\DeclareFixedFont{\titlefont}{T1}{fvs}{b}{n}{18}
\title{Ribbon concordances and slice obstructions: experiments and examples}
\author{Nathan M. Dunfield}
\address{ Dept.~of Math., MC-382 \\
          University of Illinois \\
          1409 W. Green St. \\
          Urbana, IL 61801 \\ 
          USA
}
\email{nathan@dunfield.info}
\urladdr{http://dunfield.info}
\author{Sherry Gong}
\address{Texas A\&M University,  USA
}
\email{}
\urladdr{}
\newcommand{\lk}{\mathit{lk}}
\newcommand{\injects}{\hookrightarrow}
\newcommand{\surjects}{\twoheadrightarrow}
\renewcommand{\Hbar}{\kernoverline{5}{H}{0}\vphantom{H}}
\newcommand{\PSM}{\mathcal{PS}_{19}}
\newcommand{\PS}{$\PSM$}
\newcommand{\ZF}{$\mathcal{ZF}$}
\newcommand{\HKLC}{\mathcal{HKL}^c}
\newcommand{\Kh}{\mathit{Kh}}
\newcommand{\odd}{\mathit{odd}}
\newcommand{\svec}{\bm{s}}
\newcommand{\svectil}{\tilde{\bm{s}}\vphantom{\bm{s}}}
\newcommand{\Sq}{\mathrm{Sq}}
\newcommand{\med}{\mathrm{med}}
\newcommand{\HFK}{\widehat{\mathit{HFK}}}
\renewcommand{\Re}{\mathrm{Re}}
\renewcommand{\Im}{\mathrm{Im}}
\newcommand{\ConcordGraph}{$\Gamma_{RC}$}
\newcommand{\PartialConcordGraph}{$\Gamma_{\PSM}$}
\newcommand{\approxribboncount}{$\approx 1.6$ million} 
\newcommand{\approxnonslicecount}{$\approx 350.5$ million}
\newcommand{\NumNineteenKnots}{\num{352152252}}
\newcommand{\NumPlausiblySlice}{\num{3869541}}
\newcommand{\NumRibbonKnots}{\num{1633786}}                 % 2025-07-16
\newcommand{\approxremainingknots}{$\approx \num{11400}$}   % 2025-07-16
\newcommand{\NumTopSliceKnots}{\num{1656198}}               % 2025-07-16
\newcommand{\NumNotTopSlice}{\num{2211914}}                 % 2025-07-16
\newcommand{\NumTopSliceUnknowns}{\num{1429}}               % 2025-07-16
\newcommand{\approxremainingknotstop}{$\approx \num{1400}$} % 2025-07-16
\newcommand{\NumObsByAnyHKLTest}{\num{2211761}}             % 2025-07-16
\newcommand{\NumObsByBasicHKLTest}{\num{2135384}}           % 2025-07-16
\newcommand{\NumObsByFancyHKLTest}{\num{72544}}             % 2025-07-16
\newcommand{\NumObsByDirectHKLTest}{\num{3833}}             % 2025-07-16
\newcommand{\NumObsByHLKWhereQIsTwo}{\num{67025}}           % 2025-07-16
\newcommand{\NumSmoothSliceUnknowns}{\num{11383}}           % 2025-07-16
\newcommand{\NumNotSmoothSlice}{\num{2224372}}              % 2025-07-16
\newcommand{\NumTopSliceButNotSmoothly}{\num{12162}}        % 2025-07-16
\newcommand{\NumTopSliceNotOrNotKnownSmoothly}{\num{22412}} % 2025-07-16
\newcommand{\NumNotHomotopyRibbon}{24}                      % 2025-07-16
\newcommand{\NumObsBySmoothInvariants}{\num{333912}}        % 2025-07-16
\newcommand{\NumObsBySmoothInvButNotHKL}{\num{10631}}       % 2025-07-16
\newcommand{\NumSLThreeObsComputed}{\num{15355}}            % 2025-07-16
\newcommand{\NumObsBySmoothInvsBeyondFullTopObs}{\num{10618}}  % 2025-07-16
\newcommand{\NumNotTopSliceOutOfFullNineteen}{\num{350494625}} % 2025-07-16
\newcommand{\NumVertsInConcordGraph}{\num{1677696}}
\newcommand{\NumEdgesInConcordGraph}{\num{1835130}}
\newcommand{\NumComponentsConcordGraph}{\num{524}}
\newcommand{\NumRibbonConcordToUnknot}{\num{1420528}}
\newcommand{\NumConcordNonAltToAlt}{\num{1484676}}
\newcommand{\NumKnotsNeedingOneBand}{\num{1249604}}       % 2025-05-29 
\newcommand{\NumKnotsNeedingTwoBands}{\num{381869}}       % 2025-05-29 
\newcommand{\NumKnotsNeedingThreeBands}{\num{2238}}       % 2025-05-29 
\newcommand{\NumKnotsNeedingFourBands}{\num{75}}          % 2025-05-29 
\newcommand{\RibbonKnotsWithXTorsionOrderTwo}{\num{582}}  % 2025-05-29
\definecolor{sherrycomment}{rgb}{0.067, 0.412, 0.067}
\begin{document}

\begin{dedication}
  Dedicated to Cameron McA.~Gordon for his impact on topology and topologists.
\end{dedication}

\begin{abstract}
  There are 352.2 million prime knots in the 3-sphere with at most 19
  crossings.  We study which of these knots are slice, in both the
  smooth and topological categories.  While no algorithm is known for
  deciding whether a given knot is slice in either setting, we are
  able to determine it smoothly for all but about 11,400 knots
  (0.003\% or 1 in 30,000) and topologically for all but about 1,400
  knots (0.0004\% or about 1 in 250,000).  In particular, we show that
  some 1.6~million of these knots (0.46\%) are smoothly slice (in fact
  ribbon) and that 350.5~million are not even topologically slice
  (99.54\%).  We use a wide range of tools and techniques, and
  introduce several new or refined methods for probing these
  properties.  Along the way, we produce 500,000 pairs of 0-friends,
  that is, pairs of distinct knots with the same 0-surgery.  We
  discuss how our data is consistent with several important
  conjectures and suggests new ones, and highlight the simplest knots
  where sliceness remains unknown.
\end{abstract}
\maketitle

{\small
\tableofcontents
}

\section{Introduction}

\enlargethispage{0.5cm}

Recall a \emph{knot} is the image $K$ of a smooth embedding
$S^1 \hookrightarrow S^3$.  A knot $K$ is \emph{smoothly slice} when
there is a smoothly embedded disk $D^2$ in the 4-ball $D^4$ whose
boundary is $K$.  Similarly, a knot $K$ is \emph{topologically
  slice} when it is the boundary of a locally-flat embedded disk in
$D^4$. Any smoothly slice knot is topologically slice, but the
converse need not be the case.  There are 352.2~million prime knots in
the 3-sphere with at most 19 crossings \cite{Burton2020}.
% Precisely: 352,152,252 nontrivial prime knots.  This is up to
% homeomorphism of S^3, not isotopy, or the count would nearly double.
We study which of these knots are slice, in both the smooth and
topological categories.  While no algorithm is known
for deciding whether a given knot is slice in either setting, we are
able to determine it smoothly for all but \approxremainingknots\ knots
(0.003\% or about 1 in 30,000) and topologically for all but
\approxremainingknotstop\ knots (0.0004\% or about 1 in 250,000), using a
wide variety of tools and techniques.  In particular, we show that
some \approxribboncount\ of these knots (0.46\%) are smoothly slice
and that \approxnonslicecount\ are not even topologically slice
(99.54\%). More precisely:

\begin{theorem}
  \label{thm: main}
  Let $\cK_{19}$ denote the nontrivial prime knots in the 3-sphere
  with at most 19~crossings, up to homeomorphism of $S^3$, so that
  $\# \cK_{19} = \NumNineteenKnots$.  Then:
  \begin{enumerate}
  \item For $S = \NumRibbonKnots \ (0.46\%)$, the number of smoothly
    slice knots in $\cK_{19}$ is in $[S, S + \NumSmoothSliceUnknowns]$.
  \item For
    $T = S + \NumTopSliceNotOrNotKnownSmoothly = \NumTopSliceKnots \ (0.47\%)$, the
    number of topologically slice knots in $\cK_{19}$ is in
    $[T, T + \NumTopSliceUnknowns]$.  There are at least
    \NumTopSliceButNotSmoothly\ knots in
    $\cK_{19}$ which are topologically slice but not smoothly slice.
  \end{enumerate}
  Consequently, for
  $R = \NumNotTopSliceOutOfFullNineteen \ (99.53\%)$, the number of
  knots in $\cK_{19}$ that are not topologically slice is in
  $[R, R + \NumTopSliceUnknowns]$.
\end{theorem}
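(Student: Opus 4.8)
The plan is to establish the three enumerated claims, after which the final ``consequently'' clause is pure arithmetic: given $\#\cK_{19}=\NumNineteenKnots$ from the census of \cite{Burton2020}, and given (2) that the number of topologically slice knots in $\cK_{19}$ lies in $[T,\,T+\NumTopSliceUnknowns]$ with $T=\NumTopSliceKnots$, the number of knots in $\cK_{19}$ that are \emph{not} topologically slice lies in
$[\,\NumNineteenKnots - (T+\NumTopSliceUnknowns),\ \NumNineteenKnots - T\,] = [R,\,R+\NumTopSliceUnknowns]$ with $R=\NumNotTopSliceOutOfFullNineteen$. So the real work is two-sided: explicit constructions pin down the \emph{lower} ends of the intervals, while concordance obstructions (together with the total count) pin down the \emph{upper} ends, and the width $\NumSmoothSliceUnknowns$ (resp.\ $\NumTopSliceUnknowns$) is exactly the set of knots that neither method resolves.

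For the lower bounds I would proceed constructively. For each knot in the census I search for a sequence of band moves carrying a diagram of $K$ to an unlink; any such knot is ribbon, hence smoothly slice and a fortiori topologically slice, and this is what yields the $S=\NumRibbonKnots$ ribbon knots of claim (1) and the contribution $T\ge S$ to claim (2). To get the remaining $\NumTopSliceNotOrNotKnownSmoothly$ topologically slice knots by which $T$ exceeds $S$, I would invoke Freedman's theorem that a knot with trivial Alexander polynomial is topologically slice, together with its refinements certifying topological sliceness from suitable Alexander-module/Blanchfield data; these constructions give the lower ends of both intervals. Crossing this list of topologically slice knots against the smooth obstructions below then certifies at least $\NumTopSliceButNotSmoothly$ knots that are topologically but not smoothly slice.

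For the upper bounds I would run a battery of concordance obstructions, in roughly increasing cost. Topologically: the Fox--Milnor factorization condition on the Alexander polynomial and the Levine--Tristram signatures dispatch the overwhelming majority of the \approxnonslicecount\ non-topologically-slice knots, and the subtler survivors are handled by metabelian Casson--Gordon-type and twisted-Alexander / $\mathrm{SL}(2,\mathbb{C})$ invariants, i.e.\ the $\HKLC$ family of tests. Smoothly: for knots passing all the topological obstructions (hence possibly topologically slice), I would apply the Rasmussen $s$-invariant, the Heegaard Floer $\tau$, $\upsilon$, and $d$-invariant / involutive invariants of branched covers, and $\mathrm{SL}(2,\mathbb{C})$ Chern--Simons-type bounds, to detect non-smooth-sliceness. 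A knot that is obstructed smoothly yet certified topologically slice feeds the ``topologically but not smoothly'' tally; a knot obstructed by no topological test and not realized by any construction is recorded among the $\NumTopSliceUnknowns$ topological unknowns, and similarly for the $\NumSmoothSliceUnknowns$ smooth unknowns. Combining the constructions and obstructions then gives exactly the asserted intervals.

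The main obstacle I anticipate is not any single theoretical ingredient but the orchestration and verification at scale: computing and cross-checking dozens of concordance invariants over $352$ million knots, certifying the band-move searches (and bounding how many extra ribbon moves might have been missed), and — the genuinely hard residue — the knots where every obstruction vanishes and no slice disk is found. That residue is precisely why the theorem is phrased as an interval rather than an equality, and narrowing it would require either new obstructions or new constructions beyond those currently available.
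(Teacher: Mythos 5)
Your proposal matches the paper's approach at the level of decomposition and main technique: prefilter by Fox--Milnor and the Murasugi signature (the paper's ``plausibly slice'' knots), build ribbon disks via a band-move search to supply the lower bound $S$ and most of $T$, use Freedman's $\Delta_K=1$ criterion for the remaining topologically slice knots, and use twisted-Alexander/Casson--Gordon (HKL) tests together with Khovanov- and Floer-theoretic $s$-, $\tau$-type invariants for the upper bounds, with the final clause being pure arithmetic as you say. The paper additionally closes the residue with two techniques your sketch omits --- propagating slice and non-slice status along a ribbon-concordance graph assembled from the band-move data (handling the last $\approx 2000$ knots), and a 0-friend / RBG-link method that drills short geodesics out of hyperbolic 0-surgeries to settle a handful of stubborn knots such as the Conway knot's concordance class --- but these are refinements of the same architecture, and the overall route you describe is the one the paper takes.
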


\subsection{Ribbon knots}

A \emph{ribbon disk} for a knot $K$ is a smooth slice disk $D$ with
image in $D^4 \setminus \{0\} \cong S^3 \times (0, 1]$ so that the
$(0, 1]$-coordinate is a Morse function on $D$ with no local maxes in
the interior of $D$.  A knot is \emph{ribbon} when it has a ribbon disk.
(Equivalently, a knot is ribbon when it is the boundary of a
restricted kind of immersed disk $D \looparrowright S^3$ as in
e.g.~\cite{Fox1961} or \cite[Definition 13.1.9]{Kawauchi1996}.) As every
ribbon knot is smoothly slice, one is led to ask:
\begin{question}[\cite{Fox1961}]
  Are all smoothly slice knots ribbon?
\end{question}
The slice-ribbon conjecture is that the answer to this question is
yes. Here, all the smoothly slice knots identified in
Theorem~\ref{thm: main} are indeed ribbon.  

\subsection{Prior work}
\label{sec: prior}
For knots with at most 12 crossings, the complete
picture was finished only in 2020:

\begin{theorem}[\cite{LewarkMcCoy2019, Piccirillo2020}]
  \label{thm: 12 known}
  Of the \num{2977} nontrivial prime knots with at most 12 crossings,
  exactly 158 are smoothly slice and 159 are topologically slice.  All
  of the smoothly slice knots are moreover ribbon.
\end{theorem}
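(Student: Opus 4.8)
The plan is to pinch the count of slice knots from both sides: exhibit explicit constructions that realize sliceness for $158$ (resp.\ $159$) of the $2977$ knots, and run every available concordance obstruction against the rest until nothing survives. The hard part, as it turned out historically, is a single knot.

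For the lower bound one works through the $158$ candidate smoothly slice knots and, for each, writes down an explicit ribbon presentation: a sequence of oriented band surgeries (a fusion with a trivial unlink, or equivalently a symmetric-union diagram, or an immersed-ribbon-disk picture in the sense of \cite{Fox1961}) that terminates in an unlink. Each such presentation certifies the knot is ribbon, hence smoothly slice, hence topologically slice. Adjoining the one extra knot that is topologically but not smoothly slice --- the Conway knot $11n34$, which has Alexander polynomial $1$ and is therefore topologically slice by Freedman's disk theorem --- gives $\ge 158$ smoothly slice and $\ge 159$ topologically slice knots, and shows every smoothly slice knot produced this way is ribbon.

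For the upper bound one must certify that every remaining knot is \emph{not} (smoothly, resp.\ topologically) slice. In the smooth category this is a pipeline of obstructions applied knot-by-knot: the classical and Tristram--Levine signatures; the Fox--Milnor factorization of the Alexander polynomial (which obstructs topological sliceness as well); the Ozsv\'ath--Szab\'o $\tau$ and Rasmussen $s$ invariants; $\Upsilon$ and related knot-Floer concordance homomorphisms; and Donaldson/Heegaard-Floer $d$-invariant obstructions from the double branched cover. In the topological category one supplements Fox--Milnor with Casson--Gordon and other metabelian invariants for the knots it misses. This is the bulk of the work of \cite{LewarkMcCoy2019}, and it disposes of every knot except one: the Conway knot, which evades all of these classical obstructions.

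The main obstacle is therefore exactly this last knot, and its resolution is the genuinely new input, due to \cite{Piccirillo2020}. One produces an auxiliary knot $K'$ whose $0$-trace $4$-manifold is diffeomorphic to that of $11n34$; by the trace-embedding lemma, $11n34$ is smoothly slice if and only if $K'$ is. But $s(K') = 2 \neq 0$, so $K'$ --- and hence $11n34$ --- is not smoothly slice, while Freedman still makes it topologically slice. With all $2977$ knots now sorted, the counts are $158$ smoothly slice and $159$ topologically slice, and all the smoothly slice ones are ribbon by the explicit presentations in the lower-bound step.
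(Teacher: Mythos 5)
The paper does not prove this theorem; it states it as a citation to \cite{LewarkMcCoy2019} and \cite{Piccirillo2020}, so there is no in-paper proof to compare against. Your reconstruction is an accurate account of how those references establish the result: the lower bound via explicit ribbon bands for the 158 candidates, the upper bound via the battery of signature, Fox--Milnor, Casson--Gordon, Heegaard Floer, and Khovanov obstructions compiled in \cite{LewarkMcCoy2019} and its antecedents, and the Conway knot $K11n34$ resolved by Piccirillo's trace-embedding argument with a companion knot $K'$ having $s(K') \neq 0$, while $\Delta_{K11n34} = 1$ makes it topologically slice by Freedman. Your description is consistent with the approach in the cited literature and with the way this paper uses the same ingredients elsewhere (ribbon certificates in Section~\ref{sec: bands}, trace and $0$-friend arguments in Section~\ref{sec: friends}).
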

Several groups have worked to identify ribbon and/or slice knots with
more crossings, including \cite[Appendix~A]{Stoimenow2002} and
\cite{GukovEtAl2023}. The largest general list we are aware of is
\cite{GukovEtAl2023}, which identifies \num{1705} ribbon knots from
among the \num{59937} nontrivial prime knots with at most
14 crossings.  We independently found the same 1705 knots as part of
Theorem~\ref{thm: main}; see Section~\ref{sec: ribbon comp} for more.

For alternating knots, Owens and Swenton studied these questions at a
vastly larger scale than Theorem~\ref{thm: 12 known} or
\cite{GukovEtAl2023} using techniques specific to the alternating
case:

\begin{theorem}[\cite{OwensSwenton2023}]
  \label{thm: OS alt}
  % R = 662903 + 1730
  Of the 1.2 billion nontrivial prime alternating knots with at most 21
  crossings, at least $R = \num{664633}$ (0.05\%) are ribbon 
  and at most $R + \num{3276}$ are smoothly slice.
\end{theorem}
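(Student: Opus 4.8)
The plan is to establish Theorem~\ref{thm: OS alt} by combining a complete enumeration of the nontrivial prime alternating knots with at most 21 crossings with two matching bounds: a lower bound $R$ coming from explicitly exhibited ribbon disks, and an upper bound $R + \num{3276}$ coming from a battery of slice obstructions. The enumeration is the easy part: reduced prime alternating diagrams correspond to certain $4$-valent planar maps, and by the resolution of the Tait flyping conjecture (Menasco--Thistlethwaite) two such diagrams represent the same knot if and only if they are related by flypes, so one can put each knot into a canonical form and count exactly, recovering the figure of 1.2 billion. This sidesteps the much harder general tabulation underlying Theorem~\ref{thm: main}.

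For the lower bound, for each alternating knot $K$ I would search for a ribbon disk by trying to transform $K$, through a finite sequence of oriented band moves, into an unlink (whose number of components records the fusion number). The search is guided by the alternating diagram: one tries ``short'' bands lying near the two checkerboard surfaces, bands realizing visible symmetries of the diagram (symmetric unions and $K \# \overline{K}$-type patterns), and bands that undo a rational tangle or split off a connect-summand; after each band move one simplifies the resulting diagram and recurses to a bounded depth. Any success is a certificate that $K$ is ribbon, and this is essentially the algorithm implemented in Swenton's \texttt{KLO} software. Sweeping all 1.2 billion knots in this way produces the $R = \num{664633}$ certified ribbon knots.

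For the upper bound I would run on every alternating knot a suite of obstructions and call $K$ \emph{potentially slice} only if it passes all of them: vanishing of the signature and of the Arf invariant; the Fox--Milnor factorization $\Delta_K(t) \doteq f(t)\,f(t^{-1})$ of the Alexander polynomial; and --- the decisive ingredient in the alternating case --- the constraints from the double branched cover $\Sigma_2(K)$. Since $\Sigma_2(K)$ bounds the canonical negative-definite $4$-manifold whose intersection form is the reduced Goeritz form $G$ of an alternating diagram, a slice disk produces a rational homology $4$-ball with the same boundary; capping off yields a closed negative-definite manifold, and Donaldson's diagonalization theorem then forces $G$ to embed as a full-rank sublattice of the standard diagonal lattice $\langle -1 \rangle^{n}$, a finite arithmetic test in the spirit of Lisca and Greene--Jabuka. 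In addition the Heegaard Floer correction terms of $\Sigma_2(K)$, computable combinatorially from $G$ after Ozsv\'{a}th--Szab\'{o}, must vanish along a metabolizer of the linking form. One then checks that exactly $R + \num{3276}$ of the alternating knots survive every test, which bounds from above the number that are smoothly slice; together with the lower bound this is the theorem.

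The main obstacle is the positive half. The band-move search carries no guarantee of terminating with an answer, so the real difficulty is designing heuristics that are at once broad enough to find a ribbon disk for essentially every ribbon knot in the list and cheap enough to be run more than a billion times; the $\num{3276}$-knot gap is exactly the residue left by this incompleteness. A secondary difficulty is implementing the obstruction suite uniformly and correctly --- extracting Goeritz forms from diagrams and coping with their stable-equivalence ambiguity, and solving the lattice-embedding problem robustly --- so that the reported gap reflects genuine mathematical uncertainty rather than artifacts of the computation.
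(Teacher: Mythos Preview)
This theorem is not proved in the paper at all: it is stated as a result of Owens--Swenton and cited as \cite{OwensSwenton2023}, with no argument given. So there is no ``paper's own proof'' for your attempt to be compared against; you have instead sketched what such a proof might look like.

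That said, the paper does summarize the Owens--Swenton method in Section~\ref{sec: ribbon comp}, and it is worth comparing your sketch to that summary. You have the overall architecture right --- a lower bound from explicitly found ribbon disks and an upper bound from slice obstructions built on the Goeritz form and Donaldson's theorem --- but you miss the specific organizing principle the paper highlights. According to Section~\ref{sec: ribbon comp}, Owens--Swenton exploit that a smoothly slice alternating knot must have a \emph{bifactorizable diagram} (a Goeritz-matrix condition); this single test serves as the main obstruction, and, crucially, it also \emph{guides} the ribbon search: their ``algorithmic bands'' are short bands chosen so that the result is again bifactorizable, which drastically prunes the search. Your proposal treats the obstruction side and the ribbon-search side as independent, and pads the obstruction suite with Heegaard Floer $d$-invariants that the paper does not attribute to Owens--Swenton. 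So your sketch is plausible as a generic strategy but does not capture the feature the paper singles out as the reason the computation is feasible at this scale.
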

For alternating knots with at most 19 crossings, Owens and Swenton
had only 86 knots whose smooth slice status was unknown.  We resolved
all of these in the course of proving Theorem~\ref{thm: main} and
moreover settled the question in the topological category:

\begin{theorem}
  \label{thm: alt} %\num{51280976}
  Of the 51.3 million nontrivial prime alternating knots with at
  most 19 crossings, exactly \num{82043} (0.16\%) are ribbon and the
  rest are not topologically slice.
\end{theorem}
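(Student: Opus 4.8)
The plan is to run the census computation behind Theorem~\ref{thm: main}, restricted to the roughly $51.3$~million prime alternating diagrams with at most $19$ crossings, and to check that in the alternating case it terminates with no undecided knots. Concretely, I would produce two lists: list~$A$, the alternating knots for which an explicit ribbon disk is constructed, and list~$B$, those for which a topological slice obstruction is produced; then I would verify that $A$ and $B$ are disjoint and together exhaust all $51.3$~million knots, with $\# A = \num{82043}$.

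For list~$A$, I would search for ribbon disks by looking for short sequences of fusion (band) moves turning $K$ into an unlink, supplemented by recognition of symmetric-union presentations and other standard ribbon patterns; each candidate disk is certified by recording the bands explicitly and checking, via diagram simplification and a rigorous unknot/unlink recognizer, that the resulting link is trivial. This recovers the Owens--Swenton ribbon alternating knots of Theorem~\ref{thm: OS alt} in the $\le 19$ range, and additionally settles the $86$ alternating knots with at most $19$ crossings whose smooth slice status they left open: under a more aggressive band search some of these acquire ribbon disks and the remainder fall to list~$B$, giving the total $\num{82043}$.

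For list~$B$, I would apply a cascade of \emph{topological} concordance obstructions, cheapest first: a nonzero signature, which is a topological concordance invariant, and more generally failure of the Fox--Milnor factorization $\Delta_K(t) = f(t)\, f(t^{-1})$ (up to units); together these dispose of the overwhelming majority of the $51.3$~million knots. For the knots that survive as algebraically slice I would run the Casson--Gordon-type metabelian obstructions, packaged here as the $\HKLC$ tests and computed from twisted Alexander polynomials over prime-power cyclic covers of the double branched cover; these obstruct topological sliceness. The alternating hypothesis is what makes this decisive in practice: the double branched cover is surgery on a definite plumbing carrying the Goeritz form of a reduced alternating diagram, so its first homology is small and explicit and the relevant character data and twisted polynomials are uniform and cheap to compute.

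The main obstacle is \emph{completeness}: proving that $A$ and $B$ really do cover all $51.3$~million knots, with empty remainder. This has two parts. First, the ribbon search must be thorough enough that ``$K$ is ribbon'' is never claimed without a verified band description, and ``the intermediate link is trivial'' is never claimed without a certificate; the $86$ Owens--Swenton unknowns are precisely where this requires care. Second, the $\HKLC$ obstruction must be strong enough on the borderline alternating knots --- those that are algebraically slice but not ribbon --- where I would rely on the structural simplicity of the branched cover and cross-check against the Owens--Swenton smooth (Donaldson/Goeritz) results, confirming that in this range every alternating knot they certify non-slice smoothly is already obstructed from being topologically slice. Once $A$ and $B$ are verified, disjoint, and exhaustive, the theorem follows: $\num{82043}$ of the alternating knots are ribbon and all others are not topologically slice.
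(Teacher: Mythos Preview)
Your overall strategy matches the paper's: restrict to alternating knots in $\PSM$, find ribbon disks by band search (Section~\ref{sec: bands}), and obstruct the rest topologically via signature, Fox--Milnor, and then the HKL twisted-Alexander tests (Section~\ref{sec: HKL}).  Where you go wrong is in the claim that this cascade \emph{terminates with no undecided alternating knots}.  It does not.

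The paper's HKL machinery, even with all the refinements of Sections~\ref{sec: HKL refinement}--\ref{sec: q^e}, leaves exactly nine alternating knots unobstructed; these are the subject of Theorem~\ref{thm: alt scraps}.  Five of them ($K14a12741$, $16a350194$, $17ah_{0000055}$, $18ah_{0000122}$, $19ah_{00000457}$) are 2-bridge knots $K_{m^2,q}$, and the paper handles them by computing the original Casson--Gordon \emph{signatures} directly from \cite{CassonGordonOrsay}, not via twisted Alexander polynomials.  The other four ($18ah_{3327857}$, $18ah_{4025786}$, $18ah_{4099296}$, $18ah_{4099297}$) are handled by an entirely different mechanism: a one-band ribbon concordance is found from each to the \emph{composite} knot $K7a1 \# K7a1$, and then one invokes \cite{LivingstonNaik1999} to show $K7a1$ has infinite order in $\cC_1^{\mathit{top}}$, so $2[K7a1]\neq 0$.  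Neither of these ingredients appears in your proposal.  Your suggested cross-check against the Owens--Swenton Goeritz/Donaldson obstruction would in fact \emph{detect} this gap rather than close it: per Table~\ref{tab: smooth obs} there are 13 alternating knots in $\HKLC$ obstructed smoothly by the Goeritz test but not by any HKL test, so the confirmation you hope for fails precisely here.
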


\begin{figure}
  \centering
  \begin{tikzpicture}[font=\scriptsize]
    \tikzset{axis label/.style={font=\footnotesize}}
    \newcommand{\nmdsubfigurewidth}{6.2cm}
    \begin{scope}
      \begin{tikzoverlay*}[height=\nmdsubfigurewidth]{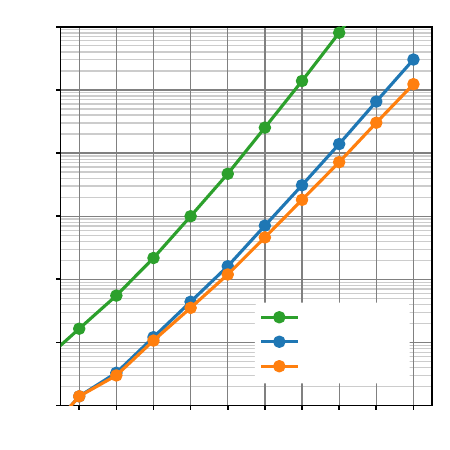}
  \draw (17.602674, 8) node[below] {$10$};
  \draw (25.855024, 8) node[below] {$11$};
  \draw (34.107374, 8) node[below] {$12$};
  \draw (42.359724, 8) node[below] {$13$};
  \draw (50.612074, 8) node[below] {$14$};
  \draw (58.864425, 8) node[below] {$15$};
  \draw (67.116775, 8) node[below] {$16$};
  \draw (75.369125, 8) node[below] {$17$};
  \draw (83.621475, 8) node[below] {$18$};
  \draw (91.873825, 8) node[below] {$19$};
  \draw (12, 9.879630) node[left] {${10^{1}}$};
  \draw (12, 23.905247) node[left] {${10^{2}}$};
  \draw (12, 37.930864) node[left] {${10^{3}}$};
  \draw (12, 51.956481) node[left] {${10^{4}}$};
  \draw (12, 65.982099) node[left] {${10^{5}}$};
  \draw (12, 80.007716) node[left] {${10^{6}}$};
  \draw (12, 94.033333) node[left] {${10^{7}}$};
  \draw (66, 29.7) node[right] {prime};
  \draw (66, 24.3) node[right] {plaus. slice};
  \draw (66, 19.2) node[right] {ribbon};
  \draw (55.749576, 1) node[below, axis label] {Crossings};
  \draw (2, 50) node[rotate=90, anchor=south, axis label] {Number of knots}; 
\end{tikzoverlay*}
    \end{scope}
    \begin{scope}[shift={(7.2, 0)}]
      \begin{tikzoverlay*}[width=\nmdsubfigurewidth]%
  {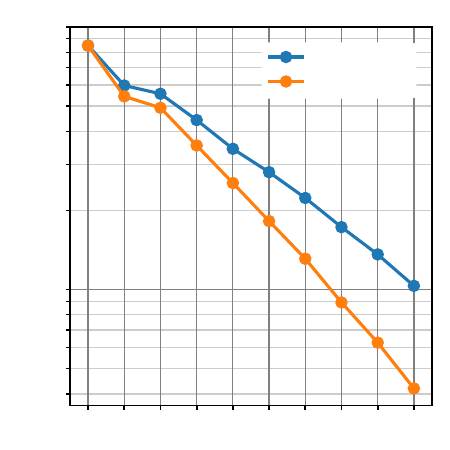}
  \draw (19.567449, 8) node[below] {$10$};
  \draw (27.612980, 8) node[below] {$11$};
  \draw (35.658512, 8) node[below] {$12$};
  \draw (43.704044, 8) node[below] {$13$};
  \draw (51.749576, 8) node[below] {$14$};
  \draw (59.795107, 8) node[below] {$15$};
  \draw (67.840639, 8) node[below] {$16$};
  \draw (75.886171, 8) node[below] {$17$};
  \draw (83.931702, 8) node[below] {$18$};
  \draw (91.977234, 8) node[below] {$19$};
  \draw (15, 18.387154) node[left] {$0.5\%$};
  \draw (15, 36.187154) node[left] {$1\%$};
  \draw (15, 53.5) node[left] {$2\%$};
  \draw (15, 76.5) node[left] {$5\%$};
  \draw (15, 94.533333) node[left] {$10\%$};
  \draw (67, 87.4) node[right] {plaus. slice};
  \draw (67, 82.1) node[right] {ribbon};
  \draw (55.749576, 1) node[below, axis label] {Crossings};
  \draw (7, 50) node[rotate=90, anchor=south, axis label] {Portion of prime knots}; 
\end{tikzoverlay*}
    \end{scope}
    % \draw[opacity=0.1] (0, 0) grid (14, 7);
  \end{tikzpicture}

  \caption{ The number of prime plausibly slice and prime ribbon knots, as
    compared to all prime knots. For 20 crossings, we expect about 15
    million prime plausibly slice knots and 5.5 million prime ribbon knots.
    (There are 1.8 billion prime knots with 20 crossings
    \cite{Morwen23} and the plot at right suggests 0.8\% are plausibly
    slice and 0.3\% are ribbon.)}
  \label{fig: num PS}
\end{figure}

\begin{figure}
  \centering
  \begin{tikzpicture}[font=\scriptsize]
  \tikzset{axis label/.style={font=\footnotesize}}
  \tikzset{annotate plot/.style={font=\scriptsize}}
  \newcommand{\nmdsubfigurewidth}{6.3cm}
  \begin{scope}
    \input plots/volume_hist.tex
  \end{scope}
  \begin{scope}[shift={(7.0, 0)}] % 7.2
    \begin{tikzoverlay*}[width=\nmdsubfigurewidth]{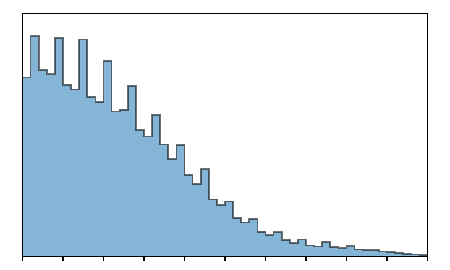}
    \begin{scope}[annotate plot]
    \draw (92, 50) node[left] {$\mu=24.8$}
       ++(0, -6)  node[left] {$\mathrm{med}=21.0$}
       ++(0, -6.5)  node[left] {$\sigma=18.5$}
       ++(0, -6)  node[left] {$\max = 129\hphantom{.}$};
  \end{scope}
  \draw (5.000000, 0.787037) node[below] {$0$};
  \draw (14.000000, 0.787037) node[below] {$10$};
  \draw (23.000000, 0.787037) node[below] {$20$};
  \draw (32.000000, 0.787037) node[below] {$30$};
  \draw (41.000000, 0.787037) node[below] {$40$};
  \draw (50.000000, 0.787037) node[below] {$50$};
  \draw (59.000000, 0.787037) node[below] {$60$};
  \draw (68.000000, 0.787037) node[below] {$70$};
  \draw (77.000000, 0.787037) node[below] {$80$};
  \draw (86.000000, 0.787037) node[below] {$90$};
  \draw (95.000000, 0.787037) node[below] {$100$};
  \draw (50, -5) node[below, axis label]
        {$\sqrt{\det(K)} \in 1 + 2 \Z$};
  % Internal axis coordinate system
  \begin{scope}[shift={(5.00000000, 3.33333333)},
                xscale=0.90000000, yscale=1954.84103505]
      %\draw[red] (0.000000, 0.000000) rectangle (100.000000, 0.027839);
  \end{scope}
\end{tikzoverlay*}
  \end{scope}
  \begin{scope}[shift={(0, -5.0)}] % -5.25
    \begin{scope}
      \begin{tikzoverlay*}[width=\nmdsubfigurewidth]{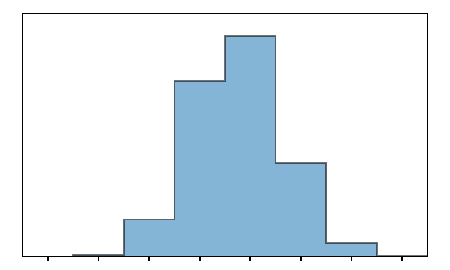}
  \begin{scope}[annotate plot]
    \draw (93, 50) node[left] {$\mu=4.8$}
           ++(0, -6) node[left] {$\mathrm{med}=5.0$}
           ++(0, -6.5) node[left] {$\sigma=0.9$};
  \end{scope}  
  \draw (10.625000, 0.787037) node[below] {$1$};
  \draw (21.875000, 0.787037) node[below] {$2$};
  \draw (33.125000, 0.787037) node[below] {$3$};
  \draw (44.375000, 0.787037) node[below] {$4$};
  \draw (55.625000, 0.787037) node[below] {$5$};
  \draw (66.875000, 0.787037) node[below] {$6$};
  \draw (78.125000, 0.787037) node[below] {$7$};
  \draw (89.375000, 0.787037) node[below] {$8$};
  \draw (50, -5) node[below, axis label]
        {Seifert genus $g_3(K)$};
  % Internal axis coordinate system
  \begin{scope}[shift={(-0.62500000, 3.33333333)},
                xscale=11.25000000, yscale=133.60728839]
      %\draw[red] (0.500000, 0.000000) rectangle (8.500000, 0.407326);
  \end{scope}
\end{tikzoverlay*}
    \end{scope}
    \begin{scope}[shift={(7.0, 0)}]
      \input plots/log_10_rk_HFK_hist.tex
    \end{scope}
  \end{scope}
\end{tikzpicture}

\caption{Some basic statistics about the 3.87 million plausibly slice
  knots.  Here $\mu$ is the mean, $\med$ is the median, and $\sigma$
  is the standard deviation. The ``spikes'' in the plot of
  $\sqrt{\det(K)}$ are when that number is divisible by $3$.  The plot
  of the knot Floer homology $\HFK$ is indistinguishable from the one
  of reduced Khovanov homology over $\F_2$ or $\Q$.}
\label{fig: PS basic}
\end{figure}

\begin{figure}
  \centering
  \begin{tikzpicture}[font=\footnotesize]
  \tikzset{axis label/.style={font=\small}}
  \tikzset{annotate plot/.style={font=\scriptsize}}
  \begin{scope}
    \begin{tikzoverlay*}[width=9.3cm]%
  {vol_vs_log_HFK_alt_vs_non_300.png}
  %{vol_vs_log_HFK_viridis_200.png}
  %{vol_vs_log_HFK_magma_300.png}
  \draw (14.342219, 4.531250) node[below] {$10$};
  \draw (26.046121, 4.531250) node[below] {$15$};
  \draw (37.750023, 4.531250) node[below] {$20$};
  \draw (49.453925, 4.531250) node[below] {$25$};
  \draw (61.157826, 4.531250) node[below] {$30$};
  \draw (72.861728, 4.531250) node[below] {$35$};
  \draw (84.565630, 4.531250) node[below] {$40$};
  \draw (96.269531, 4.531250) node[below] {$45$};
  \draw (5.800781, 11.509847) node[left] {$1.5$};
  \draw (5.800781, 22.428848) node[left] {$2.0$};
  \draw (5.800781, 33.347848) node[left] {$2.5$};
  \draw (5.800781, 44.266849) node[left] {$3.0$};
  \draw (5.800781, 55.185849) node[left] {$3.5$};
  \draw (5.800781, 66.104850) node[left] {$4.0$};
    \draw (50, -2) node[below, axis label]
     {Hyperbolic volume of $S^3 \setminus K$};
  \draw (-2, 38) node[rotate=90, anchor=south, axis label]
     {$\log_{10}\big(\dim \HFK (K)\big)$};
  % Internal axis coordinate system
  \begin{scope}[shift={(-9.06558388, -21.24715392)},
                xscale=2.34078034, yscale=21.83800091]
      %\draw[red] (-1.000000, 0.954243) rectangle (44.134795, 4.221179);
  \end{scope}
\end{tikzoverlay*}
  \end{scope}
\end{tikzpicture}
\caption{Comparing hyperbolic volume and the knot Floer homology
  $\HFK$ for knots in \PS.  Here, the alternating knots are the higher
  blob (peach-purple) and the nonalternating ones are the lower blob
  (green-blue); only 5.3\% of the knots in \PS\ are alternating, so
  this plot over-emphasizes them to show the relative behavior.  This
  plot of $\HFK$ is indistinguishable from the one for reduced
  Khovanov homology over $\F_2$ or $\Q$, so this matches the
  observation from \cite[\S 8]{Khovanov2003}.}
\label{fig: vol vs HFK}
\end{figure}

\subsection{Plausibly slice knots}

We now outline the methods behind Theorem~\ref{thm: main}, starting
with the basic topological obstructions from the 1960s. We call a knot
\emph{plausibly slice} when its Murasugi signature $\sigma(K) = 0$ and
its Alexander polynomial $\Delta_K(t)$ satisfies the Fox-Milnor
criteria, that is, there is an $f(t) \in \Z[t^{\pm 1}]$ with
$\Delta_K(t) = f(t) f(t^{-1})$ up to a unit in $\Z[t^{\pm 1}]$.  Any
topologically slice knot is plausibly slice, see e.g.~Theorems 8.18
and 8.19 of \cite{Lickorish1997} or Corollary 12.3.13 of
\cite{Kawauchi1996}.  There are \NumPlausiblySlice\ nontrivial
plausibly slice prime knots with at most 19 crossings, which is 1.10\%
of the prime knots in that range; we use \PS\ to denote these knots,
and these are what we will study for the rest of this paper.
(Algebraically slice knots are plausibly slice, and we suspect that
nearly all knots in \PS\ are actually algebraically slice.  However,
checking this is quite involved \cite{Livingston2010}, and we are
unaware of any software that handles the general case.  Ad hoc methods
applied to a random sample of 10,000 knots from \PS\ not known to be
topologically slice found that at least 99.9\% are algebraically
slice.)

Statistics of some basic properties of the knots in \PS\ are shown in
Figures~\ref{fig: num PS}, \ref{fig: PS basic}, and \ref{fig: vol
  vs HFK}.  Theorem~\ref{thm: main} follows immediately from:

\begin{theorem}
  \label{thm: real main}
  Of the knots in \PS, at least \NumRibbonKnots\ are ribbon and at
  least \NumNotSmoothSlice\ are not smoothly slice.  Also, at least
  \NumTopSliceKnots\ are topologically slice and \NumNotTopSlice\
   are not topologically slice.  This leaves
  \NumSmoothSliceUnknowns\ and \NumTopSliceUnknowns\ unknowns in the
  smooth and topological settings respectively. There are at least
  \NumTopSliceButNotSmoothly\ knots in \PS\ that are topologically but not
  smoothly slice.
\end{theorem}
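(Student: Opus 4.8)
The plan is to build a classification pipeline applied to the \NumPlausiblySlice\ knots of \PS, sorting each knot into one of four boxes --- \emph{ribbon}, \emph{not topologically slice}, \emph{not smoothly slice} (with topological status left open), or \emph{unknown} --- and then checking that the totals match the claimed bounds. I may assume Theorem~\ref{thm: alt}, which already disposes of every alternating knot with at most 19 crossings: \num{82043} are ribbon and the remainder are not topologically slice. So the bulk of the work is with the roughly 95\% of \PS\ that is nonalternating.

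For the \emph{ribbon} box I would search for explicit ribbon disks: starting from a diagram or braid word for $K$, attempt to attach bands so as to reach a split unlink, certifying the outcome with a knot/link recognition routine. Any success proves $K$ is ribbon, and the minimal number of bands is recorded to give a refined count by band number. This search is one-sided --- a failure says nothing --- so it must be deep and make good choices about which bands to try; the price of missing too many knots is that they fall into the unknown box rather than producing an error.

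For the obstruction boxes I would stack the standard invariants. Beyond $\sigma$ and Fox--Milnor (already built into \PS), topological sliceness is obstructed by the metabelian Casson--Gordon / twisted-Alexander invariants in the Herald--Kirk--Livingston family $\HKLC$, computed from prime-power cyclic branched covers and their character varieties; I would run a cheap ``basic'' version broadly, an expensive ``fancy'' version on the survivors, and the $q = 2$ double-branched-cover case. A knot obstructed here is not topologically slice, hence not smoothly slice. Smooth-only obstructions come from the Heegaard Floer correction terms $d(\Sigma_2(K))$, the concordance invariants $\tau$ (from $\HFK$) and $s$ (from Lee/Khovanov homology, together with its $\Z/2$ and $\mathfrak{sl}_n$ refinements), further slice--torus invariants, and, on a smaller set, $SL(3,\mathbb{C})$-type representation obstructions. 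Knots caught only by these land in the ``not smoothly slice'' box. For \emph{topological} sliceness beyond the ribbon knots I would invoke Freedman's theorem --- every knot with $\Delta_K = 1$ is topologically slice --- together with other locally-flat disk constructions, certifying the additional \NumTopSliceNotOrNotKnownSmoothly\ knots; intersecting ``topologically slice'' with ``has a nonvanishing smooth obstruction'' then yields the $\ge \NumTopSliceButNotSmoothly$ knots that are topologically but not smoothly slice. Anything in no box is declared unknown, giving \NumSmoothSliceUnknowns\ smooth and \NumTopSliceUnknowns\ topological unknowns.

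The main obstacle is not any one theorem but completeness at scale: the pipeline must run reliably on all \NumPlausiblySlice\ knots, the ribbon search must be powerful enough to leave only \approxremainingknots\ residual knots in the smooth setting, and the heavy obstruction computations --- the HKL invariants over many characters, and the Floer $d$-invariants of potentially large branched covers --- must be pushed far enough that the unknown set really is that small. I expect almost all of the effort to concentrate on the knots near the boundary, where neither a ribbon disk nor any obstruction appears quickly, and on carefully double-checking the small set of topologically-but-not-smoothly slice knots, since those depend on the delicate pairing of a Freedman-type existence result with a nonvanishing smooth invariant.
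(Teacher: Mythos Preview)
Your broad outline---ribbon search, HKL-type topological obstructions, Khovanov/Floer smooth obstructions, and Freedman for $\Delta_K=1$---matches the paper's main pipeline, but you are missing two ingredients that the paper's own proof of Theorem~\ref{thm: real main} explicitly relies on to reach the stated counts.

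First, you omit the \emph{ribbon concordance propagation} of Section~\ref{sec: ribbon graph}. The band search of Section~\ref{sec: bands} produces not just ribbon disks but ribbon concordances $K_1 \geq K_0$ between knots in \PS, assembled into the graph \PartialConcordGraph. Since concordant knots share slice status, knowing one knot in a component settles all of them. The paper uses this to certify roughly \num{3457} additional topologically slice knots (those in components whose sink has $\Delta_K=1$ but which themselves do not), about \num{144} additional knots as not topologically slice (where HKL succeeds on some but not all knots in a component), and about \num{1813} additional knots as not smoothly slice (the components of the Conway knot and $K13n866$). Your ``other locally-flat disk constructions'' does not capture this mechanism.

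Second, you omit the \emph{0-friends method} of Section~\ref{sec: friends}: Theorem~\ref{thm: slice via friends} shows 25 knots are not smoothly slice by finding a 0-friend $K'$ with $s_\F(K')\neq 0$ together with a super-special RBG link, and Theorem~\ref{thm: mystery 2} shows $18nh_{00000601}$ is topologically slice via a 0-friend. You also omit the ad~hoc Theorems~\ref{thm: cable 8}, \ref{thm: one knot}, and~\ref{thm: alt scraps}, each needed for specific knots. Without all of these, your unknown counts would be several thousand larger than \NumSmoothSliceUnknowns\ and \NumTopSliceUnknowns.

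Two smaller points: assuming Theorem~\ref{thm: alt} is circular, since it is a corollary of the same computations; and the paper does not use $d$-invariants of branched covers---the smooth obstructions actually deployed are the $s$-invariants, $\svec^{\Sq^1_o}$, the LEO invariant $\svectil_c$, the $\mathfrak{sl}_3$ $s$-invariants, and the Goeritz bifactorization test for alternating knots (Table~\ref{tab: smooth obs}).
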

Two methods handle all but 0.6\% of the knots in
Theorem~\ref{thm: real main}:
\begin{enumerate}
\item
  \label{item: ribbon}
  All \NumRibbonKnots\ ribbon knots were identified using the process
  detailed in Section~\ref{sec: bands}.  Given a knot $K$, we did a
  combinatorial search for a sequence of band moves that turns $K$
  into the unlink and so specifies a ribbon disk. As
  needed, we looked for such moves in many different diagrams for $K$;
  these diagrams were generated by a more global method than just doing
  random Reidemeister moves.  We also created a table of
  $\approx \num{12100}$ two-component ribbon links which sometimes
  allowed us to certify a knot is ribbon after doing a single band
  move.

\item
  \label{item: HKL}
  The main slicing obstruction used was in the topological
  category, namely the technique of Herald, Kirk, and Livingston
  \cite{HeraldKirkLivingston2010}.  This method uses twisted Alexander
  polynomials to provide a computationally-efficient form of the
  Casson-Gordon invariants \cite{CassonGordonOrsay}. We introduce
  several refinements of this method in Sections~\ref{sec: HKL
    refinement}, \ref{sec: poly is a norm}, \ref{sec: HKL with q=2},
  and \ref{sec: q^e}.  
  Collectively, such 
  \emph{HKL tests} obstruct \NumObsByAnyHKLTest\ knots in \PS\ from
  being topologically slice (Theorem~\ref{thm: HKL obs summary}).
\end{enumerate}
To handle the remaining \num{23994} knots, we used the following methods:
\begin{enumerate}[resume]
\item
  \label{item: smooth}
  We computed many different smooth slice obstructions from Khovanov
  homology, knot Floer homology, and gauge theory; see
  Section~\ref{sec: smooth} for details, but this includes the
  $s$-invariant in Khovanov homology and several of its
  generalizations.  Collectively, these show that
  \NumObsBySmoothInvariants\ knots (8.6\% of \PS ) are not smoothly
  slice (Theorem~\ref{thm: smooth summary}).  However, only
  \NumObsBySmoothInvButNotHKL\ of these knots (0.3\% of \PS) are not
  already covered by the HKL tests in (\ref{item: HKL}).

\item
  \label{item: scraps}
  Some 36 knots need special attention including the Conway knot and
  the 2-1 cable on the figure 8 knot, which are not smoothly slice by
  \cite{Piccirillo2020} and \cite{DaiEtAl2024} respectively.
  Theorem~\ref{thm: slice via friends} shows that 25 knots are not
  smoothly slice using the 0-friends method discussed below in
  Section~\ref{sec: zero intro}. Theorems~\ref{thm: cable 8}
  and~\ref{thm: one knot} each show a single knot is not smoothly
  slice. Finally, Theorem~\ref{thm: alt scraps} shows nine alternating
  knots are not topologically slice.

\item \label{item: top slice only} The additional
  \NumTopSliceNotOrNotKnownSmoothly\ topologically slice knots were
  primarily identified by having $\Delta_K = 1$, see \cite[\S
  11.7]{FreedmanQuinn1990}, or by the method of Section~\ref{sec:
    bands} producing a ribbon concordance to a knot with
  $\Delta_{K} = 1$.  One knot, $18nh_{00000601}$, was shown to be
  topologically slice by the 0-friend method as part of
  Theorem~\ref{thm: mystery 2}.

\item The final $\approx \num{2000}$ knots we dealt with in
  Theorem~\ref{thm: real main} were handled in Section~\ref{sec: ribbon
    graph} by using the ribbon concordances between knots in \PS\
  produced in Section~\ref{sec: bands}.  For example, there are
  \num{1672} other knots in \PS\ that are ribbon concordant to the Conway
  knot; since the Conway knot is not smoothly slice, the same is true
  for those \num{1672} knots.
\end{enumerate}

\begin{remark}
  It is worth noting that the topological HKL tests of (\ref{item:
    HKL}) were about 6.6~times as effective as all the smooth
  obstructions in (\ref{item: smooth}).  Thus, in practice it seems
  that one should start with HKL tests rather the smooth
  invariants even if one only cares about slicing in the smooth
  category.  

  Also, the 10 knots in Theorems~\ref{thm: cable 8} and \ref{thm: alt
    scraps} as part of (\ref{item: scraps}) are the only place where
  we rely directly on the vast literature of slice properties of
  particular knots. In all other cases, our computations are at least
  nominally independent of any prior work.
\end{remark}

\begin{figure}
  \centering
  \begin{tikzpicture}[line cap=round, line join=round,
                      line width=1.01, font=\small]
    % 18nh_00000601
    \begin{scope}[shift={(0, 2.5)}, rotate=-45, scale=0.44]
        \begin{scope}
    \draw (1.78, 7.26) .. controls (2.08, 8.12) and (2.76, 8.84) .. 
          (3.65, 8.86) .. controls (4.30, 8.88) and (4.97, 8.64) .. (4.97, 8.07);
    \draw (4.97, 7.55) .. controls (4.97, 6.69) and (4.97, 5.83) .. (4.97, 4.97);
    \draw (4.97, 4.97) .. controls (4.97, 4.74) and (4.97, 4.50) .. (4.97, 4.26);
    \draw (4.97, 3.73) .. controls (4.97, 3.49) and (4.97, 3.25) .. (4.97, 3.02);
    \draw (4.97, 3.02) .. controls (4.97, 1.98) and (4.26, 1.06) .. (3.28, 1.06);
    \draw (2.75, 1.06) .. controls (2.19, 1.06) and (1.62, 1.06) .. (1.06, 1.06);
    \draw (1.06, 1.06) .. controls (0.08, 1.06) and (0.08, 2.60) .. 
          (0.08, 3.90) .. controls (0.08, 5.28) and (0.28, 6.80) .. (1.52, 7.18);
    \draw (2.03, 7.33) .. controls (2.98, 7.63) and (3.97, 7.79) .. (4.97, 7.81);
    \draw (4.97, 7.81) .. controls (5.62, 7.82) and (6.28, 7.84) .. (6.93, 7.85);
    \draw (6.93, 7.85) .. controls (7.49, 7.86) and (8.06, 7.87) .. (8.62, 7.88);
    \draw (9.15, 7.90) .. controls (9.51, 7.90) and (9.87, 7.74) .. 
          (9.87, 7.42) .. controls (9.87, 7.02) and (9.36, 6.93) .. (8.89, 6.93);
    \draw (8.89, 6.93) .. controls (8.39, 6.93) and (7.91, 6.67) .. (7.91, 6.22);
    \draw (7.91, 5.69) .. controls (7.91, 5.54) and (7.91, 5.39) .. (7.91, 5.24);
    \draw (7.91, 4.71) .. controls (7.91, 4.35) and (7.74, 3.99) .. 
          (7.42, 3.99) .. controls (7.02, 3.99) and (6.93, 4.50) .. (6.93, 4.97);
    \draw (6.93, 4.97) .. controls (6.93, 5.30) and (6.93, 5.63) .. (6.93, 5.95);
    \draw (6.93, 5.95) .. controls (6.93, 6.50) and (6.93, 7.04) .. (6.93, 7.59);
    \draw (6.93, 8.11) .. controls (6.93, 8.58) and (7.40, 8.89) .. 
          (7.91, 8.89) .. controls (8.45, 8.89) and (8.89, 8.44) .. (8.89, 7.89);
    \draw (8.89, 7.89) .. controls (8.89, 7.66) and (8.89, 7.43) .. (8.89, 7.20);
    \draw (8.89, 6.67) .. controls (8.89, 6.21) and (8.41, 5.95) .. (7.91, 5.95);
    \draw (7.91, 5.95) .. controls (7.67, 5.95) and (7.43, 5.95) .. (7.19, 5.95);
    \draw (6.66, 5.95) .. controls (6.27, 5.95) and (5.95, 5.63) .. (5.95, 5.24);
    \draw (5.95, 4.71) .. controls (5.95, 4.26) and (5.47, 3.99) .. (4.97, 3.99);
    \draw (4.97, 3.99) .. controls (4.64, 3.99) and (4.32, 3.99) .. (3.99, 3.99);
    \draw (3.99, 3.99) .. controls (3.05, 3.99) and (2.03, 3.82) .. 
          (2.03, 3.02) .. controls (2.03, 2.52) and (2.30, 2.04) .. (2.75, 2.04);
    \draw (3.28, 2.04) .. controls (3.73, 2.04) and (3.99, 2.52) .. (3.99, 3.02);
    \draw (3.99, 3.02) .. controls (3.99, 3.25) and (3.99, 3.49) .. (3.99, 3.73);
    \draw (3.99, 4.26) .. controls (3.99, 4.65) and (4.31, 4.97) .. (4.71, 4.97);
    \draw (5.24, 4.97) .. controls (5.47, 4.97) and (5.71, 4.97) .. (5.95, 4.97);
    \draw (5.95, 4.97) .. controls (6.19, 4.97) and (6.43, 4.97) .. (6.66, 4.97);
    \draw (7.19, 4.97) .. controls (7.43, 4.97) and (7.67, 4.97) .. (7.91, 4.97);
    \draw (7.91, 4.97) .. controls (8.45, 4.97) and (8.89, 4.53) .. 
          (8.89, 3.99) .. controls (8.89, 3.02) and (6.86, 3.02) .. (5.24, 3.02);
    \draw (4.71, 3.02) .. controls (4.56, 3.02) and (4.41, 3.02) .. (4.26, 3.02);
    \draw (3.73, 3.02) .. controls (3.28, 3.02) and (3.01, 2.53) .. (3.01, 2.04);
    \draw (3.01, 2.04) .. controls (3.01, 1.71) and (3.01, 1.38) .. (3.01, 1.06);
    \draw (3.01, 1.06) .. controls (3.01, 0.52) and (2.57, 0.08) .. 
          (2.03, 0.08) .. controls (1.54, 0.08) and (1.06, 0.34) .. (1.06, 0.79);
    \draw (1.06, 1.32) .. controls (1.07, 3.33) and (1.11, 5.36) .. (1.78, 7.26);
  \end{scope}
    \end{scope}
  
    % A 0-friend
    \begin{scope}[ shift={(7, 0.2)}, rotate=0, scale=0.6]
        \begin{scope}
    \draw (0.74, 6.60) .. controls (0.35, 6.60) and (0.09, 6.97) .. 
          (0.09, 7.38) .. controls (0.08, 7.90) and (0.56, 8.26) .. 
          (1.10, 8.38) .. controls (2.12, 8.60) and (3.29, 8.59) .. (4.40, 8.58);
    \draw (4.93, 8.58) .. controls (6.38, 8.57) and (7.82, 8.57) .. (9.27, 8.56);
    \draw (9.27, 8.56) .. controls (9.87, 8.55) and (9.87, 7.67) .. 
          (9.87, 6.92) .. controls (9.87, 6.21) and (9.87, 5.29) .. (9.53, 5.29);
    \draw (9.06, 5.29) .. controls (8.97, 5.29) and (8.88, 5.29) .. (8.79, 5.29);
    \draw (8.39, 5.29) .. controls (8.30, 5.29) and (8.21, 5.29) .. (8.12, 5.29);
    \draw (7.73, 5.29) .. controls (7.64, 5.29) and (7.56, 5.29) .. (7.47, 5.29);
    \draw (7.01, 5.29) .. controls (6.62, 5.29) and (6.62, 4.57) .. (6.62, 3.98);
    \draw (6.62, 3.98) .. controls (6.62, 3.00) and (6.19, 2.02) .. (5.31, 2.02);
    \draw (5.31, 2.02) .. controls (4.23, 2.01) and (3.14, 2.00) .. (2.05, 1.99);
    \draw (2.05, 1.99) .. controls (1.83, 1.99) and (1.62, 1.99) .. (1.40, 1.99);
    \draw (1.40, 1.99) .. controls (0.97, 1.98) and (0.83, 2.50) .. 
          (0.78, 2.99) .. controls (0.69, 4.00) and (0.71, 5.22) .. (0.73, 6.33);
    \draw (0.74, 6.86) .. controls (0.75, 7.25) and (1.70, 7.25) .. (2.44, 7.25);
    \draw (2.97, 7.25) .. controls (3.45, 7.25) and (3.92, 7.25) .. (4.40, 7.25);
    \draw (4.93, 7.25) .. controls (6.03, 7.25) and (7.27, 7.16) .. (7.27, 6.21);
    \draw (7.27, 5.75) .. controls (7.27, 5.59) and (7.27, 5.44) .. (7.27, 5.29);
    \draw (7.27, 5.29) .. controls (7.27, 4.70) and (7.27, 3.98) .. (6.88, 3.98);
    \draw (6.42, 3.98) .. controls (6.27, 3.98) and (6.12, 3.98) .. (5.97, 3.98);
    \draw (5.97, 3.98) .. controls (5.75, 3.98) and (5.53, 3.98) .. (5.31, 3.98);
    \draw (5.31, 3.98) .. controls (4.99, 3.98) and (4.66, 4.09) .. (4.66, 4.37);
    \draw (4.66, 4.83) .. controls (4.66, 4.92) and (4.66, 5.01) .. (4.66, 5.09);
    \draw (4.66, 5.49) .. controls (4.66, 5.64) and (4.66, 5.79) .. (4.66, 5.94);
    \draw (4.66, 5.94) .. controls (4.66, 6.38) and (4.66, 6.81) .. (4.66, 7.25);
    \draw (4.66, 7.25) .. controls (4.66, 7.47) and (4.66, 7.69) .. (4.66, 7.90);
    \draw (4.66, 7.90) .. controls (4.66, 8.13) and (4.66, 8.36) .. (4.66, 8.58);
    \draw (4.66, 8.58) .. controls (4.66, 9.21) and (5.94, 9.21) .. 
          (6.97, 9.21) .. controls (7.95, 9.21) and (9.27, 9.21) .. (9.27, 8.82);
    \draw (9.27, 8.29) .. controls (9.27, 7.29) and (9.27, 6.29) .. (9.27, 5.29);
    \draw (9.27, 5.29) .. controls (9.26, 4.48) and (9.26, 3.68) .. 
          (9.26, 2.87) .. controls (9.26, 2.21) and (9.25, 1.39) .. (8.84, 1.39);
    \draw (8.38, 1.39) .. controls (8.30, 1.39) and (8.21, 1.39) .. (8.12, 1.39);
    \draw (7.66, 1.39) .. controls (6.66, 1.38) and (5.31, 1.37) .. (5.31, 1.75);
    \draw (5.31, 2.28) .. controls (5.31, 2.63) and (5.31, 2.98) .. (5.31, 3.33);
    \draw (5.31, 3.33) .. controls (5.31, 3.48) and (5.31, 3.64) .. (5.31, 3.79);
    \draw (5.31, 4.25) .. controls (5.31, 4.53) and (4.98, 4.64) .. (4.66, 4.64);
    \draw (4.66, 4.64) .. controls (4.44, 4.64) and (4.23, 4.64) .. (4.01, 4.64);
    \draw (4.01, 4.64) .. controls (3.66, 4.64) and (3.32, 4.64) .. (2.97, 4.64);
    \draw (2.44, 4.64) .. controls (2.05, 4.64) and (2.05, 3.92) .. (2.05, 3.33);
    \draw (2.05, 3.33) .. controls (2.05, 2.97) and (2.05, 2.61) .. (2.05, 2.26);
    \draw (2.05, 1.73) .. controls (2.05, 0.72) and (3.65, 0.72) .. 
          (4.99, 0.72) .. controls (6.28, 0.72) and (7.93, 0.72) .. (7.93, 1.39);
    \draw (7.93, 1.39) .. controls (7.93, 2.69) and (7.93, 3.99) .. (7.93, 5.29);
    \draw (7.93, 5.29) .. controls (7.93, 5.65) and (7.63, 5.94) .. (7.27, 5.94);
    \draw (7.27, 5.94) .. controls (6.49, 5.94) and (5.71, 5.94) .. (4.93, 5.94);
    \draw (4.47, 5.94) .. controls (4.38, 5.94) and (4.29, 5.94) .. (4.21, 5.94);
    \draw (3.74, 5.94) .. controls (3.54, 5.94) and (3.36, 5.81) .. 
          (3.36, 5.62) .. controls (3.36, 5.35) and (3.70, 5.29) .. (4.01, 5.29);
    \draw (4.01, 5.29) .. controls (4.23, 5.29) and (4.44, 5.29) .. (4.66, 5.29);
    \draw (4.66, 5.29) .. controls (5.34, 5.29) and (5.97, 4.88) .. (5.97, 4.25);
    \draw (5.97, 3.72) .. controls (5.97, 3.51) and (5.79, 3.33) .. (5.58, 3.33);
    \draw (5.05, 3.33) .. controls (4.79, 3.33) and (4.53, 3.33) .. (4.27, 3.33);
    \draw (3.74, 3.33) .. controls (3.40, 3.33) and (3.05, 3.33) .. (2.70, 3.33);
    \draw (2.70, 3.33) .. controls (2.55, 3.33) and (2.40, 3.33) .. (2.25, 3.33);
    \draw (1.79, 3.33) .. controls (1.40, 3.33) and (1.40, 2.75) .. (1.40, 2.25);
    \draw (1.40, 1.72) .. controls (1.40, 0.26) and (3.29, 0.08) .. 
          (4.99, 0.08) .. controls (6.64, 0.08) and (8.58, 0.08) .. (8.58, 1.39);
    \draw (8.58, 1.39) .. controls (8.58, 2.69) and (8.58, 3.99) .. (8.59, 5.29);
    \draw (8.59, 5.29) .. controls (8.59, 5.93) and (8.55, 6.58) .. 
          (8.37, 7.19) .. controls (8.14, 7.91) and (6.30, 7.91) .. (4.93, 7.91);
    \draw (4.40, 7.90) .. controls (3.61, 7.90) and (2.70, 7.90) .. (2.70, 7.25);
    \draw (2.70, 7.25) .. controls (2.70, 7.10) and (2.70, 6.94) .. (2.70, 6.79);
    \draw (2.70, 6.33) .. controls (2.70, 5.77) and (2.70, 5.20) .. (2.70, 4.64);
    \draw (2.70, 4.64) .. controls (2.70, 4.29) and (2.70, 3.94) .. (2.70, 3.60);
    \draw (2.70, 3.07) .. controls (2.70, 2.79) and (3.04, 2.68) .. 
          (3.36, 2.68) .. controls (3.72, 2.68) and (4.01, 2.97) .. (4.01, 3.33);
    \draw (4.01, 3.33) .. controls (4.01, 3.68) and (4.01, 4.03) .. (4.01, 4.37);
    \draw (4.01, 4.83) .. controls (4.01, 4.92) and (4.01, 5.01) .. (4.01, 5.09);
    \draw (4.01, 5.49) .. controls (4.01, 5.64) and (4.01, 5.79) .. (4.01, 5.94);
    \draw (4.01, 5.94) .. controls (4.01, 6.48) and (3.33, 6.60) .. (2.70, 6.60);
    \draw (2.70, 6.60) .. controls (2.05, 6.60) and (1.39, 6.60) .. (0.74, 6.60);
  \end{scope}
    \end{scope}
    
    \node[below] at (3, 0) {$K = 18nh_{00000601}$ in \PS};
    \node[below] at (10.5, 0) {A 0-friend $K'$ of $K$}; 
  \end{tikzpicture}
  \caption{A pair of 0-friends. The knot $K$ has 18
    crossings and the simplest diagram for $K'$ known has 31 crossings.
    This pair will appear again in Section~\ref{sec: mystery}.}
  \label{fig: besties}
\end{figure}

\subsection{0-friends}
\label{sec: zero intro}

The 0-surgery on a knot $K$ in $S^3$ is the unique Dehn surgery where
$b_1 > 0$.  A pair of knots in $S^3$ are \emph{0-friends} when their
0-surgeries are homeomorphic.  A secondary focus of this paper,
contained in Section~\ref{sec: friends}, is to study 0-friends of 
knots in \PS. The motivation for this is twofold.
First, and most tantalisingly, if $K$ and $K'$ are 0-friends where $K$
is smoothly slice and $K'$ is not, then there is an exotic smooth structure on
$S^4$, disproving the smooth 4-dimensional Poincar\'e Conjecture; see
\cite[\S 1]{ManolescuPiccirillo2023}.  Second, even with that
conjecture unresolved, in favorable circumstances one can show that
certain 0-friends must have the same smooth slice status; for example, this
was used to show that the Conway knot is not smoothly slice
\cite{Piccirillo2020}.

There are only 101 pairs of knots in \PS\ that are 0-friends (see
Section~\ref{sec: friend finder}), so we used the method of
\cite[\S9.3]{DunfieldObeidinRudd2024} to generate more than
\num{500000} 0-friends of the knots in \PS.  Very roughly, starting
with a knot $K$ whose 0-surgery $Z_K = S^3_0(K)$ is hyperbolic, one
drills out a closed geodesic in $Z_K$ and tests whether the resulting
manifold is the exterior of a knot in $S^3$.  When
it is, one can recover a diagram for the 0-friend using
\cite{DunfieldObeidinRudd2024}.  We outline in Section~\ref{sec:
  friends} why hyperbolic geometry suggests this method produces most
small-to-medium 0-friends of knots in \PS; a sample pair of 0-friends found
is shown in Figure~\ref{fig: besties}.

Using this plethora of 0-friends and the methods of
\cite{Piccirillo2020} and \cite{ManolescuPiccirillo2023}, we show 25
knots in \PS\ are not smoothly slice in Theorem~\ref{thm: slice via
  friends}.  Following \cite{ManolescuPiccirillo2023}, this involves
producing an RBG link (see Definition~\ref{def: RBG link} below) for
each pair of 0-friends; we introduce a new computational method to
search for such links in Section~\ref{sec: drill RBG}, again using
\cite{DunfieldObeidinRudd2024}.

We also found four pairs where we could determine the smooth sliceness
of the 0-friend but not of the original knot in \PS, leading to:
\begin{theorem}
  \label{thm: mystery}
  If $18nh_{00000601}$ is not smoothly slice, or if any of
  $\{16n68278$, $17nh_{0010647}$, $18nh_{00098198}\}$ are smoothly
  slice, then there is an exotic smooth 4-sphere.
\end{theorem}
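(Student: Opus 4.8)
The plan is to run each of the four pairs of 0-friends named in the statement through the Manolescu--Piccirillo mechanism of \cite{ManolescuPiccirillo2023}, as set up in Section~\ref{sec: friends}. The engine is the following principle: if a pair of 0-friends $(K,K')$ is realized by an RBG link $L$ (Definition~\ref{def: RBG link}), then $L$ determines a homotopy 4-sphere $X_L$ such that if one of $K, K'$ is smoothly slice in the standard $S^4$, then the other is smoothly slice in $X_L$. By Freedman \cite{FreedmanQuinn1990}, $X_L$ is homeomorphic to $S^4$. Hence if, in a pair realized by an RBG link, one knot is smoothly slice in the standard $S^4$ and the other is \emph{not}, then $X_L$ is not diffeomorphic to $S^4$ and so is an exotic 4-sphere --- which is exactly what we want to produce.

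First I would assemble the input data for each of $18nh_{00000601}$ and the three knots $16n68278$, $17nh_{0010647}$, $18nh_{00098198}$. Section~\ref{sec: friends} produces, for each, a specific 0-friend $K'$ (by drilling a closed geodesic out of the hyperbolic 0-surgery and recognizing the result as a knot exterior via \cite{DunfieldObeidinRudd2024}), and the method of Section~\ref{sec: drill RBG} produces an RBG link realizing the 0-friendship. For $18nh_{00000601}$ the 0-friend $K'$ is the 31-crossing knot of Figure~\ref{fig: besties}, and I would certify that $K'$ is smoothly slice by exhibiting a ribbon disk for it via a sequence of band moves as in Section~\ref{sec: bands}. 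For each of $16n68278$, $17nh_{0010647}$, $18nh_{00098198}$ I would instead certify that the 0-friend $K'$ is \emph{not} smoothly slice, using the smooth obstructions of Section~\ref{sec: smooth} (or, when available, a ribbon concordance to a knot already known to be non-slice).

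Granting this data, the theorem follows at once. Suppose $18nh_{00000601}$ is not smoothly slice. Its 0-friend $K'$ is smoothly slice and the pair is realized by an RBG link, so $18nh_{00000601}$ is smoothly slice in the corresponding $X_L$; since it is not smoothly slice in the standard $S^4$, $X_L$ is an exotic 4-sphere. Suppose instead that one of $16n68278$, $17nh_{0010647}$, $18nh_{00098198}$ is smoothly slice. Then that knot together with its 0-friend $K'$ --- which is not smoothly slice --- forms a slice/non-slice pair realized by an RBG link, so again the corresponding $X_L$ is an exotic 4-sphere. In either case an exotic smooth 4-sphere exists.

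The logical skeleton above is short; the real content --- and the main obstacle --- is in the two computational inputs. For the RBG links one must carry out the geodesic-drilling and knot-recognition of \cite{DunfieldObeidinRudd2024}, then search for an RBG link for each pair and verify it: confirm the unknottedness and color conditions of Definition~\ref{def: RBG link} and check that its two surgery descriptions genuinely reproduce $K$ and $K'$; this is the new method of Section~\ref{sec: drill RBG}. For the slice status of the 0-friends one must find the band-move ribbon certificate for the friend of $18nh_{00000601}$ and compute obstructions strong enough to defeat the other three friends, all of which have considerably larger crossing number than the original knots. These are precisely the tasks addressed in Sections~\ref{sec: bands}, \ref{sec: drill RBG}, and~\ref{sec: smooth}, so the main obstacle to Theorem~\ref{thm: mystery} is simply that all of these computations must go through for each of the four pairs.
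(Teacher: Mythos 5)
Your proposal is essentially the paper's argument, and the application to the four named knots is organized exactly as the paper does it: for $18nh_{00000601}$ the 0-friend is certified ribbon, while for the other three the 0-friends are shown not smoothly slice, and in each case a slice/non-slice pair of 0-friends yields an exotic $S^4$.

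One point deserves a correction of emphasis. You route the construction of the homotopy 4-sphere through the RBG link $L$, writing that ``$L$ determines a homotopy 4-sphere $X_L$.'' That is not quite how the mechanism works, and the paper's proof of Theorem~\ref{thm: mystery 2} is more direct: given 0-friends $K$ and $K'$ with $K$ bounding a smooth disk $D \subset D^4$, one takes $A = D^4 \setminus \nu(D)$ and glues it to $B = (S^3 \times I) \cup (\text{0-framed 2-handle on } K' \times \{1\})$ along the common boundary $Z_K \cong Z_{K'}$, producing a homotopy 4-ball $W$ in which $K'$ bounds a smooth disk. The ingredients are the 0-friendship and a choice of slice disk; no RBG link is used or needed in this step. (The RBG link $L$ by itself does not determine a homotopy 4-sphere --- without a slice disk you just get some closed 4-manifold, not a priori a homotopy sphere.) The super-special RBG links of Section~\ref{sec: drill RBG} and Table~\ref{tab: friends3} serve a different role: they feed Theorems~\ref{thm: super-special} and~\ref{thm: naka ren}, which are what let the paper \emph{transfer} smooth slice obstructions between 0-friends in Theorem~\ref{thm: slice via friends}, and they record for the reader which pairs have diffeomorphic or non-homeomorphic traces. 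They are not what produces the exotic $S^4$ once the hypothesis of Theorem~\ref{thm: mystery} is granted. This is a harmless misattribution rather than a gap --- since every pair of 0-friends admits an RBG link, your chain of implications is still valid --- but separating the two roles would make the argument cleaner and closer to what \cite{ManolescuPiccirillo2023} actually does.
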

All four knots in Theorem~\ref{thm: mystery} are topologically slice,
and $18nh_{00000601}$ is even smoothly slice in a homotopy 4-ball; see
Theorem~\ref{thm: mystery 2}.

\begin{figure}
  \centering
  \begin{tikzpicture}[font=\footnotesize]
    %Set \graphicspath{{plots/images/}} to include the image files
\begin{tikzoverlay*}[width=0.95\textwidth]{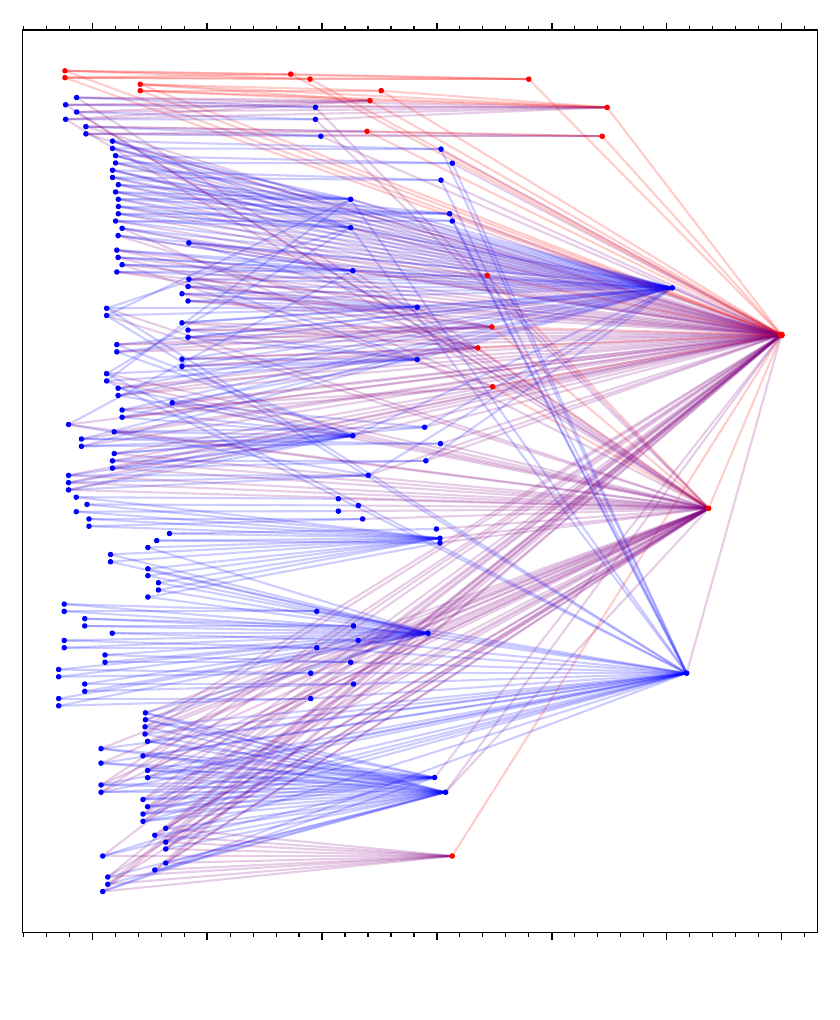}
  \draw (50.000000, 5.0) node[below] {\small Hyperbolic volume
    of $S^3 \setminus K$};
  \draw (93.019481, 8.670635) node[below] {$0$};

  \draw (79.349955, 8.670635) node[below] {$5$};

  \draw (65.680429, 8.670635) node[below] {$10$};

  \draw (52.010903, 8.670635) node[below] {$15$};

  \draw (38.341377, 8.670635) node[below] {$20$};

  \draw (24.671851, 8.670635) node[below] {$25$};

  \draw (11.002325, 8.670635) node[below] {$30$};

  % Internal axis coordinate system
  \begin{scope}[shift={(93.01948052, 14.90599745)},
                xscale=-2.73390519, yscale=0.00511876]
      %\draw[red] (0.000000, 75.406000) rectangle (31.471084, 19163.000000);
  \end{scope}
  \node[fill=white] at (97.8, 81.9) {unknot};
              
\end{tikzoverlay*}
  \end{tikzpicture}

  \caption{A visualization of ribbon concordances between 168 smoothly
    slice knots.
    Each knot is indicated by a dot, where alternating knots are red
    and other knots are blue.  Each edge represents a ribbon
    concordance $K_1 \geq K_0$ where $K_1$ always corresponds to the
    dot farther to the left; that is, the hyperbolic volume of
    $S^3 \setminus K$ respects the ribbon partial order.  The color of
    each edge is the average of its vertices. Note there are no
    $K_1 \geq K_0$ where $K_1$ is alternating and $K_0$ is not. See
    Section~\ref{sec: ribbon graph} and Figure~\ref{fig: piece of U}
    for more details.}
  \label{fig: piece of U intro}
\end{figure}

\subsection{Ribbon concordances}
\label{sec: concord intro}

Suppose $K_0$ and $K_1$ are knots in
$S^3$.  Recall from \cite{Gordon1981} that a \emph{ribbon concordance
  from $K_1$ to $K_0$} is a smooth annulus $A$ embedded in
$S^3 \times I$ with boundary components $K_0 \times \{0\}$ and
$K_1 \times \{1\}$ so that the $I$-coordinate is a Morse function on
$A$ with no local maxima.  In this case, we write $K_1 \geq K_0$.  In
particular, $K$ is ribbon if and only if $K \geq \mathrm{unknot}$.
Agol recently proved that $\geq$ gives a partial order on isotopy
classes of knots \cite{Agol2022}. Our search for ribbon disks produced
\NumEdgesInConcordGraph\ ribbon concordances $K_1 \geq K_0$; in
\NumRibbonConcordToUnknot\ cases, $K_0$ was the unknot.  In
Section~\ref{sec: ribbon graph}, we describe how these give evidence
for conjectures that some invariants, such as the hyperbolic volume,
are decreasing with respect to this partial order.  A sampling of the
ribbon concordances we found is shown in Figure~\ref{fig: piece of U
  intro}.

\subsection{Category alignment}

We now turn to two situations where there seems to be no difference
between topological and smooth sliceness.  First, recall Lisca showed
that a 2-bridge knot is ribbon if and only if it is smoothly slice
\cite{Lisca2007}.  Moreover, it is posited that a 2-bridge knot $K$ is
smoothly slice if and only if it is topologically slice
\cite[Conjecture~1]{FellerMcCoy2016}.  While there are 2-bridge knots
whose topological and smooth slice genera differ
\cite{FellerMcCoy2016}, our Theorem~\ref{thm: alt} above
still compels us to propose:

\begin{conjecture}
  \label{conj: alt}
  A prime alternating knot is smoothly slice if and only if it is
  topologically slice.
\end{conjecture}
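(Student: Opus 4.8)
The plan is to establish the only nontrivial direction: that a prime alternating knot $K$ which is topologically slice is in fact smoothly slice, and indeed ribbon, so that Theorem~\ref{thm: alt} should hold with no bound on the crossing number. (The reverse implication needs nothing, since smoothly slice always implies topologically slice.) Note this would simultaneously settle the slice--ribbon conjecture for alternating knots, which is itself known only for restricted families.

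First I would fix a reduced alternating diagram $D$ for $K$ --- canonical up to flypes by the Tait conjectures --- and record what topological sliceness forces. The Fox--Milnor condition holds for $\Delta_K$, the Murasugi (indeed Levine--Tristram) signature vanishes, and the Casson--Gordon invariants vanish for every metabolizing character; the last are computable through twisted Alexander polynomials, exactly as in the HKL tests of Section~\ref{sec: HKL refinement}. For an alternating $K$ these conditions are unusually rigid: the Goeritz form of $D$ is definite and is the lattice form of the Tait graph $G$, while the Seifert genus, determinant, and signature of $K$ are all read off from $G$ and its planar dual combinatorially (Crowell--Murasugi, Gordon--Litherland). The aim of this step is to repackage ``$K$ is topologically slice'' as a purely lattice-theoretic condition on the pair $(G, \text{Goeritz form})$ --- in particular one wants the Goeritz lattice to embed in the diagonal lattice $\langle -1 \rangle^{\oplus n}$, which is automatic from Donaldson's theorem applied to $\Sigma_2(K)$ when $K$ is \emph{smoothly} slice, but for a merely topological slice disk must instead be deduced from the vanishing Casson--Gordon data.

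The core of the argument is then a lattice-embedding classification generalizing Lisca's analysis of $2$-bridge knots~\cite{Lisca2007}. For a $2$-bridge knot the Tait graph is a path and the Goeritz lattice is linear; Lisca classified exactly which linear lattices embed in $\langle -1 \rangle^{\oplus n}$, and then matched each admissible embedding to an explicit sequence of band moves producing a ribbon disk. The program here is to do the same for the graph lattices of arbitrary planar Tait graphs: (i) show topological sliceness forces such an embedding, via the Casson--Gordon machinery in the spirit of~\cite{CassonGordonOrsay}; (ii) classify the planar graph lattices admitting it; and (iii) realize each such lattice by a sequence of band moves turning $K$ into an unlink, i.e.\ $K \ge \mathrm{unknot}$ in the ribbon order of Section~\ref{sec: concord intro}.

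The main obstacle is step (iii), with step (ii) close behind: even the linear case already demanded an intricate inductive description of the admissible embeddings together with a nontrivial dictionary between embeddings and band moves, and nothing comparable is known for the lattices arising from general planar Tait graphs. A realistic intermediate target is to prove the conjecture for structured families whose double branched covers are Seifert fibered --- Montesinos alternating knots, whose Tait graphs are star-shaped --- extending the known pretzel and Montesinos cases, and, in parallel, to push the computer verification of Theorem~\ref{thm: alt} to larger crossing numbers to test whether the lattice patterns that emerge really are the ribbon ones.
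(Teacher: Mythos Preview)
The statement you are attempting to prove is labeled \emph{Conjecture} in the paper, not Theorem: the paper offers no proof and does not claim one. It is posed as an open problem, motivated by the computational evidence of Theorem~\ref{thm: alt} (which verifies the statement only for prime alternating knots with at most 19 crossings). So there is no ``paper's own proof'' to compare your proposal against.

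Your write-up is candid about this: it is a research program, not a proof, and you correctly flag steps (ii) and (iii) as the genuine obstacles. But there is a specific gap worth naming in step (i). You want to deduce from \emph{topological} sliceness that the Goeritz lattice embeds in the standard diagonal lattice $\langle -1\rangle^{\oplus n}$. For \emph{smooth} sliceness this follows from Donaldson's theorem applied to the definite 4-manifold bounded by $\Sigma_2(K)$, but Donaldson's theorem is a smooth result and there is no topological analogue: the existence of exotic definite 4-manifolds (e.g.\ the $E_8$ manifold) shows that topological sliceness alone cannot force such a lattice embedding by the same mechanism. You suggest recovering the embedding from vanishing Casson--Gordon data instead, but the Casson--Gordon invariants see only the linking form on $H_1(\Sigma_2(K))$ and characters on it, not the full intersection lattice of a bounding 4-manifold; there is no known route from that data to a diagonal lattice embedding. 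Without step (i), the Lisca-style program cannot get started, and bridging this gap is essentially the content of the conjecture itself.
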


Second, recall that a knot $K$ is \emph{fibered} when the exterior
$E_K = S^3 \setminus \nu(K)$ fibers over the circle; here, the fiber
surface is necessarily the unique minimum genus Seifert surface.
Using knot Floer homology computed via \cite{HFKCalculator}, we found
that 29.9\% of the knots in \PS\ are fibered.  Per Table~\ref{tab:
  fibered unknown} below, we resolved both topological and smooth
sliceness for all but 25 of the fibered knots in \PS.  Thus, we were
vastly more successful with fibered knots than nonfibered ones
(factors of $\approx 200$ and $\approx 25$ in the smooth and
topological settings respectively). This is despite not employing any
tests specific to the fibered case, though some exist at least in
theory \cite{CassonGordon1983, CassonLong1985}. Regardless, in all but
the 25 unknown cases, the topological and smooth sliceness matched for
fibered knots, suggesting:
\begin{conjecture}
  A fibered hyperbolic knot is smoothly slice if and only if it is
  topologically slice.
\end{conjecture}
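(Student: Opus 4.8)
\medskip
\noindent\textbf{Toward a proof.}
One implication is immediate, since a smooth slice disk is in particular locally flat. For the converse, the plan is to exploit the open book on $S^3$ with page the fiber surface $F$ and monodromy $\varphi\colon F\to F$ to rigidify a topological slice disk.

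A first remark narrows the target considerably: a fibered knot has $\deg\Delta_K=2g(F)$, so $\Delta_K=1$ forces $F$ to be a disk and $K$ to be the unknot. Hence for a nontrivial fibered knot the Freedman route to topological sliceness via trivial Alexander polynomial \cite[\S 11.7]{FreedmanQuinn1990} is vacuous, and any topological slice disk $D$ must come from genuinely four-dimensional surgery. Write $W=D^4\setminus\nu(D)$, so $H_\ast(W)\cong H_\ast(S^1)$ and $\pi_1(W)$ is normally generated by a meridian. The heart of the plan is a locally flat analogue of the Casson--Gordon loop theorem \cite{CassonGordon1983}: (i) control $\pi_1(W)$ and use the surgery machinery of \cite{FreedmanQuinn1990} to extend the fibration of $E_K\subset\partial W$ to an open-book-type decomposition of $W$ with compact topological page $V$ and $\partial V=F$; (ii) recognize $V$ as a handlebody, using that it is a homology handlebody whose homotopy type the Casson--Gordon compression argument should pin down; (iii) conclude that $\varphi$ extends over the handlebody $V$, so $K$ is homotopy ribbon by the standard converse construction, hence smoothly slice. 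Casson and Long's algorithm \cite{CassonLong1985} for deciding when a surface automorphism extends over a handlebody would then make the resulting criterion effective on the fibered knots in \PS.

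Hyperbolicity enters to eliminate the classical counterexamples. The standard topologically-but-not-smoothly slice knots are untwisted Whitehead doubles and kindred satellites, which by Thurston's trichotomy are never hyperbolic; more generally, forbidding an essential torus in $E_K$ makes $\varphi$ pseudo-Anosov, and one expects to invoke geometric rigidity when identifying $V$.

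The main obstacle is precisely steps (i)--(ii). There is no general mechanism for promoting topological sliceness to smooth sliceness, and the Casson--Gordon loop theorem is a smooth/PL result; its locally flat version --- extending the page over the complement of a merely locally flat disk and recognizing it as a handlebody --- is not known, and is exactly the gap. Absent that input, the realistic approach remains the one of this paper: verify the conjecture on ever-larger tables of knots, and stress-test it by running Casson--Long-style monodromy computations against the obstructions developed above.
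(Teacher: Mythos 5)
The statement you are addressing is a \emph{conjecture}, and the paper offers no proof of it; it is supported there only by the experimental evidence in Table~\ref{tab: fibered unknown} (all but 25 fibered knots in $\PSM$ had matching smooth and topological slice status) and by the remark, citing \cite{Ruberman2015}, that a fibered knot that is topologically but not smoothly slice would contradict either the smooth 4-dimensional Poincar\'e conjecture or the topological slice-ribbon conjecture. Your sketch is therefore not being measured against a proof but against that remark, and the comparison is instructive.

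Your observation that $\Delta_K=1$ forces a fibered $K$ to be the unknot is correct and genuinely clarifying: it shows that the usual Freedman mechanism for producing topologically slice, non-smoothly-slice knots is unavailable in the fibered world, which is part of why the conjecture is plausible. Your three-step plan is essentially the Casson--Gordon picture for fibered ribbon knots \cite{CassonGordon1983, CassonLong1985}, and you honestly flag that steps (i)--(ii) are open: producing a topological open-book-with-handlebody-page decomposition of the exterior of a locally flat disk is exactly the fibered case of the topological slice-ribbon conjecture, not something the surgery machinery of \cite{FreedmanQuinn1990} delivers for free. However, there is a second gap you do not flag, in step (iii). The Casson--Gordon theorem produces a smooth slice disk in a \emph{homotopy} 4-ball when the monodromy extends over a handlebody; concluding that $K$ is smoothly slice in the genuine $D^4$ requires the smooth 4-dimensional Poincar\'e conjecture. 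So even if (i)--(ii) succeeded, your argument would establish the conjecture only modulo smooth 4D Poincar\'e --- which is precisely what \cite{Ruberman2015} says, so your sketch recovers the paper's logic but presents (iii) as if it closed.

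One further small discrepancy: your explanation of the hyperbolicity hypothesis (ruling out Whitehead doubles and satellites on general grounds) is heuristically reasonable, but the paper's actual reason is narrower and sharper. There is exactly one non-hyperbolic fibered knot in $\PSM$, namely $17ns_{29}$, the $(2,1)$-cable of the figure-eight knot; it is known \emph{not} to be smoothly slice (Theorem~\ref{thm: cable 8}), while its topological slice status is open (Table~\ref{tab: non hyp unknown}). Hyperbolicity is imposed precisely to avoid staking the conjecture on the topological slice status of that one satellite knot.
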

Here, we restrict to hyperbolic knots because there is only a single
non-hyperbolic fibered knot in \PS, namely $17ns_{29}$. Note that a
fibered knot that is topologically but not smoothly slice would give
rise to a counterexample either to the smooth 4D Poincar\'e conjecture
or to the topological slice-ribbon conjecture discussed in the next
subsection \cite{Ruberman2015}.

\subsection{The slice-ribbon conjecture} To search for counterexamples to
the slice-ribbon conjecture, one needs methods for showing that a knot
$K$ is smoothly slice without exhibiting a ribbon disk.  Recall from
e.g.~\cite[Lemma 2.5]{Teichner2011} that a knot $K$ is smoothly slice
if and only if there is a ribbon knot $J$ so that $K \# J$ is
ribbon. As recounted in Section~\ref{sec: slice only}, we tried this
strategy for several small $J$.  Taking a hint from how
\cite{HeraldKirkLivingston2010} found a slice disk for $K12a990$, we
can sometimes choose $J$ so that we know in advance that $K \# J$ is
ribbon concordant to a knot that is simpler than $K$.  Using this
method, we discovered more than 500 smoothly slice knots where
repeated passes of our ribbon disk search had come up empty.  However,
we eventually found ribbon disks for all of them; see
Section~\ref{sec: slice only} for details.  Thus we have identified no
smoothly slice knots that are not known to be ribbon.

However, the knot $K = 18nh_{00000601}$ from Theorem~\ref{thm:
  mystery} is intriguing in the context of the slice-ribbon
conjecture.  Unless it and its 0-friend give a counterexample to the
smooth 4D Poincar\'e conjecture, the knot $K$ must be smoothly slice.
However, we tried very hard to find a ribbon disk for $K$ to no
avail.  This makes $K$ a plausible candidate for a knot that is
smoothly slice but not ribbon.

A version of the slice-ribbon conjecture in the topological category
is as follows.  Recall that a knot $K$ is \emph{topologically homotopy
  ribbon} when it bounds a locally flat disk $F \subset D^4$ so that
the inclusion $S^3 \setminus K \hookrightarrow D^4 \setminus F$ is
surjective on fundamental groups.  (Any ribbon knot is topologically
homotopy ribbon \cite[Lemma 3.1]{Gordon1981}.)  The topological
slice-ribbon conjecture is that every topologically slice knot is
topologically homotopy ribbon.  Of the topologically slice knots in
Theorem~\ref{thm: real main}, all but possibly $18nh_{00000601}$ are
in fact topologically homotopy ribbon.  This is because knots with
$\Delta_K = 1$ are topologically homotopy ribbon by \cite[Theorem
11.7B]{FreedmanQuinn1990} and this is the basis for our primary method
(\ref{item: top slice only}) above for showing that a knot is
topologically slice.

\subsection{Inscrutable knots}

We next highlight some of the simpler knots not covered by
Theorem~\ref{thm: main} so that they can serve as challenges and
benchmarks for future work in this area.  Table \ref{tab: smooth
  unknown} lists the knots of small crossing number whose smooth
sliceness remains unknown; Table~\ref{tab: top unknown} is the
topological analogue.  Table~\ref{tab: low vol unknown} focuses on
unknown knots with small hyperbolic volume and Table~\ref{tab: non hyp
  unknown} lists the non-hyperbolic knots; together, these two tables
includes all unknown knots in \PS\ with Seifert genus~1.  Finally,
Table~\ref{tab: fibered unknown} lists all unknown knots in \PS\ that
are fibered.

\begin{table}[b]
  \centering
  {\small
    \begin{tabular}{lllll}
      \toprule
      $K13n65$ & $K14n10011$ & $K15n17501$ & $K15n110439$ & $K15n132965$ \\
      $K13n3871$ & $K14n11063$ & $K15n21905$ & $K15n117232$ & $K15n137921^*$ \\
      $K13n3872$ & $K14n18909$ & $K15n27582$ & $K15n120055$ & $K15n138033$ \\
      $K13n3897$ & $K14n18911$ & $K15n33471^*$ & $K15n121598$ & $K15n138051$ \\
      $K13n3936$ & $K14n21673$ & $K15n40402$ & $K15n121834$ & $K15n140327$ \\
      $K13n4582$ & $K15n2810$ & $K15n49735$ & $K15n121916$ & $K15n140449$ \\
      $K14n3713^*$ & $K15n4646$ & $K15n53931^*$ & $K15n122603$ & $K15n144034$ \\
      $K14n4425$ & $K15n11287$ & $K15n94339^*$ & $K15n123414$ & $K15n146982$ \\
      $K14n4621^*$ & $K15n11568$ & $K15n95989$ & $K15n124496$ & $K15n154389$ \\
      $K14n5486$ & $K15n11570$ & $K15n95995$ & $K15n124640$ & $K15n155464$ \\
      $K14n9023$ & $K15n16056$ & $K15n103703$ & $K15n130504$ & $K15n157903^*$ \\
      \bottomrule
    \end{tabular}
  }
  
  \caption{Above are the 55 knots with at most 15 crossings where we
    could not determine whether they are smoothly slice. With the
    exception of the seven marked with a ``$*$'', these knots have
    $\Delta_K = 1$ and hence are topologically slice \cite[\S
    11.7]{FreedmanQuinn1990}.  We do not know whether any of the ones
    marked with ``$*$'' are topologically slice; they all have
    $\Delta_K = (t^2 - t + 1)^2$.}
  \label{tab: smooth unknown}
\end{table}

\begin{table}[b]
  \centering
  {\small
    \begin{tabular}{lllll}
      \toprule
      $K14n3713$ & $K15n115646^*$ & $16n254312$ & $16n658612^*$ & $16n889116$ \\
      $K14n4621$ & $K15n137921$ & $16n288832$ & $16n679074$ & $16n898244$ \\
      $K15n33471$ & $K15n157903$ & $16n312764^*$ & $16n716954$ & $16n911851$ \\
      $K15n53931$ & $16n86850$ & $16n357101$ & $16n740455$ & $16n914151$ \\
      $K15n73236^*$ & $16n129260$ & $16n374204$ & $16n800378^*$ & $16n977108$ \\
      $K15n77799^*$ & $16n178564$ & $16n429842$ & $16n829217$ & $16n982575$ \\
      $K15n94339$ & $16n180683$ & $16n478839^*$ & $16n843878$ &  \\
      $K15n99019^*$ & $16n218932^*$ & $16n481747$ & $16n889114$ &  \\
      \bottomrule
    \end{tabular}
  }
  \caption{Above are the 38 knots with at most 16 crossings where we could
    not determine whether they are topologically slice. The 9 known not to
    be smoothly slice are marked with a ``$*$''.  Some 30 of these
    knots have $\Delta_K = (t^2 - t + 1)^2$.}
  \label{tab: top unknown}
\end{table}

\begin{table}
  \centering
  {\small
    \begin{tabular}{lrc@{\hskip 0.4em}c@{\hskip 0.4em}clrc@{\hskip 0.4em}c@{\hskip 0.4em}c}
      \toprule
      knot & vol & $g_3(K)$ & sm & top &  knot & vol & $g_3(K)$ & sm & top  \\
      \midrule
      $19nh_{000000055}$ & 8.66 & 2 &  &  & $19nh_{000003713}$ & 14.04 & 6 &  & y \\
      $19nh_{000000156}$ & 9.96 & 1 &  &  & $19nh_{000003849}$ & 14.09 & 1 &  & y \\
      $17nh_{0000497}$ & 11.85 & 2 &  &  & $19nh_{000004248}$ & 14.22 & 2 &  &  \\
      $18nh_{00000597}$ & 11.92 & 3 &  & y & $19nh_{000004269}$ & 14.23 & 2 &  & y \\
      $18nh_{00000601}$ & 11.93 & 5 &  & n & $18nh_{00003640}$ & 14.36 & 3 &  & y \\
      $18nh_{00000707}$ & 12.20 & 1 & n &  & $16n429842$ & 14.45 & 2 &  &  \\
      $18nh_{00000752}$ & 12.30 & 2 &  &  & $18nh_{00004014}$ & 14.49 & 2 &  & y \\
      $18nh_{00000957}$ & 12.63 & 2 &  & & $18nh_{00004122}$ & 14.53 & 2 & n &  \\
      $K15n77799$ & 13.15 & 2 & n &  & $19nh_{000005728}$ & 14.60 & 2 & n &  \\
      $18nh_{00001632}$ & 13.31 & 1 & n &  & $19nh_{000006042}$ & 14.67 & 2 &  &  \\
      $18nh_{00001818}$ & 13.44 & 2 &  &  & $17nh_{0004389}$ & 14.73 & 3 &  & y \\
      $16n481747$ & 13.59 & 2 & &  & $19nh_{000007003}$ & 14.85 & 3 &  & y \\
      $18nh_{00002591}$ & 13.90 & 2 &  & y & $18nh_{00005355}$ & 14.87 & 3 &  & y \\
      $18nh_{00002656}$ & 13.93 & 2 &  & y & $17nh_{0005136}$ & 14.94 & 3 &  & y \\
      \bottomrule
  \end{tabular}
  }
  \caption{The hyperbolic knots in \PS\ of smallest volume whose slice
    status is not completely known.  Here, an ``n'' in the ``sm''
    column means the knot is not smoothly slice, and a ``y'' or ``n''
    in the ``top'' column indicates a known topologically slice status.  All
    the unknown knots with $g_3(K) = 1$ appear either in this table or
    Table~\ref{tab: non hyp unknown}.}
  \label{tab: low vol unknown}
\end{table}

\begin{table}
  \centering
  {\small
    \begin{tabular}{llccc}
      \toprule
      knot & structure & $g_3(K)$ & smooth & top \\
      \midrule
      $K15n115646$ & Trefoil[3/2] & 1 & n &  \\
      $16n800378$ & Trefoil[$-1/4$] & 1 & n &  \\
      $17ns_{29}$ & Fig8[1]        & 2 & n &  \\
      $18ns_{51}$ & Trefoil[A(1/2, 4/3)] & 2 & n & \\
      $18ns_{52}$ & Trefoil[A(1/2, 3/5)] & 2 & n &  \\
      $18ns_{86}$ & Fig8[$-1/2$] & 1 & & y \\
      $19ns_{006}$ & Trefoil[11/6] & 1 & n &  \\
      $19ns_{018}$ & Trefoil[15/4] & 1 & n &  \\
      $19ns_{101}$ & Trefoil[A($-1/2, 5/7$)] & 2 & n & \\
      $19ns_{157}$ & Trefoil[C($-1/2, 1/3, -1/2$)] & 2 & n &  \\
      $19ns_{193}$ & Trefoil[D($2, -3, -1/2$)] & 2 & n &  \\
      $19ns_{244}$ & Fig8[$-1/3$] & 2 & &  \\
      \bottomrule
    \end{tabular}
  }
  \caption{The 12 non-hyperbolic knots in \PS\ whose slice
    status is not completely known.  These are all satellite knots;
    see \cite[\S 5.9]{Burton2020} for how to read the structure
    column. Here an ``n'' in the ``smooth'' column means the knot is
    not smoothly slice, and a ``y'' in the ``top'' column means the
    knot is topologically slice.  There are only 21 non-hyperbolic
    knots in \PS, so we had a much lower success rate in this
    setting.  All the unknown knots with $g_3(K) = 1$ appear either in
    this table or Table~\ref{tab: low vol unknown}.}
  \label{tab: non hyp unknown}
     
\end{table}

\FloatBarrier

\begin{table}[h]
  \centering
  {\small
    \begin{tabular}{l@{\hskip 0.7em}ccc@{\hskip 2.0em}l@{\hskip 0.7em}ccc}
      \toprule
      knot & $g_3(K)$ &  smooth & top & knot & $g_3(K)$ &  smooth & top \\
      \midrule
      $17nh_{4549325}$ & 6 &  &  & $19nh_{000458099}$ & 7 &  &  \\
      $17ns_{29}$ & 2 & n &  & $19nh_{000616024}$  & 4 &  &  \\
      $18nh_{00000601}$ & 5 &  & n & $19nh_{000889975}$ & 4 & &  \\
      $18nh_{00068958}$ & 4 &  &  & $19nh_{000941086}$ & 4 &  &  \\
      $18nh_{00251375}$ & 4 &  &  & $19nh_{003772023}$ & 4 &  &  \\
      $18nh_{00446655}$ & 4 &  &  & $19nh_{009668884}$ & 4 &  &  \\
      $18nh_{00584034}$ & 4 &  &  & $19nh_{010449461}$ & 4 &  &  \\
      $18nh_{01814106}$ & 4 &  &  & $19nh_{019331276}$ & 4 &  &  \\
      $18nh_{01872227}$ & 4 &  &  & $19nh_{019761670}$ & 4 &  &  \\
      $18nh_{04892103}$ & 4 &  &  & $19nh_{029859281}$ & 5 &  &  \\
      $18nh_{10190577}$ & 4 &  &  & $19nh_{032455745}$ & 4 &  & \\
      $19nh_{000054314}$ & 4 &  &  & $19nh_{080376121}$ & 4 & &  \\
      $19nh_{000107587}$ & 4 &  &  &  \\
     \bottomrule
    \end{tabular}
  }
  \caption{The 25 \emph{fibered} knots in \PS\ whose slice status is
    not completely known.  Here, the knot
    $17ns_{29}$ is not smoothly slice by Theorem~\ref{thm: cable 8},
    and $18nh_{00000601}$ is topologically slice by Theorem~\ref{thm:
      mystery 2}.  The genus of the fibered Seifert surface is listed
    for each knot.}
  \label{tab: fibered unknown}
\end{table}

\subsection{Code and data}

Of necessity, Theorem~\ref{thm: real main} is the result of a
large-scale computer computation.  The main software tools we used are
SnapPy \cite{SnapPy}, KnotJob \cite{KnotJob}, the HFK Calculator
\cite{HFKCalculator}, and SageMath \cite{SageMath}. All data and
code needed to check our results are permanently archived at
\cite{CodeAndData}, and key parts have been incorporated into the
development version of SnapPy.  The latter will be part of SnapPy 3.3
but can be used now following the instructions in \cite{CodeAndData}.
We include in \cite{CodeAndData} a quick-access list of PD codes for
all 286 knots in \PS\ mentioned in this paper by name.

Producing Theorem~\ref{thm: real main} consumed more than 100
CPU-years of computational time over the course of five
calendar-years.  However, it would take a tiny fraction of that to
confirm our results as, for example, we saved a description of each
ribbon disk found (see Section~\ref{sec: cert}), and these can be
quickly checked as valid. Similarly, the data includes the $(m, p)$
parameters of each successful HKL test.

One method we used to try to ensure the correctness of the code is
worth mentioning; for concreteness, we describe it in the context of
slice obstructions.  We initially ran each new slice obstruction on
the whole of \PS, even those knots which had already been identified
as ribbon.  Recall \PS\ is more than 42\% ribbon knots, and we
identified most of those early in our computations.  If the code had a
bug causing it to randomly report it had found this slice obstruction
for 1 in \num{100000} knots, we would have seen about 16 ribbon knots
being flagged as ``not smoothly slice''.  Even near the end, when we
were attacking the last \num{50000} knots with very time-consuming
computations, each run included a substantial portion of knots known
not to have the property being checked for as a trap for 
bugs.

% \subsection{Avenues for improvement} While we outline possible
% refinements of some of the techniques used in Theorem~\ref{thm: main}
% in the corresponding sections of this paper, its seems almost certain
% that other methods will be needed to resolve all of the remaining
% unknown knots in Theorem~\ref{thm: real main}.  Indeed, we suspect 

\subsection{Acknowledgements} We thank Robert Lipshitz, Ciprian
Manolescu, Maggie Miller, Lisa Piccirillo, Jake Rasmussen, Danny
Ruberman, Dirk Sch\"utz, Zolt\'an Szab\'o, and Alex Zupan for
helpful conversations and correspondence.  This work was partially
conducted at the Analytic and Geometric Aspects of Gauge Theory program
held at SLMath (formerly MSRI).  Dunfield was partially supported by US
NSF grants DMS-1811156 and DMS-2303572 and by a fellowship from the
Simons Foundation (673067, Dunfield).  Gong was partially supported by
NSF grants DMS-2055736 and DMS-2340465.

\section{Searching for ribbon disks and concordances}
\label{sec: bands}

\begin{figure}
\centering
\includegraphics[width=0.45\textwidth]{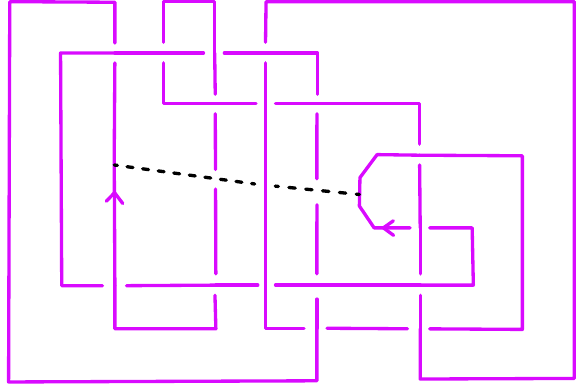} \hspace{1cm} %
\includegraphics[width=0.45\textwidth]{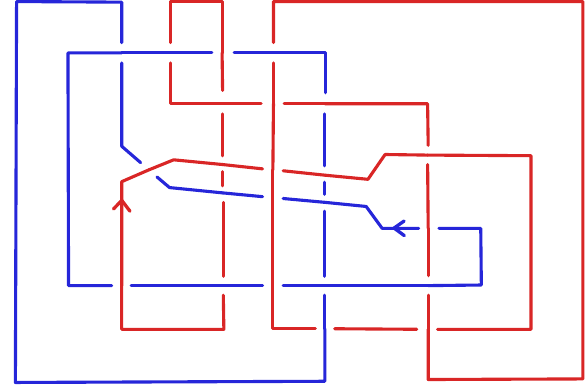}
\caption{A band move on the knot $K = 16n424914$ at left resulting in the
  link at right.  The band is added along the arc shown by the
  dotted line and has a half-twist so that the new link $L$ has two
  components.  In fact, $L$ is the unlink, showing that $K$ is ribbon. }
\label{fig: 16n424914}
\end{figure}

This section is devoted to describing the process we used to show:

\begin{theorem}\label{thm: ribbon}
  At least \NumRibbonKnots\ knots in \PS\ are ribbon and hence smoothly slice.
\end{theorem}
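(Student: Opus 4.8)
The plan is to prove Theorem~\ref{thm: ribbon} constructively, by exhibiting for as many knots in \PS\ as possible a \emph{band presentation}: a finite sequence of band (saddle) moves converting the knot into an unlink. If $b$ band moves transform $K$ into the $(b+1)$-component unlink, then capping each unknotted component with a flat disk produces a smooth disk $D \subset D^4$ on which the radial coordinate is Morse with $b$ index-one critical points (the bands) and $b+1$ index-zero critical points (the caps) and no index-two critical points; this is precisely a ribbon disk, so $K$ is ribbon. (Figure~\ref{fig: 16n424914} is the $b=1$ case.) Thus it suffices, for each of the \NumPlausiblySlice\ knots in \PS, to search for such a sequence; every success is a checkable certificate (see Section~\ref{sec: cert}), and the count of certified knots is the claimed lower bound \NumRibbonKnots.

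First I would organize the search as a bounded recursion on link diagrams. Starting from a diagram of $K$, test whether the current link is already an unlink using diagram simplification in SnapPy/KnotJob together with cheap invariants; if so, record the certificate. Otherwise, enumerate candidate bands on the current diagram — an embedded arc in the complement of the diagram joining two points of the link, optionally with a half-twist so the component count moves in the desired direction — apply each, simplify the result, and recurse, subject to a cap on the number of bands and on diagram complexity. A single projection is not enough: a band that moves $K$ toward the unlink may be invisible in a given diagram, so at each stage I would generate many diagrams for the intermediate link, not by random Reidemeister moves alone but by a more global procedure. To shorten the recursion, I would precompute a table of roughly \num{12100} two-component links already known to be ribbon (e.g.\ because they themselves admit short band presentations); then, after a single band move on $K$, it is enough to recognize the result in this table rather than drive it all the way to an unlink.

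The hard part is the combinatorial explosion: bands form an infinite family even on a fixed diagram, and the set of diagrams one can reach is unbounded, so without strong heuristics the recursion never terminates on the difficult knots. Three ingredients keep it tractable. (i) Diagram generation by non-local moves, so the search can escape the local minima of crossing number where a naive walk through Reidemeister moves stalls. (ii) Aggressive simplification and pruning after each band, so that only genuinely simpler links are pursued further. (iii) Prioritizing candidate bands by how much they decrease crossing number, Seifert genus, or other invariants of the simplified result, and running the whole procedure with increasing resource budgets, revisiting stubborn knots with more diagrams and greater depth. Knots not certified after all passes are simply not shown ribbon by this method — which is why the theorem asserts a lower bound rather than an exact count — and are handed to the other techniques of the paper.
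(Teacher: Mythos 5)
Your proposal matches the paper's proof in essentially every structural respect: search for a band sequence (restricted in the paper to a finite set of ``diagrammatic bands'' built from paths in a subdivided dual graph of the diagram) that turns $K$ into an unlink, cap the unlink with disks to get a ribbon disk, generate many alternative diagrams by the global ``shaking'' method of \cite{DunfieldObeidinRudd2024} rather than random Reidemeister moves, prune multi-band branches with cheap necessary conditions (linking numbers, signature, Fox--Milnor), short-circuit via a cached table of $\approx\num{12100}$ two-component ribbon links, and record checkable certificates so that the final count is a rigorous lower bound. The only notable implementation difference is that the paper identifies unlinks by triangulating the exterior and simplifying a $\pi_1$ presentation rather than by diagrammatic simplification, which the authors found both faster and more reliable, but this does not change the logical structure of the argument.
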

First, we recall a standard description for ribbon disks that we use
throughout. For a link $L_0$, a \emph{band move} is a 1-handle
attachment resulting in a link $L_1$ with one more component; see
Figure~\ref{fig: 16n424914}. A band move is determined by an arc with
endpoints in $L_0$ together with how the band twists around it. A band
move encodes an embedded surface in $S^3 \times I$ whose interesting
component is a pair of pants $P$ with waist attached to $L_0$ and
cuffs attached to $L_1$. Here, one arranges $P$ to have a single
critical point with respect to the $I$-coordinate, which has
index~1. When a knot $K$ has a sequence of $k$ band moves resulting in
an unlink $U$, it is ribbon: the cobordism in $S^3 \times I$ made up
of $k$ pants combines with disks capping off the components of $U$ in
$D^4$ to form a ribbon disk. Moreover, any ribbon disk can be
described in this way. (The reason a band move is required to
increase the number of components is that otherwise the corresponding
cobordism results in a non-orientable or otherwise non-planar
surface.)

\begin{figure}
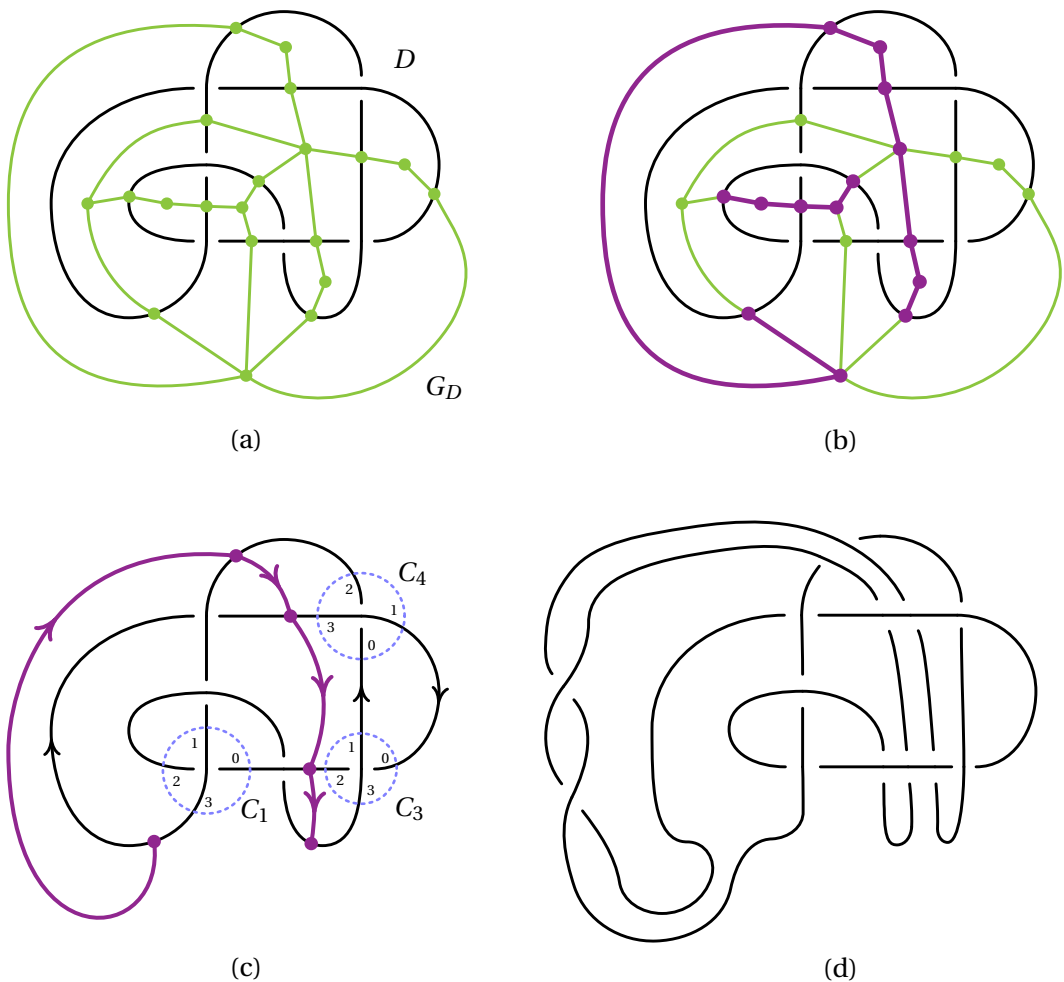

  \centering
  \input figures/diagrammatics

  \caption{Part (a) shows a diagram $D$ of the knot $K6a3$ with the
    subdivided dual graph $G_D$. Part (b) shows two simple paths in
    $G_D$ that can be used to build diagrammatic bands. Part (d) shows
    the result of doing one possible band move starting with the path
    shown in (c).  We can record the (oriented) path in (c) by the
    crossing input pairs to its left, that is
    $[(C_1, 3), (C_4, 2), (C_4, 3), (C_3, 2), (C_3, 3)]$, and add the
    over-under information and the number of half-twists ($+2$ in this
    case) to specify the band move in (d). In \cite{CodeAndData,
      SnapPy}, this is \texttt{Band([(1, 3), (4, 2), (4, 3), (3, 2),
      (3, 3)], [True, False, False], 2)}.}
  \label{fig: diag bands}
\end{figure}

\subsection{Diagrammatic bands}
For a knot $K$, there are infinitely-many isotopy classes of arcs that
can be used for a band move, to say nothing of the choice of the
twist. No bounds are known to exist for the complexity of the arcs
needed to encode a ribbon disk, but here we consider the following
finite set of possibilites. Given a diagram $D$ for a knot $K$, we
define a \emph{diagrammatic band} as follows. Consider the dual graph
to $D$, which has one vertex for every complementary region of the
planar projection of $D$ and one edge for each segment of $D$ between
two crossings. Modify this dual graph to create $G_D$ by adding a
vertex in the middle of each dual edge where it meets the original
$D$, as in Figure~\ref{fig: diag bands}(a). A diagrammatic band is one
built from a simple path in $G_D$ whose endpoints are on $K$, together
with the over-under information and the number of half-twists; see
Figure~\ref{fig: diag bands}(b, c, d) for an example.  For a given
path, the parity of the number of half-twists is determined by the
requirement that the band increases the number of components.  We used
diagrammatic bands because of technical challenges to considering
bands that cross out of and then back into the same complementary
region.

\begin{table}
  \centering
  \begin{tabular}{crl}
    \toprule
    length & \multicolumn{1}{c}{\# bands} & \% bands\\
    \midrule
    2 & \num{1684652} & 84.6 \\
    3 & \num{255129}  & 12.8 \\
    4 & \num{43474}   & \hphantom{1}2.2\\
    5 & \num{7490}    & \hphantom{1}0.4 \\
    6 & \num{100}     & \hphantom{1}0.005 \\
    7 & \num{16}      & \hphantom{1}0.0008 \\
    \bottomrule
  \end{tabular}
  \hspace{4em}
  \begin{tabular}{crc}
    \toprule
    half-twists & \multicolumn{1}{c}{\# bands} & \multicolumn{1}{c}{\% bands} \\
    \midrule
    0 & \num{1079431} & 54.2\hphantom{4} \\
    1 & \num{909006} & 45.7\hphantom{4} \\
    2 & \num{2424} &   \hphantom{4}0.12 \\
    3 & \num{0} &   \hphantom{4}0\hphantom{.12} \\
    \bottomrule
  \end{tabular}

  \caption{Statistics on the 1.99 million bands used in
    Section~\ref{sec: bands}.  Here, the length of a band is the
    number of times it interacts with the diagram, including its
    endpoints; the band used for Figure~\ref{fig: diag bands}(d) has
    length 5.  Bands with 3 half-twists were initially included in our
    search, but since nothing came of them, we subsequently restricted
    to at most 1 or 2. }
  \label{tab: band stats}
\end{table}

\subsection{Search strategy}
\label{sec: strategy}

Our search for ribbon disks was done in several stages. We started
with adding only a single band that moreover came from a path in $G_D$
that is minimal length among those joining its endpoints.  In
Figure~\ref{fig: diag bands}(b), the shorter arc is minimal length but
the longer one used in Figure~\ref{fig: diag bands}(c) is not. For
another example, Figure \ref{fig: 16n424914} depicts a band added
along a shortest path.  Initially, we limited ourselves to three
half-twists.

When no shortest path yielded a band giving the unlink, we checked all
simple paths in $G_D$ up to a certain length (usually 6). If that
failed to find a ribbon disk, we initiated a multi-band search,
starting with two bands and working up to four bands.

For the multi-band search, we first considered all links resulting
from $D$ by a single band move of the specified type.  We pruned this
collection of 2-component links by keeping only those where the
following necessary conditions for topological sliceness held:
\begin{enumerate}
\item \label{item: link 0}
  the linking number between every pair of components was zero,

\item \label{item: sig 0}
  the link signature was zero, and

\item \label{item: Fox-Milnor}
  it satisfied the multivariate Fox-Milnor test; that is, the
  Alexander nullity was one less than the number of components of the
  link, and the first non-vanishing Alexander polynomial satisfied the
  Fox-Milnor condition of \cite[Corollary 12.3.14]{Kawauchi1996}.
\end{enumerate}
For links whose exteriors were hyperbolic, we also removed duplicate
copies of the same link from the collection.  After pruning, for each
link $L_i$ we added a second band in various ways to create a pile of
3-component links. These links were collectively pruned by (\ref{item:
  link 0}, \ref{item: sig 0}, \ref{item: Fox-Milnor}) and consolidated
before possibly adding a third band, and so on.  Table~\ref{tab: band
  stats} gives data about the lengths and twists of the bands used.

As part of this process, we also searched for ribbon concordances
between knots in \PS\ to generate data for Section~\ref{sec: ribbon
  graph}.  Suppose a band move on a knot $K$ results in a split
two-component link where one component is an unknot and the other
component is a knot $K'$ different from $K$. Such a band then
prescribes a ribbon concordance from $K$ to $K'$ with one saddle point
and one minimum.

Throughout, identifying a link $L$ as the unlink was primarily done by
building an ideal triangulation $T$ of $S^3 \setminus L$, simplifying
it to $T'$ using Pachner moves and standard heuristics, computing a
presentation for $\pi_1(T')$ and simplifying that, and then finally
checking if said presentation had no relators.  At least with SnapPy
\cite{SnapPy}, we found this method both faster than using
diagrammatic simplification and having an even higher success rate.
For example, it correctly identifies the 382 ``really hard unknot
diagrams'' of \cite[Appendix~A]{ApplebaumEtAl}.

\subsection{Shaking  diagrams}
\label{sec: shaking}

Since we only used diagrammatic bands, the choice of starting diagram
is important.  We first tried the minimal diagram specified for each
knot in \cite{Burton2020}. When that failed, we generated a diverse
collection of possibly slightly larger diagrams and used these to
search for bands. Initially, we tried generating other diagrams using
random Reidemeister moves, but found the following method suggested by
Question 4 in \cite[\S 12]{DunfieldObeidinRudd2024} to be more
effective. Starting with a diagram $D$, we computed a triangulation
$T$ of its exterior and then used the method of
\cite{DunfieldObeidinRudd2024} to get another diagram $D'$ for the
knot. Such ``diagram shaking'' was used for less than 0.25\% of the
ribbon knots in Theorem~\ref{thm: ribbon}.

\subsection{Caching 2-component ribbon links}
\label{sec: cache}

For a collection of more than \num{146000} ribbon knots in \PS\ where
we needed at least two bands, we saved the 2-component link created by
the first band move.  This resulted in a database of \num{12143}
ribbon links included with \cite{CodeAndData}. Two of these links
appeared about \num{19000} times each ($L10n36 = 10^2_{104}$ and
$L10n32 = 10^2_{26}$) and more than 130 appeared at least 100 times,
but about \num{7200} appeared exactly once.  With this in hand, it is
possible to accelerate the search for ribbon disks, using SnapPy to
check if the result of adding a single band is a known ribbon link.
Moreover, certain of these links have hard to find ribbon disks. For
example, the link $R$ we called \verb|ribbon_2_18_86ba6dfc| initially
appeared only for the knot $17nh_{0000980}$.  However, we subsequently
found 66 other knots where one band move produced $R$, showing that
those knots were ribbon.  Overall, this technique identified 206
additional ribbon knots.

\subsection{Searching by connected sum with ribbon knots}
\label{sec: slice only}

Another method for detecting smoothly slice knots uses the fact that
knot concordance classes form a group under connected sum: for a knot
$K$ and a smoothly slice knot $J$, if $K \# J$ is smoothly slice, then
$K$ is as well.  Using this, we proceeded as follows: for a knot $K$,
take the connected sum of $K$ with a known ribbon knot $R$, and then
attempt to show that $K \# R$ is ribbon using Section~\ref{sec:
  strategy}. (A knot $K$ is smoothly slice if and only if there is a
such ribbon knot $R$ \cite[Lemma~2.5]{Teichner2011}, so in theory this
is a universal test.)

We did this search with two specific ribbon knots: the connected sum
of a trefoil $T_{2,3}$ with its mirror $T_{2,-3}$, and the connected
sum of two copies of the figure-8 knot $K4a1$. The major advantage of
using a ribbon knot of the form $A \# B$ is that its connected sum
with $K$ can be viewed as adding $A$ and $B$ separately in different
places along $K$, ideally where there are then obvious band moves
that decrease the number of crossings.  The motivating example comes
from \cite[\S 8]{HeraldKirkLivingston2010}, where $K12a990$ is shown
to be smoothly slice using $T_{2,3} \# T_{2,-3}$.

Fix an orientation on a knot $K$ and suppose its diagram has a bigon
region whose two sides have parallel orientations.  If we form the
connected sum of $K$ with a trefoil knot with oppositely signed
crossings near this bigon, then we can do a band move that simplifies
the resulting 2-component link, as in
Figure~\ref{adding_in_trefoil_oriented}.  If we repeat this process
with a mirror image of the trefoil next to a different bigon with
crossings of opposite sign, this amounts to taking the connected sum
with $T_{2,3} \# T_{2,-3}$ and then adding two bands.  This results in
a diagram for a 3-component link $L$ with two fewer crossings than that
of $K$.  We then searched for ribbon disks for $L$ as in
Section~\ref{sec: strategy}.  Figure~\ref{19nh_051162051} shows an
example where this strategy works.

\begin{figure}
  \centering
  \begin{tikzpicture}[nmdstd]
    \definecolor{connsum}{cmyk}{0.5,0.0,1.0,0.0}
    \colorlet{bandsum}{-connsum}
    \begin{tikzoverlay*}[width=0.9\textwidth]{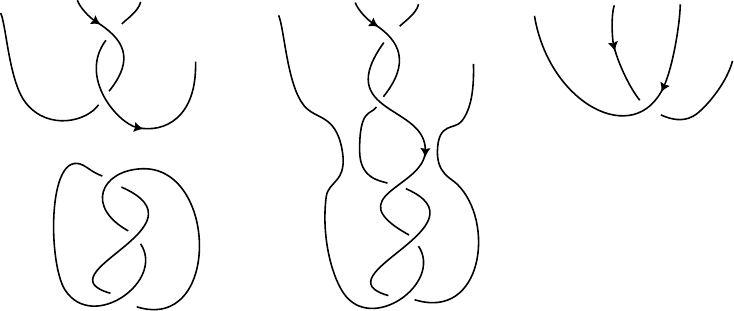}
      \node[left] at (1.5,30.5) {$K$};
      \node[left] at (6.7, 9.3) {$T_{2, 3}$};
      \node[right] at (86.4,23.2) {$L$};
      \begin{scope}[line width=1.4]
        \draw[color=connsum, dashed] (9.9,20.1) -- (8.9,25.85);
        \draw[color=bandsum, dotted] (20.6,19.4) -- (20.9,24.8);
      \end{scope}
    \end{tikzoverlay*}
  \end{tikzpicture}
  \caption{We perform the connected sum of the oriented knot $K$ 
    at left with $T_{2, 3}$ along the dark dotted line, and then do an
    untwisted band move along the light dashed line.  The result is
    shown in the middle.  After simplifying, we get the
    2-component link $L$ at right which has one fewer crossing than
    $K$.}
\label{adding_in_trefoil_oriented}
\end{figure}

\begin{figure}
  \centering
  \begin{tikzpicture}[nmdstd]
    \definecolor{connsum}{cmyk}{0.5,0.0,1.0,0.0}
    \colorlet{bandsum}{-connsum}
    \begin{tikzoverlay*}[width=0.6\textwidth]{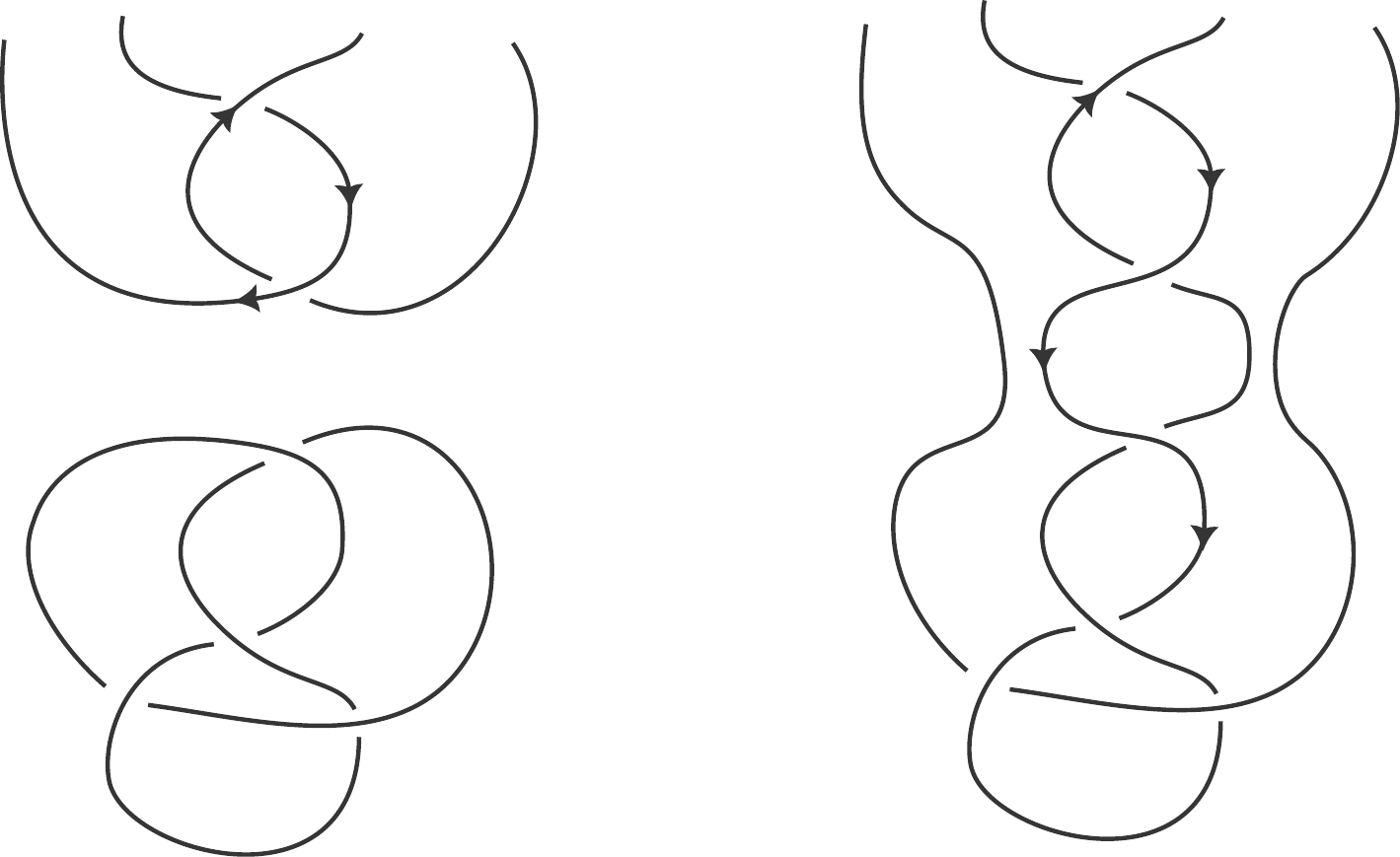}
      \node[left] at (1.1,46.2) {$K$};
      \node[left] at (6.1,11.0) {$K4a1$};
      \node[right] at (94.9,35.7) {$L$};
      \begin{scope}[line width=1.4]
        \draw[color=connsum, dashed] (10.6,29.85) -- (9.3,40.1);
        \draw[color=bandsum, dotted] (27.3,31.1) -- (27.8,39.1);
      \end{scope}
    \end{tikzoverlay*}
  \end{tikzpicture}
  \caption{Performing the connected sum of $K$ with $K4a1$ along the
    dark dotted line and then adding an untwisted band along the light
    dashed line results in the 2-component link $L$ at left.}
\label{adding_4_1s}
\end{figure}

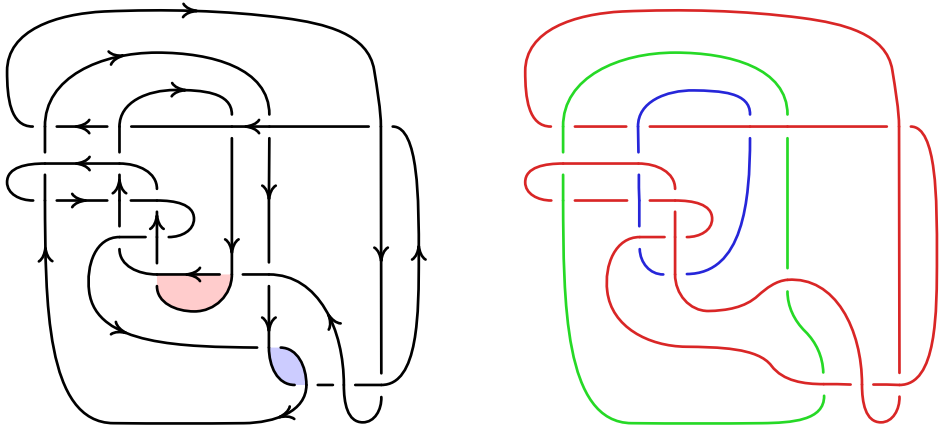
\begin{figure}
\centering
\begin{tikzpicture}[nmdstd, line width=1.01, scale=0.85]
%\begin{tikzpicture}[nmdstd, line width=1.01]
  % \draw (0, 0) rectangle (6.5, 6.5);
  \begin{scope}[scale=0.65]
    % negative diagon
    \fill[fill=red!20]
         (5.43, 3.68) .. controls (5.43, 3.19) and (5.03, 2.79) .. 
         (4.54, 2.79) .. controls (4.09, 2.79) and (3.65, 3.00) ..
         (3.65, 3.40) -- 
         (3.65, 3.68) .. controls (4.15, 3.68) and (4.64, 3.68) ..
         (5.14, 3.68) -- cycle;

    \fill[blue!20]
         (6.59, 1.92) .. controls (6.99, 1.91) and (7.20, 1.47) ..
         (7.20, 1.02) --
         (6.92, 1.02) .. controls (6.51, 1.02) and (6.31, 1.47) ..
         (6.31, 1.92) -- cycle;
    
    \draw[mid arrow=0.53] (0.70, 7.24) .. controls (0.16, 7.24) and (0.09, 7.94) .. 
          (0.09, 8.57) .. controls (0.09, 9.57) and (1.27, 9.95) .. 
          (2.40, 10.01) .. controls (3.83, 10.07) and (5.27, 10.04) .. 
          (6.69, 9.85) .. controls (7.63, 9.72) and (8.67, 9.48) .. 
          (8.81, 8.60) .. controls (8.88, 8.15) and (8.97, 7.63) .. (8.97, 7.24);
    \draw[mid arrow=0.55] (8.97, 7.24) .. controls (8.97, 5.26) and (8.98, 3.28) .. (8.98, 1.30);
    \draw (8.98, 0.73) .. controls (8.99, 0.42) and (8.82, 0.12) .. 
          (8.54, 0.12) .. controls (8.17, 0.12) and (8.09, 0.59) .. (8.09, 1.02);
    \draw[mid arrow=0.5] (8.09, 1.02) .. controls (8.09, 2.35) and (7.50, 3.68) .. (6.31, 3.68);
    \draw (6.31, 3.68) .. controls (6.10, 3.68) and (5.90, 3.68) .. (5.69, 3.68);
    \draw[mid arrow=0.55] (5.14, 3.68) .. controls (4.64, 3.68) and (4.15, 3.68) .. (3.65, 3.68);
    \draw (3.65, 3.68) .. controls (3.20, 3.68) and (2.76, 3.89) .. (2.76, 4.29);
    \draw (2.76, 4.84) .. controls (2.76, 5.05) and (2.76, 5.25) .. (2.76, 5.46);
    \draw[mid arrow=0.85] (2.76, 5.46) .. controls (2.76, 5.67) and (2.76, 5.87) .. (2.76, 6.08);
    \draw (2.76, 6.62) .. controls (2.76, 6.83) and (2.76, 7.03) .. (2.76, 7.24);
    \draw[mid arrow=0.6] (2.76, 7.24) .. controls (2.76, 7.83) and (3.42, 8.12) .. 
          (4.09, 8.12) .. controls (4.72, 8.12) and (5.43, 8.07) .. (5.43, 7.53);
          \draw[mid arrow=0.85] (5.43, 6.96) .. controls (5.43, 5.87) and (5.43, 4.78) .. (5.43, 3.68);
          
    \draw (5.43, 3.68) .. controls (5.43, 3.19) and (5.03, 2.79) .. 
    (4.54, 2.79) .. controls (4.09, 2.79) and (3.65, 3.00) .. (3.65, 3.40);

    \draw (3.65, 3.95) .. controls (3.65, 4.16) and (3.65, 4.37) .. (3.65, 4.58);
    \draw[mid arrow=0.9] (3.65, 4.58) .. controls (3.65, 4.78) and (3.65, 4.99) .. (3.65, 5.19);
    \draw (3.65, 5.74) .. controls (3.65, 6.14) and (3.21, 6.35) .. (2.76, 6.35);
    \draw[mid arrow=0.6] (2.76, 6.35) .. controls (2.17, 6.35) and (1.58, 6.35) .. (0.99, 6.35);
    \draw (0.99, 6.35) .. controls (0.56, 6.35) and (0.09, 6.27) .. 
          (0.09, 5.90) .. controls (0.09, 5.62) and (0.39, 5.46) .. (0.71, 5.46);
    \draw[mid arrow=0.6] (1.27, 5.46) .. controls (1.67, 5.46) and (2.07, 5.46) .. (2.47, 5.46);
    \draw (3.03, 5.46) .. controls (3.23, 5.46) and (3.44, 5.46) .. (3.65, 5.46);
    \draw (3.65, 5.46) .. controls (4.07, 5.46) and (4.53, 5.38) .. 
          (4.53, 5.02) .. controls (4.53, 4.74) and (4.24, 4.58) .. (3.93, 4.58);
    \draw (3.38, 4.58) .. controls (3.17, 4.58) and (2.97, 4.58) .. (2.76, 4.58);
    \draw[mid arrow=0.5] (2.76, 4.58) .. controls (2.20, 4.58) and (1.96, 3.92) .. 
          (2.04, 3.27) .. controls (2.20, 1.96) and (4.29, 1.94) .. (6.03, 1.92);
    \draw (6.59, 1.92) .. controls (6.99, 1.91) and (7.20, 1.47) .. (7.20, 1.02);
    \draw[mid arrow=0.1, mid arrow=0.9] (7.20, 1.02) .. controls (7.20, 0.38) and (6.45, 0.11) .. 
          (5.72, 0.10) .. controls (4.69, 0.09) and (3.66, 0.09) .. 
          (2.62, 0.10) .. controls (1.00, 0.11) and (1.00, 3.05) .. (0.99, 5.46);
    \draw (0.99, 5.46) .. controls (0.99, 5.67) and (0.99, 5.87) .. (0.99, 6.08);
    \draw (0.99, 6.62) .. controls (0.99, 6.83) and (0.99, 7.03) .. (0.99, 7.24);
    \draw[mid arrow=0.4] (0.99, 7.24) .. controls (0.98, 8.43) and (2.32, 9.02) .. 
          (3.65, 9.02) .. controls (4.95, 9.02) and (6.32, 8.65) .. (6.32, 7.53);
    \draw[mid arrow=0.5] (6.32, 6.96) .. controls (6.31, 5.96) and (6.31, 4.96) .. (6.31, 3.97);
    \draw[mid arrow=0.75] (6.31, 3.40) .. controls (6.31, 2.91) and (6.31, 2.41) .. (6.31, 1.92);
    \draw (6.31, 1.92) .. controls (6.31, 1.47) and (6.51, 1.02) .. (6.92, 1.02);
    \draw (7.47, 1.02) .. controls (7.59, 1.02) and (7.71, 1.02) .. (7.83, 1.02);
    \draw (8.36, 1.02) .. controls (8.57, 1.02) and (8.78, 1.02) .. (8.98, 1.02);
    \draw[mid arrow=0.55](8.98, 1.02) .. controls (9.87, 1.02) and (9.87, 2.74) .. 
          (9.87, 4.13) .. controls (9.87, 5.48) and (9.87, 7.24) .. (9.25, 7.24);
    \draw (8.68, 7.24) .. controls (7.89, 7.24) and (7.11, 7.24) .. (6.32, 7.24);
    \draw[mid arrow=0.67] (6.32, 7.24) .. controls (6.02, 7.24) and (5.72, 7.24) .. (5.43, 7.24);
    \draw (5.43, 7.24) .. controls (4.63, 7.24) and (3.84, 7.24) .. (3.04, 7.24);
    \draw[mid arrow=0.65] (2.47, 7.24) .. controls (2.07, 7.24) and (1.67, 7.24) .. (1.27, 7.24);
  \end{scope}

  %%%%%%%%%%%%%%%%%%%%%%%%%%%%%%%%%%%%%%%%%%%%%%%%%%%%%%%%%%%%%%
  %
  % After adding trefoils and bands.
  %
  %%%%%%%%%%%%%%%%%%%%%%%%%%%%%%%%%%%%%%%%%%%%%%%%%%%%%%%%%%%%%%
  \begin{scope}[shift={(8, 0)}, scale=0.65,]
  \definecolor{linkcolor0}{rgb}{0.85, 0.15, 0.15}
  \definecolor{linkcolor1}{rgb}{0.15, 0.15, 0.85}
  \definecolor{linkcolor2}{rgb}{0.15, 0.85, 0.15}
  \begin{scope}[color=linkcolor0]
    \draw (0.70, 7.24) .. controls (0.16, 7.24) and (0.09, 7.94) .. 
          (0.09, 8.57) .. controls (0.09, 9.57) and (1.27, 9.95) .. 
          (2.40, 10.01) .. controls (3.83, 10.07) and (5.27, 10.04) .. 
          (6.69, 9.85) .. controls (7.63, 9.72) and (8.67, 9.48) .. 
          (8.81, 8.60) .. controls (8.88, 8.15) and (8.97, 7.63) .. (8.97, 7.24);
    \draw (8.97, 7.24) .. controls (8.97, 5.26) and (8.98, 3.29) .. (8.98, 1.31);
    \draw (8.98, 0.74) .. controls (8.99, 0.43) and (8.82, 0.12) .. 
          (8.54, 0.12) .. controls (8.17, 0.12) and (8.09, 0.60) .. (8.09, 1.03);
    \draw (8.09, 1.03) .. controls (8.09, 2.35) and (7.46, 3.64) .. (6.32, 3.55);
    \draw (6.32, 3.55) .. controls (6.05, 3.53) and (5.82, 3.35) .. 
          (5.62, 3.17) .. controls (5.31, 2.88) and (4.87, 2.81) .. 
          (4.44, 2.80) .. controls (3.99, 2.80) and (3.65, 3.21) .. (3.65, 3.68);
    \draw (3.65, 3.68) .. controls (3.65, 3.98) and (3.65, 4.28) .. (3.65, 4.58);
    \draw (3.65, 4.58) .. controls (3.65, 4.78) and (3.65, 4.99) .. (3.65, 5.19);
    \draw (3.65, 5.74) .. controls (3.65, 6.14) and (3.22, 6.35) .. (2.78, 6.35);
    \draw (2.78, 6.35) .. controls (2.18, 6.35) and (1.59, 6.35) .. (0.99, 6.35);
    \draw (0.99, 6.35) .. controls (0.56, 6.35) and (0.09, 6.27) .. 
          (0.09, 5.90) .. controls (0.09, 5.62) and (0.39, 5.46) .. (0.71, 5.46);
    \draw (1.27, 5.46) .. controls (1.69, 5.46) and (2.10, 5.46) .. (2.51, 5.46);
    \draw (3.05, 5.46) .. controls (3.25, 5.46) and (3.45, 5.46) .. (3.65, 5.46);
    \draw (3.65, 5.46) .. controls (4.07, 5.46) and (4.53, 5.38) .. 
          (4.53, 5.02) .. controls (4.53, 4.74) and (4.24, 4.58) .. (3.93, 4.58);
    \draw (3.40, 4.58) .. controls (3.20, 4.58) and (3.00, 4.58) .. (2.81, 4.58);
    \draw (2.81, 4.58) .. controls (2.24, 4.58) and (1.96, 3.92) .. 
          (2.04, 3.27) .. controls (2.14, 2.42) and (3.01, 1.95) .. 
          (3.91, 1.94) .. controls (4.66, 1.93) and (5.62, 1.92) .. 
          (5.94, 1.48) .. controls (6.22, 1.11) and (6.71, 1.04) .. (7.18, 1.04);
    \draw (7.18, 1.04) .. controls (7.39, 1.04) and (7.61, 1.04) .. (7.82, 1.03);
    \draw (8.36, 1.03) .. controls (8.57, 1.03) and (8.78, 1.03) .. (8.98, 1.02);
    \draw (8.98, 1.02) .. controls (9.87, 1.02) and (9.87, 2.73) .. 
          (9.87, 4.13) .. controls (9.87, 5.48) and (9.87, 7.24) .. (9.25, 7.24);
    \draw (8.68, 7.24) .. controls (7.89, 7.24) and (7.11, 7.24) .. (6.32, 7.24);
    \draw (6.32, 7.24) .. controls (6.02, 7.24) and (5.73, 7.24) .. (5.43, 7.24);
    \draw (5.43, 7.24) .. controls (4.64, 7.24) and (3.85, 7.24) .. (3.05, 7.24);
    \draw (2.49, 7.24) .. controls (2.08, 7.24) and (1.68, 7.24) .. (1.27, 7.24);
  \end{scope}
  \begin{scope}[color=linkcolor1]
    \draw (3.37, 3.68) .. controls (3.05, 3.68) and (2.82, 3.96) .. (2.81, 4.29);
    \draw (2.80, 4.84) .. controls (2.80, 5.05) and (2.80, 5.25) .. (2.80, 5.46);
    \draw (2.80, 5.46) .. controls (2.79, 5.67) and (2.79, 5.87) .. (2.79, 6.08);
    \draw (2.78, 6.62) .. controls (2.78, 6.83) and (2.77, 7.03) .. (2.77, 7.24);
    \draw (2.77, 7.24) .. controls (2.76, 7.83) and (3.43, 8.12) .. 
          (4.10, 8.11) .. controls (4.72, 8.10) and (5.44, 8.07) .. (5.43, 7.53);
    \draw (5.43, 6.96) .. controls (5.43, 5.42) and (5.26, 3.70) .. (3.93, 3.69);
  \end{scope}
  \begin{scope}[color=linkcolor2]
    \draw (6.32, 3.27) .. controls (6.32, 2.91) and (6.48, 2.57) .. 
          (6.74, 2.31) .. controls (7.00, 2.05) and (7.16, 1.70) .. (7.17, 1.32);
    \draw (7.19, 0.76) .. controls (7.20, 0.16) and (6.42, 0.11) .. 
          (5.72, 0.10) .. controls (4.69, 0.09) and (3.66, 0.09) .. 
          (2.62, 0.10) .. controls (1.00, 0.11) and (1.00, 3.05) .. (0.99, 5.46);
    \draw (0.99, 5.46) .. controls (0.99, 5.67) and (0.99, 5.87) .. (0.99, 6.08);
    \draw (0.99, 6.62) .. controls (0.99, 6.83) and (0.99, 7.03) .. (0.99, 7.24);
    \draw (0.99, 7.24) .. controls (0.98, 8.43) and (2.32, 9.02) .. 
          (3.65, 9.02) .. controls (4.95, 9.02) and (6.32, 8.65) .. (6.32, 7.53);
    \draw (6.32, 6.96) .. controls (6.32, 5.92) and (6.32, 4.88) .. (6.32, 3.83);
  \end{scope}
  \end{scope}
%\end{tikzpicture}
\end{tikzpicture}
\caption{For the knot $K = 19nh_{051162051}$ at left, consider the two
  shaded bigons, one with positive and one with negative crossings.
  Taking the connected sum with $T_{2,3} \# T_{2,-3}$ and adding two
  bands as in Figure~\ref{adding_in_trefoil_oriented} and its mirror
  image yields the 3-component link $L$ at right.  As $L$ is an
  unlink, $K$ is smoothly slice.}
\label{19nh_051162051}
\end{figure}

The other knot we added was $K4a1 \# K4a1$, which is ribbon since
$K4a1$ is its own mirror image. As with the trefoils, each $K4a1$ was
added along with a band, as in Figure~\ref{adding_4_1s}. To ensure
this is a valid band move, the edges of the bigon must be oriented
anti-parallel, as in the figure.

Initially, we found 446 new slice knots in \PS\ by adding
$T_{2,3} \# T_{2,-3}$ and 67 slice knots by adding $K4a1 \#
K4a1$. However, by searching harder for ribbon disks, we eventually
found them for all 513 of these knots, as discussed in the next
subsection.

\subsection{Obscure ribbon disks}
% ['19nh_051162051']
% ['17nh_0016322', '17nh_0026540', '17nh_0298397'] 

For certain knots in \PS, we had reason to suspect that they were
ribbon, even though our initial search did not turn up any ribbon
disks. This includes the 513 smoothly slice knots from the previous
subsection, as well as 41 knots that are 0-friends with a ribbon knot
by the method of Section~\ref{sec: friends}.

For these knots, we expanded the parameters of our search (the number
of distinct diagrams tried, the number and length of bands checked,
etc).  In all but one case, namely $18nh_{00000601}$ from
Theorem~\ref{thm: mystery}, we eventually found a ribbon disk.  We
devoted more computational resources to these suspicious knots than we
could afford to do for all the unknown knots in Theorem~\ref{thm: real
  main}.  Based on the success rate of the expanded search as compared
to that of the original search, we expect there are
hundreds of other similarly obscure ribbon disks among the
\NumSmoothSliceUnknowns\ knots in \PS\ whose smooth slice status is
unknown.

The knot $19nh_{051162051}$ of Figure~\ref{19nh_051162051} is one
example where we initially discovered it was slice by
Section~\ref{sec: slice only} but eventually found a ribbon
disk. Another way we could show that knots were smoothly slice
(without a priori knowing that they are ribbon) was by showing that
their knot trace was diffeomorphic to that of a ribbon knot (see
Section~\ref{sec: friend app}).  Some knots where we initially used
the shared knot trace method but later found a ribbon disk are
$17nh_{0016322}$, $17nh_{0026540}$, and $17nh_{0298397}$. The knot
$17nh_{0016322}$ is depicted in Figure \ref{fig: disk the obscure}(a).

\begin{figure}
  \centering
  \begin{tikzpicture}[nmdstd, line width=1.00001, scale=0.9]
    % \definecolor{linkcolor0}{rgb}{0.85, 0.15, 0.15}
% \begin{tikzpicture}[line width=1.5, line cap=round, line join=round]
  \begin{scope}[rotate=45, scale=0.5]
    \draw (2.24, 6.36) .. controls (2.24, 5.42) and (2.53, 4.45) .. (3.33, 4.45);
    \draw (3.33, 4.45) .. controls (3.69, 4.45) and (4.06, 4.45) .. (4.42, 4.45);
    \draw (4.42, 4.45) .. controls (4.69, 4.45) and (4.97, 4.45) .. (5.24, 4.45);
    \draw (5.77, 4.45) .. controls (6.41, 4.45) and (7.05, 4.45) .. (7.68, 4.45);
    \draw (7.68, 4.45) .. controls (8.21, 4.45) and (8.78, 4.54) .. 
          (8.78, 4.99) .. controls (8.78, 5.36) and (8.36, 5.54) .. (7.95, 5.54);
    \draw (7.42, 5.54) .. controls (7.01, 5.54) and (6.60, 5.72) .. 
          (6.60, 6.08) .. controls (6.60, 6.52) and (7.17, 6.62) .. (7.68, 6.62);
    \draw (7.68, 6.62) .. controls (8.89, 6.62) and (9.87, 5.65) .. 
          (9.87, 4.45) .. controls (9.87, 3.40) and (9.67, 2.27) .. (8.78, 2.27);
    \draw (8.78, 2.27) .. controls (8.50, 2.27) and (8.22, 2.27) .. (7.95, 2.27);
    \draw (7.42, 2.27) .. controls (6.49, 2.27) and (5.51, 2.56) .. (5.51, 3.36);
    \draw (5.51, 3.36) .. controls (5.51, 3.72) and (5.51, 4.09) .. (5.51, 4.45);
    \draw (5.51, 4.45) .. controls (5.51, 5.45) and (5.51, 6.45) .. (5.51, 7.45);
    \draw (5.51, 7.98) .. controls (5.51, 8.49) and (4.98, 8.81) .. 
          (4.42, 8.81) .. controls (3.82, 8.81) and (3.33, 8.32) .. (3.33, 7.71);
    \draw (3.33, 7.71) .. controls (3.33, 7.44) and (3.33, 7.16) .. (3.33, 6.89);
    \draw (3.33, 6.36) .. controls (3.33, 5.81) and (3.33, 5.26) .. (3.33, 4.71);
    \draw (3.33, 4.18) .. controls (3.33, 3.67) and (3.86, 3.36) .. (4.42, 3.36);
    \draw (4.42, 3.36) .. controls (4.69, 3.36) and (4.97, 3.36) .. (5.24, 3.36);
    \draw (5.77, 3.36) .. controls (6.41, 3.36) and (7.05, 3.36) .. (7.68, 3.36);
    \draw (7.68, 3.36) .. controls (8.24, 3.36) and (8.78, 3.05) .. (8.78, 2.54);
    \draw (8.78, 2.01) .. controls (8.78, 1.49) and (8.24, 1.18) .. (7.68, 1.18);
    \draw (7.68, 1.18) .. controls (6.08, 1.18) and (4.42, 1.71) .. (4.42, 3.10);
    \draw (4.42, 3.63) .. controls (4.42, 3.81) and (4.42, 4.00) .. (4.42, 4.18);
    \draw (4.42, 4.71) .. controls (4.42, 5.64) and (4.14, 6.62) .. (3.33, 6.62);
    \draw (3.33, 6.62) .. controls (2.97, 6.62) and (2.61, 6.62) .. (2.24, 6.62);
    \draw (2.24, 6.62) .. controls (1.72, 6.62) and (1.15, 6.72) .. 
          (1.15, 7.17) .. controls (1.15, 7.53) and (1.56, 7.71) .. (1.98, 7.71);
    \draw (2.51, 7.71) .. controls (2.69, 7.71) and (2.88, 7.71) .. (3.07, 7.71);
    \draw (3.60, 7.71) .. controls (4.24, 7.71) and (4.87, 7.71) .. (5.51, 7.71);
    \draw (5.51, 7.71) .. controls (6.51, 7.71) and (7.68, 7.71) .. (7.68, 6.89);
    \draw (7.68, 6.36) .. controls (7.68, 6.08) and (7.68, 5.81) .. (7.68, 5.54);
    \draw (7.68, 5.54) .. controls (7.68, 5.26) and (7.68, 4.99) .. (7.68, 4.71);
    \draw (7.68, 4.18) .. controls (7.68, 4.00) and (7.68, 3.81) .. (7.68, 3.63);
    \draw (7.68, 3.10) .. controls (7.68, 2.82) and (7.68, 2.55) .. (7.68, 2.27);
    \draw (7.68, 2.27) .. controls (7.68, 2.00) and (7.68, 1.72) .. (7.68, 1.44);
    \draw (7.68, 0.91) .. controls (7.68, 0.10) and (6.60, 0.09) .. 
          (5.65, 0.09) .. controls (4.17, 0.08) and (2.86, 0.90) .. 
          (1.86, 1.99) .. controls (0.75, 3.18) and (0.09, 4.72) .. 
          (0.08, 6.35) .. controls (0.07, 7.51) and (0.16, 8.81) .. 
          (1.15, 8.81) .. controls (1.76, 8.81) and (2.24, 8.32) .. (2.24, 7.71);
    \draw (2.24, 7.71) .. controls (2.24, 7.44) and (2.24, 7.16) .. (2.24, 6.89);
  \end{scope}

  \node[below] at (0.1, 1) {(a)};
  \node[below] at (7.5, 1) {(b)};

  \begin{scope}[shift={(7.5, -0.7)}, rotate=45, scale=0.5]
    \draw (3.00, 5.88) .. controls (2.65, 5.88) and (2.48, 6.27) .. (2.48, 6.67);
    \draw (2.48, 6.67) .. controls (2.48, 6.86) and (2.48, 7.05) .. (2.48, 7.23);
    \draw (2.48, 7.75) .. controls (2.48, 8.19) and (2.48, 8.63) .. (2.48, 9.07);
    \draw (2.48, 9.07) .. controls (2.48, 9.87) and (3.78, 9.87) .. 
          (4.87, 9.87) .. controls (5.93, 9.87) and (7.27, 9.87) .. (7.27, 9.32);
    \draw (7.27, 8.76) .. controls (7.27, 7.80) and (7.27, 6.84) .. (7.27, 5.88);
    \draw (7.27, 5.88) .. controls (7.27, 5.69) and (7.27, 5.50) .. (7.27, 5.31);
    \draw (7.27, 4.84) .. controls (7.27, 4.65) and (7.27, 4.47) .. (7.27, 4.28);
    \draw (7.27, 4.28) .. controls (7.27, 3.84) and (7.27, 3.40) .. (7.27, 2.96);
    \draw (7.27, 2.40) .. controls (7.27, 2.13) and (7.42, 1.88) .. 
          (7.67, 1.88) .. controls (8.00, 1.88) and (8.07, 2.30) .. (8.07, 2.68);
    \draw (8.07, 2.68) .. controls (8.07, 3.41) and (8.07, 4.28) .. (7.55, 4.28);
    \draw (7.03, 4.28) .. controls (6.93, 4.28) and (6.82, 4.28) .. (6.71, 4.28);
    \draw (6.20, 4.28) .. controls (5.84, 4.28) and (5.67, 3.88) .. (5.67, 3.48);
    \draw (5.67, 3.48) .. controls (5.67, 3.21) and (5.67, 2.95) .. (5.67, 2.68);
    \draw (5.67, 2.68) .. controls (5.67, 1.80) and (6.39, 1.08) .. 
          (7.27, 1.08) .. controls (8.03, 1.08) and (8.87, 1.23) .. 
          (8.87, 1.88) .. controls (8.87, 2.28) and (8.70, 2.68) .. (8.35, 2.68);
    \draw (7.83, 2.68) .. controls (7.64, 2.68) and (7.46, 2.68) .. (7.27, 2.68);
    \draw (7.27, 2.68) .. controls (6.83, 2.68) and (6.39, 2.68) .. (5.95, 2.68);
    \draw (5.39, 2.68) .. controls (4.91, 2.68) and (4.88, 2.04) .. 
          (4.88, 1.48) .. controls (4.88, 0.51) and (6.06, 0.21) .. 
          (7.17, 0.15) .. controls (8.33, 0.08) and (9.52, 0.51) .. 
          (9.67, 1.55) .. controls (9.81, 2.58) and (9.87, 3.63) .. 
          (9.84, 4.67) .. controls (9.82, 5.67) and (9.67, 6.68) .. 
          (9.36, 7.64) .. controls (9.09, 8.52) and (8.21, 9.04) .. (7.27, 9.04);
    \draw (7.27, 9.04) .. controls (5.77, 9.05) and (4.26, 9.06) .. (2.76, 9.06);
    \draw (2.24, 9.07) .. controls (2.13, 9.07) and (2.03, 9.07) .. (1.92, 9.07);
    \draw (1.41, 9.07) .. controls (1.05, 9.07) and (0.88, 8.67) .. 
          (0.88, 8.27) .. controls (0.88, 7.83) and (1.24, 7.48) .. (1.69, 7.48);
    \draw (1.69, 7.48) .. controls (1.95, 7.48) and (2.21, 7.48) .. (2.48, 7.48);
    \draw (2.48, 7.48) .. controls (2.67, 7.48) and (2.85, 7.48) .. (3.04, 7.48);
    \draw (3.56, 7.48) .. controls (3.83, 7.48) and (4.08, 7.32) .. 
          (4.08, 7.07) .. controls (4.08, 6.75) and (3.66, 6.67) .. (3.28, 6.67);
    \draw (3.28, 6.67) .. controls (3.09, 6.67) and (2.91, 6.67) .. (2.72, 6.67);
    \draw (2.24, 6.67) .. controls (2.13, 6.67) and (2.03, 6.67) .. (1.92, 6.67);
    \draw (1.41, 6.67) .. controls (0.62, 6.67) and (0.09, 7.43) .. 
          (0.09, 8.27) .. controls (0.09, 9.03) and (0.23, 9.87) .. 
          (0.89, 9.87) .. controls (1.33, 9.87) and (1.69, 9.51) .. (1.69, 9.07);
    \draw (1.69, 9.07) .. controls (1.69, 8.63) and (1.69, 8.19) .. (1.69, 7.75);
    \draw (1.69, 7.23) .. controls (1.69, 7.05) and (1.69, 6.86) .. (1.69, 6.67);
    \draw (1.69, 6.67) .. controls (1.69, 5.91) and (1.83, 5.07) .. 
          (2.48, 5.07) .. controls (2.93, 5.07) and (3.28, 5.43) .. (3.28, 5.88);
    \draw (3.28, 5.88) .. controls (3.28, 6.06) and (3.28, 6.25) .. (3.28, 6.43);
    \draw (3.28, 6.91) .. controls (3.28, 7.10) and (3.28, 7.29) .. (3.28, 7.48);
    \draw (3.28, 7.48) .. controls (3.28, 7.91) and (3.64, 8.27) .. 
          (4.08, 8.27) .. controls (4.88, 8.27) and (4.88, 7.13) .. (4.88, 6.16);
    \draw (4.88, 5.60) .. controls (4.88, 4.66) and (4.88, 3.48) .. (5.39, 3.48);
    \draw (5.95, 3.48) .. controls (6.31, 3.48) and (6.48, 3.88) .. (6.48, 4.28);
    \draw (6.48, 4.28) .. controls (6.48, 4.72) and (6.83, 5.07) .. (7.27, 5.07);
    \draw (7.27, 5.07) .. controls (7.65, 5.07) and (8.07, 5.15) .. 
          (8.07, 5.48) .. controls (8.07, 5.72) and (7.82, 5.88) .. (7.55, 5.88);
    \draw (6.99, 5.88) .. controls (6.29, 5.88) and (5.58, 5.88) .. (4.88, 5.88);
    \draw (4.88, 5.88) .. controls (4.44, 5.88) and (4.00, 5.88) .. (3.56, 5.88);
  \end{scope}
% \end{tikzpicture}
  \end{tikzpicture}
  \caption{The knot $17nh_{0016322}$ in (a) was initially shown to be
    smoothly slice by finding a ribbon knot with a diffeomorphic knot
    trace.  We suspected that the knot $19nh_{000077044}$ in (b) was
    smoothly slice because it shared a 0-surgery with a ribbon knot.
    For both knots, we eventually found a ribbon disk using an
    expanded band search.  }
\label{fig: disk the obscure}
\end{figure}
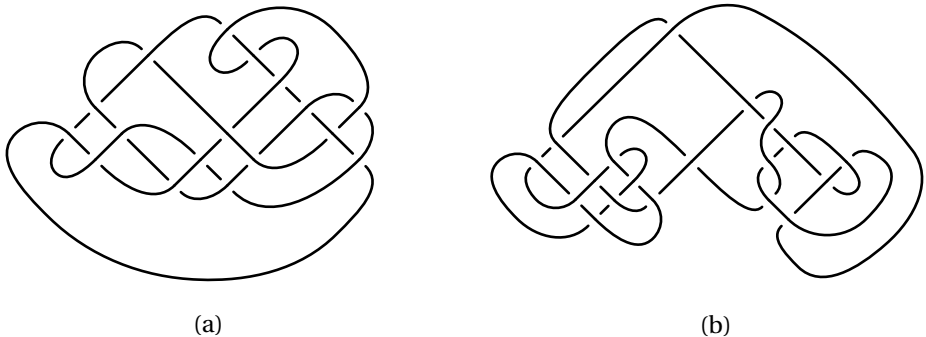

For Section~\ref{sec: friends}, we also identified knots we suspected
were smooth slice because they are $0$-friends with a ribbon knot
(this is a priori weaker than having the same knot trace).  For these
knots, we also eventually found ribbon disks for all
but $18nh_{00000601}$.  In particular, we found ribbon disks for
$19nh_{000077044}$, $19nh_{000187109}$, and $19nh_{003361975}$.  The
knot $19nh_{000077044}$ is depicted in Figure \ref{fig: disk the
  obscure}(b).

% ACTUALLY MAYBE ADD ALL THE FIGURES FOR THE 0-FRIENDS AND 0-SURGERIES?
  
\subsection{Fusion number}

\begin{table}
  \centering
  \begin{tabular}{c r l} 
    \toprule
    \# bands & \# knots  & \multicolumn{1}{c}{\%}\\
    \midrule
    1 & \NumKnotsNeedingOneBand & 76.5 \\ 
    2 & \NumKnotsNeedingTwoBands  & 23.4 \\
    3 & \NumKnotsNeedingThreeBands & \hphantom{2}0.14 \\
    4 & \NumKnotsNeedingFourBands &  \hphantom{2}0.0045 \\
    \bottomrule
  \end{tabular}
  \caption{The number of bands in the ribbon disks we used in
    Theorem~\ref{thm: ribbon}. It is an upper bound on the fusion
    number for each knot.}
  \label{tab: fusion}
\end{table}

The \emph{fusion number} of a ribbon knot is the minimal number of
1-handles needed in a ribbon disk, or equivalently the number of band
moves needed to produce the unlink. In our search, we looked for ribbon
disks needing up to four bands; the vast majority of the ribbon disks
we found only needed one band, with the vast majority of the remainder
only needing 2 bands, and so on; see Table~\ref{tab: fusion}.  Only
$\NumKnotsNeedingFourBands$ of the ribbon disks we found used four
bands, so we did not search further than that.

There has been much research exploring bounds on the fusion number by
computable knot invariants, including the torsion order of knot Floer
homology \cite{JuhaszMillerZemke2020}, and the $X$-torsion order in
Khovanov homology \cite{Sarkar2020, Schuetz2024}. Using KnotJob
\cite{KnotJob}, we computed the $X$-torsion order for all the knots in
\PS, for the fields $\Q$, $\F_3$, and $\F_5$. For our ribbon knots,
$\RibbonKnotsWithXTorsionOrderTwo$ had $X$-torsion order two, and the
rest had $X$-torsion order one.  The ribbon knots with $X$-torsion
order two must have fusion number at least two. We found 2-band
ribbon disks for all but one of them, namely $18nh_{00098373}$, and
hence the rest have fusion number exactly two.

\subsection{Related work}
\label{sec: ribbon comp}

As mentioned in Section~\ref{sec: prior}, Owens and Swenton studied
the 1.2~billion nontrivial prime alternating knots with at most 21
crossings, finding some $\num{664633}$ ribbon disks
\cite{OwensSwenton2023}. They heavily exploited that when such a knot
is smoothly slice, it has a ``bifactorizable diagram'', a notion that
is defined in terms of Goeritz matrices. They then focused on
``algorithmic bands'' which are short (length at most~3 in our
language with at most 1 half-twist) and yield another bifactorizable
diagram; in hard cases, they allowed slightly longer ``escape bands''.
For prime alternating knots with at most 19 crossings, in the end we
completely determined which are ribbon in Theorem~\ref{thm: alt} using
our generic approach; this includes the 86 knots that were
unresolved by \cite{OwensSwenton2023}.  However, we had to really
struggle to find ribbon disks for some knots (e.g.~$19ah_{29063095}$,
$19ah_{40083815}$, $19ah_{02944101}$, and $19ah_{39877460}$) that are
straightforward with the techniques of \cite{OwensSwenton2023}.

The paper \cite{GukovEtAl2023} uses machine learning techniques to
search for ribbon disks.  There, the authors used the same class of
diagrammatic bands as we do.  However, rather than try all such bands
up to some fixed complexity, they use reinforcement learning or
Bayesian optimization to generate a collection of bands to add.  For
knots with at most 14 crossings, \cite{GukovEtAl2023} arrived at the
same list of \num{1705} ribbon knots as in Theorem~\ref{thm: real
  main}.

\subsection{Ribbon certificates}
\label{sec: cert}

For each ribbon knot in Theorem~\ref{thm: ribbon}, we saved a ``ribbon
certificate'' in \cite{CodeAndData}.  For such a knot $K$, this is
sequence $(D_1, b_1, D_2, b_2, \ldots, b_k, D_{k+1})$ where:
\begin{enumerate}
\item $D_1$ is a diagram for $K$.
\item Each $D_i$ is a link diagram and $b_i$ is a diagrammatic band
  for $D_i$.  Here, $D_i$ is recorded by a PD code and $b_i$ is encoded
  as in the caption to Figure~\ref{fig: diag bands}.
  
\item $D_{i+1}$ is a diagram for the link resulting from adding 
  $b_i$ to $D_i$.
  
\item $D_{k + 1}$ is the unlink or one of the links from
  Section~\ref{sec: cache}.
\end{enumerate}
To check a certificate, one needs to verify certain links are
isotopic.  For example, $D_1$ may not be the standard diagram for $K$
because of shaking as described in Section~\ref{sec: shaking}.
Similarly, $D_{i+1}$ is typically not the diagram initially
generated by adding $b_i$ to $D_i$; rather it, is a simplified and
possibly shaken version of the same.  Most of the time these links
have hyperbolic exteriors, in which case SnapPy can easily check the
needed link equivalences.

\section{Topological obstructions}
\label{sec: HKL}

The main topological slice obstruction we used was that of Herald,
Kirk, and Livingston \cite{HeraldKirkLivingston2010}, which uses
twisted Alexander polynomials and the theory developed by Kirk and
Livingston \cite{KirkLivingston1999a} to provide a
computationally-efficient form of the Casson-Gordon invariants
\cite{CassonGordonOrsay}.  We introduce several refinements of the
methods of \cite{HeraldKirkLivingston2010} in Sections~\ref{sec: HKL
  refinement}, \ref{sec: poly is a norm}, \ref{sec: HKL with q=2}, and
\ref{sec: q^e}; we refer to these broadly as \emph{HKL tests}.  The
main result of this section is:
\begin{theorem}
  \label{thm: HKL obs summary}
  Of the knots in \PS, at least \NumObsByAnyHKLTest\ (57.1\%) are
  obstructed from being topologically slice by an HKL test. 
\end{theorem}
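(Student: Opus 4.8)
The plan is to implement the slice obstruction of \cite{HeraldKirkLivingston2010}, together with the refinements of Sections~\ref{sec: HKL refinement}, \ref{sec: poly is a norm}, \ref{sec: HKL with q=2}, and \ref{sec: q^e}, in a form efficient enough to run on all of \PS, and then tabulate which knots are obstructed. Fix a knot $K$ and a prime power $q$. From a Seifert matrix for $K$ (or, when $q = 2$, from a Goeritz-type presentation) one computes the finite abelian group $A_q = H_1(\Sigma_q)$, where $\Sigma_q = \Sigma_q(K)$ is the $q$-fold cyclic branched cover of $S^3$ over $K$, together with its $\Q/\Z$-valued linking form $\lambda_q$. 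The Casson--Gordon theory as packaged by \cite{HeraldKirkLivingston2010, KirkLivingston1999a, CassonGordonOrsay} says that if $K$ is topologically slice then there is a \emph{metabolizer}: a subgroup $M \le A_q$ with $M = M^{\perp}$ for $\lambda_q$ (so $|M|^2 = |A_q|$) such that for every prime $p$, every $k \ge 1$, and every character $\chi \colon A_q \to \Z/p^k$ vanishing on $M$, the twisted Alexander polynomial of $K$ associated to $\chi$ satisfies a Fox--Milnor-type norm condition. Taking the contrapositive, the HKL test to implement is: enumerate the metabolizers $M \le A_q$; if for each $M$ one can exhibit a single vanishing character $\chi$ whose twisted Alexander polynomial is \emph{not} a norm, then $K$ is not topologically slice.

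The core per-character computation is, given $(K, q, \chi)$, to form the metabelian representation of $\pi_1(S^3 \setminus K)$ determined by the abelianization and $\chi$, twisted so as to land over the cyclotomic ring $\Z[\zeta_{p^k}][t^{\pm 1}]$; then take a simplified group presentation, build the twisted Alexander matrix by Fox calculus, and extract the order of the twisted Alexander module. The delicate ingredient is the norm test itself --- deciding whether the resulting Laurent polynomial is of the form $f(t)\,\overline{f(t)}$ up to units and norms from the cyclotomic field --- for which I would use the criterion developed in Section~\ref{sec: poly is a norm} (controlling the relevant unit group, testing modulo well-chosen primes, and reducing to a finite factorization problem) rather than attempting an exact symbolic factorization. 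The case $q = 2$ admits a streamlined metabolizer search and is treated in Section~\ref{sec: HKL with q=2}, while the passage from primes to general prime powers $q = p^e$ is the content of Section~\ref{sec: q^e}; Section~\ref{sec: HKL refinement} supplies the remaining optimizations that make the metabolizer enumeration tractable.

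I would then run this pipeline over \PS\ in order of increasing cost: first the basic HKL test at small $q$ and small $|A_q|$, escalating to the fancier and the direct variants and to larger $q$ only for knots not yet obstructed. Summing the knots obstructed by at least one such test yields the stated count \NumObsByAnyHKLTest, with the breakdown into \NumObsByBasicHKLTest\ obstructed by the basic test, \NumObsByFancyHKLTest\ additionally by the fancy refinements, \NumObsByDirectHKLTest\ by the direct variant, and \NumObsByHLKWhereQIsTwo\ using a character of order a power of $2$, recorded as part of the proof. The archived data includes the successful $(q, p, k, M, \chi)$ parameters for each knot, so the verdict for any individual knot can be rechecked cheaply.

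The main obstacle will be keeping the metabolizer enumeration and the norm test simultaneously correct and fast enough to survive millions of knots. Since $|A_q|$ can be large with many prime factors, the number of isotropic subgroups of $(A_q, \lambda_q)$ can explode, so the search must be organized $p$-primarily and with aggressive pruning as in Section~\ref{sec: HKL refinement}. Correctness of a ``not a norm'' verdict is equally critical, because a false positive here would wrongly certify a slice (indeed ribbon) knot as non-slice; this is exactly the kind of bug the paper guards against by re-running every obstruction on the large population of known ribbon knots in \PS\ as a trap. Secondary difficulties are the cost of the twisted Alexander computations over ramified cyclotomic rings and the heuristics governing when to stop increasing $q$; neither affects correctness, only how many of the remaining \num{23994} stubborn knots get picked up here rather than by the smooth methods of Section~\ref{sec: smooth}.
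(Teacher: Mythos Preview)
Your high-level framing is right: this is a computational tabulation, and the archived parameters allow per-knot verification.  But several of your descriptions of the actual methods are off, and in ways that matter if you tried to implement them.

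First, you have swapped the roles of the two parameters.  In the paper, $m$ is the degree of the branched cover $B_m$ and $q$ is the order of the target group of $\chi \colon H_1(B_m;\Z) \to \Z/q\Z$.  Section~\ref{sec: HKL with q=2} is about allowing the \emph{character} to have $2$-power order (where the norm condition must be tested in all $\Q(\zeta_{2^k})$ via Proposition~\ref{prop: 2 case}), not about the double branched cover; and Section~\ref{sec: q^e} is about characters into $\Z/q^e\Z$ with $e>1$, not about prime-power cover degrees.  Your attributions of those sections are therefore backwards.

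Second, the paper explicitly does \emph{not} compute the linking form and does \emph{not} enumerate genuine metabolizers $M=M^\perp$.  Instead it works with the coarser class of $\Z/m\Z$-invariant subgroups of the right size, and the refinement of Section~\ref{sec: HKL refinement} is the dimension bound $\dim\Abar \le \tfrac12\dim\Hbar$ (Lemma~\ref{lem: small meta}) combined with the decomposition of $\Hbar = H_1(B_m;\F_q)$ into irreducible $\F_q[\Z/m\Z]$-modules and the weak/strong obstruction dichotomy of Theorem~\ref{thm: imp HKL}.  Computing $\lk$ is listed only as a possible future refinement.

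Third, the norm test of Section~\ref{sec: poly is a norm} is precisely an exact factorization over the cyclotomic field followed by the pairing criterion of Proposition~\ref{prop: norm test}; it is not a modular or heuristic test as you describe.
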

The contributions of the individual techniques is given in
Section~\ref{sec: HKL params}.

\subsection{Basic setup} \label{sec: HKL setup}

We first outline how these invariants work, following
\cite{HeraldKirkLivingston2010, KirkLivingston1999a,
  FriedlVidussi2011}.  For a knot $K$ in $S^3$, let $E_K$ be its
exterior $S^3 \setminus \nu(K)$, and let $B_m$ be the $m$-fold cyclic
cover of $S^3$ branched over $K$ for some $m \geq 2$.  Consider
$H_1(B_m; \Z)$ and a homomorphism $\chi \maps H_1(B_m; \Z) \to \Z/q\Z$
for some integer $q \geq 2$.  (We assume here that $H_1(B_m; \Z)$ is
finite, which can easily be arranged or checked directly.)
By identifying $\Z/q\Z$ with the multiplicative group generated by
$\zeta_q = e^{2 \pi i/q}$ in $\Q(\zeta_q)$, we can see $\rho$ as a representation
of $H_1(B_m; \Z)$ on $\Q(\zeta_q)$, which induced 
down from $\pi_1(\mbox{$m$-fold cyclic cover of $E_K$)}$ to $\pi_1(E_K)$,
gives a representation
$\rho_\chi \maps \pi_1 (E_K) \to \GL{m}{\big(\Q(\zeta_q)\big)}$.  If
$Z_K$ is the $0$-framed Dehn surgery on $K$, then $\rho_\chi$ extends
to $\pi_1(Z_K)$ and the criterion will be in terms of the twisted
Alexander polynomial of $(Z_K, \rho_\chi)$, which is a Laurent
polynomial in $\Q(\zeta_q)[t^{\pm 1}]$ that we denote as $\Delta_\chi$ but
is $\Delta_{Z_K, (\Q[\zeta_q[t^{\pm 1}])^m}$ in
\cite{HeraldKirkLivingston2010}, $\Delta_1(Z_K; \rho_\chi)$ in
\cite{KirkLivingston1999a}, and $\Delta_{Z_K}^{\alpha(m, \chi)}$ in
\cite{FriedlVidussi2011}.

The basic idea is that if $K$ is topologically slice, then $B_m$ is
the boundary of the 4-manifold $N_m$ obtained by taking the $m$-fold
cyclic cover of $D^4$ branched over the slice disk.  If $\chi$ extends
to $H_1(N_m; \Z)$, possibly after enlarging the range to $\Z/q^k\Z$,
then $\Delta_\chi(t)$ must be a \emph{norm}, that is, equal to
$f(t) \fbar(t)$ up to units in $\Q(\zeta_q)[t^{\pm 1}]$; here $\fbar$
is the result of applying the involution to $\Q(\zeta_q)[t^{\pm 1}]$
that interchanges $t$ and $t^{-1}$ and is complex conjugation on
$\Q(\zeta_q)$.  To understand when $\chi$ might extend to $N_m$, one
induces the notion of an \emph{invariant metabolizer}, a subgroup $A$
of $H := H_1(B_m; \Z)$ where $A$ is invariant under the
$\Z/m\Z$-action on $H$, the orders satisfy $\abs{A}^2 = \abs{H}$, and
$A$ is its own orthogonal complement under the nonsingular linking
form $\lk \maps H \otimes H \to \Q/\Z$.  This is relevant because the
kernel of $H \to H_1(N_m; \Z)$ is necessarily such an invariant
metabolizer.  The following is a reformulation of the results of
Section 6 of \cite{KirkLivingston1999a} as in
\cite{FriedlVidussi2011}:

\begin{theorem}[\cite{KirkLivingston1999a}]
  \label{thm: main KL}
  Suppose $m$ and $q$ are prime powers where $q$ is odd.  If $K$ is
  topologically slice, then there exists an invariant metabolizer
  $A \subset H$ such that every
  $\chi \maps H \to \Z/q\Z$ with $A \subset \ker(\chi)$
  gives a $\Delta_\chi$ that is a norm.
\end{theorem}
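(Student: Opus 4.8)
The plan is to run the classical Casson--Gordon slicing obstruction in the twisted-Alexander-polynomial packaging of \cite{KirkLivingston1999a, FriedlVidussi2011}, extracting the metabolizer $A$ directly from the branched cover of a slice disk. So suppose $K$ is topologically slice and fix a locally flat slice disk $D\subset D^4$. First I would form $N_m$, the $m$-fold cyclic cover of $D^4$ branched over $D$, so that $\partial N_m = B_m$, and let $\tau$ be the generator of its deck group. Because $m$ is a prime power, a Smith-theory argument (as in \cite{CassonGordonOrsay}) shows $N_m$ is a rational homology $4$-ball; in particular $H_1(N_m;\Z)$ is finite and carries the induced $\Z/m\Z$-action. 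This is the only place the hypothesis that $m$ is a prime power is used.

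Next I would set $A := \ker\big(H\to H_1(N_m;\Z)\big)$ and check it is an invariant metabolizer. Invariance under the $\Z/m\Z$-action is immediate, since $\tau$ restricts to the deck transformation of $B_m$, so $H\to H_1(N_m;\Z)$ is equivariant and its kernel is $\tau$-invariant. That $A = A^\perp$ under the linking form $\lk\maps H\otimes H\to\Q/\Z$ --- and hence $\abs{A}^2 = \abs{H}$ --- is the standard ``half lives, half dies'' computation, coming from the long exact sequence of $(N_m,B_m)$ and Poincar\'e--Lefschetz duality together with the fact that $N_m$ is a rational homology ball. This is exactly the fact recorded in the discussion preceding the theorem, and it makes $A$ an invariant metabolizer.

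Now fix any $\chi\maps H\to\Z/q\Z$ with $A\subseteq\ker\chi$; the goal is to show $\Delta_\chi$ is a norm. Since $\chi$ kills $A$ it factors through the injection $H/A\injects H_1(N_m;\Z)$, and as $\Z/q\Z$ embeds in the divisible, hence injective, group $\Q/\Z$, the map $\chi$ extends to some $\tilde\chi\maps H_1(N_m;\Z)\to\Z/q^k\Z$ for a suitable $k\geq 1$ --- this is the ``enlarging of the range'' mentioned above --- in such a way that $\tilde\chi$ restricted to $H$ is $\chi$ followed by $\Z/q\Z\injects\Z/q^k\Z$. Unwinding the constructions of \cite{KirkLivingston1999a}, the fact that $\tilde\chi$ is defined on all of $H_1(N_m;\Z)$ is precisely what is needed for the associated representation $\rho_{\tilde\chi}$ of $\pi_1(Z_K)$ to extend over the slice-disk exterior $W$ bounding $Z_K$, which is a rational homology $S^1\times D^3$ (the $t$-direction extends because $H_1(W;\Z) = \Z$, and the branched-cover direction because $\tilde\chi$ extends). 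The twisted Poincar\'e--Lefschetz duality argument of \cite[\S 6]{KirkLivingston1999a}, as repackaged in \cite{FriedlVidussi2011}, then forces the twisted Alexander polynomial of $(Z_K,\rho_{\tilde\chi})$ to be a norm in $\Q(\zeta_{q^k})[t^{\pm 1}]$; and since $\tilde\chi|_H$ is $\chi$ composed with $\Z/q\Z\injects\Z/q^k\Z$, the representation $\rho_{\tilde\chi}$ is simply $\rho_\chi$ base-changed along $\Q(\zeta_q)\injects\Q(\zeta_{q^k})$, so this says precisely that $\Delta_\chi$, viewed in the larger ring, is a norm. I expect the main obstacle to be exactly this middle step --- checking that ``$\tilde\chi$ extends over $H_1(N_m;\Z)$'' genuinely licenses the duality argument, and that it is the polynomial $\Delta_\chi$ attached to $Z_K$, rather than some a priori different invariant, that it constrains --- but this is the content of \cite[\S 6]{KirkLivingston1999a} and its streamlining in \cite{FriedlVidussi2011}.

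Finally I must descend: $\Delta_\chi\in\Q(\zeta_q)[t^{\pm 1}]$ is a norm up to units in $\Q(\zeta_{q^k})[t^{\pm 1}]$, and I claim it is already a norm up to units in $\Q(\zeta_q)[t^{\pm 1}]$. This is where $q$ being odd is used. Put $F = \Q(\zeta_q)\subseteq E = \Q(\zeta_{q^k})$; then $[E:F]$ is a power of the odd prime dividing $q$. The ring $F[t^{\pm 1}]$ is a PID, and since $\overline{\Delta_\chi}$ is a unit multiple of $\Delta_\chi$, the polynomial $\Delta_\chi$ is a norm up to units over $F$ exactly when every irreducible $\pi\in F[t^{\pm 1}]$ with $\bar\pi$ a unit multiple of $\pi$ divides $\Delta_\chi$ to even order. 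Suppose not: some such self-conjugate $\pi$ divides $\Delta_\chi$ to odd order. Over $E$ it factors into a single Galois orbit of distinct irreducibles, of size dividing $[E:F]$ and therefore odd; the conjugation involution permutes this orbit (the Galois group of $\Q(\zeta_{q^k})/\Q$ being abelian), so, the orbit being odd, it fixes some member $\psi$. Then $\psi$ is self-conjugate over $E$ and divides $\Delta_\chi$ to the same odd order as $\pi$, contradicting that $\Delta_\chi$ is a norm over $E$. Hence every such $\pi$ divides $\Delta_\chi$ to even order, so $\Delta_\chi$ is a norm up to units in $\Q(\zeta_q)[t^{\pm 1}]$, as desired.
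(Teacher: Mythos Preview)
Your proposal is correct. Note that the paper does not give its own proof of this theorem: it is quoted from \cite{KirkLivingston1999a}, and the surrounding discussion in Section~\ref{sec: HKL with q=2} merely sketches the two-step structure --- first extend $\chi$ over $N_m$ (possibly enlarging the range to $\Z/q^k\Z$) and invoke Proposition~6.1 of \cite{KirkLivingston1999a} to get that $\Delta_\chi$ is a norm over $\Q(\zeta_{q^k})$, then descend to $\Q(\zeta_q)$ via Lemma~6.4 of \cite{KirkLivingston1999a} using that $q$ is odd. Your argument follows this outline exactly, and your final paragraph is in effect a self-contained proof of that Lemma~6.4.
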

To use this as a slice obstruction, one considers all possible
invariant metabolizers $A$ and for each one finds at least one $\chi$
for which $\Delta_\chi$ is not a norm; the precise tests we used are
discussed in Section~\ref{sec: HKL implement}.  As in
\cite{HeraldKirkLivingston2010}, we do not actually compute the form
$\lk$, and so work with the possibly larger class of $A$ that are
invariant and have the correct size.

\subsection{The original HKL  test}
\label{sec: HKL implement}

Until Section~\ref{sec: q^e}, we will only consider the special case
of Theorem~\ref{thm: main KL} when $q$ is prime.  We also assume that
$\gcd(m, q) = 1$ as otherwise it turns out that $H^1(B_m; \F_q)$ will
be zero.  Since $q$ is prime, any $\chi \maps H \to \Z/q\Z$ can be
viewed as an element of $H^1(B_m; \F_q)$.  Since $m$ and $q$ are
coprime, the polynomial $x^m - 1$ in $\F_q[x]$ is separable, and, by
the theory of rational canonical form for matrices \cite[\S
12.2]{DummitFoote}, the irreducible representations of $\Z/m\Z$ on
finite-dimensional $\F_q$-vector spaces correspond to the irreducible
factors of $x^m - 1$ in $\F_q[x]$. Indeed, given an irreducible
representation $\psi$ of $\Z/m\Z$ on such a vector space $V$, the
corresponding factor of $x^m - 1$ is the characteristic polynomial of
the $\psi$-action of the preferred generator of $\Z/m\Z$.

Switching to the language of modules over $\F_q[\Z/m\Z]$, decompose
$\Hbar = H_1(B_m; \F_q)$ into irreducible submodules as
\[
  \Hbar = \bigoplus_{i = 1}^k V_i^{e_i} \mtext{where the
    $V_i$ are distinct and all $e_i \geq 1$.}
\]
Let $\Abar$ be the image of a metabolizer $A \subset H_1(B_m; \Z)$ in
$\Hbar$; by Lemma 8.2 of \cite{HeraldKirkLivingston2010}, it is a
\emph{proper} subspace of $\Hbar$.  
By Schur's lemma, there are $\Z/m\Z$-invariant subspaces
$W_i$ of $V_i^{e_i}$ so that $\Abar = \bigoplus_{i = 1}^k W_i$.

For an $\F_q$-vector space $W$, we denote the dual vector space of
linear functionals $W \to \F_q$ by $W^*$, and further identify 
$H^1(B_m; \F_q)$ with $\Hbar^*$. When all the multiplicities $e_i = 1$,
we have the following principle that underlies the applications in
Section~10 of \cite{HeraldKirkLivingston2010}:

\begin{lemma}
  \label{lem: basic HKL}
  Suppose $\Hbar$ decomposes into $\bigoplus_{i = 1}^k V_i$ where
  the $V_i$ are distinct irreducibles.  Suppose each $V_i^*$ contains
  a $\chi_i$ where $\Delta_{\chi_i}$ is not a norm (here $\chi_i$ is
  defined as $0$ on the other $V_j$).  Then $K$ is not topologically
  slice.
\end{lemma}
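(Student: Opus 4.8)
The plan is to argue by contradiction: suppose $K$ is topologically slice, and derive that one of the $\Delta_{\chi_i}$ must be a norm, contradicting the hypothesis. First I would invoke Theorem~\ref{thm: main KL}: since $m$ and $q$ are prime powers with $q$ odd, topological sliceness of $K$ yields an invariant metabolizer $A \subset H = H_1(B_m; \Z)$ such that every $\chi \maps H \to \Z/q\Z$ with $A \subset \ker(\chi)$ gives a $\Delta_\chi$ that is a norm. Let $\Abar$ be the image of $A$ in $\Hbar = H_1(B_m; \F_q)$; by Lemma 8.2 of \cite{HeraldKirkLivingston2010}, $\Abar$ is a \emph{proper} subspace of $\Hbar$.

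Next I would use the decomposition $\Hbar = \bigoplus_{i=1}^k V_i$ into distinct irreducible $\F_q[\Z/m\Z]$-modules. Since $\Abar$ is $\Z/m\Z$-invariant, Schur's lemma (as invoked in the excerpt just before the lemma statement) gives $\Abar = \bigoplus_{i=1}^k W_i$ with each $W_i$ a $\Z/m\Z$-invariant subspace of $V_i$; but $V_i$ is irreducible, so $W_i$ is either $0$ or all of $V_i$. Because $\Abar$ is proper, at least one index $i_0$ has $W_{i_0} = 0$, i.e.\ $\Abar \subseteq \bigoplus_{i \neq i_0} V_i$. Now consider the functional $\chi_{i_0} \in V_{i_0}^*$ provided by hypothesis, extended by $0$ on every other $V_j$; viewed in $H^1(B_m; \F_q) \cong \Hbar^*$, it vanishes on $\bigoplus_{i \neq i_0} V_i$ and hence on $\Abar$. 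Since $q$ is prime, I can regard $\chi_{i_0}$ as a homomorphism $H \to \Z/q\Z$; it factors through $\Hbar$ and kills $\Abar$, so in particular $A \subset \ker(\chi_{i_0})$. Here I should double-check that killing $\Abar$ in $\Hbar$ is equivalent to killing $A$ in $H$, which holds because $\chi_{i_0}$ factors through the quotient $H \to H \otimes \F_q = \Hbar$ (as its target is an $\F_q$-vector space) and $\Abar$ is by definition the image of $A$ there.

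Then Theorem~\ref{thm: main KL} forces $\Delta_{\chi_{i_0}}$ to be a norm, contradicting the hypothesis that $\Delta_{\chi_{i_0}}$ is not a norm. This contradiction shows $K$ is not topologically slice.

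The main obstacle I anticipate is purely bookkeeping rather than conceptual: one must be careful that the $\chi_i$ in the lemma's hypothesis lives in $V_i^*$ but the theorem wants a homomorphism $H \to \Z/q\Z$, so the identifications $H^1(B_m;\F_q) \cong \Hbar^* \cong \bigoplus V_i^*$ and the passage from $H$ to $\Hbar$ must be tracked so that ``$A \subset \ker \chi_{i_0}$'' really does hold in $H$ and not merely in $\Hbar$ — but as noted this is automatic since any $\F_q$-valued homomorphism on $H$ factors through $\Hbar$. A second, minor point to verify is that the decomposition into distinct irreducibles (the $e_i = 1$ case) is exactly what makes Schur's lemma give the clean statement $W_i \in \{0, V_i\}$; with multiplicities this step would fail, which is precisely why this lemma is stated only in the multiplicity-free case and the general case is deferred to later sections.
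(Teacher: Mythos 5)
Your proof is correct and follows essentially the same route as the paper's: both use that $\Abar$ is a proper subspace of $\Hbar$, that Schur's lemma forces $\Abar$ to be a direct sum of a subset of the $V_i$, and hence that some $\chi_{i_0}$ vanishes on $\Abar$ (and thus on $A$), after which Theorem~\ref{thm: main KL} finishes the job. Your phrasing as a contradiction and the explicit remark that any $\F_q$-valued homomorphism on $H$ factors through $\Hbar = H \otimes \F_q$ are small elaborations of the paper's terser argument, not a different approach.
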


\begin{proof}
As $\Abar$ is a proper subspace of $\Hbar$ and is the direct sum of a
subset of the $V_i$, after reindexing the $V_i$ we may assume $\Abar$
is contained in $\bigoplus_{i = 2}^k V_i$.  Then $\Abar$ is contained
in $\ker(\chi_1)$, and, since $\Delta_{\chi_1}$ is not a norm,
Theorem~\ref{thm: main KL} implies that $K$ is not topologically
slice.
\end{proof}

% % This was subsumed into the next theorem
%
% Lemma~\ref{lem: basic HKL} easily extends to the case when some $V_i$
% have multiplicity, but one needs many more $\Delta_{\chi}$ to not be
% norms to get an obstruction:
%
% \begin{lemma}
%   \label{lem: mult HKL}
%   Suppose $\Hbar$ decomposes as $\bigoplus_{i=1}^k V_i^{e_i}$ where
%   the $V_i$ are distinct irreducibles and all
%   $e_i \geq 1$.  Suppose for every $i$ either
%    \begin{enumerate}
%   \item
%     \label{item: mult 1}
%     $e_i = 1$ and there is some $\chi$ in $V_i^*$
%     so that $\Delta_{\chi}$ is not a norm, or
%   \item
%     \label{item: more mult}
%     $e_i > 1$ and every class in
%     $P^1\big((V_i^{e_i})^*\big)$ contains a $\chi$ where $\Delta_{\chi}$
%     not a norm.
%   \end{enumerate}
%   Then $K$ is not topologically slice.
% \end{lemma}
%
% \begin{proof}
% As noted, there are $\Z/m\Z$-invariant subspaces $W_i$ of $V_i$ so
% that $\Abar = \bigoplus_{i = 1}^k W_i$.  As $\Abar \neq \Hbar$, we can
% reindex so that $W_1$ is a proper subset of $V_1^{e_1}$.  If
% $e_1 = 1$, then as $V_1$ is irreducible we have $W_1 = \{0\}$.  Hence,
% $\Abar$ is the kernel of the $\chi$ promised by (\ref{item: mult 1})
% and Theorem~\ref{thm: main KL} applies.
% If instead $e_i > 1$, as $W_1$ is a proper subspace of $V_1^{e_1}$,
% there is a class in $P^1\big((V_i^{e_i})^*\big)$ where every $\chi$
% it contains vanishes on $W_1$.  By hypothesis, one such $\chi$ has
% $\Delta_\chi$ not a norm and again Theorem~\ref{thm: main KL} applies.
% \end{proof}

\subsection{A refined HKL test}
\label{sec: HKL refinement}

We now analyze the image $\Abar$ of the metabolizer more carefully,
leading to Theorem~\ref{thm: imp HKL} which improves Lemma~\ref{lem:
  basic HKL}. As in \cite[\S 8]{HeraldKirkLivingston2010}, for any
prime $q$ we consider the localization $H_{(q)} = H \otimes \Z_{(q)}$
of $H$, which concretely is the subgroup of elements whose order is a
power of $q$; hence $H = \bigoplus \setdef{H_{(q)}}{\mbox{$q$ prime}}$
where each $H_{(q)}$ is $\Z/m\Z$-invariant.  The subgroups $H_{(q)}$
are orthogonal under the linking form $\lk$, and hence the restriction
of $\lk$ to each $H_{(q)}$ is nondegenerate.  Moreover, the
metabolizer similarly decomposes into the sum of
$A_{(q)} \leq H_{(q)}$, and one can check that
$\abs{A_{(q)}}^2 = \abs{H_{(q)}}$.  Here is the key fact that
strengthens Lemma 8.2 of \cite{HeraldKirkLivingston2010} and allows us
to go beyond Lemma~\ref{lem: basic HKL}:

\begin{lemma}[\cite{Sawin2024}]
  \label{lem: small meta}
  As $\F_q$-vector spaces, $\dim \Abar \leq \frac{1}{2} \dim \Hbar$.
  Moreover, $H_{(q)}/A_{(q)} \cong A_{(q)}$.
\end{lemma}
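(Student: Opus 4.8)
The plan is to reduce both claims to statements about the nondegenerate $q$-part $H_{(q)}$ and its metabolizer $A_{(q)}$, and then use the fact that reduction mod $q$ from $H_{(q)}$ to $\Hbar = H_{(q)}/q H_{(q)}$ (noting $\Hbar = H_1(B_m;\F_q)$ is identified with the $q$-torsion quotient) carries metabolizers to half-dimensional subspaces. First I would recall from the discussion just above that $H = \bigoplus_q H_{(q)}$, that $\lk$ restricts nondegenerately to each $H_{(q)}$, and that $A = \bigoplus_q A_{(q)}$ with $|A_{(q)}|^2 = |H_{(q)}|$ and $A_{(q)}$ its own orthogonal complement in $H_{(q)}$ under $\lk|_{H_{(q)}}$; moreover $A_{(q)}$ is $\Z/m\Z$-invariant. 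So without loss of generality I may work entirely inside the finite abelian $q$-group $G := H_{(q)}$ with metabolizer $B := A_{(q)}$, and the target statements become: (i) the image $\Abar$ of $B$ in $G/qG$ has $\dim_{\F_q}\Abar \le \tfrac12 \dim_{\F_q}(G/qG)$, since $\dim_{\F_q}(G/qG) = \dim_{\F_q}\Hbar$ because only the $q$-part of $H$ contributes to $H_1(B_m;\F_q)$; and (ii) $G/B \cong B$ as groups.

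For (ii), the self-annihilating property gives a short exact sequence $0 \to B \to G \to G/B \to 0$ together with a perfect pairing $G/B \otimes B \to \Q/\Z$ induced by $\lk$: indeed $B = B^\perp$ means the adjoint map $G \to \Hom(G,\Q/\Z)$ sends $B$ isomorphically onto the annihilator of $B$, which is $\Hom(G/B,\Q/\Z)$, so $B \cong \Hom(G/B, \Q/\Z) \cong G/B$ using that $G/B$ is a finite abelian group (Pontryagin self-duality of finite abelian groups). This also re-derives $|B|^2 = |G|$. The second sentence of the lemma is then immediate.

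For (i), the main point is to control the image of $B$ in $G/qG$. Write $\dim_{\F_q}(G/qG) = r$, so $G$ needs $r$ generators as an abelian group, and let $s = \dim_{\F_q}\Abar$. I claim $B$ also needs exactly $\dim_{\F_q}(B/qB)$ generators and that $\Abar$, being the image of $B$ in $G/qG$, is a quotient of $B/qB$, hence $s \le \dim_{\F_q}(B/qB)$. Now apply part (ii): since $B \cong G/B$, the minimal number of generators of $B$ equals that of $G/B$, and from $0 \to B \to G \to G/B \to 0$ one gets $\dim_{\F_q}(G/qG) \le \dim_{\F_q}(B/qB) + \dim_{\F_q}((G/B)/q(G/B))$, i.e.\ $r \le 2\dim_{\F_q}(B/qB)$, giving $\dim_{\F_q}(B/qB) \ge r/2$. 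That is the wrong direction, so instead I would argue with the linking form directly at the level of $G/qG$: the pairing $\lk$ on $G$ induces a (possibly degenerate) $\F_q$-valued symmetric pairing on $G/qG$ by composing with $\tfrac1q\Z/\Z \cong \F_q$ on the relevant piece, and $\Abar$ lands in its own orthogonal complement for this induced pairing because $B$ is self-annihilating in $G$. For a symmetric bilinear form on an $r$-dimensional $\F_q$-space, any subspace contained in its own orthogonal complement has dimension at most $r/2$ — this is the standard bound, valid even when the form is degenerate, since such a subspace must contain... no: a self-orthogonal subspace can be as large as the whole radical. So the genuinely careful route, and the step I expect to be the real obstacle, is showing the induced form on $G/qG$ is \emph{nondegenerate}; equivalently, that $qG$ is exactly the radical, which follows from $\lk$ being nondegenerate on $G$ together with $G$ being a $q$-group (an element $x\in G$ pairs trivially with all of $G$ mod $q$ iff $\lk(x,-)$ takes values in $\tfrac1{q^{k-1}}\Z/\Z$ for $\exp(G)=q^k$, iff $x \in qG$, using nondegeneracy to upgrade this to $x \in qG$ precisely — this needs a short structure-theorem argument). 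Granting nondegeneracy, a self-orthogonal subspace of a nondegenerate symmetric form on an $r$-dimensional space has dimension $\le r/2$, and since $\Abar$ is self-orthogonal we conclude $s \le r/2 = \tfrac12\dim_{\F_q}\Hbar$, as desired. I would cite \cite{Sawin2024} for the crisp version of the $qG = \mathrm{rad}$ computation and the induced-form nondegeneracy, and present the metabolizer-decomposition reductions and the Pontryagin-duality argument in full, since those are short.
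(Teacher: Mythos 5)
The second claim, $H_{(q)}/A_{(q)} \cong A_{(q)}$, you prove essentially the same way the paper does: Pontryagin duality plus $A_{(q)} = A_{(q)}^\perp$ identifies $A_{(q)}$ with $\big(H_{(q)}/A_{(q)}\big)^\vee \cong H_{(q)}/A_{(q)}$.

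The dimension bound, however, has a genuine gap. Your argument hinges on the claim that the $\F_q$-valued pairing induced by $\lk$ on $G/qG$ (with $G = H_{(q)}$) is nondegenerate, or equivalently that its radical is exactly $qG$. This is false whenever $G$ is not homocyclic. Take $G = \Z/q\Z \oplus \Z/q^2\Z$ with the nondegenerate form $\lk\big((a,b),(c,d)\big) = ac/q + bd/q^2 \pmod{\Z}$; the exponent is $q^2$, so the reduced pairing is $\overline{\lk}(\bar x, \bar y) = q\,\lk(x,y)$, and for $x = (1,0)$ we get $\overline{\lk}\big(\overline{(1,0)},\overline{(c,d)}\big) = q\cdot c/q \equiv 0 \pmod{\Z}$ for every $(c,d)$, even though $(1,0)\notin qG = 0\oplus q\Z/q^2\Z$. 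So the ``short structure-theorem argument'' you hope for does not exist; the reduced pairing can be badly degenerate. (Taking the orthogonal sum of two copies of this example produces a $G$ that does admit a metabolizer, so the failure is not an artifact of choosing a $G$ that has none.)

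The frustrating part is that the route you tried first and then abandoned is the one the paper actually takes, and you were a single observation from closing it. You correctly recorded $s := \dim\Abar \leq \rank A_{(q)}$ (since $\Abar$ is a quotient of $A_{(q)}\otimes\F_q$) and, from part (ii), $\rank A_{(q)} = \rank\big(G/A_{(q)}\big)$. What you did not use is that $\rank\big(G/A_{(q)}\big)$ is computed \emph{exactly}, not merely bounded: by right-exactness of $-\otimes\F_q$, one has $\big(G/A_{(q)}\big)\otimes\F_q \cong \Gbar/\Abar$, so $\rank\big(G/A_{(q)}\big) = \dim\Gbar - s$. Feeding this through gives $s \leq \rank A_{(q)} = \rank\big(G/A_{(q)}\big) = \dim\Gbar - s$, hence $2s \leq \dim\Gbar = \dim\Hbar$. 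No nondegeneracy of any reduced form is needed.
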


\begin{proof}
For any group $N$, define $N^{\vee} = \Hom(N, \Q/\Z)$. Set
$G = H_{(q)}$.  Since $\lk$ on $G$ is nondegenerate, the natural map
$\iota \maps G \to G^{\vee}$ defined by
$g \mapsto \lk(g, \cdotspaced)$ is injective.  For any $M \leq G$, the
surjection $G \surjects G/M$ induces an injection
$(G/M)^\vee \injects G^\vee$, and we identify $(G/M)^\vee$ with its
image.  Note that $M^\perp$ is the preimage of $(G/M)^\vee$ under
$\iota$.  Taking $M$ to be the metabolizer $A_{(q)}$, from
$M = M^\perp$ we get that $M \cong (G/M)^\vee$; since $N^\vee \cong N$
for any finite group, we have $M \cong G/M$ proving the second claim.

Recall that $\rank N$ is the minimal number of generators of a group
$N$; when $N$ is a $q$-group,
$\rank N = \dim_{\F_q}\big(N \otimes \Z/q\Z\big)$.  Let $\Gbar = G/qG$
and $\Mbar$ be the image of $M$ in $\Gbar$.  Note that
$\rank(G/M) = \dim \Gbar - \dim \Mbar$.  Since $M \cong G/M$, we have
\[
  \rank M + \dim \Mbar = \dim \Gbar.
\]
Since $\dim \Mbar \leq \rank M$, we have
$2 \dim \Mbar \leq \dim \Gbar$ as needed.
\end{proof}

\begin{theorem}
  \label{thm: imp HKL}
  Suppose $\Hbar$ decomposes into distinct irreducibles as
  $\bigoplus_{i = 1}^{k} V_i^{e_i}$.  We say that $V_i$ is
  \emph{weakly obstructed} when there is some $\chi \in (V_i^{e_i})^*$
  where $\Delta_\chi$ is not a norm; it is \emph{strongly obstructed}
  if for every proper invariant subspace $W \subset V_i^{e_i}$ there is a
  $\chi \in (V_i^{e_i})^*$ with $W \subset \ker \chi$ where
  $\Delta_\chi$ is not a norm. (When $e_i = 1$, these notions
  coincide.)  Define
  \[
    \delta(V_i) = \begin{cases}
      \dim V_i^{e_i} &  \text{if $V_i$ is strongly obstructed} \\
      \dim V_i      &  \text{if $V_i$ is weakly obstructed} \\
      0             &  \text{otherwise}
    \end{cases}
  \]

  If $\sum_{i = 0}^k \delta(V_i) >  \frac{1}{2} \dim \Hbar$, then $K$ is not
  topologically slice.
\end{theorem}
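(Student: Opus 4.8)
The plan is to argue by contradiction. Assuming $K$ is topologically slice, I would first invoke Theorem~\ref{thm: main KL} to fix an invariant metabolizer $A \subset H$ with the property that $\Delta_\chi$ is a norm for every homomorphism $\chi \maps H \to \Z/q\Z$ vanishing on $A$. Passing to $\Hbar = H_1(B_m;\F_q)$ and writing $\Abar$ for the image of $A$, the isotypic decomposition recorded via Schur's lemma in Section~\ref{sec: HKL implement} gives $\Abar = \bigoplus_{i=1}^{k} W_i$ with each $W_i \subseteq V_i^{e_i}$ a $\Z/m\Z$-invariant subspace. The whole argument then reduces to the per-summand bound $\dim W_i \geq \delta(V_i)$ for every $i$: summing it produces $\dim \Abar \geq \sum_i \delta(V_i) > \tfrac{1}{2}\dim \Hbar$, contradicting Lemma~\ref{lem: small meta}.

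To establish $\dim W_i \geq \delta(V_i)$ I would split into the three cases defining $\delta$, the engine being one elementary observation: since $\gcd(m,q)=1$, the simple component of $\F_q[\Z/m\Z]$ corresponding to $V_i$ is a field $\F_{q^{\dim V_i}}$, so $V_i^{e_i}$ is a free module over it and every invariant subspace has $\F_q$-dimension a multiple of $\dim V_i$; in particular $\dim W_i < \dim V_i$ forces $W_i = 0$. In the weakly obstructed case I would take the $\chi \in (V_i^{e_i})^*$ with $\Delta_\chi$ not a norm guaranteed by weak obstruction and extend it by zero on the remaining summands; it then vanishes on $W_j$ for all $j \neq i$ automatically, so if also $W_i = 0$ it vanishes on all of $\Abar$, forcing $A \subseteq \ker \chi$ and hence $\Delta_\chi$ a norm --- the desired contradiction. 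Thus $W_i \neq 0$ and $\dim W_i \geq \dim V_i = \delta(V_i)$. In the strongly obstructed case, if $W_i$ were a proper subspace of $V_i^{e_i}$ the definition supplies $\chi \in (V_i^{e_i})^*$ with $W_i \subseteq \ker\chi$ and $\Delta_\chi$ not a norm; extending by zero again yields $\Abar \subseteq \ker\chi$ and the same contradiction, so $W_i = V_i^{e_i}$ and $\dim W_i = \dim V_i^{e_i} = \delta(V_i)$. In the unobstructed case $\dim W_i \geq 0 = \delta(V_i)$ is vacuous.

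The remaining details are routine bookkeeping: that an element of $\Hbar^*$ is the same thing as a homomorphism $H \to \Z/q\Z$, that it kills $A$ precisely when it kills $\Abar$ (immediate, using $\Hbar = H/qH$ and $q$ prime), that extending a character by zero does not change its twisted Alexander polynomial (it is literally the same character of $\pi_1(Z_K)$), and that $\Abar \subseteq \ker \chi$ iff $W_i \subseteq \ker\chi$ for all $i$. The one place where genuine structure enters --- and the step I would be most careful about --- is the module-theoretic divisibility fact, since it is what collapses the weakly obstructed analysis to ``$W_i = 0$ or $\dim W_i \geq \dim V_i$''. Beyond that I do not anticipate an obstacle: the whole proof is the argument of Lemma~\ref{lem: basic HKL} carried out more carefully, with Lemma~\ref{lem: small meta} furnishing the sharp bound $\tfrac{1}{2}\dim\Hbar$ in place of the mere properness of $\Abar$.
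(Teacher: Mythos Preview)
Your proposal is correct and follows essentially the same argument as the paper: decompose $\Abar$ into its isotypic pieces $W_i$, use the strong/weak obstruction hypotheses together with Theorem~\ref{thm: main KL} to force a lower bound $\dim W_i \geq \delta(V_i)$ in each summand, and then contradict Lemma~\ref{lem: small meta}. The paper organizes the logic slightly differently (it supposes $\dim A_i \geq \delta(V_i)$ fails for some $i$ and then exhibits the bad $\chi$ directly), but the content is identical; you are in fact more explicit than the paper about why $\dim W_i < \dim V_i$ forces $W_i = 0$, which the paper leaves implicit.
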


\begin{proof}
As noted above, $\Abar$ is the direct sum of invariant subspaces
$A_i \subset V_i^{e_i}$.  If $\dim A_i \geq \delta(V_i)$ for all $i$,
then
\[
  \dim \Abar = \sum_{i = 0}^k \dim A_i \geq \sum_{i = 0}^k
  \delta(V_i) > \frac{1}{2} \dim \Hbar
\]  
contradicting Lemma~\ref{lem: small meta}.  Therefore, we have an $i$
with $0 \leq \dim A_i < \delta(V_i)$. If $V_i$ is strongly obstructed
so that $\delta(V_i) = \dim V_i^{e_i}$, this means
$A_i \neq V_i^{e_i}$; if $V_i$ is weakly obstructed so
$\delta(V_i) = \dim(V_i)$, we get $A_i = \{0\}$. In either case, there
is a $\chi \in (V_i^{e_i})^*$ vanishing on all of $\Abar$, and
Theorem~\ref{thm: main KL} applies to show $K$ is not topologically
slice.
\end{proof}

\subsection{Checking if a polynomial is a norm}
\label{sec: poly is a norm}

The paper \cite{HeraldKirkLivingston2010} does not give a general
procedure for deciding when $\Delta_\chi$ is a norm; rather, the
authors used various techniques (e.g.~a form of Gauss's lemma over
$\Z[\zeta_q]$) to show that the $\Delta_\chi$ appearing in their
computations were not norms.  Below in Proposition~\ref{prop: norm
  test}, we give a general criterion that is easy to implement
provided one can factor polynomials with coefficients in a number
field, which is a well-studied problem \cite{Cohen1993, SageMath}.

We work in the setting of a subfield $F$ of $\C$ that is invariant
under complex conjugation.  Let $R = F[t, t^{-1}]$ be its ring of
Laurent polynomials. Consider the involutive ring isomorphism that
combines complex conjugation with $t \mapsto 1/t$; that is, if
$f(t) = \sum_n a_n t^n$, set $\fbar(t) = \sum_n \abar_n t^{-n}$
where the $a_n \in F$ and the sum is actually finite.  Recall that
$f \in R$ is a norm when there is a $g \in R$ with $f = u g \gbar$ for
some unit $u$ of $R$ (here $u = a t^n$ for some $a \in F$). We have
the following test:
\begin{proposition}
  \label{prop: norm test}
  Suppose $f \in R$ is $u p_1^{e_1} p_2^{e_2} \cdots p_k^{e_k}$ where
  $u \in R$ is a unit and the $p_i \in R$ are non-associate
  irreducibles.  Then $f$ is a norm if and only if for every $i$
  either
  \begin{enumerate}
  \item \label{item: self-assoc}
    $\pbar_i$ is an associate of $p_i$ and $e_i$ is even, or
  \item \label{item: interchanged}
    $\pbar_i$ is an associate of some other $p_j$ and $e_i = e_j$.
  \end{enumerate}
\end{proposition}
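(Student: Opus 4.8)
The plan is to exploit that $R = F[t, t^{-1}]$ is a unique factorization domain and that $f \mapsto \fbar$ is a ring automorphism of $R$. First I would record the ambient structure. Since $F$ is a field, $F[t]$ is a PID, and $R$ is a localization of it, hence also a PID and in particular a UFD; its units are exactly the $a t^n$ with $a \in F^{\times}$ and $n \in \Z$. The involution $\overline{\cdotspaced}$ is the composite of the automorphism of $R$ induced by complex conjugation on $F$ and the automorphism $t \mapsto t^{-1}$; these commute and each squares to the identity, so $\overline{\cdotspaced}$ is a ring automorphism with $\overline{\overline{f}} = f$. Consequently it carries irreducibles to irreducibles and permutes the associate classes of irreducibles; writing $v_p$ for the valuation attached to an irreducible $p$, we get $v_p(\fbar) = v_{\pbar}(f)$ for every irreducible $p$ and every $f$. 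I would also note two trivial closure properties of the set $\mathcal{N}$ of norms: it is closed under multiplication, since $(u_1 g_1 \overline{g_1})(u_2 g_2 \overline{g_2}) = (u_1 u_2)(g_1 g_2)\overline{(g_1 g_2)}$, and it contains every unit (take $g = 1$).

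For the ``if'' direction, I would partition the $p_i$ into those for which $\pbar_i$ is an associate of $p_i$ and conjugate pairs $\{p_i, p_j\}$, $i \neq j$, for which $\pbar_i$ is an associate of $p_j$; by hypothesis every $p_i$ lies in exactly one part. If $\pbar_i = u_i p_i$ with $e_i$ even, then $p_i^{e_i} = u_i^{-e_i/2}\, p_i^{e_i/2}\, \overline{p_i^{e_i/2}} \in \mathcal{N}$; if $\pbar_i = v p_j$ with $e_i = e_j$, then $p_i^{e_i} p_j^{e_j} = v^{-e_i}\, p_i^{e_i}\, \overline{p_i^{e_i}} \in \mathcal{N}$. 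Hence $f$ is the product of the unit $u$ with finitely many elements of $\mathcal{N}$, and therefore $f \in \mathcal{N}$, i.e.\ $f$ is a norm.

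For the ``only if'' direction, suppose $f = u\, g\, \gbar$ and factor $g = w\, q_1^{a_1}\cdots q_l^{a_l}$ into non-associate irreducibles. For any irreducible $p$ we have $v_p(f) = v_p(g) + v_p(\gbar) = v_p(g) + v_{\pbar}(g)$. If $\pbar$ is an associate of $p$, then $v_{\pbar}(g) = v_p(g)$, so $v_p(f) = 2 v_p(g)$ is even; taking $p = p_i$ (so $e_i \geq 1$) gives case (\ref{item: self-assoc}). If $\pbar$ is not an associate of $p$, then replacing $p$ by $\pbar$ and using $\overline{\pbar} = p$ yields $v_{\pbar}(f) = v_{\pbar}(g) + v_p(g) = v_p(f)$; so if $p = p_i$ occurs in $f$, i.e.\ $e_i = v_{p_i}(f) \geq 1$, then $v_{\pbar_i}(f) = e_i \geq 1$ as well, which forces $\pbar_i$ to be an associate of some $p_j$ with $e_j = v_{p_j}(f) = e_i$, and $j \neq i$ since $\pbar_i$ is not an associate of $p_i$. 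This is case (\ref{item: interchanged}). The only step that genuinely requires care is that $\overline{\cdotspaced}$ is an honest ring automorphism, so that it really does preserve irreducibility and valuations; granting that, the rest is bookkeeping with unique factorization and the closure properties of $\mathcal{N}$.
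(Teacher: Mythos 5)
Your proof is correct and takes essentially the same approach as the paper's: both rest on unique factorization in the PID $R$ together with the fact that $f \mapsto \fbar$ is a ring automorphism, so that it permutes the (associate classes of) irreducibles and hence acts on the multiset of prime factors. The only cosmetic difference is that the paper phrases the bookkeeping in terms of ideals (to sidestep units), whereas you track units explicitly using the valuations $v_p$ and the identity $v_p(\fbar) = v_{\pbar}(f)$.
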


\begin{proof}
Recall that $R$ is a principal ideal domain (it is a localization of
the PID $F[t]$) and hence has unique factorization. To avoid messing
with units, we work with ideals rather than elements. If $f$ has the
required factorization we can write
\[
(f) = (q_1)^{2 d_1} \cdots (q_\ell)^{2 d_\ell}
\big((r_1)(\rbar_1)\big)^{e_1} \cdots \big((r_m)(\rbar_m)\big)^{e_m}
\]
where each $(q_i) = (\qbar_i)$ and all of the $(q_i)$, $(r_i)$, and
$(\rbar_i)$ are distinct.  We then take $(g) = (q_1)^{d_1} \cdots (q_\ell)^{d_\ell}
(r_1)^{e_1} \cdots (r_m)^{e_m}$ to show $(f)$ is a norm.

Conversely, suppose $f$ is a norm, that is $(f) = (g)(\gbar)$, and
consider a factorization $(g) = (q_1)^{d_1} \cdots (q_k)^{d_k}$ where
the $(q_i)$ are distinct prime ideals. As $h \mapsto \hhbar$ is a ring
isomorphism, we have that $(\qbar_1)^{d_1} \cdots (\qbar_k)^{d_k}$ is
such a factorization of $(\gbar)$.  Consider the factorization of
$(f)$ gotten by combining those for $(g)$ and $(\gbar)$ and grouping
common factors.  If $(\qbar_i) = (q_i)$, then $(q_i)$ occurs in $(f)$
to the power $2 d_i$ as needed for (\ref{item: self-assoc}). If
$(\qbar_i) = (q_k)$ for some $k \neq j$, then $(q_i)$ and $(\qbar_i)$
both occur in $(f)$ to the power $d_i + d_k$ as needed for (\ref{item:
  interchanged}).  Finally, if $(\qbar_i) \neq (q_k)$ for all $k$,
then $(q_i)$ and $(\qbar_i)$ both occur in $(f)$ to the power
$d_i$, and so $(f)$ has the required factorization.
\end{proof}

\subsection{Allowing $q = 2$}
\label{sec: HKL with q=2}

We now explain why Theorem~\ref{thm: main KL} excludes $q = 2$ and
give a variant that can be applied even in this case.  As noted in
Section~\ref{sec: HKL setup}, one may have to enlarge the range of
$\chi$ to $\Z/q^k\Z$ to extend it over the 4-manifold $N_m$ bounded by
$B_m$.  Thus while $\Delta_\chi$ is in $\Q(\zeta_{q})[t^{\pm 1}]$, if
the knot is topologically slice, what we learn from Proposition~6.1 of
\cite{KirkLivingston1999a} is that $\Delta_\chi$ is a norm in the
larger ring $\Q(\zeta_{q^k})[t^{\pm 1}]$ where $k$ is unknown.  By
Lemma~6.4 of \cite{KirkLivingston1999a}, for odd $d$ the latter
condition is equivalent to being a norm in the original
$\Q(\zeta_{q})[t^{\pm 1}]$, giving Theorem~\ref{thm: main KL}.
However for $q = 2$, we have examples such as $f(t) = t^4 + 4 t^2 + 1$
which is irreducible in $\Q[t^{\pm 1}] = \Q(\zeta_2)[t^{\pm 1}]$ and
so not a norm there, but $f = g \gbar$ for
$g = t^2 + (\sqrt{2} i) t + 1$ in $\Q(\zeta_8)[t^{\pm 1}]$.

If the knot $K$ is ribbon, then $H_1(B_m ; \Z) \to H_1(N_m; \Z)$ is
onto and there is no need to extend $\chi$.  In fact, one has:

\begin{theorem}
  \label{thm: ribbon obs}
  Suppose $m$ is an odd prime power.  If $K$ is topologically homotopy
  ribbon, then there exists an invariant metabolizer $A \subset H$
  such that every $\chi \maps H \to \Z/2^k\Z$ with
  $A \subset \ker(\chi)$ gives a $\Delta_\chi$ that is a norm.
\end{theorem}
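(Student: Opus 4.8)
The plan is to re-run the proof of Theorem~\ref{thm: main KL}, following Kirk--Livingston \cite{KirkLivingston1999a}, but to exploit the homotopy-ribbon hypothesis to bypass the one step that breaks down at the prime~$2$: the passage (Lemma~6.4 of \cite{KirkLivingston1999a}) from ``$\Delta_\chi$ is a norm in $\Q(\zeta_{2^k})[t^{\pm 1}]$ for some unknown~$k$'' to ``$\Delta_\chi$ is a norm in the ring where it actually lives.'' The key observation is that for a homotopy ribbon disk one never has to enlarge the range of $\chi$ at all.

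First I would fix a locally flat homotopy ribbon disk $F \subset D^4$ for $K$, so that $\pi_1(S^3 \setminus \nu(K)) \surjects \pi_1(D^4 \setminus \nu(F))$, and let $N_m$ be the $m$-fold cyclic branched cover of $D^4$ over $F$. Exactly as in the slice case recalled in Section~\ref{sec: HKL setup}, $\partial N_m = B_m$ and $A := \ker\!\big(H_1(B_m;\Z) \to H_1(N_m;\Z)\big)$ is a $\Z/m\Z$-invariant metabolizer for the linking form on $H = H_1(B_m;\Z)$; the hypotheses on $m$ needed there (prime power, coprime to $2$) hold because $m$ is an odd prime power. This $A$, which does not depend on $k$, is the metabolizer the theorem asks for.

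The one new ingredient is the claim that $H_1(B_m;\Z) \to H_1(N_m;\Z)$ is \emph{surjective}. To prove it, write $X_K = S^3 \setminus \nu(K)$ and $X_F = D^4 \setminus \nu(F)$; both have first homology $\Z$ generated by a meridian, and the inclusion $X_K \injects X_F$ is compatible with reduction modulo $m$. Lifting to the corresponding $m$-fold cyclic covers $\widetilde X_K \to \widetilde X_F$, a short diagram chase gives $\pi_1(\widetilde X_K) \surjects \pi_1(\widetilde X_F)$: any preimage in $\pi_1(X_K)$ of an element of $\ker\!\big(\pi_1(X_F) \to \Z/m\Z\big)$ already lies in $\ker\!\big(\pi_1(X_K) \to \Z/m\Z\big)$, because that homomorphism factors through $\pi_1(X_F)$. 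Hence $H_1(\widetilde X_K;\Z) \surjects H_1(\widetilde X_F;\Z)$. Since $B_m$ is built from $\widetilde X_K$ and $N_m$ from $\widetilde X_F$ by gluing in (a neighborhood of) the lifted branch locus, compatibly with the covering maps, passing to these quotients preserves surjectivity, proving the claim.

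Granting the claim, $H_1(B_m;\Z) \to H_1(N_m;\Z)$ induces an isomorphism $H/A \cong H_1(N_m;\Z)$, so any $\chi \maps H \to \Z/2^k\Z$ with $A \subseteq \ker \chi$ descends to a homomorphism $H_1(N_m;\Z) \to \Z/2^k\Z$; that is, $\chi$ extends over $N_m$ with its range unchanged. By the mechanism recalled in Section~\ref{sec: HKL setup} (Proposition~6.1 of \cite{KirkLivingston1999a}), it follows that $\Delta_\chi$, which lives in $\Q(\zeta_{2^k})[t^{\pm 1}]$, is a norm in $\Q(\zeta_{2^k})[t^{\pm 1}]$. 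The enlargement of the range to $\Z/q^k\Z$, and with it the appeal to Lemma~6.4 of \cite{KirkLivingston1999a} that fails for $q = 2$, is never invoked. I expect the surjectivity claim to be the only real obstacle: one has to propagate $\pi_1$-surjectivity cleanly through the finite cyclic covers and the regluing of the branch locus, keeping everything $\Z/m\Z$-equivariant and working in the locally flat category so that $N_m$ is well defined and carries the homology of a slice-disk branched cover. Verifying that $A$ is a genuine metabolizer is routine, being word-for-word the argument already used for Theorem~\ref{thm: main KL} (it uses only $H_1(N_m;\Q) = 0$ and Poincar\'e--Lefschetz duality on $N_m$).
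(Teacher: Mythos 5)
Your proposal is correct and takes essentially the same route as the paper: use the homotopy-ribbon hypothesis to get surjectivity of $H_1(B_m;\Z) \to H_1(N_m;\Z)$, take $A$ to be the kernel, and conclude that any $\chi$ vanishing on $A$ extends over $N_m$ without enlarging its range, so Proposition~6.1 of \cite{KirkLivingston1999a} applies directly. The paper's proof is just a terser version of yours, asserting the surjectivity of the map on $H_1$ of the branched covers as an immediate ``consequence'' of the $\pi_1$-surjectivity in the definition, where you spell out the covering-space diagram chase and the regluing of the branch locus.
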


\begin{proof}
Recall that $K$ is topologically homotopy ribbon when it bounds a
locally flat disk $F \subset D^4$ so that the inclusion
$S^3 \setminus K \hookrightarrow D^4 \setminus F$ is surjective on
fundamental groups.  (Any ribbon knot is topologically homotopy ribbon
\cite[Lemma 3.1]{Gordon1981}.)  Consequently, the inclusion
$B_m \to N_m$ gives a surjection $H \to H_1(N_m; \Z)$; the kernel of
this is $A$.  As $A \subset \ker(\chi)$, we get that $\chi$ extends to $H_1(N_m; \Z)$
and hence $\Delta_\chi$ is a norm by Proposition~6.1 of
\cite{KirkLivingston1999a}.
\end{proof}

\begin{table}
  \centering
  % Data of 2025-06-04
  {\small
  \begin{tabular}{llll}
    $17nh_{0630889}$ & $18nh_{09292518}$ & $18nh_{13798702}$ & $18nh_{25872205}$ \\
    $19nh_{000130563}$ & $19nh_{000130564}$ & $19nh_{001561948}$ & $19nh_{001746199}$ \\
    $19nh_{001785287}$ & $19nh_{015088058}$ & $19nh_{020746102}$ & $19nh_{026824671}$ \\
    $19nh_{032393076}$ & $19nh_{035320248}$ & $19nh_{035487682}$ & $19nh_{055867647}$ \\
    $19nh_{068872115}$ & $19nh_{083547570}$ & $19nh_{109593374}$ & $19nh_{144186247}$ \\
    $19nh_{144186248}$ & $19nh_{146789683}$ & $19nh_{150081216}$ & $19nh_{169852405}$ \\
  \end{tabular}
  }
  \caption{We do not know whether these 24 knots are smoothly or
    topologically slice, but they are not ribbon, or even
    topologically homotopy ribbon, by Theorem~\ref{thm: ribbon obs}
    using $m = 3$ and $k = 1$.  They all have
    $\Delta_K = t^4-2 t^3+3 t^2-2 t+1$. }
  \label{tab: not ribbon}
\end{table}
\begin{remark}
  Theorem~\ref{thm: ribbon obs} gives a ribbon obstruction that is not
  obviously a topological slice obstruction.  We found
  \NumNotHomotopyRibbon\ knots in \PS\ where Theorem~\ref{thm: ribbon
    obs} applies, but we have been unable to determine whether the
  knot is smoothly or topologically slice.  These are listed in
  Table~\ref{tab: not ribbon}, and are thus candidates for knots that
  are slice but not ribbon.
\end{remark}

To get a genuine topological slice obstruction when $q = 2$, we need a
criterion for showing that $f \in \Q[t^{\pm 1}]$ is not a norm in
$\Q(\zeta_{2^k})[t^{\pm 1}]$ for all $k$; equivalently, setting
$\Q(\zeta_{2^\infty}) = \bigcup_{k} \Q(\zeta_{2^k})$, we want to know
whether $f$ is a norm in $\Q(\zeta_{2^\infty})[t^{\pm 1}]$.
Proposition~\ref{prop: 2 case} below gives the needed test, allowing
us to apply Lemma~\ref{lem: basic HKL} and Theorem~\ref{thm: imp HKL}
when $q = 2$.

For use
in Section~\ref{sec: q^e}, we consider a slightly more general
question.  Suppose $K = \Q(\zeta_{2^n})$ and $f \in K[t^{\pm 1}]$ is
irreducible. Let $L_f \subset \C$ be the
splitting field of $f$ and take $\Gal(f) = \Gal(L_f/K)$.  For any
finite group $G$, we set:
\begin{equation}
  \label{eq: galois bound}
  d(G) = \max \setdef{k \geq 0}{\mbox{$\exists H \trianglelefteq G$ with
      quotient $\Z/2^k\Z$ or $\Z/2\Z \oplus \Z/2^{k - 1}\Z$}}
\end{equation}
and further define $d(f) = d(\Gal(f))$.

Our test is the following, where the case $K = \Q$ corresponds to $n =
1$, not $n = 0$:

\begin{proposition}
  \label{prop: 2 case}
  Let $K = \Q(\zeta_{2^n})$ with $n \geq 1$. Suppose
  $f \in K[t^{\pm1}]$ factors as
  $u p_1^{e_1} p_2^{e_2} \cdots p_k^{e_k}$ where $u$ is a unit and the
  $p_i$ are non-associate irreducibles.  Then $f$ is a norm in
  $\Q(\zeta_{2^\infty})[t^{\pm 1}]$ if and only if for every $i$ one
  of the following holds:
  \begin{enumerate}
  \item
    \label{a: sym even}
      $\pbar_i$ is an associate of $p_i$ and $e_i$ is even, or
    \item
      \label{b: sym odd}
      $\pbar_i$ is an associate of $p_i$ with $e_i$ odd, and $p_i$
      is a norm in $\Q(\zeta_{2^\ell})[t^{\pm 1}]$ with
      $\ell = n + d(p_i) + 1$, where $d(p_i)$ is from (\ref{eq: galois
        bound}), or
    \item
      \label{c: non sym}
      $\pbar_i$ is an associate of some other $p_j$ and $e_i = e_j$.
    \end{enumerate}
\end{proposition}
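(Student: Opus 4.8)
The plan is to run the ideal-theoretic argument of Proposition~\ref{prop: norm test}, but now in the ring $E = \Q(\zeta_{2^\infty})[t^{\pm 1}]$. This is again a PID with unique factorization (a localization of $\Q(\zeta_{2^\infty})[t]$) carrying the involution $\overline{\cdot}$, since $\Q(\zeta_{2^\infty})$ is closed under complex conjugation; the same remarks apply to each $\Q(\zeta_{2^\ell})[t^{\pm1}]$. First I would note that, as each $p_i$ is separable over $K$ and the $p_i$ are pairwise coprime in $K[t^{\pm1}]$, these properties persist in $E$: the primes of $E$ dividing $f$ split into disjoint blocks indexed by $i$, each prime in the $i$-th block occurring in $(f)$ to exact multiplicity $e_i$, and $\overline{\cdot}$ carries the $i$-th block onto the $j$-th block when $\overline{p_i}$ is an associate of $p_j$ (possibly $j=i$). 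Then, exactly as in Proposition~\ref{prop: norm test}, $(f) = (g)(\overline{g})$ is solvable in $E$ if and only if $v_{\mathfrak q}(f) = v_{\overline{\mathfrak q}}(f)$ for every prime $\mathfrak q \mid f$ and $v_{\mathfrak q}(f)$ is even whenever $\overline{\mathfrak q} = \mathfrak q$.

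Reading this condition block by block yields the three cases. If $\overline{p_i}$ is an associate of $p_j$ with $j \neq i$, matching valuations across the swapped blocks is exactly $e_i = e_j$, which is (\ref{c: non sym}). If $\overline{p_i} \sim p_i$ with $e_i$ even, the parity condition is automatic, which is (\ref{a: sym even}). If $\overline{p_i} \sim p_i$ with $e_i$ odd, then every prime of block $i$ has odd valuation in $(f)$, so the parity condition holds if and only if $\overline{\cdot}$ is fixed-point-free on block $i$, i.e.\ if and only if $(p_i)$ is itself a norm in $E$. So everything reduces to the key claim: a self-conjugate irreducible $p \in K[t^{\pm1}]$ is a norm in $\Q(\zeta_{2^\infty})[t^{\pm1}]$ if and only if it is a norm in $\Q(\zeta_{2^{n+d(p)+1}})[t^{\pm1}]$, one direction being trivial.

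To prove the key claim, let $L = L_p$ be the splitting field of $p$ over $K$ and $G = \Gal(p) = \Gal(L/K)$. For $K \subseteq M \subseteq \C$ the irreducible factors of $p$ over $M$ correspond to the orbits of $\Gal(LM/M) = \Gal(L/(L \cap M))$ on the roots of $p$, each with multiplicity one, and $\overline{\cdot}$ permutes these factors. Hence, setting $F = L \cap \Q(\zeta_{2^\infty})$: once $\Q(\zeta_{2^\ell}) \supseteq F$ we get $L \cap \Q(\zeta_{2^\ell}) = F$, so $p$ has the same irreducible factorization over $\Q(\zeta_{2^\ell})$ as over $\Q(\zeta_{2^\infty})$, with $\overline{\cdot}$ acting the same way; therefore $p$ is a norm in $\Q(\zeta_{2^\ell})[t^{\pm1}]$ as soon as it is one in $\Q(\zeta_{2^\infty})[t^{\pm1}]$. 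It thus remains to show $F \subseteq \Q(\zeta_{2^{n+d(p)+1}})$. Here $F/K$ is Galois (being intermediate in the abelian extension $\Q(\zeta_{2^\infty})/K$), so $\Gal(F/K) \cong G/N$ for some $N \trianglelefteq G$, and it is a quotient of $\Gal(\Q(\zeta_{2^\infty})/K)$, which is $\cong \Z_2$ if $n \geq 2$ and $\cong \Z/2\Z \oplus \Z_2$ if $n = 1$. Every finite quotient of either group has the form $\Z/2^k\Z$ or $\Z/2\Z \oplus \Z/2^{k-1}\Z$, so by the definition of $d(G) = d(p)$ in (\ref{eq: galois bound}) we have $k \leq d(p)$; then identifying the subfields of $\Q(\zeta_{2^\infty})$ realizing these Galois groups and bounding their conductors gives $F \subseteq \Q(\zeta_{2^{n+d(p)+1}})$.

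The main obstacle in the writeup is this last conductor bound. For $n \geq 2$ the relevant group is procyclic, so $F = \Q(\zeta_{2^{n+k}})$ with $k \leq d(p)$ and the bound $n + d(p)$ already suffices; the extra ``$+1$'' is forced only when $n = 1$ (so $K = \Q$), where a cyclic quotient $\Z/2^k\Z$ can correspond to the real cyclotomic field $\Q(\zeta_{2^{k+2}})^+$, genuinely of conductor $2^{k+2}$, and one must verify that this and the analogous $\Z/2\Z \oplus \Z/2^{k-1}\Z$ cases all lie inside $\Q(\zeta_{2^{k+2}}) = \Q(\zeta_{2^{n+d(p)+1}})$. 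Everything else is bookkeeping within the framework already set up for Proposition~\ref{prop: norm test}.
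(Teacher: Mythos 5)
Your proof is correct and reaches the same structural conclusion as the paper's, but the framing is genuinely different in a way worth noting. The paper fixes a single $\Q(\zeta_{2^\ell})$, factors $(f)$ there via Proposition~\ref{prop: norm test}, and then acts by $G = \Gal\big(\Q(\zeta_{2^\ell})/K\big)$ on those prime ideals; the $G$-orbits are assembled into irreducibles $p \in K[t^{\pm 1}]$, and the self-conjugate orbits with odd multiplicity are sent to case~(\ref{b: sym odd}) via Corollary~\ref{cor: galois bound}. You instead work directly in the direct-limit ring $E = \Q(\zeta_{2^\infty})[t^{\pm 1}]$, reformulate the norm condition valuation-theoretically, and read it off block by block (each block being the set of primes of $E$ dividing one $p_i$, all with multiplicity $e_i$ by separability). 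These ``blocks'' coincide with the paper's $G$-orbits once one unwinds the dictionary, but your presentation avoids fixing $\ell$ up front and makes it transparent that the only nontrivial case is a self-conjugate block of odd multiplicity, whose norm condition is fixed-point-freeness of $\overline{\,\cdot\,}$ on the block --- i.e.\ $p_i$ being a norm in $E$. Both approaches then bottleneck at the same conductor estimate; your proof of it (stabilization of the factorization once $\Q(\zeta_{2^\ell}) \supseteq L_p \cap \Q(\zeta_{2^\infty})$, plus the fact that $\Gal(F/K)$ is simultaneously a quotient of $\Gal(p)$ and of $\Gal\big(\Q(\zeta_{2^\infty})/K\big)$) is a mild rephrasing of the paper's Corollary~\ref{cor: galois bound}, which instead shows that the $g$ in $f = u g \bar g$ already lives over $L_f \cap \Q(\zeta_{2^\infty})$. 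One small bonus of your writeup: you correctly observe that the ``$+1$'' in $\ell = n + d(p) + 1$ is only forced at $n = 1$; for $n \geq 2$ the relevant Galois group is procyclic and $\ell = n + d(p)$ already works, a sharpening the paper does not record.
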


The following will be needed for Proposition~\ref{prop: 2 case}:

\begin{lemma}
  \label{lem: subfields}
  Every finitely-generated subfield $L$ of $\Q(\zeta_{2^\infty})$ is
  Galois over $\Q$ and has $[L:\Q] = 2^k$ for some $k \geq 0$.
  Moreover, the field $\Q(\zeta_{2^\infty})$ has exactly three
  subfields $L$ with $[L : \Q] = 2^k$ for each $k \geq 1$.  These are
  $\Q(\zeta_{2^{k + 1}})$, $\Q\big(\cos(\pi/2^{k+1})\big)$, and
  $\Q\big(\sin(\pi/2^{k+1}) i\big)$.  All such $L$ are contained in
  $\Q(\zeta_{2^{k+2}})$ and have $\Gal(L/\Q)$ either $\Z/2^k\Z$ or
  $\Z/2\Z \oplus \Z/2^{k - 1}\Z$.  Finally, if
  $M \subset \Q(\zeta_{2^\infty})$ is finitely-generated and contains
  $L$, then $M/L$ is Galois with group $\Z/2^\ell\Z$ or
  $\Z/2\Z \oplus \Z/2^{\ell - 1}\Z$ where $[M : L] = 2^\ell$.
\end{lemma}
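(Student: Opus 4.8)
The plan is to move everything to the Galois side and reduce to elementary computations in the $2$-adic cyclotomic situation.

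First I would dispose of the general assertions. Since $\Q(\zeta_{2^\infty})/\Q$ is algebraic, a finitely-generated subfield $L$ is a finite extension of $\Q$, so $L\subseteq\Q(\zeta_{2^N})$ for some $N$; because $\Q(\zeta_{2^N})/\Q$ is abelian, $L/\Q$ is Galois, and $[L:\Q]$ divides $[\Q(\zeta_{2^N}):\Q]=2^{N-1}$, hence is a power of $2$. Next I would record the structure of $\Gamma:=\Gal(\Q(\zeta_{2^\infty})/\Q)$: using $(\Z/2^n\Z)^\times\cong\langle-1\rangle\times\langle5\rangle$ for $n\geq3$ and passing to the inverse limit gives $\Gamma\cong\Z/2\Z\times\Z_2$, and the infinite Galois correspondence identifies degree-$2^k$ subfields of $\Q(\zeta_{2^\infty})$ with index-$2^k$ closed subgroups of $\Gamma$, compatibly with passing to Galois groups of quotients.

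The heart of the argument is a single trick: if $\Gamma_L\leq\Gamma$ has index $2^k$ then $2^k\Gamma\subseteq\Gamma_L$. Since $2^k\Gamma=\{0\}\times2^k\Z_2$ (which one checks equals $\Gal(\Q(\zeta_{2^\infty})/\Q(\zeta_{2^{k+2}}))$) and $\Gamma/2^k\Gamma\cong\Z/2\Z\oplus\Z/2^k\Z$, the subgroup $\Gamma_L/2^k\Gamma$ is one of the three order-$2$ subgroups of $\Z/2\Z\oplus\Z/2^k\Z$ (its $2$-torsion is $(\Z/2\Z)^2$ for $k\geq1$). This already shows there are exactly three such $L$, all contained in $\Q(\zeta_{2^{k+2}})$; computing $\Gamma/\Gamma_L$ in each of the three cases gives $\Z/2^k\Z$ twice and $\Z/2\Z\oplus\Z/2^{k-1}\Z$ once. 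For the relative statement I would rerun exactly this argument with $\Gamma$ replaced by $\Gamma_L$ (noting that each finite-index $\Gamma_L$ is abstractly $\Z/2\Z\times\Z_2$ or $\Z_2$): $M/L$ is Galois with group $\Gamma_L/\Gamma_M$ of order $[M:L]=2^\ell$, and $2^\ell\Gamma_L\subseteq\Gamma_M$ forces this quotient to be $\Z/2^\ell\Z$ or $\Z/2\Z\oplus\Z/2^{\ell-1}\Z$.

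It remains to match the three abstract degree-$2^k$ subfields with the named ones, and this is where the (purely mechanical) bookkeeping lives. It is cleanest to work inside $\Q(\zeta_{2^{k+2}})$, whose degree-$2^k$ subfields correspond to the three order-$2$ subgroups of $\Gal(\Q(\zeta_{2^{k+2}})/\Q)=(\Z/2^{k+2}\Z)^\times$. Writing $\sigma_j$ for $\zeta_{2^{k+2}}\mapsto\zeta_{2^{k+2}}^{j}$, one checks that $\langle\sigma_{1+2^{k+1}}\rangle$, $\langle\sigma_{-1}\rangle$, and $\langle\sigma_{2^{k+1}-1}\rangle$ are three distinct order-$2$ subgroups (the three units are pairwise incongruent mod $2^{k+2}$), whose fixed fields are $\Q(\zeta_{2^{k+1}})$, the maximal totally real subfield $\Q(\cos(\pi/2^{k+1}))$, and $\Q(\sin(\pi/2^{k+1})i)$ respectively — the last two via $\cos(\pi/2^{k+1})=(\zeta_{2^{k+2}}+\zeta_{2^{k+2}}^{-1})/2$ and $\sin(\pi/2^{k+1})i=(\zeta_{2^{k+2}}-\zeta_{2^{k+2}}^{-1})/2$. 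Since there are only three such subfields, these are all of them; taking the corresponding quotients of $(\Z/2^{k+2}\Z)^\times\cong\Z/2\Z\oplus\Z/2^k\Z$ confirms that $\Gal(\Q(\zeta_{2^{k+1}})/\Q)\cong\Z/2\Z\oplus\Z/2^{k-1}\Z$ while the other two Galois groups are cyclic of order $2^k$. I expect this last identification — in particular verifying distinctness and pinning down $\Gal(\Q(\sin(\pi/2^{k+1})i)/\Q)$ — to be the only real work; everything else is forced once $\Gamma\cong\Z/2\Z\times\Z_2$ and the ``$2^k\Gamma\subseteq\Gamma_L$'' observation are in place.
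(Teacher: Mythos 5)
Your proposal is correct and reaches the same conclusions by essentially the same arithmetic, but you organize the argument differently.  The paper works inside a fixed finite cyclotomic field $\Q(\zeta_{2^\ell})$ with $\ell\geq k+2$, observes that $\Gal(\Q(\zeta_{2^\ell})/\Q)\cong\Z/2\Z\oplus\Z/2^{\ell-2}\Z$ admits (up to automorphism of the target) exactly two epimorphisms onto $\Z/2^k\Z$ and one onto $\Z/2\Z\oplus\Z/2^{k-1}\Z$, concludes there are three degree-$2^k$ subfields, and then notes that for larger $m$ the count is again three so the subfields must be the ones already present in $\Q(\zeta_{2^{k+2}})$.  You instead pass immediately to the profinite group $\Gamma\cong\Z/2\Z\times\Z_2$ and use the observation that an index-$2^k$ closed subgroup $\Gamma_L$ must contain $2^k\Gamma=\{0\}\times 2^k\Z_2=\Gal(\Q(\zeta_{2^\infty})/\Q(\zeta_{2^{k+2}}))$, reducing the classification to order-$2$ subgroups of $\Z/2\Z\oplus\Z/2^k\Z$.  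This buys you a direct proof that all such $L$ lie in $\Q(\zeta_{2^{k+2}})$ (which the paper gets only as a corollary of the counting), and the relative statement about $M/L$ drops out by literally rerunning the same argument with $\Gamma_L$ in place of $\Gamma$, whereas the paper handles it by observing that quotients of $\Z/2\Z\oplus\Z/2^{m-1}\Z$ have the same form.  Your explicit matching of the three subgroups $\langle\sigma_{1+2^{k+1}}\rangle$, $\langle\sigma_{-1}\rangle$, $\langle\sigma_{2^{k+1}-1}\rangle$ with the three named fields is the same as the paper's identification via the involutions $\zeta\mapsto-\zeta$, $\zeta\mapsto\bar\zeta$, $\zeta\mapsto-\bar\zeta$.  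One tiny caveat worth flagging if you write this up: to say ``rerun with $\Gamma_L$ in place of $\Gamma$'' you need that every finite-index closed subgroup of $\Z/2\Z\times\Z_2$ is itself isomorphic to $\Z/2\Z\times\Z_2$ or $\Z_2$; you note this parenthetically, and it is true, but it deserves a one-line justification (project to the $\Z_2$ factor and use that closed subgroups of $\Z_2$ are $2^n\Z_2$).
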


\begin{proof}
Any finitely-generated $L$ is contained in some
$\Q(\zeta_{2^\ell})$.  Recall
\[
  G =
\Gal(\Q(\zeta_{2^\ell})/\Q) \cong (\Z/2^\ell\Z)^\times \cong \Z/2\Z
\oplus \Z/2^{\ell - 2}\Z
\]
and that any subfield $L$ of $\Q(\zeta_{2^\ell})$ corresponds to a
subgroup $H \leq G$.  As $G$ is abelian, $H$ is normal and so $L/\Q$
is Galois.  As $[L : \Q] = \abs{G/H}$ divides
$\abs{G} = 2^{\ell - 1}$, we have proved the first claim.

Now fix $k \geq 1$.  We first analyze the subfields $L$ of
$\Q(\zeta_{2^\ell})$ of degree $2^k$ over $\Q$ when $\ell \geq k + 2$.
Let $H \leq G$ be the subgroup corresponding to $L$.  Now, $G/H$ is
generated by two elements, one of which has order at most two, and so
is either $\Z/2^k\Z$ or $\Z/2\Z \oplus \Z/2^{k - 1}\Z$.  Up to
automorphisms of the target group, you can check that there are two
epimorphisms from $G$ onto $\Z/2^k\Z$ and one onto
$\Z/2\Z \oplus \Z/2^{k - 1}\Z$.  Thus $\Q(\zeta_{2^\ell})$ has exactly
three subfields of degree $2^k$ over $\Q$.

For any $m > \ell$, we similarly have $\Q(\zeta_{2^m})$ has three
subfields of degree $2^k$ over $\Q$, and therefore these subfields
must be the ones inherited from
$\Q(\zeta_{2^\ell}) \subset \Q(\zeta_{2^m})$.  Since any finitely-generated
subfield of $\Q(\zeta_{2^\infty})$ is contained in some
$\Q(\zeta_{2^m})$, we have proven that $\Q(\zeta_{2^\infty})$ has
exactly three subfields of degree $2^k$, specifically those coming
from $M = \Q(\zeta_{2^{k+2}})$. To identify them concretely, first note that
each corresponds to the fixed field of one of the three involutions 
in $\Gal(M/\Q)$.  If $\zeta = \zeta_{2^{k+2}}$,
these are $\zeta \mapsto \zetabar$, $\zeta \mapsto -\zeta$, and
$\zeta \mapsto -\zetabar$, whose fixed fields are
$\Q\big(\cos(\pi/2^{k+1}) = \zeta + \zetabar \big)$,
$\Q\big(\zeta_{2^{k + 1}} = \zeta^2\big)$, and
$\Q\big(\sin(\pi/2^{k+1}) i = \zeta - \zetabar\big)$,
respectively.

The final claims about $M/L$ now follow from our knowing $\Gal(M/\Q)$ is
$\Z/2^m\Z$ or $\Z/2\Z \oplus \Z/2^{m - 1}\Z$ and that the only
quotients of these groups have the same form.
\end{proof}

\begin{corollary}
  \label{cor: galois bound}
  Let $K = \Q(\zeta_{2^n})$ for $n \geq 1$ and suppose
  $f \in K[t^{\pm 1}]$ is irreducible.  If $f$ is a norm over
  $\Q(\zeta_{2^\infty})$ then it is a norm over $\Q(\zeta_{2^\ell})$
  with $\ell = n + d(f) + 1$.
\end{corollary}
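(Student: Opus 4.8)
The plan is a Galois‑descent argument. Granting that $f$ is a norm over $\Q(\zeta_{2^\infty})$, I would show that over every sufficiently large cyclotomic $2$‑layer the irreducible factors of $f$ already have coefficients in the subfield $L_\infty := L_f \cap \Q(\zeta_{2^\infty})$, and then bound $L_\infty$ using the defining property~\eqref{eq: galois bound} of $d$ together with Lemma~\ref{lem: subfields}; since $\ell = n + d(f) + 1$ is calibrated exactly so that all the relevant subfields of $\Q(\zeta_{2^\infty})$ sit inside $\Q(\zeta_{2^\ell})$, this gives the claim.

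First I would reduce to ordinary polynomials: clearing powers of $t$, assume $f \in K[t]$ with $f(0) \neq 0$, the case $\deg f = 0$ being trivial. Since $f$ is a norm over $\Q(\zeta_{2^\infty})$, its finitely many ``$g$''-coefficients lie in some layer, so $f$ is a norm over $\Q(\zeta_{2^m})$ for all large $m$; fix such an $m$. As $f$ is irreducible over the characteristic‑zero field $K$ it is separable, hence squarefree over $\Q(\zeta_{2^m})$, so its factorization there reads $f = u\, p_1 p_2 \cdots p_k$ with the $p_i$ non‑associate irreducibles and every exponent equal to $1$. Applying Proposition~\ref{prop: norm test}, the norm condition rules out case~(\ref{item: self-assoc}) (it would need an even exponent) and forces every $\pbar_i$ to be an associate of some $p_j$ with $j \neq i$; thus $k = 2s$ is even, the $p_i$ fall into $s$ conjugate pairs, and $f = (\text{unit}) \prod_{j=1}^{s} r_j \rbar_j$ with $r_1, \rbar_1, \ldots, r_s, \rbar_s$ pairwise non‑associate. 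It therefore suffices to show each $r_j$, normalized monic, has coefficients in $\Q(\zeta_{2^\ell})$.

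Next I would pin down the field of definition of these factors. Since $L_\infty$ is a finitely‑generated subfield of $\Q(\zeta_{2^\infty})$, Lemma~\ref{lem: subfields} makes it finite over $\Q$, so after enlarging $m$ I may also assume $L_f \cap \Q(\zeta_{2^m}) = L_\infty$. Then, via the standard isomorphism $\Gal\bigl(L_f\,\Q(\zeta_{2^m})/\Q(\zeta_{2^m})\bigr) \cong \Gal(L_f/L_\infty)$, the irreducible factors of $f$ over $\Q(\zeta_{2^m})$ are exactly the polynomials $\prod_{\beta \in O}(t - \beta)$ as $O$ runs over the orbits of $\Gal(L_f/L_\infty)$ acting on the roots of $f$; each such polynomial is fixed by $\Gal(L_f/L_\infty)$, hence lies in $L_\infty[t]$. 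So every $r_j \in L_\infty[t]$.

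Finally I would bound $L_\infty$. From $K \subseteq L_\infty \subseteq \Q(\zeta_{2^\infty})$, Lemma~\ref{lem: subfields} gives that $L_\infty/K$ is Galois with $[L_\infty : K] = 2^e$ and $\Gal(L_\infty/K)$ isomorphic to $\Z/2^e\Z$ or $\Z/2\Z \oplus \Z/2^{e-1}\Z$; as this group is a quotient of $\Gal(f) = \Gal(L_f/K)$, the definition~\eqref{eq: galois bound} of $d$ yields $e \le d(f)$. Hence $[L_\infty : \Q] = 2^{\,n-1+e} \le 2^{\,n-1+d(f)}$, and Lemma~\ref{lem: subfields} then forces $L_\infty \subseteq \Q(\zeta_{2^{\,n+1+e}}) \subseteq \Q(\zeta_{2^{\ell}})$ with $\ell = n + d(f) + 1$. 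Taking $g' = \prod_{j=1}^{s} r_j \in L_\infty[t^{\pm 1}] \subseteq \Q(\zeta_{2^\ell})[t^{\pm 1}]$, its conjugate $\gbar'$ also lies in $\Q(\zeta_{2^\ell})[t^{\pm 1}]$ (a cyclotomic field is closed under complex conjugation), and $f = (\text{unit})\, g'\, \gbar'$ over $\Q(\zeta_{2^\ell})$, so $f$ is a norm there. The step I expect to be the crux is the middle one: recognizing that the descent of the factorization is governed precisely by $L_f \cap \Q(\zeta_{2^\infty})$ and that, via Lemma~\ref{lem: subfields} and the definition of $d$, this subfield cannot exceed $\Q(\zeta_{2^\ell})$; the squarefree bookkeeping and the reassembly of $g'$ are routine.
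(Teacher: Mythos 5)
Your proof is correct, and it rests on the same two key insights as the paper's: that the descent question is governed by the subfield $L_\infty = L_f \cap \Q(\zeta_{2^\infty})$, and that $L_\infty$ is bounded by $\Q(\zeta_{2^\ell})$ via Lemma~\ref{lem: subfields} and the definition of $d$ — the final Galois-theoretic bounding step is essentially identical in both. Where you diverge is in how you show $f$ is a norm over $L_\infty$: you refactor $f$ into irreducibles over a sufficiently deep cyclotomic layer, invoke Proposition~\ref{prop: norm test} with the squarefreeness observation to pair these factors as $r_j, \rbar_j$, descend each factor to $L_\infty[t]$ via the Galois orbit description of irreducible factors, and reassemble a fresh norm witness $g' = \prod r_j$. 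The paper's route is shorter: starting directly from a norm witness $f = u\,g\,\gbar$ over $\Q(\zeta_{2^\infty})$, it observes that the roots of $g$ and of $\gbar$ (namely the $\theta_i$ and $1/\thetabar_i$) are roots of $f$ and hence lie in $L_f$, so $g$, $\gbar$, and the scalar $a$ already lie in $L_f \cap \Q(\zeta_{2^\infty})$ — no re-factorization or use of Proposition~\ref{prop: norm test} needed. Your approach is a bit more machinery for the same conclusion; the one thing it buys you is an explicitly squarefree, paired-up factorization, but that extra structure is not used. Both are valid; the paper's argument that the \emph{given} witness already descends is the tighter move, and is worth internalizing since you flagged the descent step as the crux.
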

  
\begin{proof}
Set $R = \Q(\zeta_{2^\infty})[t^{\pm 1}]$.  Suppose $f = u g \gbar$
with $g \in R$ and $u \in R$ a unit, i.e.~$u = a t^d$ for nonzero
$a \in \Q(\zeta_{2^\infty})$.  Multiplying by units in $R$, we
can assume that $f$ and $g$ are genuine polynomials in
$\Q(\zeta_{2^\infty})[t]$ with nonzero constant terms.  Then $f = a
t^n g \gbar$ where $n = \deg(g)$ and $a \in \Q(\zeta_{2^\infty})$.

Let $L_f$ be the splitting field of $f$ over $K$.  Writing
$g(t) = \prod^n_{i = 1} (t - \theta_i)$, we have $\theta_i \in L_f$ so
$g \in L_f[t^{\pm 1}]$. Now
$\gbar = \prod^n_{i = 1} (1/t - \thetabar_i)$, so each $1/\thetabar_i$
is a root of $f$ and hence $\thetabar_i$ is also in $L_f$.  Thus
$\gbar \in L_f[t^{\pm 1}]$ as well.  Solving for $a$ in
$f = a t^n g \gbar$, we see that $a \in L_f$ as well.  Thus $f = a t^n
g \gbar$ shows that $f$ is a norm over the field
$M = L_f \cap \Q(\zeta_{2^\infty})$.

Now $M$ is Galois over $K$ as both $L_f$ and $\Q(\zeta_{2^\infty})$
are, and so $M$ corresponds to a normal subgroup $H$ of $\Gal(f)$.  By
the last claim of Lemma~\ref{lem: subfields}, $\Gal(M/K)$ is
$\Z/2^k\Z$ or $\Z/2\Z \oplus \Z/2^{k - 1}\Z$ for some $k \geq 0$.  As $\Gal(M/K)$ is also
$\Gal(f)/H$, we have $k \leq d(f)$ by (\ref{eq: galois bound}).  So
$[M : \Q] = [M : K][K : \Q] = 2^{k + (n - 1)} \leq 2^{n + d(f) - 1}$.
Again by Lemma~\ref{lem: subfields}, we have
$M \subset \Q(\zeta_{2^{\ell}})$ where $\ell = n + d(f) + 1$, and so $f$
is a norm over $\Q(\zeta_{2^{\ell}})$ as needed.
\end{proof}

We now prove the main result of this subsection.

\begin{proof}[Proof of Proposition~\ref{prop: 2 case}]
The ``only if'' direction is straightforward and left to you. For the
other, suppose $f$ is a norm in some $\Q(\zeta_{2^\ell})[t^{\pm
  1}]$. By Proposition~\ref{prop: norm test}, as ideals in
$\Q(\zeta_{2^\ell})[t^{\pm 1}]$ we have:
\[
(f) = (q_1)^{2 d_1} \cdots (q_\ell)^{2 d_\ell}
\big((r_1)(\rbar_1)\big)^{e_1} \cdots \big((r_m)(\rbar_m)\big)^{e_m}
\]
where each $(q_i) = (\qbar_i)$ and all of the $(q_i)$, $(r_i)$, and
$(\rbar_i)$ are distinct.  Consider the action of
$G = \Gal\big(\Q(\zeta_{2^\ell})/K\big)$ on these ideals, which
permutes them amongst themselves.  Since the absolute Galois group
$\Gal\big(\Q(\zeta_{2^\ell})/\Q\big)$ is abelian and complex
conjugation is one of its elements, we have
$\overline{\sigma \cdot (g)} = \sigma \cdot (\gbar)$ for all
$\sigma \in G$ and $g \in \Q(\zeta_{2^\ell})[t^{\pm 1}]$.  So the
$G$-orbit of $(q_1)$ consists only of certain other $(q_i)$; by
uniqueness of factorizations, we have $d_i = d_1$ in this case. The
product of the ideals in the $G$-orbit of $(q_1)$ gives an ideal $(p)$
generated by a $p \in K[t^{\pm 1}]$ which is irreducible there, and
indeed $p^{2 d_1}$ is a factor of $f$ in case (\ref{a: sym even}).
The same is true for the other orbits of the $(q_i)$.

Turning to $(r_1)$, there are two cases.  If $(\rbar_1)$ is in a
different $G$ orbit than $(r_1)$, the same is true of any other
$(r_j)$ or $(\rbar_j)$ in its orbit. Relabeling, we can assume the $G$
orbit of $(r_1)$ is all $(r_i)$ with $1 \leq i \leq k$; here
$e_i = e_1$ for such $i$.  The product of these $(r_i)$ gives $(p)$
with $p \in K[t^{\pm 1}]$ irreducible.  Now
$(\pbar) = (\rbar_1) (\rbar_2) \cdots (\rbar_k)$ is distinct from
$(p)$, so $p^{e_1}$ is a factor of $f$ of in case (\ref{c: non sym}).
Finally, consider when $(\rbar_1)$ is in the same $G$ orbit as
$(r_1)$. In this case, if $(p)$ is the product of all the ideals in
the orbit of $(r_1)$, we get the usual factor $p^{e_1}$ of $f$.  If
$e_1$ happens to be even, we are in case (\ref{a: sym even}), and
otherwise we apply Corollary~\ref{cor: galois bound} to land in case
(\ref{b: sym odd}).
\end{proof}

\begin{remark}
  Computing the Galois group of a polynomial is a highly nontrivial
  task.  This is especially the case when the base field is not $\Q$,
  where the only implementation we are aware of is in Magma
  \cite{Magma}.  However, the following result allows us to avoid
  computing the Galois group in practice.  For example, if
  $f \in \Q[t]$ has $\deg(f) \leq 14$, it shows we need only worry
  about $f$ being a norm over $\Q\big(\zeta_{2^5}\big)$ which has
  degree 16 over $\Q$.
\end{remark}

\begin{lemma}
  Suppose $f \in K[t]$ is irreducible, where $K$ has 
  characteristic $0$. For any even degree $\leq 30$, the following is
  an upper bound on $d(f)$:
  \begin{center}
    \begin{tabular}{c|ccc|ccc|ccc|ccc|ccc}
      \toprule
      $\deg(f)$ & $2$ & $4$ & $6$ & $8$ & $10$ & $12$ & $14$ & $16$ & $18$ & $20$ & $22$ & $24$
      & $26$ & $28$ & $30$ \\ \midrule
      $d(f) \leq$ & $1$ & $2$ & $2$ & $3$ & $3$ & $3$ & $2$ & $4$ & $4$ & $4$ & $2$ & $4$ & $3$ & $3$ & $4$ \\
      \bottomrule
    \end{tabular}
  \end{center}
\end{lemma}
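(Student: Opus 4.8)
The plan is to translate the statement into a question about transitive permutation groups and then reduce that to the classification of primitive groups. Since $\mathrm{char}(K)=0$ the polynomial $f$ is separable, and since $f$ is irreducible $G:=\Gal(f)$ acts faithfully and transitively on the $n$ roots of $f$, so $G$ is a transitive subgroup of $S_n$. Both groups in \eqref{eq: galois bound} have order $2^k$, so a quotient of $G$ of either shape is automatically a quotient of the $2$-primary part $P$ of $G^{\mathrm{ab}}$; hence $2^{d(f)}$ divides $|P|$, and it suffices to bound $v_2\!\left(|G^{\mathrm{ab}}|\right)$ over all transitive $G\le S_n$. One then checks that the tabulated numbers are exactly
\[
  B(n):=\max\Bigl\{\,\textstyle\sum_i b(n_i)\ :\ n=n_1n_2\cdots n_r,\ n_i\ge 2\,\Bigr\},
\]
where $b(m)$ is the largest value of $v_2\!\left(|Q^{\mathrm{ab}}|\right)$ as $Q$ runs over the primitive permutation groups of degree $m$.

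First I would run the standard block-system reduction. A maximal chain of block systems realizes $G$ inside an iterated wreath product $Q_1\wr Q_2\wr\cdots\wr Q_r$ of primitive groups $Q_i$ of degrees $n_i$ with $\prod_i n_i=n$. At each stage, with $N\trianglelefteq G$ the kernel of the action on the blocks and $\overline G=G/N$ the (transitive) action on the block set, the five-term exact sequence gives a surjection $(N^{\mathrm{ab}})_{\overline G}\twoheadrightarrow \ker\!\bigl(G^{\mathrm{ab}}\twoheadrightarrow\overline G^{\mathrm{ab}}\bigr)$; since $N$ is a subdirect product of the within-block induced groups, on whose factors $\overline G$ acts transitively, the coinvariants $(N^{\mathrm{ab}})_{\overline G}$ are controlled by the abelianization of a single such factor, itself a transitive group of smaller degree. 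Iterating therefore yields $v_2\!\left(|G^{\mathrm{ab}}|\right)\le\sum_i v_2\!\left(|Q_i^{\mathrm{ab}}|\right)\le\sum_i b(n_i)\le B(n)$. (Making the coinvariants estimate fully precise takes a little care but is routine.)

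For the primitive pieces this is where the O'Nan–Scott theorem and CFSG enter. If $Q$ is of affine type, $Q=V\rtimes H$ with $V=\F_p^{\,a}$, $m=p^a$, and $H\le\mathrm{GL}_a(\F_p)$ irreducible; then $V$ is a nontrivial irreducible $\F_p[H]$-module, so $Q^{\mathrm{ab}}\cong H^{\mathrm{ab}}$, and Schur's lemma together with the structure of irreducible linear groups over finite fields (the extremal case being the normalizer of a Singer cycle) gives $v_2\!\left(|H^{\mathrm{ab}}|\right)\le v_2(p^a-1)$. If $\mathrm{soc}(Q)$ is nonabelian it is perfect and $Q/\mathrm{soc}(Q)$ embeds into a product of an outer automorphism group with a small symmetric group, which for $m\le 30$ forces $v_2\!\left(|Q^{\mathrm{ab}}|\right)\le 2$. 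In all cases $b(m)\le B(m)$, and then computing $B(n)$ for each even $n\le 30$ reproduces the table — for instance $B(18)=b(9)+b(2)=v_2(8)+1=4$ from the factorization $18=9\cdot 2$, while $B(16)=4$ comes from splitting $16$ into four factors of $2$.

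The main obstacle is the affine estimate $v_2\!\left(|H^{\mathrm{ab}}|\right)\le v_2(p^a-1)$, which needs honest finite linear-group theory; everything else is bookkeeping. Alternatively, since the transitive permutation groups of degree at most $30$ have been completely classified and tabulated, one can bypass the last two paragraphs entirely and simply compute $d(Q)$ for each of them, obtaining the table (in fact as the exact maxima).
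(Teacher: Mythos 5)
Your concluding sentence --- simply compute $d(Q)$ from the classified transitive permutation groups of each degree $\le 30$ --- is exactly the paper's proof: the paper cites the Holt--Royle table of transitive groups and runs a short GAP computation over the $37{,}256$ relevant groups, observing (as you do) that $d(G)$ is read off from the abelianization. So the lemma is covered by your fallback, and your reduction of $d(f)$ to the $2$-part of $G^{\mathrm{ab}}$ for transitive $G\le S_n$ matches the paper.

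The theoretical route occupying most of your proposal is a genuinely different argument, attractive because it would give bounds for arbitrary $n$, but as sketched it has a real gap at the step you dismiss as ``routine.'' The five-term sequence does give $v_2\bigl(|G^{\mathrm{ab}}|\bigr) \le v_2\bigl(|(N^{\mathrm{ab}})_{\overline{G}}|\bigr) + v_2\bigl(|\overline{G}^{\mathrm{ab}}|\bigr)$, but the asserted bound $v_2\bigl(|(N^{\mathrm{ab}})_{\overline{G}}|\bigr) \le v_2\bigl(|H^{\mathrm{ab}}|\bigr)$ is not automatic. The natural $\overline{G}$-equivariant surjection from $N^{\mathrm{ab}}$ onto the product of abelianizations of the block images of $N$ has kernel $\bigl(N \cap \prod_i [H_i,H_i]\bigr)/[N,N]$, which can be nonzero for proper subdirect products, and you do not argue that its coinvariants contribute nothing to the $2$-part; moreover the image of $N$ in each block factor is a priori only a \emph{normal subgroup} of the block action $H$, not $H$ itself, so the target of that surjection is not $\prod_i H_i^{\mathrm{ab}}$. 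Pinning this down --- together with the affine estimate, the nonabelian-socle bound, and the assertion (spot-checked for only two of the fifteen entries) that $B(n)$ reproduces the table --- is where the real work would lie. Given that degree $\le 30$ is well within the range of the existing classification, the direct computation is the pragmatic choice and is what the paper does.
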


\begin{proof}
Let $n = \deg(f)$.  Since $K$ has characteristic $0$ and $f$ is
irreducible, the roots of $f$ are distinct. Considering the action of
$\Gal(f)$ on the $n$ roots, we get $\Gal(f) \leq S_n$ and moreover,
$\Gal(f)$ acts transitively on $\{1, 2, \ldots, n\}$.  Using the
classification of transitive permutation groups in low degrees
\cite[Table~1]{HoltRoyle2020}, we used \cite{GAP} to compute $d(G)$
for each of them and took maximums to get the values in the table.
Here $d(G)$ can easily be computed from
the abelianization of $G$, and doing so for all 37,256 relevant
transitive groups takes about a minute.
\end{proof}

\subsection{Quotients of nonprime order}
\label{sec: q^e}

So far, we have considered $\chi \maps H \to \Z/q\Z$ where $q$ is a
prime. However, Theorem~\ref{thm: main KL} also permits the case of
$\chi \maps H \to \Z/q^e\Z$ where $e > 1$.  Now $\Z/q^e\Z$ is not a
field, so to follow the discussion leading to Lemma~\ref{lem: basic
  HKL}, we would need to think about $(\Z/q^e\Z)[\Z/m\Z]$-modules and
their attendant quirks.  To sidestep this issue for small enough $m$
and $q^e$, one can work directly with a triangulation of $B_m$ and an
associated presentation of $\pi_1B_m$.  One then uses this
presentation to describe $H$ and find all $\chi \maps H \to \Z/q^e\Z$
directly.  One then enumerates all possible metabolizers $A_{(q)}$ of
$H_{(q)}$ that satisfy Lemma~\ref{lem: small meta} and sees whether
each can be ruled out by some $\chi$.  This direct approach is much
less computationally efficient than in the field case where we can
exploit the decomposition of $\Hbar$ into irreducibles.
% Numbers checked 2025-09-10
However, we were still able to use it to obstruct
\NumObsByDirectHKLTest\ examples where we had no success using just
$\chi$ taking values in fields. In those cases, $m = 2$ and
$q = 3, 5, 7$ or $(m, q) \in \{(3, 2), (3, 7)\}$.  In fact, $(2, 3)$
worked in \num{3380} cases and $(3, 2)$ worked in \num{438} cases,
with the other values of $(m, q)$ accounting for just 15 cases.  (If
one knows the $\Z/m\Z$-action on $H$, then one could restrict to the
possible metabolizers that are $\Z/m\Z$-invariant. We did not take the
care necessary to do so, especially as when $m = 2$ the action is just
by $\pm 1$ and hence adds no restrictions.)

\subsection{Effective methods and parameters}
\label{sec: HKL params}

As stated in Theorem~\ref{thm: HKL obs summary}, in total we showed
that \NumObsByAnyHKLTest\ (57.1\%) knots in \PS\ are not topologically
slice using an HKL test.  The original Lemma~\ref{lem: basic HKL}
accounted for the vast majority, namely \NumObsByBasicHKLTest\
(55.2\%).  Theorem~\ref{thm: imp HKL} or Proposition~\ref{prop: 2
  case} was used for \NumObsByFancyHKLTest\ (1.9\%) knots, and the
method of Section~\ref{sec: q^e} was used for \NumObsByDirectHKLTest\
(0.03\%) knots.

% Below numbers as of 2025-05-28

For the successful obstructions, the degree $m$ of the cover used was
primarily $2$ (84.1\%), $3$ (10.6\%), $5$ (2.6\%), $7$ (1.4\%), $11$
(0.8\%), and $31$ (0.3\%), with the remaining values of $m$ accounting
for 0.2\% of the successful HKL tests; the largest $m$ was $71$.  The
value of $q$ used had median $7$, mean $14.3$, and $\sigma =
13.6$. The largest $q$ was $661$, though 99.6\% had $q < 50$.  We used
$q = 2$, and hence Proposition~\ref{prop: 2 case}, for
\NumObsByHLKWhereQIsTwo\ knots, which is 3.0\% of the successful HKL
tests.

\subsection{Implementation}

Our implementation of these topological slice obstructions was
incorporated into the software package SnapPy \cite{SnapPy} and is
available when one uses it in SageMath \cite{SageMath}.  The initial
version was released in 2021 as part of SnapPy 3.0 and was restricted
to the test of Lemma~\ref{lem: basic HKL}, and also required $m$ to be
prime (not just a prime power); this is the version referred to in
\cite{ManolescuPiccirillo2023, OwensSwenton2023}.  An enhanced version
that uses Theorem~\ref{thm: imp HKL}, allows $m$ to be a prime power,
and uses Proposition~\ref{prop: 2 case} when $q = 2$ will be part of
SnapPy 3.3 and is included in \cite{CodeAndData}.  One difference
compared to \cite{HeraldKirkLivingston2010} is that we use the default
simplified presentation for $\pi_1 E_K$ that SnapPy provides, rather
than a Wirtinger presentation coming from a knot diagram; the former
has many fewer generators but much longer relators.

\subsection{Possible refinements}

We end this section by outlining two possible refinements.  Given how
effective this technique is overall, we expect either would likely
obstruct topological sliceness of some of the
\approxremainingknotstop\  knots where that property is unknown.

First, one could greatly reduce the number of metabolizers to be ruled
out by computing the linking form $\lk$ on $H_1(B_m; \Z)$. To
illustrate the idea, consider the common situation where $m = 2$ and
$H_{(q)} = (\Z/q\Z)^2 = \Hbar$.  Here, the generator of $\Z/2\Z$ acts
on all of $\Hbar$ by $v \mapsto -v$, so $\Hbar$ decomposes as $V_1^2$
where $\dim V_1 = 1$.  To apply Theorem~\ref{thm: imp HKL}, we will
need to strongly obstruct $V_1^2$, and so we need some $q + 1$
different $\Delta_\chi$ to not be norms.  Now
$\lk$ must be the standard hyperbolic bilinear form on $\Hbar$, that
is, there is a basis $\{x, y\}$ of $\Hbar$ with
$\lk(x, x) = \lk(y, y) = 0$ and $\lk(x, y) = 1/q$ (see e.g.~Chapter 7
of \cite{Aschbacher2000}, specifically (21.2) and note $\Hbar$ must
have Witt index 1 as $\dim \Abar = 1$).  Thus, $\pair{x}$ and
$\pair{y}$ are the only possibilities for $\Abar$ and so we need only
check two $\Delta_\chi$.  (A caveat is that many ribbon knots in this
situation have all $\Delta_\chi$ being nonnorms, in which case
Theorem~\ref{thm: imp HKL} would succeed regardless.)  Direct
computation of $\lk$ from a triangulation was done in
\cite{BudneyBurton2020} using Regina \cite{Regina}.  When $m$ is
small, Regina succeeds in computing $\lk$ for the $B_m$ that are of
interest here.  However, it would require delicate book-keeping to
``line up'' the answer for $\lk$ with the computation of
$\Delta_\chi$, so we do not pursue this here. (Another approach to
find $\lk$ would be to use \cite{Nosaka2022}, which requires a Seifert
surface.)

Second, one can often determine enough of $\lk$ to greatly restrict
the possible $\Abar$ just from the decomposition of $\Hbar$ into
irreducibles.  For an $\F_q[\Z/m\Z]$-module $V$, let $V^*$ be its dual
or contragredient; in coordinates, if
$\psi \maps \Z/m\Z \to \GL{n}{\F_q}$ describes $V$ then
$\psi^*(g) = (\psi(g^{-1}))^t$.  If $V$ and $W$ are irreducible
invariant subspaces of $\Hbar$, we get a map $V \to W^*$ of
$\F_q[\Z/m\Z]$-modules by $v \mapsto \lk(v, \cdotspaced)$; by Schur's
lemma, this implies that $V$ and $W$ are orthogonal unless
$V \cong W^*$.  Suppose that $\Hbar$ decomposes as
$\bigoplus_{i = 1}^k V_i^{e_i}$ where no $V_i$ is self-dual and so
isomorphic to $V_i^*$.  Then $\lk$ vanishes on each $V_i^{e_i}$, and
we can rewrite the decomposition as an \emph{orthogonal} sum
$\bigoplus_{i = 1}^{k/2} W_i$ where
$W_i = V_i^{e_i} \oplus (V_i^*)^{e_i}$ and nondegeneracy forces each
nonzero $v \in V_i$ to pair nontrivially with some $w \in V_i^*$.  For
example, if
$\Hbar = V_1 \oplus V_1^* \oplus V_2 \oplus V_2^* \oplus V_3 \oplus
V_3^*$ where all $V_i$ have the same dimension, then Theorem~\ref{thm:
  imp HKL} requires us to find nonnorm $\Delta_\chi$ for four of these
factors to obstruct slicing.  However, we claim that if there are
nonnorm $\Delta_\chi$ for both $V_i$ and $V_i^*$ for a single $i$ then
the knot cannot be topologically slice.  This is because any $\Abar$
will have to meet $V_i \oplus V_i^*$ in $\{0\}$, $V_i$, or $V_i^*$,
and so $\Abar$ is in the kernel of at least one of the two $\chi$.

\section{Smooth slice obstructions}
\label{sec: smooth}

We computed many different smooth slice obstructions coming from Khovanov
homology, knot Floer homology, and gauge theory, yielding: 

\begin{theorem}
  \label{thm: smooth summary}
  Of the knots in \PS, some \NumObsBySmoothInvariants\ (8.6\%) are
  obstructed from being smoothly slice by the invariants listed in
  Table~\ref{tab: smooth obs}.  
\end{theorem}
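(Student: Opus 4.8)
The statement is at bottom a catalogue-and-union result: each invariant appearing in Table~\ref{tab: smooth obs} is, by a theorem in the literature, constrained to vanish (or to take a prescribed value) whenever $K$ is smoothly slice, so the plan is to compute each of these invariants for every knot of \PS, record for each knot whether some invariant certifies that it is not smoothly slice, and report the size of the resulting set of obstructed knots. In practice one runs these computations only on the knots of \PS\ not already certified ribbon by Theorem~\ref{thm: ribbon}; as in the ``Code and data'' discussion, it is nonetheless prudent to keep a sample of known-ribbon knots in each batch as a trap for bugs, since no genuine smooth slice obstruction may ever fire on a ribbon knot.

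First I would treat the Khovanov-homological obstructions. Rasmussen's invariant satisfies $|s(K)|\le 2\,g_4(K)$, so $s(K)\ne 0$ obstructs smooth sliceness, and the same applies to the $s$-invariants over $\F_p$, to the concordance homomorphisms coming from other Bar-Natan-type deformations and from $\mathrm{sl}_N$ homology, and to the Steenrod-square refinements of $s$; all of these are read off the Khovanov chain complex, so I would run KnotJob \cite{KnotJob} across \PS\ and flag the knots on which any of them is nonzero. Second, for the knot-Floer obstructions, recall that a smoothly slice knot has $\tau(K)=0$, $\nu^+(K)=V_0(K)=0$, and $\Upsilon_K\equiv 0$, and that the relevant local $h$-invariants extracted from $d$-invariants of $\pm1$-surgeries vanish; using the HFK Calculator \cite{HFKCalculator} to compute $\HFK$ and the needed part of $\mathit{CFK}^\infty$, I would extract these and flag the knots on which any is an obstruction. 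Third, for the gauge-theoretic obstructions I would invoke the corresponding slice-genus bounds --- for instance those coming from Heegaard Floer $d$-invariants of the double branched cover $\Sigma_2(K)$ together with Donaldson-type diagonalization, and from instanton homology --- computing $\Sigma_2(K)$ and its $d$-invariants and checking the relevant lattice-embedding conditions. The final step is to form the union of all the flagged knots and count it; this cardinality is the \NumObsBySmoothInvariants\ in the statement.

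The main obstacle is computational rather than conceptual: the Khovanov and knot-Floer complexes of $19$-crossing knots can be very large, so the real work is pushing these computations through for (essentially) all of the $3.87$ million knots in \PS\ within a feasible CPU budget and then assembling the many separate outputs consistently. Two pitfalls deserve care. One must fix conventions for mirror images and orientations so that quantities like $s(K)$ and $s(\overline{K})$ are matched correctly and no knot is miscounted across the separate invariant runs; and one must be sure that the entries of Table~\ref{tab: smooth obs} are being used strictly as obstructions, i.e.\ that a ``not smoothly slice'' verdict is never issued for a knot shown ribbon in Theorem~\ref{thm: real main} --- which is precisely what the control-sample methodology is designed to detect.
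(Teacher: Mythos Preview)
Your overall plan---compute each invariant across \PS, flag the knots where some invariant obstructs smooth sliceness, and report the cardinality of the union---is exactly right, and matches what the paper does.  But the specific list of invariants you propose does not match Table~\ref{tab: smooth obs}, and since the theorem is literally a statement about that table, this matters.

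Concretely: you include the knot Floer invariants $\tau$, $\nu^+$, $\Upsilon_K$ as part of the count, but the paper computed $\tau$, $\nu$, $\epsilon$ and found they obstruct a subset of what $(s_\Q, s_{\F_2}, s_{\F_3})$ already handles, so they are explicitly omitted from Table~\ref{tab: smooth obs}.  Conversely, you do not name two entries that are in the table: the LEO invariant $\svectil_c$ of \cite{DunfieldLipshitzSchuetz} (which strictly dominates the $s$-invariants and odd $\Sq^1$) and the $\mathfrak{sl}_3$ $s$-invariants of \cite{Schuetz2025}; the latter were only computed for about \num{15000} knots because of cost, but they contribute six obstructions not seen otherwise.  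Finally, your ``gauge-theoretic'' item is too broad: the only gauge-theoretic input in Table~\ref{tab: smooth obs} is the Goeritz bifactorizability test of \cite[Lemma~3]{LewarkMcCoy2019}, which applies only to alternating knots and rests on Donaldson's diagonalization---the paper does not compute $d$-invariants of $\Sigma_2(K)$ or instanton invariants here.  With those substitutions, your plan is the paper's.
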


\begin{remark}
  Note that Theorem~\ref{thm: HKL obs summary}, which uses the
  topological HKL tests, covers 6.6~times more knots than
  Theorem~\ref{thm: smooth summary}.  Moreover, Theorem~\ref{thm:
    smooth summary} obstructs only \NumObsBySmoothInvButNotHKL\ knots
  (0.3\% of \PS) that are not included in Theorem~\ref{thm: HKL obs
    summary}.
\end{remark}

\begin{table}
  \centering
  % Data as of 2025-06-20
  \begin{tabular}{rrl}
    \toprule
    \#$K \in \PSM$ & \#$K \in \HKLC$ & smooth slice obstruction\\
    \midrule
    \num{195155} & \num{6776} & One of the $s$-invariants $\{s_\Q, s_{\F_2}, s_{\F_3}\}$  \\
    \num{60929} &  \num{2998} & The Steenrod square $\svec^{\Sq^1_o}$ on $\Kh_\odd^*$\\
    \num{890}   &  \num{838} & The LEO invariant $\svectil_c$ \\ 
    \num{6} & \num{6} & An $s$-invariant for the $\mathfrak{sl}_3$-version of $\Kh^*$ \\
    \num{76885} & {13} & Goeritz bifactor.~test for alternating knots \\
    \midrule
    \NumObsBySmoothInvariants & \NumObsBySmoothInvButNotHKL & total distinct knots obstructed \\
    \bottomrule
  \end{tabular}
  
  \caption{Above is the breakdown of which smooth slice obstructions
    were used to prove Theorem~\ref{thm: smooth summary}.  The first
    column gives the number of knots in \PS\ obstructed by each
    invariant.  For the second column, $\HKLC \subset \PSM$ is the
    complement of the knots shown not to be topologically slice by
    Theorem~\ref{thm: HKL obs summary}; hence this column represents
    the new information gleaned from Theorem~\ref{thm:
      smooth summary}. Each knot is counted only once, for the
    invariant that obstructs it that is highest on the table.  The LEO
    invariant $\svectil_c$ necessarily obstructs anything handled by
    $\{s_\Q, s_{\F_2}, s_{\F_3}, \svec^{\Sq^1_o}\}$, for a total of
    \num{256974} knots in \PS\ and \num{10612} knots in $\HKLC$.  }
  \label{tab: smooth obs}
\end{table}

The set of invariants used to prove Theorem~\ref{thm: smooth summary} were:

\begin{enumerate}
\item
  \label{item: s}
  Rasmussen's $s$-invariant \cite{Rasmussen2010}, over the fields
  $\Q$, $\F_2$, and $\F_3$.  We denote these by $s_\Q$, $s_{\F_2}$, and,
  $s_{\F_3}$, each of which takes values in $\Z$ and is $0$ on all
  smoothly slice knots.
  
\item
  \label{item: Sq1 odd}
  The $\Sq^1$-invariant in odd Khovanov homology from 
  \cite[\S 5.6]{SarkarScadutoStoffregen2020}.  We denote it by
  $\svec^{\Sq^1_o}$, and it takes values in $\Z^4$ and is $0$ on all
  smoothly slice knots.
  
\item
  \label{item: LEO}
  The invariant $\svectil_c$ from \cite[\S
  7]{DunfieldLipshitzSchuetz}, which is defined in terms of local
  equivalence classes of local even-odd (LEO) triples that mix
  together the original and odd flavors of Khovanov homology.  This
  takes values in $\Z^2$ and is $0$ on all smoothly slice knots.

\item \label{item: sl3} Various $s$-invariants associated to the
  $\mathfrak{sl}_3$-Khovanov homology \cite{Schuetz2025}.  These are
  quite difficult to compute, so we did so only for
  \NumSLThreeObsComputed\ knots, including all those unresolved by
  other methods.
  
\item
  \label{item: bifac}
  For alternating knots, we employed the criterion of
  \cite[Lemma~3]{LewarkMcCoy2019}, which involves bifactorability of
  the Goeritz matrices and is based on Donaldson's theorem.
\end{enumerate}

All computations of the Khovanov-based invariants in (\ref{item:
  s}--\ref{item: sl3}) were done using Sch\"utz's KnotJob
\cite{KnotJob}. By Section~6 of \cite{DunfieldLipshitzSchuetz}, the
LEO invariant $\svectil_c$ is a stronger smooth slice invariant than
any of $s_\Q, s_{\F_2}, s_{\F_3}, \svec^{\Sq^1_o}$; that is, if any of
those four invariants obstruct smooth slicing, so does $\svectil_c$.
For alternating knots in \PS, the obstruction (\ref{item: bifac})
specific to that setting was the only one of (\ref{item:
  s}--\ref{item: bifac}) that obstructed smooth slicing.  By
\cite[Prop.~6.30]{DunfieldLipshitzSchuetz}, none of (\ref{item:
  s}--\ref{item: LEO}) can obstruct smooth slicing for an alternating
knot with signature $0$, so this is to be expected.
  
Other invariants we looked at included:
\begin{itemize}
\item The invariants $\tau$, $\nu$, and $\epsilon$ from knot Floer
  homology, computed via \cite{HFKCalculator}. These obstructed
  essentially the same set of knots as did
  $(s_\Q, s_{\F_2}, s_{\F_3})$.  Specifically, every knot obstructed
  by one of $(\tau, \nu, \epsilon)$ was also obstructed by one of
  $(s_\Q, s_{\F_2}, s_{\F_3})$, with the converse holding 99.96\% of
  the time.

\item The $\Sq^1$-invariant from the original Khovanov homology
  \cite{LipshitzSarkar2014a} obstructs exactly the same knots in \PS\
  as $(s_\Q, s_{\F_2}, s_{\F_3})$.  The same is true for the
  $s^\Z$-invariant of \cite{Schuetz2022a}.  In general, neither of
  these invariants can give more information than the LEO invariant
 $\svectil_c$ \cite{DunfieldLipshitzSchuetz}.

\item The higher $\Sq^2$-invariant of \cite{LipshitzSarkar2014a} is
  very difficult to compute for a 19 crossing knot, and the analogues
  for odd Khovanov homology from \cite{Schuetz2022b} are even harder.
  (This is because there is no known Bar-Natan--style ``scanning
  algorithm'' for these invariants; compare \cite{Schuetz2022c}, which
  applies even to $\svectil_c$.)  While these invariants can provide
  slice obstructions not seen by $\svectil_c$, we only computed
  $\Sq^2$ from \cite{LipshitzSarkar2014a} for about \num{2700} knots
  where all the other Khovanov invariants vanished.  Unfortunately,
  $\Sq^2$ also vanished for these knots.  The same was true for the
  roughly \num{130} knots where we computed the versions of $\Sq^2$
  for $\Kh_\odd^*$.
\end{itemize}

\section{Zero-friends}
\label{sec: friends}

The 0-surgery on a knot $K$ in $S^3$ is the unique Dehn surgery where
$b_1 > 0$.  A pair of knots in $S^3$ are \emph{0-friends} when their
0-surgeries are homeomorphic. In this section, we study some 0-friends
of the knots in \PS.  The motivation for this is twofold.  First, and
most tantalisingly, if $K$ and $K'$ are 0-friends where $K$ is
smoothly slice and $K'$ is not, then there is an exotic smooth
structure on $S^4$, disproving the smooth 4-dimensional Poincar\'e
Conjecture; see \cite[\S 1]{ManolescuPiccirillo2023}.  Second, even
with that conjecture unresolved, in favorable circumstances one can
show that certain 0-friends must have the same smooth slice status;
for example, this was used to show that the Conway knot is not
smoothly slice \cite{Piccirillo2020}.  In Section~\ref{sec: friend
  app}, we use 0-friends to determine the smooth slice status of 25
knots in \PS. Section~\ref{sec: mystery} gives 4 pairs of 0-friends
where we can only determine smooth sliceness of one of the knots;
these are thus candidates for constructing exotic smooth 4-spheres.

\subsection{Finding 0-friends}
\label{sec: friend finder}

There is an algorithm to determine when two 3-manifolds are
homeomorphic (see e.g.~\cite{Kuperberg2019}).  While the known
algorithms are unimplementable, in practice one can still usually
decide this, especially in the presence of hyperbolic geometry
\cite{HodgsonWeeks1994}.  With 114 exceptions (0.003\%), the knots in
\PS\ have hyperbolic 0-surgeries, so the first question is how many
pairs of 0-friends there are in \PS\ itself.  The answer is just 101
pairs.\footnote{The common 0-surgery is hyperbolic for 98 of these 101
  pairs of 0-friends.  There are only two nontrivial groups of mutual
  0-friends, both with four knots: $\{K11n97$, $16n86947$,
  $18nh_{00000399}$, $18nh_{00000400}\}$ and $\{K11n67$,
  $18nh_{00000254}$, $19nh_{000000631}$, $19nh_{000000632}\}$.  See
  \cite{CodeAndData} for all 186 knots involved.}  This is consistent
with \cite{KegelWeiss2024}, which found only 6 such pairs among all
$\approx 313,000$ prime knots with $\leq 15$ crossings.  Note also
that the 0-friend used in \cite{Piccirillo2020} is much more
complicated than the Conway knot itself. Therefore, to usefully study
0-friends of knots in \PS, we need a technique that can generate them
from a single given knot $K$.

We used the following method from
\cite[\S9.3]{DunfieldObeidinRudd2024} to generate more than
\num{500000} 0-friends of the knots in \PS; we explain in
Section~\ref{sec: hyp rules} why this should generate most of the
interesting small-to-medium 0-friends.  One example pair of 0-friends
is shown in Figure~\ref{fig: besties}.

\begin{enumerate}
\item
  \label{item: geod}
  For each knot $K$ whose 0-surgery $Z_K = S^3_0(K)$ is hyperbolic, we
  looked at all closed geodesics $\gamma$ of length at most $3$ that
  generated $H_1(Z_K; \Z) \cong \Z$, using \cite{HodgsonWeeks1994,
    SnapPy}. (The reason we stopped at length $3$ is explained
  in Section~\ref{sec: hyp rules}.)
  
\item
  \label{item: drill}
  For each $\gamma$, we considered the exterior
  $X = Z_K \setminus \gamma$, which must itself be hyperbolic. The
  hyperbolic structure on $X$ determines a finite list of Dehn
  fillings on it that might be $S^3$, and this can be used to
  identify whether $X$ is the exterior of some unknown knot $K'$ in
  $S^3$.  (See e.g.~\cite{Dunfield2020b} for more on this.)

\item For each knot exterior $X$ identified in Step~\ref{item: drill},
  we used \cite{DunfieldObeidinRudd2024} to find an actual knot
  diagram for $K'$.
  
\end{enumerate}

\begin{remark}
  This method uses numerical computations about the hyperbolic
  structures that are not rigorous.  However, each pair of 0-friends
  in our data must be genuine since filling, drilling, and finding
  knot diagrams are all combinatorial operations that are implemented
  at the level of manipulating triangulations of the relevant
  3-manifolds.  Such numerical issues instead mean that our list of
  0-friends may not be complete according to the above description;
  still, we expect it contains 99\% of such 0-friends.
\end{remark}

\subsection{Overview of 0-friends}

We identified at least one 0-friend for 546,717 (14.1\%) of the knots
in \PS. (For these knots, we found an average of 1.5 0-friends, though
we were not careful about avoiding duplicates.) The crossing
number of the diagram of the 0-friend $K'$ had mean 225.7 and median
154, with standard deviation $\sigma$ of 219.2. (We make no claim that
these are minimal crossing diagrams, though we suspect very few can be
simplified to reduce the number of crossings by even 50\%.)  This makes
most of these diagrams much too large to allow computing Khovanov
homology and hence most of the invariants of Section~\ref{sec:
  smooth}.  In contrast, the exteriors $E_{K'}$ are only modestly more
complicated than the original $E_K$.  The number of tetrahedra in the
triangulation of $E_{K'}$ had mean and median 39.0 as compared to 31.0
for $E_{K}$, an increase of only 25\% (here $\sigma = 7.91$ and
$\sigma = 3.84$ respectively).

In the end, we were able to determine the smooth slice status for
78,507 0-friends of 56,319 knots in \PS; we denote these 0-friend
knots by \ZF\ and show information about their crossing number and
hyperbolic volume in Figure~\ref{fig: ZF basic}.  Some 82.0\% of the
knots in \ZF\ are ribbon and 18.0\% are not smoothly slice.  (For all
the knots in \PS\ where we found a 0-friend, we had 71.5\% ribbon and
28\% not smoothly slice, with 0.4\% unknown; this contrasts with \PS\
as a whole where 42.2\% are ribbon, 57.5\% are not smoothly lice, and
0.3\% unknown.)  Initially, based on the computations described in
Sections~\ref{sec: bands}--\ref{sec: smooth}, there were 36 knots in
\PS\ of unknown smooth slice status where this was determined for a
0-friend.  For 28 of these knots in \PS, we can use the 0-friend to
determine the smooth slice status of the original; see
Theorem~\ref{thm: slice via friends} for more and Section~\ref{sec:
  mystery} for the 8 mysterious remaining knots.

\begin{figure}
  \centering
  \begin{tikzpicture}[font=\scriptsize]
    \newcommand{\nmdsubfigurewidth}{9.75cm}
    \tikzset{axis label/.style={font=\footnotesize}}
    \tikzset{annotate plot/.style={font=\scriptsize}}
    \begin{scope}
      %Set \graphicspath{{plots/images/}} to include the image files
\begin{tikzoverlay*}[width=\nmdsubfigurewidth]{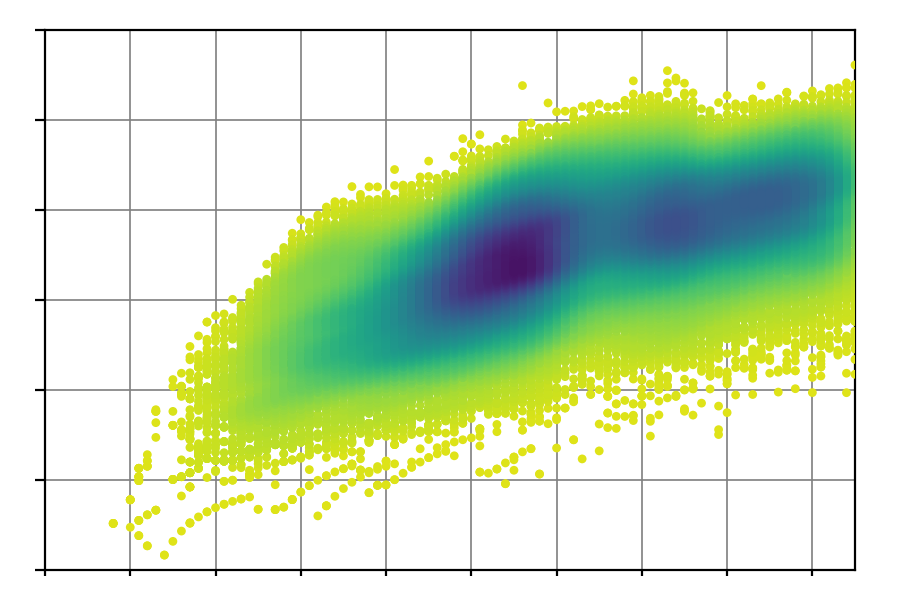}
  \draw (50.000000, -3) node[below, axis label] {Crossings};
  \draw (5.000000, 1.635802) node[below] {$0$};

  \draw (14.473684, 1.635802) node[below] {$10$};

  \draw (23.947368, 1.635802) node[below] {$20$};

  \draw (33.421053, 1.635802) node[below] {$30$};

  \draw (42.894737, 1.635802) node[below] {$40$};

  \draw (52.368421, 1.635802) node[below] {$50$};

  \draw (61.842105, 1.635802) node[below] {$60$};

  \draw (71.315789, 1.635802) node[below] {$70$};

  \draw (80.789474, 1.635802) node[below] {$80$};

  \draw (90.263158, 1.635802) node[below] {$90$};

  \draw (-7, 33.333333) node[rotate=90.0, axis label] {Hyperbolic volume};
  \draw (2.839506, 3.333333) node[left] {$5$};

  \draw (2.839506, 13.333333) node[left] {$10$};

  \draw (2.839506, 23.333333) node[left] {$15$};

  \draw (2.839506, 33.333333) node[left] {$20$};

  \draw (2.839506, 43.333333) node[left] {$25$};

  \draw (2.839506, 53.333333) node[left] {$30$};

  \draw (2.839506, 63.333333) node[left] {$35$};

  % Internal axis coordinate system
  \begin{scope}[shift={(5.00000000, -6.66666667)},
                xscale=0.94736842, yscale=2.00000000]
      %\draw[red] (8.000000, 5.829635) rectangle (100.000000, 33.053399);
  \end{scope}
\end{tikzoverlay*}
    \end{scope}
  \end{tikzpicture}
  \caption{ For each 0-friend $K'$ in \ZF, this plot compares the
    crossing number of the simplest known diagram for $K'$ with the
    volume of the hyperbolic structure on $E_{K'}$.  The latter has
    mean 22.4, median 22.8, and $\sigma = 3.5$.  This is about 9\% more
    than the corresponding original knots in \PS; compare also
    Figure~\ref{fig: PS basic}.  }
  \label{fig: ZF basic}
\end{figure}

\begin{figure}
  \centering
  \begin{tikzpicture}[font=\scriptsize]
    \newcommand{\nmdsubfigurewidth}{5.7cm}
    \tikzset{axis label/.style={font=\footnotesize}}
    \tikzset{annotate plot/.style={font=\scriptsize}}
    \begin{scope}
      %Set \graphicspath{{plots/images/}} to include the image files
\begin{tikzoverlay*}[width=\nmdsubfigurewidth]{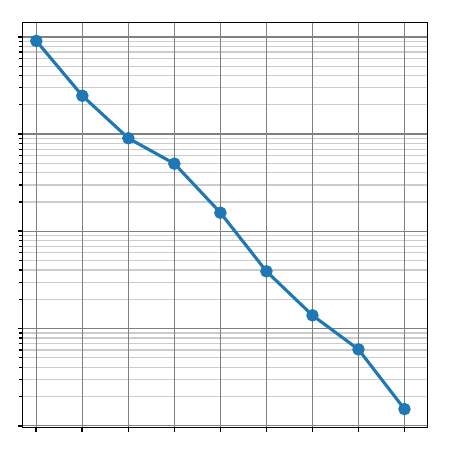}
  \draw (50.000000, -4.731481) node[below, axis label] {Crossings};
  \draw (8.068182, 2.453704) node[below] {$11$};

  \draw (18.295455, 2.453704) node[below] {$12$};

  \draw (28.522727, 2.453704) node[below] {$13$};

  \draw (38.750000, 2.453704) node[below] {$14$};

  \draw (48.977273, 2.453704) node[below] {$15$};

  \draw (59.204545, 2.453704) node[below] {$16$};

  \draw (69.431818, 2.453704) node[below] {$17$};

  \draw (79.659091, 2.453704) node[below] {$18$};

  \draw (89.886364, 2.453704) node[below] {$19$};

  \draw (-15.5, 50.000000) node[rotate=90.0, axis label] {$Z_K$ is hyperbolic};
  \begin{scope}[shift={(3.5, 0)}]
  \draw (0, 5.393112) node[left] {${0.001\%}$};

  \draw (0, 26.995653) node[left] {${0.01\%}$};

  \draw (0, 48.598195) node[left] {${0.1\%}$};

  \draw (0, 70.200737) node[left] {${1\%}$};

  \draw (0, 91.803278) node[left] {${10\%}$};
  \end{scope}

  % No internal axis coordinate system as there is a log scale.
\end{tikzoverlay*}
    \end{scope}
    \begin{scope}[shift={(7.0, 0)}]
      %Set \graphicspath{{plots/images/}} to include the image files
\begin{tikzoverlay*}[width=\nmdsubfigurewidth]{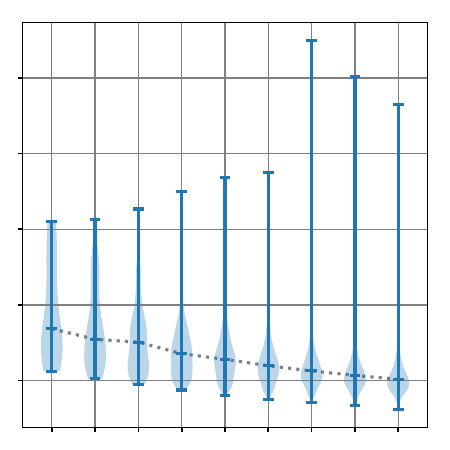}
  \draw (-10, 50.000000) node[rotate=90.0, axis label] {Length of geodesic in $Z_K$};
  \draw (50.000000, -4.70000) node[below, axis label] {Crossings};
  \begin{scope}[shift={(0, 2.5)}]
    \draw (11.497326, 0) node[below] {$11$};
    \draw (21.122995, 0) node[below] {$12$};
    \draw (30.748663, 0) node[below] {$13$};
    \draw (40.374332, 0) node[below] {$14$};
    \draw (50.000000, 0) node[below] {$15$};
    \draw (59.625668, 0) node[below] {$16$};
    \draw (69.251337, 0) node[below] {$17$};
    \draw (78.877005, 0) node[below] {$18$};
    \draw (88.502674, 0) node[below] {$19$};
  \end{scope}
  \begin{scope}[shift={(3.8, 0)}]
    \draw (0, 15.436669) node[left] {$0.5$};
    \draw (0, 32.245887) node[left] {$1.0$};
    \draw (0, 49.055104) node[left] {$1.5$};
    \draw (0, 65.864322) node[left] {$2.0$};
    \draw (0, 82.673540) node[left] {$2.5$};
  \end{scope}

  % Internal axis coordinate system
  \begin{scope}[shift={(-94.38502674, -1.37254850)},
                xscale=9.62566845, yscale=33.61843522]
      %\draw[red] (10.750000, 0.311242) rectangle (19.250000, 2.744971);
  \end{scope}
\end{tikzoverlay*}
    \end{scope}
  \end{tikzpicture}
  \caption{%
    There are 93 knots $K$ in \PS\ whose exterior $E_K$ is hyperbolic
    but $Z_K$ is not hyperbolic.  At left, we see the portion of such
    exceptional knots declines exponentially in the number of
    crossings, ending at $0.0015\%$ ($1$ in $67{,}500$) for
    19-crossing knots.  For \PS, every time $Z_K$ was hyperbolic the
    core of the Dehn surgery solid torus was isotopic to a geodesic.
    The violin plot at right shows how the length of that geodesic
    changes as the number of crossings increases; the dotted line
    passes through the medians of each sample and the top and bottom
    lines are the maxes and mins.  }
  \label{fig: Z_K hyp}
\end{figure}

\subsection{Hyperbolic biases}
\label{sec: hyp rules}

Our approach in Section~\ref{sec: friend finder} may seem to be one of
convenience: we restrict to when $Z_K$ is hyperbolic and then using
that hyperbolic structure to guide the search for the 0-friend.

\FloatBarrier

We already noted that $Z_K$ is hyperbolic for all but 114 knots $K$ in
\PS; this includes the 21 knots in \PS\ where $E_K$ itself is not
hyperbolic.  Moreover, the portion of such exceptional knots appears
to decline exponentially in the number of crossings, as shown in the
left plot of Figure~\ref{fig: Z_K hyp}.  Thus insisting that $Z_K$ be
hyperbolic requires that we skip very few knots.

However, while any $K'$ that is a 0-friend of $K$ gives a
corresponding knot in $Z_K$ that generates $H_1(Z_K; \Z)$, in
Step~\ref{item: geod} we restrict to those knots in $Z_K$ that are
\emph{isotopic to geodesics} in its hyperbolic structure.  Remarkably,
whenever $Z_K$ is hyperbolic for $K$ in \PS, the core of the Dehn
surgery solid torus in $Z_K$ is indeed isotopic to a geodesic.  We
know of no geometric heuristic that suggests this pattern will not
continue; if anything, we would expect it to become more marked if
that were possible.  Thus, when looking for a 0-friend, considering
just the exteriors of geodesics in $Z_K$ seems to be a very mild
restriction, or possibly not one at all!

\begin{figure}
  \centering
  \begin{tikzpicture}[font=\scriptsize]
  \newcommand{\nmdsubfigurewidth}{6.5cm}
  \tikzset{axis label/.style={font=\footnotesize}}
  \tikzset{annotate plot/.style={font=\scriptsize}}
  \begin{scope}
    %Set \graphicspath{{plots/images/}} to include the image files
\begin{tikzoverlay*}[width=\nmdsubfigurewidth]{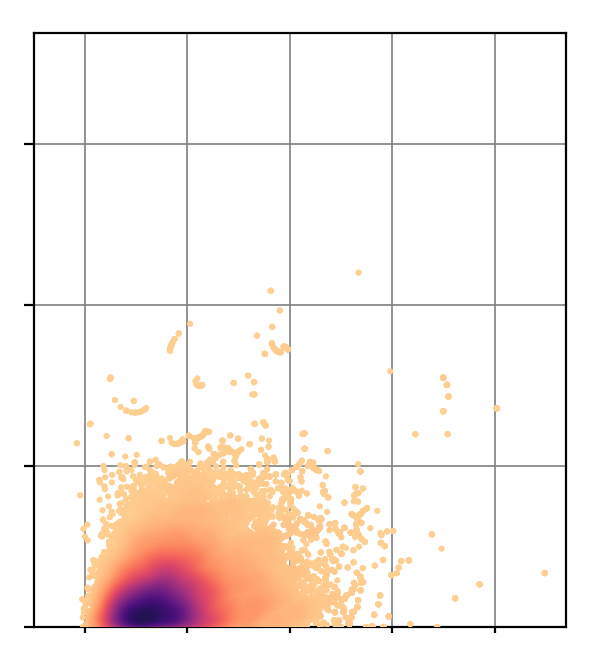}
  \draw (14.155172, 2.953704) node[below] {$0.5$};
  \draw (31.224138, 2.953704) node[below] {$1.0$};
  \draw (48.293103, 2.953704) node[below] {$1.5$};
  \draw (65.362069, 2.953704) node[below] {$2.0$};
  \draw (82.431034, 2.953704) node[below] {$2.5$};
  \draw (4, 5.500000) node[left] {$0$};
  \draw (4, 32.311868) node[left] {$\frac{\pi}{4}$};
  \draw (4, 59.123737) node[left] {$\frac{\pi}{2}$};
  \draw (4, 85.935605) node[left] {$\frac{3 \pi}{4}$};
    \draw (50, -5) node[below, axis label]
        {Geodesic in $Z_K$ giving $K$ in \PS};
  % Internal axis coordinate system
  \begin{scope}[shift={(-2.91379310, 5.50000000)},
    xscale=34.13793103, yscale=34.13793103]
      \draw[black, line width=0.6pt] (0.25, 0.0035) -- +(2.6, 0);
  \end{scope}
\end{tikzoverlay*}
  \end{scope}
  \begin{scope}[shift={(7, 0)}]
    %Set \graphicspath{{plots/images/}} to include the image files
\begin{tikzoverlay*}[width=\nmdsubfigurewidth]{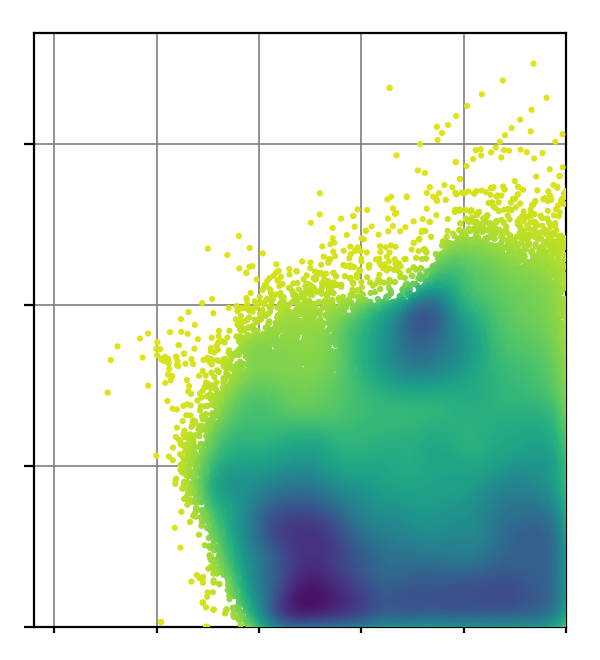}
  \draw (9.034483, 2.953704) node[below] {$0.5$};
  \draw (26.103448, 2.953704) node[below] {$1.0$};
  \draw (43.172414, 2.953704) node[below] {$1.5$};
  \draw (60.241379, 2.953704) node[below] {$2.0$};
  \draw (77.310345, 2.953704) node[below] {$2.5$};
  \draw (94.379310, 2.953704) node[below] {$3.0$};
  \draw (4, 5.500000) node[left] {$0$};
  \draw (4, 32.311868) node[left] {$\frac{\pi}{4}$};
  \draw (4, 59.123737) node[left] {$\frac{\pi}{2}$};
  \draw (4, 85.935605) node[left] {$\frac{3 \pi}{4}$};
  \draw (50, -5) node[below, axis label]
     {Geodesic in $Z_K$ giving $K'$ in \ZF};
  % Internal axis coordinate system
  \begin{scope}[shift={(-8.03448276, 5.50000000)},
                xscale=34.13793103, yscale=34.13793103]
       \draw[black, line width=0.6pt] (0.4, 0.0035) -- ++(2.597, 0) --
       ++(0, 2.85) ;
  \end{scope}
\end{tikzoverlay*}
  \end{scope}  
  \end{tikzpicture}
  \caption{ A closed geodesic $\gamma$ in a hyperbolic manifold $M$
    has a length $L \in \R_{>0}$ and also a twist $\theta \in \R/(2\pi\Z)$
    recording the holonomy of going around $\gamma$; these are
    combined into its \emph{complex length}
    $L + i \theta \in \C/(2\pi i \Z)$.  The above shows the complex
    lengths for the corresponding geodesics in $Z_K$ for each pair of
    0-friends $K$ in \PS\ and $K'$ in \ZF. The complex length depends
    on the orientation of $M$ but not of $\gamma$; as the original
    choice of $K$ or $\Kbar$ in \cite{Burton2020} is somewhat
    arbitrary, we have normalized $\theta$ modulo $\pi$ rather than
    $2 \pi$ in both cases. }
  \label{fig: geodesics}
\end{figure}

\begin{figure}
  \centering
  \begin{tikzpicture}[font=\scriptsize]
    \newcommand{\nmdsubfigurewidth}{10.0cm}
    \tikzset{axis label/.style={font=\footnotesize}}
    \tikzset{annotate plot/.style={font=\scriptsize}}
    \begin{scope}
      %Set \graphicspath{{plots/images/}} to include the image files
\begin{tikzoverlay*}[width=\nmdsubfigurewidth]{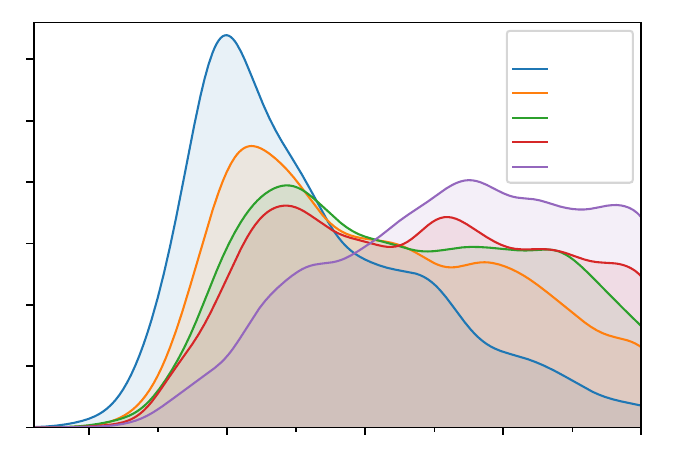}
  \draw (50.000000, -4) node[below, axis label] {Length of geodesic in $Z_K$};
  \begin{scope}[shift={(0, 0.7)}]
    \draw (13.181818, 1.172840) node[below] {$1.0$};
    \draw (33.636364, 1.172840) node[below] {$1.5$};
    \draw (54.090909, 1.172840) node[below] {$2.0$};
    \draw (74.545455, 1.172840) node[below] {$2.5$};
    \draw (95.000000, 1.172840) node[below] {$3.0$};
  \end{scope}
  \begin{scope}[shift={(0.7, 0)}]
    \draw (2.839506, 3.333333) node[left] {$0.0$};
    \draw (2.839506, 12.424242) node[left] {$0.2$};
    \draw (2.839506, 21.515152) node[left] {$0.4$};
    \draw (2.839506, 30.606061) node[left] {$0.6$};
    \draw (2.839506, 39.696970) node[left] {$0.8$};
    \draw (2.839506, 48.787879) node[left] {$1.0$};
    \draw (2.839506, 57.878788) node[left] {$1.2$};
  \end{scope}

  \begin{scope}[shift={(21, 16.5)}]
    \draw (56.503819, 43.25) node[right] {Crossings};
    \begin{scope}[shift={(0.5, -1.09)}]
      \draw (59.751389, 40.97) node[right] {$[0, 30)$};
      \draw (59.751389, 37.34) node[right] {$[30, 40)$};
      \draw (59.751389, 33.73) node[right] {$[40, 50)$};
      \draw (59.751389, 30.1) node[right] {$[50, 60)$};
      \draw (59.751389, 26.48) node[right] {$[60, 70)$};
    \end{scope}
  \end{scope}
  % Internal axis coordinate system
  \begin{scope}[shift={(-27.72727273, 3.33333333)},
                xscale=40.90909091, yscale=45.45454545]
      %\draw[red] (0.800000, 0.000000) rectangle (3.000000, 1.277237);
  \end{scope}
\end{tikzoverlay*}
    \end{scope}
  \end{tikzpicture}
  \caption{This plot explores how the length of the geodesic in $Z_K$
    that gives the 0-friend $K'$ relates to the number of crossings of
    $K'$.  Specifically, the 0-friends were grouped by crossing as
    indicated in the upper-right and then a smoothed histogram is
    shown for the lengths of the corresponding geodesics.  The groups
    had between 2,659 and 25,316 knots each, where the size increases
    with the crossing number.  Here, we include all 0-friends found
    for knots in \PS, not just those in \ZF.}
  \label{fig: yield}
\end{figure}

It remains to explain why we chose to limit our search to geodesics in
$Z_K$ of length at most 3.  For the whole of \PS, the length of the
core geodesic in $Z_K$ had mean $0.54$ and median $0.51$, with
$\sigma = 0.11$.  No geodesic was longer than $2.75$, and fewer than
$0.01\%$ were longer than $1.5$.  Moreover, as the number of crossings
increased, the median length decreased as shown in the right plot of
Figure~\ref{fig: Z_K hyp}, though a number of high outliers remain.
Thus a cutoff of $3$ is at least big enough to capture all of the
0-friends within \PS.  On the other hand, the right plot in
Figure~\ref{fig: geodesics} makes it clear there should be many
0-friends of knots in \PS\ coming from geodesics in $Z_K$ of length
more than 3.  The question is then how many ``small-to-medium''
0-friends we are missing, where concretely we mean those with a a
diagram of at most 50 crossings.  Figure~\ref{fig: yield} suggests that
we have found roughly 90\% of those with less than 30 crossings,
say 75\% of those with less than 40 crossings, and perhaps 60\% of
those with less than 50 crossings.  On the other hand, we likely have
much less than 50\% of those with crossing number in [50, 70).

It would be computationally feasible to raise the cutoff. As the
number of geodesics grows exponentially in the length, this gets
increasingly difficult, but considering those of length up to 4 (or
perhaps 5) should be possible in most of these examples.

% The following helps explains why so many $Z_K$ for $K$ in \PS\ are
% hyperbolic and strongly suggests that hyperbolic plausibly slice knots
% with more crossing behave in the same way. To understand when $Z_K$ is
% not hyperbolic, one looks at the Euclidean torus that cuts off a
% maximal cusp $C$ in the hyperbolic structure on $E_K$ (see
% e.g.~\cite{Purcell2020}).  If $Z_K$ is not hyperbolic, by the
% 6-theorem and maximality of the cusp, both the meridian and longitude
% have length in [1, 6].  This immediately bounds the area of the torus
% $\partial C$, which is half the volume of the cusp $C$.  In fact, no
% example is known where the meridian has length more than 4
% \cite[\S4.3]{HoffmanPurcell2017}; assuming this holds in general, the
% cusp volume of a knot where $Z_K$ is not hyperbolic is at most 12.  Only
% about 2\% of the knots in \PS\ have cusp volumes this small
% \nathan{Recompute, 2\% is the portion with cusp volume at most 9}, and
% this should only decrease as the number of crossings increases.

\subsection{Hyperbolic aside} We next discuss an intriguing pattern in
the hyperbolic geometry of these examples; it is not directly related
to the main goal of using 0-friends to understand the knots in \PS, so
you may want to skip ahead to Section~\ref{sec: friend app}.  Our
motivation here was to try to predict in advance which
geodesics in $Z_K$ give $K'$ with a diagram with relatively few
crossings, but we gained little purchase on this goal.

We start by considering the difference in volume between $E_K$ and
$Z_K$.  By Thurston, one always has $\Vol(Z_K) < \Vol(E_K)$, and
presuming we are in the situation that $E_K$ is the complement of a
closed geodesic $\gamma$ in $Z_K$, the difference
$\Delta \Vol = \Vol(E_K) - \Vol(Z_K)$ is typically controlled by the
length $L$ of $\gamma$, with $\Delta \Vol \approx \frac{\pi}{2} L$.
The latter is always true asymptotically for sufficiently large Dehn
fillings on $E_K$ by \cite{NeumannZagier1985}.  Even for smaller Dehn
fillings, as long as one can deform the hyperbolic structure on $E_K$
to that of $Z_K$ via hyperbolic cone manifolds, it is usually a good
approximation because of Sch\"afli's formula; see e.g.~\cite[\S
A.3]{AgolStormThurston2007}.  Here, this pattern comes through very
strongly in Figure~\ref{fig: DVol}. While for any knot $K$ with a
diagram with $c$ crossings one has
$\Vol(E_K) < v_{\mathit{oct}} \cdot c < 3.67 c$, there is no converse to
this and Figure~\ref{fig: ZF basic} shows only a weak correlation
between $\Vol(E_{K'})$, and the crossing number of the diagram for
$K'$.

Less expected is the relationship between $\Vol(E_{K})$ and $L$ shown
in Figure~\ref{fig: huh?}, namely that $\Vol(E_{K})$ is roughly linear
in $1/L$.  Intriguingly, this holds only for the original $K$ in \PS,
not the 0-friends $K'$ in \ZF.  We now re-frame this phenomenon to
remove reference to $Z_K$.  Consider a horotorus $T$ that cuts off a
cusp neighborhood $C$ of $E_K$; the Euclidean structure on $T$ is
unique up to dilation.  Let $\mu$ and $\lambda$ be the Euclidean
translations corresponding to the meridian and longitude curves on
$T$; one defines the \emph{cusp shape} as $\tau_K = \lambda/\mu$,
where by convention orientations are chosen so $\Im(\tau_K) > 0$.  A
simple calculation using Lemma 4.2 of \cite{NeumannZagier1985}
suggests that $L \approx \Re(2 \pi i/\tau_K)$.  (To see this, apply
the lemma in the case $p = 0$ and $q = 1$, using that
$v(u) = \tau_K u + O(u^3)$ by their equation (12), and
$v(u) = 2 \pi i$ by their equation (32).)  This would imply
$2 \pi/L \approx \abs{\tau_K}^2/\Im(\tau_K)$, suggesting that we
compare $\Vol(E_K)$ with $\abs{\tau_K}^2/\Im(\tau_K)$.  Doing so
results in the striking plot in Figure~\ref{fig: cusp huh?}.  This
brings to mind \cite{DaviesJuhaszLackenbyTomasev2024}, which relates
$\Re(\tau_K)$ to $\sigma_K$, since plausibly slice knots have
$\sigma_K = 0$, but we do not know a direct connection to
Figure~\ref{fig: cusp huh?}.

\pagebreak

\enlargethispage{3cm}

\begin{figure}
  \centering
  \begin{tikzpicture}[font=\scriptsize]
    \newcommand{\nmdsubfigurewidth}{6.0cm}
    \tikzset{axis label/.style={font=\footnotesize}}
    \tikzset{annotate plot/.style={font=\scriptsize}}
    \begin{scope}
      %Set \graphicspath{{plots/images/}} to include the image files
\begin{tikzoverlay*}[width=\nmdsubfigurewidth]{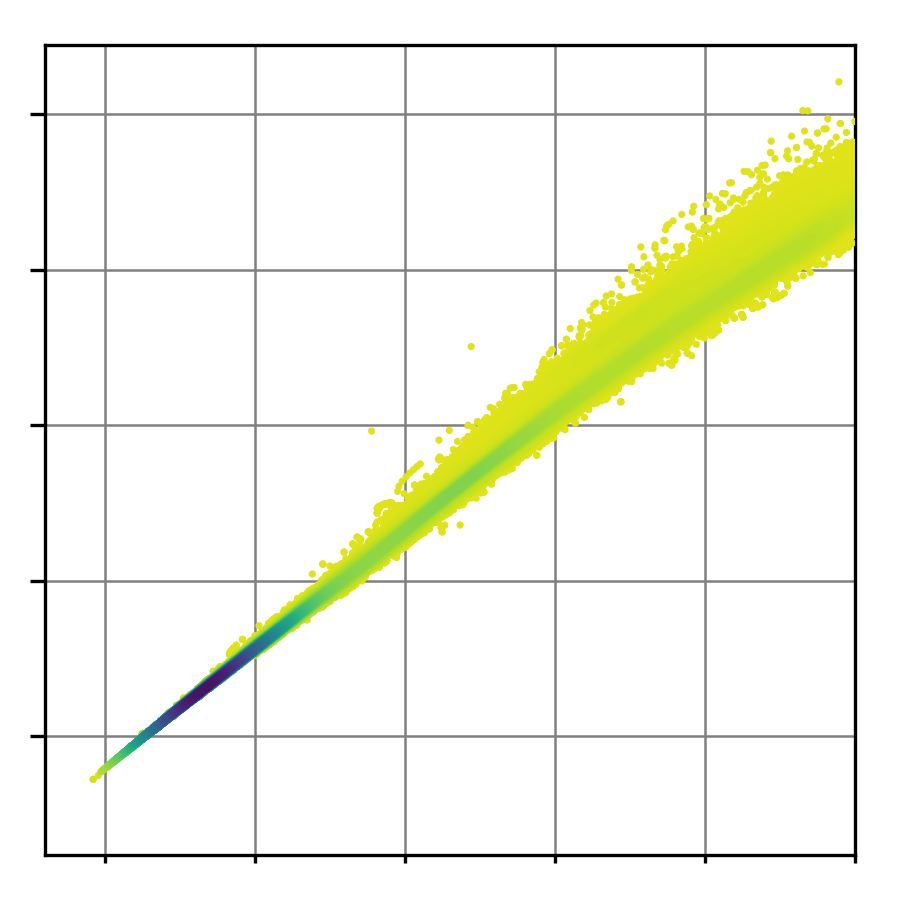}
  \draw (50.000000, -4.462963) node[below, axis label] {Length $L$ of geodesic in $Z_K = Z_{K'}$};
  \draw (11.666667, 2.453704) node[below] {$0.5$};

  \draw (28.333333, 2.453704) node[below] {$1.0$};

  \draw (45.000000, 2.453704) node[below] {$1.5$};

  \draw (61.666667, 2.453704) node[below] {$2.0$};

  \draw (78.333333, 2.453704) node[below] {$2.5$};

  \draw (95.000000, 2.453704) node[below] {$3.0$};

  \draw (-7, 50.000000) node[rotate=90.0, axis label] {$\Delta \Vol$};
  \draw (1.759259, 18.221028) node[left] {$1$};

  \draw (1.759259, 35.488031) node[left] {$2$};

  \draw (1.759259, 52.755033) node[left] {$3$};

  \draw (1.759259, 70.022035) node[left] {$4$};

  \draw (1.759259, 87.289038) node[left] {$5$};

  % Internal axis coordinate system
  \begin{scope}[shift={(-5.00000000, 0.95402594)},
                xscale=33.33333333, yscale=17.26700232]
      \draw[line width=0.5pt, line cap=round, dashed, color=black!50]
            (0.305000, 0.47909287967244346) -- (3.000000, 4.71238898038469);
  \end{scope}
\end{tikzoverlay*}
    \end{scope}
    \begin{scope}[shift={(7.75, 0)}]
      \begin{tikzoverlay*}[width=\nmdsubfigurewidth]{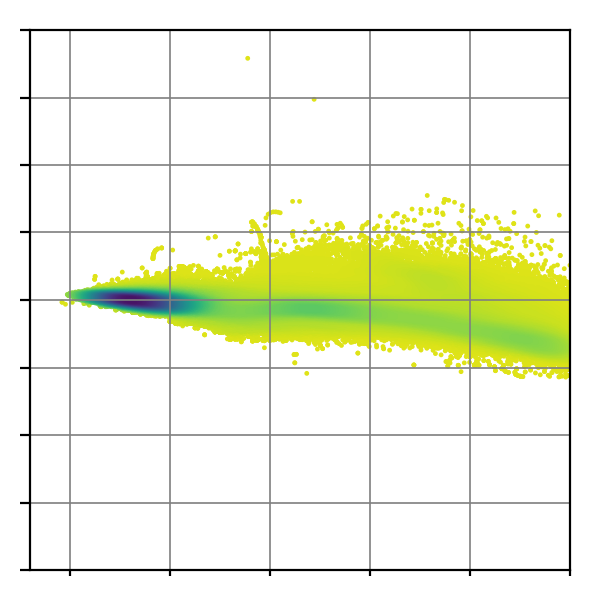}
  \draw (50.000000, -5.462963) node[below, axis label] {Length $L$ of geodesic in $Z_K = Z_{K'}$};
  \draw (11.666667, 2.453704) node[below] {$0.5$};

  \draw (28.333333, 2.453704) node[below] {$1.0$};

  \draw (45.000000, 2.453704) node[below] {$1.5$};

  \draw (61.666667, 2.453704) node[below] {$2.0$};

  \draw (78.333333, 2.453704) node[below] {$2.5$};

  \draw (95.000000, 2.453704) node[below] {$3.0$};

  \draw (-12.768519, 50.000000) node[rotate=90.0, axis label] {$\Delta \Vol/\pi L$};
  \draw (1.759259, 5.000000) node[left] {$0.30$};

  \draw (1.759259, 16.250000) node[left] {$0.35$};

  \draw (1.759259, 27.500000) node[left] {$0.40$};

  \draw (1.759259, 38.750000) node[left] {$0.45$};

  \draw (1.759259, 50.000000) node[left] {$0.50$};

  \draw (1.759259, 61.250000) node[left] {$0.55$};

  \draw (1.759259, 72.500000) node[left] {$0.60$};

  \draw (1.759259, 83.750000) node[left] {$0.65$};

  \draw (1.759259, 95.000000) node[left] {$0.70$};

  % Internal axis coordinate system
  \begin{scope}[shift={(-5.00000000, -62.50000000)},
                xscale=33.33333333, yscale=225.00000000]
      %\draw[red] (0.460424, 0.442888) rectangle (2.999970, 0.679037);
  \end{scope}
\end{tikzoverlay*}
    \end{scope}
  \end{tikzpicture}

  \vspace{-0.2cm}
  
  \caption{At left, for each $K'$ in \ZF, we plot
    $\Delta \Vol = \Vol(E_{K'}) - \Vol(Z_{K'})$ against the length
    $L$ of the corresponding geodesic in $Z_{K'}$ and we do the same
    for its corresponding 0-friend $K$ in \PS. (From Figure~\ref{fig:
      geodesics}, we see that typically $L > 1.5$ for $K'$ and
    $L < 1.5$ for $K$.  The dotted line is the prediction
    $\Delta \Vol = \frac{\pi}{2}L$.  At right, the ratio
    $\Delta \Vol/ \pi L$ is plotted instead; compare this with the
    figure in \cite[\S A.3]{AgolStormThurston2007} which is for much
    smaller manifolds than those considered here.}
  \label{fig: DVol}
\end{figure}

\begin{figure}

  \vspace{-0.0cm}
  
  \centering
  \begin{tikzpicture}[font=\scriptsize]
    \newcommand{\nmdsubfigurewidth}{6.0cm}
    \tikzset{axis label/.style={font=\footnotesize}}
    \tikzset{annotate plot/.style={font=\scriptsize}}
    \begin{scope}
      %Set \graphicspath{{plots/images/}} to include the image files
\begin{tikzoverlay*}[width=\nmdsubfigurewidth]{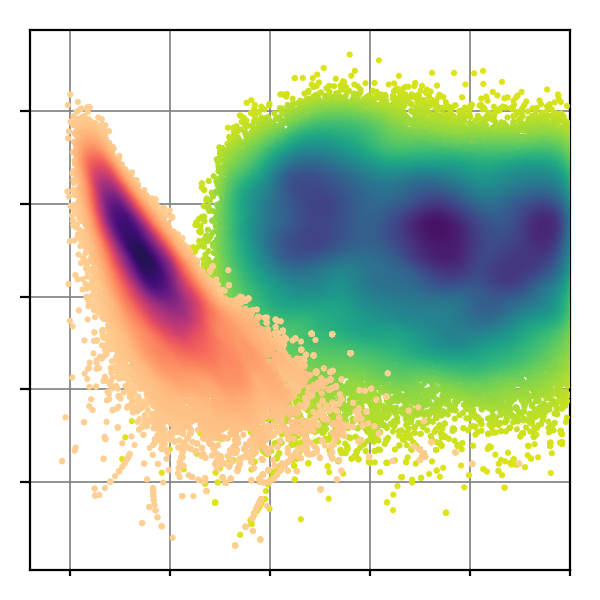}
  \draw (50.000000, -3.111111) node[below, axis label] {Core geodesic length $L$};
  \draw (11.666667, 2.453704) node[below] {$0.5$};

  \draw (28.333333, 2.453704) node[below] {$1.0$};

  \draw (45.000000, 2.453704) node[below] {$1.5$};

  \draw (61.666667, 2.453704) node[below] {$2.0$};

  \draw (78.333333, 2.453704) node[below] {$2.5$};

  \draw (95.000000, 2.453704) node[below] {$3.0$};

  \draw (-9, 50.000000) node[rotate=90.0, axis label] {Volume};
  \draw (1.759259, 19.630485) node[left] {$10$};

  \draw (1.759259, 35.089942) node[left] {$15$};

  \draw (1.759259, 50.549399) node[left] {$20$};

  \draw (1.759259, 66.008856) node[left] {$25$};

  \draw (1.759259, 81.468313) node[left] {$30$};

  % Internal axis coordinate system
  \begin{scope}[shift={(-5.00000000, -11.28842928)},
                xscale=33.33333333, yscale=3.09189141]
      %\draw[red] (0.460424, 6.591221) rectangle (2.999970, 33.053399);
  \end{scope}
\end{tikzoverlay*}
    \end{scope}
    \begin{scope}[shift={(7.5, 0)}]
      %Set \graphicspath{{plots/images/}} to include the image files
\begin{tikzoverlay*}[width=\nmdsubfigurewidth]{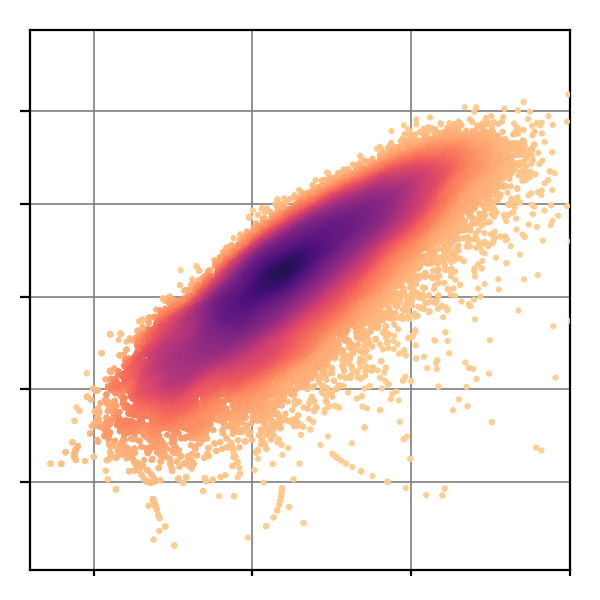}
  \draw (50.000000, -3.11111) node[below, axis label] {$1/L$};
  \draw (15.588235, 2.453704) node[below] {$0.5$};

  \draw (42.058824, 2.453704) node[below] {$1.0$};

  \draw (68.529412, 2.453704) node[below] {$1.5$};

  \draw (95.000000, 2.453704) node[below] {$2.0$};

  \draw (-9, 50.000000) node[rotate=90.0, axis label] {Volume};
  \draw (1.759259, 19.630485) node[left] {$10$};

  \draw (1.759259, 35.089942) node[left] {$15$};

  \draw (1.759259, 50.549399) node[left] {$20$};

  \draw (1.759259, 66.008856) node[left] {$25$};

  \draw (1.759259, 81.468313) node[left] {$30$};

  % Internal axis coordinate system
  \begin{scope}[shift={(-10.88235294, -11.28842928)},
                xscale=52.94117647, yscale=3.09189141]
      %\draw[red] (0.364303, 6.591221) rectangle (2.171912, 30.903825);
  \end{scope}
\end{tikzoverlay*}
    \end{scope}
  \end{tikzpicture}

  \vspace{-0.2cm}

  \caption{At left, for each pair of 0-friends $K$ in \PS\ and $K'$ in
    \ZF, we plot the volume of $E_K$ and $E_K'$ against the length $L$
    of the corresponding geodesics in $Z_K = Z_K'$; the
    orange-red-purple blob where mostly $L \in [0.5, 1.5]$ corresponds
    to $K$ in \PS\ and the green-blue blob where mostly
    $L \in [1.5, 3]$ corresponds to $K'$ in \ZF.  Only the \PS\ blob
    shows much structure, and that structure becomes even more evident
    at right where the $x$-axis is now $1/L$ rather than $L$.}
  \label{fig: huh?}
\end{figure}

\FloatBarrier

\begin{figure}[h]
  \centering
  \begin{tikzpicture}[font=\scriptsize]
    \newcommand{\nmdsubfigurewidth}{6.5cm}
    \tikzset{axis label/.style={font=\footnotesize}}
    \tikzset{annotate plot/.style={font=\scriptsize}}
    \begin{scope}
      %Set \graphicspath{{plots/images/}} to include the image files
\begin{tikzoverlay*}[width=0.90\textwidth]{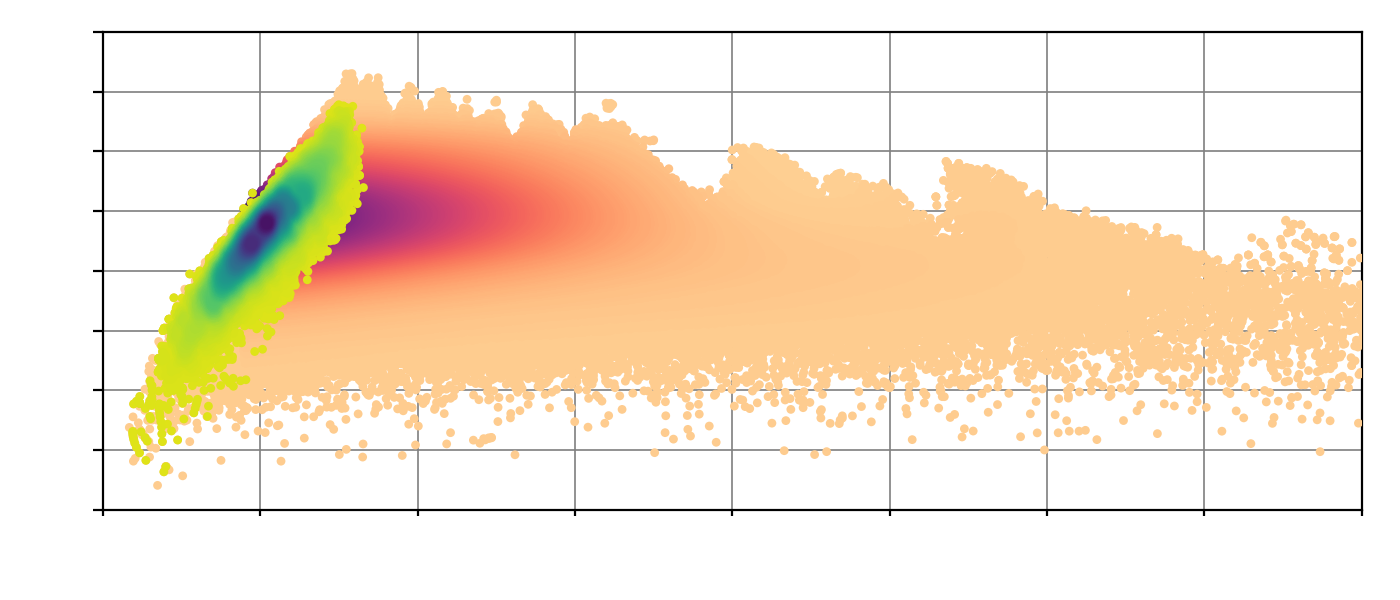}
  \draw (52.314484, 1.857143) node[below, axis label] {$\abs{\tau_K}^2/\Im(\tau_K)$};
  \draw (7.361111, 5.365079) node[below] {$0$};

  \draw (18.599454, 5.365079) node[below] {$10$};

  \draw (29.837798, 5.365079) node[below] {$20$};

  \draw (41.076141, 5.365079) node[below] {$30$};

  \draw (52.314484, 5.365079) node[below] {$40$};

  \draw (63.552827, 5.365079) node[below] {$50$};

  \draw (74.791171, 5.365079) node[below] {$60$};

  \draw (86.029514, 5.365079) node[below] {$70$};

  \draw (97.267857, 5.365079) node[below] {$80$};

  \draw (0.2, 25.513889) node[rotate=90.0, axis label] {Volume};
  \draw (5.972222, 6.456349) node[left] {$0$};

  \draw (5.972222, 10.720734) node[left] {$5$};

  \draw (5.972222, 14.985119) node[left] {$10$};

  \draw (5.972222, 19.249504) node[left] {$15$};

  \draw (5.972222, 23.513889) node[left] {$20$};

  \draw (5.972222, 27.778274) node[left] {$25$};

  \draw (5.972222, 32.042659) node[left] {$30$};

  \draw (5.972222, 36.307044) node[left] {$35$};

  \draw (5.972222, 40.571429) node[left] {$40$};

  % Internal axis coordinate system
  \begin{scope}[shift={(7.36111111, 6.45634921)},
                xscale=1.12383433, yscale=0.85287698]
      %\draw[red] (1.669757, 2.029883) rectangle (437.654162, 36.518361);
  \end{scope}
\end{tikzoverlay*}
    \end{scope}
  \end{tikzpicture}
  \caption{This plot compares the hyperbolic volume of $E_K$ with the
    shown quantity based on the cusp shape $\tau_K$.  The smaller
    green-blue blob on the left corresponds to hyperbolic knots in
    \PS\ with at most 16 crossings.  The larger peach-purple blob
    corresponds to all hyperbolic knots with at most 16 crossings. The
    densest part of the larger blob is hidden behind the smaller one.
  }
  \label{fig: cusp huh?}
\end{figure}

\subsection{Applications}
\label{sec: friend app}

To explain what the 36 exceptional 0-friends say about the
corresponding knots in \PS, we need to introduce the framework of
red-blue-green (RBG) links, following \cite[\S
3]{ManolescuPiccirillo2023}.

\begin{definition}
  \label{def: RBG link}
  An \emph{RBG link} $L = R \cup B \cup G \subset S^3$ is a 3-component
  link, with rational framings $r, b, g$
  respectively, such that there exist homeomorphisms $\psi_B \maps S^3_{r,
    g}(R \cup G) \to S^3$ and $\psi_G \maps S^3_{r, b}(R \cup B) \to
  S^3$ and such that $H_1(S^3_{r, b, g}(L); \Z) \cong \Z$.
\end{definition}
An RBG link describes a pair of 0-friends $K_B$ and $K_G$ where
$K_B$ is the image of $B$ under $\psi_B$ and $K_G$ is the image of $G$
under $\psi_G$; their common 0-surgery is $S^3_{r, b, g}(L)$.
Conversely, any pair of 0-friends can be encoded by an RBG link
\cite[Theorem~1.2]{ManolescuPiccirillo2023}.  An RBG link for the
0-friends in Figure~\ref{fig: besties} is shown in Figure~\ref{fig:
  RBG link}.

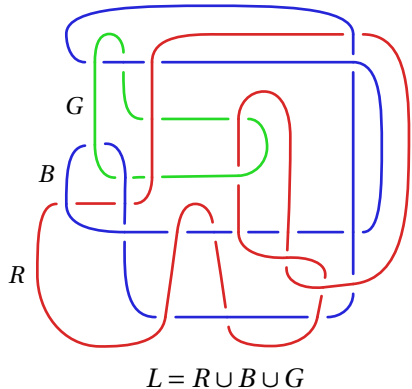
\begin{figure}
  \centering
  \begin{tikzpicture}[line cap=round, line join=round,
                      line width=1.01, font=\footnotesize]
    \definecolor{linkcolor0}{rgb}{0.85, 0.15, 0.15}
    \definecolor{linkcolor1}{rgb}{0.15, 0.15, 0.85}
    \definecolor{linkcolor2}{rgb}{0.15, 0.85, 0.15}
    \begin{scope}[scale=0.5]
      % \draw[line width=0.5, color=blue!50, dotted] (-1, -1) grid (10, 10);
      % \definecolor{linkcolor0}{rgb}{0.85, 0.15, 0.15}
% \definecolor{linkcolor1}{rgb}{0.15, 0.15, 0.85}
% \definecolor{linkcolor2}{rgb}{0.15, 0.85, 0.15}
% \begin{tikzpicture}[line width=1.5, line cap=round, line join=round]
  \begin{scope}[color=linkcolor0]
    \draw (6.62, 2.20) .. controls (6.60, 1.79) and (7.09, 1.62) .. (7.58, 1.67);
    \draw (7.58, 1.67) .. controls (7.84, 1.70) and (8.11, 1.73) .. (8.37, 1.76);
    \draw (8.37, 1.76) .. controls (9.72, 1.91) and (9.81, 3.64) .. 
          (9.83, 5.16) .. controls (9.85, 6.66) and (9.87, 8.40) .. (8.63, 8.40);
    \draw (8.10, 8.40) .. controls (7.01, 8.40) and (5.92, 8.40) .. 
          (4.82, 8.40) .. controls (4.01, 8.41) and (3.09, 8.36) .. (3.09, 7.66);
    \draw (3.09, 7.66) .. controls (3.09, 7.15) and (3.09, 6.65) .. (3.09, 6.15);
    \draw (3.09, 6.15) .. controls (3.09, 5.63) and (3.09, 5.12) .. (3.09, 4.60);
    \draw (3.09, 4.60) .. controls (3.09, 4.25) and (2.95, 3.90) .. (2.64, 3.90);
    \draw (2.11, 3.90) .. controls (1.78, 3.90) and (1.44, 3.90) .. (1.11, 3.90);
    \draw (0.58, 3.89) .. controls (0.08, 3.89) and (0.08, 2.85) .. 
          (0.08, 2.01) .. controls (0.08, 1.01) and (0.75, 0.11) .. 
          (1.71, 0.10) .. controls (2.49, 0.09) and (3.35, 0.19) .. (3.44, 0.88);
    \draw (3.44, 0.88) .. controls (3.55, 1.63) and (3.65, 2.39) .. (3.75, 3.14);
    \draw (3.75, 3.14) .. controls (3.80, 3.50) and (3.90, 3.89) .. 
          (4.23, 3.89) .. controls (4.48, 3.89) and (4.64, 3.66) .. (4.68, 3.40);
    \draw (4.76, 2.88) .. controls (4.85, 2.30) and (4.94, 1.72) .. (5.02, 1.14);
    \draw (5.10, 0.62) .. controls (5.17, 0.17) and (5.73, 0.11) .. 
          (6.24, 0.11) .. controls (6.79, 0.11) and (7.34, 0.36) .. (7.43, 0.88);
    \draw (7.43, 0.88) .. controls (7.47, 1.06) and (7.50, 1.25) .. (7.54, 1.44);
    \draw (7.63, 1.93) .. controls (7.71, 2.34) and (7.16, 2.49) .. (6.63, 2.46);
    \draw (6.63, 2.46) .. controls (6.01, 2.43) and (5.36, 2.61) .. (5.36, 3.14);
    \draw (5.36, 3.14) .. controls (5.36, 3.55) and (5.36, 3.96) .. (5.36, 4.38);
    \draw (5.36, 4.91) .. controls (5.36, 5.32) and (5.36, 5.73) .. (5.36, 6.15);
    \draw (5.36, 6.15) .. controls (5.36, 6.55) and (5.65, 6.89) .. 
          (6.04, 6.88) .. controls (6.54, 6.87) and (6.72, 6.26) .. 
          (6.72, 5.68) .. controls (6.72, 4.83) and (6.70, 3.99) .. (6.66, 3.14);
    \draw (6.66, 3.14) .. controls (6.65, 2.98) and (6.65, 2.82) .. (6.64, 2.66);
  \end{scope}
  \begin{scope}[color=linkcolor1]
    \draw (2.39, 4.59) .. controls (2.39, 5.05) and (2.24, 5.52) .. (1.86, 5.49);
    \draw (1.33, 5.44) .. controls (0.84, 5.39) and (0.84, 4.57) .. (0.84, 3.89);
    \draw (0.84, 3.89) .. controls (0.84, 3.27) and (1.64, 3.14) .. (2.37, 3.14);
    \draw (2.37, 3.14) .. controls (2.74, 3.14) and (3.11, 3.14) .. (3.49, 3.14);
    \draw (4.02, 3.14) .. controls (4.25, 3.14) and (4.49, 3.14) .. (4.72, 3.14);
    \draw (4.72, 3.14) .. controls (4.87, 3.14) and (5.02, 3.14) .. (5.17, 3.14);
    \draw (5.62, 3.14) .. controls (5.88, 3.14) and (6.14, 3.14) .. (6.40, 3.14);
    \draw (6.93, 3.14) .. controls (7.32, 3.14) and (7.71, 3.14) .. (8.10, 3.14);
    \draw (8.63, 3.14) .. controls (9.12, 3.14) and (9.12, 4.41) .. 
          (9.12, 5.40) .. controls (9.12, 6.43) and (9.12, 7.66) .. (8.37, 7.66);
    \draw (8.37, 7.66) .. controls (6.70, 7.66) and (5.03, 7.66) .. (3.36, 7.66);
    \draw (2.87, 7.66) .. controls (2.69, 7.66) and (2.52, 7.66) .. (2.34, 7.66);
    \draw (2.34, 7.66) .. controls (2.17, 7.66) and (1.99, 7.66) .. (1.82, 7.66);
    \draw (1.33, 7.66) .. controls (1.00, 7.66) and (0.84, 8.04) .. 
          (0.84, 8.41) .. controls (0.84, 9.16) and (2.97, 9.16) .. 
          (4.61, 9.16) .. controls (6.24, 9.16) and (8.37, 9.16) .. (8.37, 8.40);
    \draw (8.37, 8.40) .. controls (8.37, 8.23) and (8.37, 8.05) .. (8.37, 7.88);
    \draw (8.37, 7.39) .. controls (8.37, 5.98) and (8.37, 4.56) .. (8.37, 3.14);
    \draw (8.37, 3.14) .. controls (8.37, 2.77) and (8.37, 2.40) .. (8.37, 2.03);
    \draw (8.37, 1.50) .. controls (8.37, 1.15) and (8.06, 0.88) .. (7.70, 0.88);
    \draw (7.17, 0.88) .. controls (6.47, 0.88) and (5.77, 0.88) .. (5.06, 0.88);
    \draw (5.06, 0.88) .. controls (4.61, 0.88) and (4.16, 0.88) .. (3.71, 0.88);
    \draw (3.18, 0.88) .. controls (2.37, 0.88) and (2.36, 1.94) .. (2.37, 2.88);
    \draw (2.37, 3.37) .. controls (2.37, 3.55) and (2.38, 3.72) .. (2.38, 3.90);
    \draw (2.38, 3.90) .. controls (2.38, 4.13) and (2.38, 4.36) .. (2.39, 4.59);
  \end{scope}
  \begin{scope}[color=linkcolor2]
    \draw (5.62, 6.15) .. controls (5.95, 6.15) and (6.11, 5.77) .. 
          (6.11, 5.40) .. controls (6.11, 4.98) and (5.77, 4.65) .. (5.36, 4.64);
    \draw (5.36, 4.64) .. controls (4.69, 4.63) and (4.02, 4.62) .. (3.36, 4.61);
    \draw (2.88, 4.60) .. controls (2.79, 4.60) and (2.69, 4.60) .. (2.60, 4.59);
    \draw (2.12, 4.59) .. controls (1.75, 4.58) and (1.59, 5.03) .. (1.59, 5.46);
    \draw (1.59, 5.46) .. controls (1.59, 6.19) and (1.59, 6.93) .. (1.59, 7.66);
    \draw (1.59, 7.66) .. controls (1.59, 8.02) and (1.66, 8.41) .. 
          (1.97, 8.41) .. controls (2.20, 8.41) and (2.34, 8.17) .. (2.34, 7.92);
    \draw (2.34, 7.39) .. controls (2.34, 6.81) and (2.34, 6.15) .. (2.83, 6.15);
    \draw (3.36, 6.15) .. controls (3.94, 6.15) and (4.51, 6.15) .. (5.09, 6.15);
  \end{scope}
% \end{tikzpicture}
      \node[left] at (0.05, 2) {$R$};
      \node[left] at (0.85, 4.7) {$B$};
      \node[left] at (1.575, 6.5) {$G$};
      \node[below] at (5, -0.2) {\small $L = R \cup B \cup G$};
    \end{scope}
  \end{tikzpicture}
  \caption{%
    An RBG link $L$ for the 0-friends $K$ and $K'$ in Figure~\ref{fig:
      besties}.  With $r = 1$, $b=g=0$, one has $K' = K_B$ and
    $K = K_G$.  The link $L$ was found from $(K, K')$ using the method
    of Section~\ref{sec: drill RBG}.  }
  \label{fig: RBG link}
\end{figure}

We say that an RBG link is \emph{super-special} when the two
sublinks $R \cup B$ and $R \cup G$ are both Hopf links
and the framings satisfy $b = g = 0$ and $r \in \Z$; this is a subset
of the special RBG links of \cite{ManolescuPiccirillo2023}.  The key
facts for us are:

\begin{theorem}[\cite{ManolescuPiccirillo2023}]
  \label{thm: super-special}
  Suppose $L$ is a super-special RBG link.  If $r$
  is even, then $K_B$ and $K_G$ have homeomorphic traces; if
  $r = 0$, their traces are diffeomorphic.  If instead $r$ is odd and
  their common 0-surgery has trivial mapping class group, then their
  traces are not homeomorphic.
\end{theorem}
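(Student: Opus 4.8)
The organizing idea is that each trace $X_0(K_B)$ and $X_0(K_G)$ — obtained from $D^4$ by attaching a single $0$-framed $2$-handle along $K_B$, respectively $K_G$ — is a simply connected $4$-manifold with intersection form zero, hence carries a \emph{unique} spin structure; this restricts to a spin structure $\mathfrak{s}_B$, respectively $\mathfrak{s}_G$, on the common $0$-surgery $Z_K := S^3_0(K_B) = S^3_0(K_G) = S^3_{r,0,0}(L)$ (the equalities being part of the RBG formalism of \cite{ManolescuPiccirillo2023}). The plan is to deduce everything from the dichotomy that $\mathfrak{s}_B = \mathfrak{s}_G$ if and only if $r$ is even. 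To prove the dichotomy I would attach $2$-handles to $D^4$ along $R$, $B$, $G$ with framings $r$, $0$, $0$ to form the $4$-manifold $X_L$ with $\partial X_L = Z_K$, and then run the handle calculus of \cite{ManolescuPiccirillo2023}: since $R\cup G$ is a Hopf link with $g=0$, slam-dunking the $R$-handle into the $G$-handle and performing an $r$-fold Rolfsen twist about $G$ converts the Kirby diagram for $X_L$ into one in which $B$ has become $K_B$ with framing $0$ and the remnants of $h_R$, $h_G$ form a pair of handles determined by the framed Hopf sublink $R\cup B$ with framings $(r,0)$; the mirror manipulation does the same with $B$ and $G$ interchanged. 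As the form $\bigl(\begin{smallmatrix} r & 1\\ 1 & 0\end{smallmatrix}\bigr)$ of an $(r,0)$-framed Hopf plumbing is even exactly when $r$ is even, tracking spin structures through this calculus yields the dichotomy.

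For the positive direction, when $r$ is even we have $\mathfrak{s}_B = \mathfrak{s}_G$, so $X_0(K_B)$ and $X_0(K_G)$ are simply connected $4$-manifolds carrying a fixed identification of their boundaries with $Z_K$, the same (zero) intersection form, the same induced boundary spin structure, and vanishing Kirby--Siebenmann invariant (both being smooth); by the classification of simply connected topological $4$-manifolds with boundary (Freedman; see \cite{FreedmanQuinn1990} and cf.\ \cite{ManolescuPiccirillo2023}) they are homeomorphic. When moreover $r=0$, the residual handle pair above is a genuinely cancelling $1$-/$2$-handle pair, so the two Kirby diagrams are related by handle slides together with a cancelling blow-up/blow-down, giving an honest diffeomorphism — this is Piccirillo's trace move \cite{Piccirillo2020}. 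This establishes parts (1) and (2).

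For the last statement, suppose $r$ is odd but $X_0(K_B)$ is homeomorphic to $X_0(K_G)$. Restricting to the boundary gives a self-homeomorphism of $Z_K$; since the mapping class group of $Z_K$ is trivial it is isotopic to the identity, so after an isotopy we may take the homeomorphism $f \maps X_0(K_B)\to X_0(K_G)$ to be the identity on $Z_K$. A homeomorphism pulls spin structures back to spin structures, so $f^{*}$ of the spin structure of $X_0(K_G)$ is the unique spin structure of $X_0(K_B)$; restricting to $Z_K$, where $f$ is the identity, gives $\mathfrak{s}_B = \mathfrak{s}_G$, contradicting the dichotomy since $r$ is odd. Hence no such homeomorphism exists.

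The part that is genuinely more than bookkeeping is the handle calculus underlying the dichotomy: one must pin down exactly which sub-handlebodies of $X_L$ realize the two $0$-traces, check that the residual handle data reduces to the framed Hopf sublinks $R\cup B$ and $R\cup G$, and follow spin structures through this to see the parity of $r$ emerge. Equivalently, one may glue $X_0(K_B)$ to $-X_0(K_G)$ along $Z_K$ to obtain a closed simply connected $4$-manifold of rank $2$ and signature $0$, and must show its intersection form is even precisely when $r$ is even (so that it is spin, equivalently $\mathfrak{s}_B = \mathfrak{s}_G$). Either route is carried out in \cite{ManolescuPiccirillo2023}, so the work is to specialize that argument to the super-special case, where the Hopf-link hypotheses make every slam-dunk, twist, and spin-structure comparison explicit.
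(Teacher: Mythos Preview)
The paper's proof is simply a pointer to \cite{ManolescuPiccirillo2023} (Theorems~3.7 and~3.13, Lemmas~4.2 and~4.3, Remark~3.8), and your sketch is an outline of precisely those arguments, so the approaches coincide.  In particular, organizing everything around the spin-structure dichotomy on $Z_K$, deducing the $r$-even case from Freedman's classification, and deducing the $r$-odd case by pulling back the unique spin structure through a boundary-fixing homeomorphism, is exactly the mechanism of Lemmas~4.2--4.3 in \cite{ManolescuPiccirillo2023}.

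That said, your handle-calculus narrative has several slips worth flagging.  Slam-dunks and Rolfsen twists are moves on surgery presentations of the \emph{boundary} 3-manifold; they do not preserve the 4-manifold $X_L$, so after performing them there are no ``remnants of $h_R, h_G$'' left to analyze.  What you actually want is the observation that $X_L \cong X_0(K_B) \mathbin{\#} N \cong X_0(K_G) \mathbin{\#} N$, where $N$ is the closed simply connected 4-manifold with intersection form $\left(\begin{smallmatrix} r & 1 \\ 1 & 0 \end{smallmatrix}\right)$ obtained by capping off the 2-handlebody on the $(r,0)$-framed Hopf sublink; the parity of this form is governed by $r$, which is how the dichotomy arises.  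For the $r=0$ case, there are no 1-handles anywhere in the picture --- all three handles are 2-handles --- so ``cancelling $1$-/$2$-handle pair'' is not the right mechanism, and blow-up/blow-down likewise changes the 4-manifold.  The diffeomorphism when $r=0$ (their Theorem~3.7) comes from genuine 2-handle slides in the diagram for $X_L$, not from either of the moves you name.  Since your final paragraph defers these details to \cite{ManolescuPiccirillo2023}, exactly as the paper does, the issues are expository rather than mathematical.
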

    
\begin{proof}
This is in Sections 3 and 4 of \cite{ManolescuPiccirillo2023};
specifically, combine Theorems~3.7 and~3.13 with Lemmas~4.2 and~4.3,
noting Remark~3.8.
\end{proof}

\begin{theorem}[\cite{Nakamura2022, Ren2024}]
  \label{thm: naka ren}
  Suppose $L$ is a super-special RBG link.  Suppose
  $s_\F(K_G) \neq 0$, where $s_\F$ is the Rasmussen $s$-invariant over
  a field $\F$. Then both $K_B$ and $K_G$ are not smoothly slice.
\end{theorem}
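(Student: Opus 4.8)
The plan is to argue by contradiction: suppose one of $K_B$, $K_G$ were smoothly slice, and deduce $s_\F(K_G) = 0$ in either case, contradicting the hypothesis. One half is immediate. The Rasmussen invariant over any field satisfies $|s_\F(K)| \le 2\, g_4(K)$, so a smoothly slice knot has $s_\F = 0$; hence $s_\F(K_G) \neq 0$ already forces $K_G$ not to be smoothly slice, and all the real content lies in ruling out a smooth slice disk for $K_B$.

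So assume $K_B$ is smoothly slice. First I would recall the standard fact that a smoothly slice knot has its \emph{$0$-trace} --- the $4$-manifold $X_0(K_B)$ obtained by attaching a $0$-framed $2$-handle to $B^4$ along $K_B$ --- smoothly embedded in $S^4$: pushing a slice disk $D$ into $B^4$, the complement $S^4 \setminus \nu(D)$, capped off along $K_B$, is exactly $X_0(K_B)$ (cf.~\cite{ManolescuPiccirillo2023, Piccirillo2020}). Thus $X_0(K_B)$ embeds smoothly in $S^4$. Next I would transfer this to $K_G$ using the super-special structure of $L$. If $r = 0$, Theorem~\ref{thm: super-special} provides an orientation-preserving \emph{diffeomorphism} $X_0(K_B) \cong X_0(K_G)$, so $X_0(K_G)$ itself embeds smoothly in $S^4$. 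For general $r \in \Z$ one instead has only a homeomorphism of the common $0$-surgery $S^3_0(K_B) \cong S^3_0(K_G)$ --- which the RBG presentation supplies together with the matching of the preferred generators of $H_1$ that is built into Definition~\ref{def: RBG link} --- and here I would invoke the trace-embedding theorem of Nakamura \cite{Nakamura2022}: it upgrades such a $0$-surgery homeomorphism to a smooth embedding of $X_0(K_G)$ into $X_0(K_B)$ connect-summed with finitely many copies of $S^2 \times S^2$. Combined with $X_0(K_B) \hookrightarrow S^4$, this produces a smooth embedding of $X_0(K_G)$ into $S^4$ connect-summed with copies of $S^2 \times S^2$.

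Finally, I would feed this embedding into the stabilized genus bound for the Rasmussen invariant proved by Ren \cite{Ren2024}. Such an embedding of $X_0(K_G)$ exhibits $K_G$ as bounding a smooth disk in $B^4$ after stabilizing by copies of $S^2 \times S^2$ (cap the cocore of the $2$-handle by a disk in the complement), and Ren's bound --- which is insensitive to exactly these stabilizations --- then forces $s_\F(K_G) = 0$. This contradicts $s_\F(K_G) \neq 0$, so $K_B$ is not smoothly slice; together with the first paragraph, neither is $K_G$.

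The step I expect to be the main obstacle is the passage through Nakamura's theorem when $r \neq 0$, and especially when $r$ is odd: there one has no trace diffeomorphism at all --- indeed Theorem~\ref{thm: super-special} asserts the traces need not even be homeomorphic once the common $0$-surgery has trivial mapping class group --- so everything must be routed through the $0$-surgery homeomorphism, and one must then verify that Ren's $s$-invariant genus bound genuinely survives the $S^2 \times S^2$ summands that Nakamura's construction introduces. This is precisely the gap that the original RBG-link argument of \cite{ManolescuPiccirillo2023} could not close: in this situation it only produces the dichotomy ``$K_B$ is not smoothly slice, or there is an exotic $4$-sphere,'' and the purpose of importing \cite{Nakamura2022, Ren2024} is exactly to eliminate the second alternative.
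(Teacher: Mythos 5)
Your overall architecture matches what must be happening inside the cited theorem: the $K_G$ direction is immediate from $|s_\F|\le 2g_4$, and for $K_B$ one assumes a slice disk, passes through a trace embedding, and then applies an $s$-invariant adjunction inequality to force $s_\F(K_G)=0$. The paper's own proof, however, is a pure citation: it invokes Theorem~3.13 of \cite{Nakamura2022} directly (using that $R$ is unknotted, which is automatic for a super-special RBG link since $R\cup B$ is a Hopf link), and notes that the conditional hypothesis of that theorem --- Nakamura's Conjecture~2.15 --- is supplied by Corollary~1.5 of \cite{Ren2024}.

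Where your reconstruction goes astray is in what the two cited results actually say. You route the argument through a ``trace-embedding theorem'' for general $0$-surgery homeomorphisms with $S^2\times S^2$ stabilizations, and then describe Ren's input as the observation that the Rasmussen genus bound is ``insensitive to exactly these stabilizations.'' Neither characterization is accurate. Nakamura's Theorem~3.13 is a statement specifically about RBG links with $R$ unknotted, and the $4$-manifold it produces a genus-zero surface in is built from $\pm\CP^2$ blow-ups coming from the integer framing $r$ (this is precisely why the odd-$r$ case, where the traces need not even be homeomorphic, is delicate); it is not the generic ``$0$-surgery homeomorphism gives a stabilized trace embedding'' result, and the stabilizing summands are not $S^2\times S^2$. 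Correspondingly, Ren's Corollary~1.5 is not a stabilization-invariance remark; it is the genuine new adjunction-type inequality for the $s$-invariant in these $\CP^2$-connected-sum $4$-manifolds, which is exactly what Nakamura had to leave as Conjecture~2.15. Presenting it as ``insensitive to stabilizations'' both misstates the content and removes the reason there was anything to prove. So the proposal is a plausible-looking outline, but the two load-bearing citations are misdescribed in a way that would not survive being made precise; the paper avoids this by simply citing Theorem~3.13 and Corollary~1.5 as black boxes.
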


\begin{proof}
As $s_\F(K_G) \neq 0$, of course $K_G$ is not smoothly slice. For $K_B$, we use
Theorem~3.13 of \cite{Nakamura2022}; this result is conditional on
Conjecture~2.15 in that paper, but this was established as
Corollary~1.5 of \cite{Ren2024}. If $K_B$ was smoothly slice, since $R$ is the
unknot, Theorem~3.13 of \cite{Nakamura2022} applies to say
$s_\F(K_G) = 0$, a contradiction.
\end{proof}

The main result of this section is:
\begin{theorem}
\label{thm: slice via friends}
The 25 knots listed in Table~\ref{tab: friends} are not smoothly slice.
% ['K11n34', 'K13n866', 'K15n25044', '16n74539',
% '16n180537','17nh_0001844', '17nh_0002715',
% '17nh_0212094', '17nh_0212095', '18nh_00010270',
% '18nh_00166702', '18nh_00610378', '18nh_00610381',
% '19nh_000002588', '19nh_000003154', '19nh_000003570', 
% '19nh_000032808', '19nh_000076489', '19nh_000018991',
% '19nh_000066839', '19nh_000066841', '19nh_000177115',
% '19nh_000177116', '19nh_001336127', '19nh_002457201']
\end{theorem}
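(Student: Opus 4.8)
The plan is to realize each of the 25 knots $K$ in Table~\ref{tab: friends} as one of the two knots determined by a super-special RBG link whose other knot is a 0-friend $K'$ of $K$ that is already known not to be smoothly slice, and then to apply Theorem~\ref{thm: naka ren}. So, for each $K$ I first fix a specific 0-friend $K'$ produced by the search of Section~\ref{sec: friend finder}, chosen so that $s_\F(K') \neq 0$ for some field $\F \in \{\Q, \F_2, \F_3\}$; these $s$-invariants are computed with KnotJob \cite{KnotJob} in the cases at hand, where the diagram of $K'$ happens to be small enough for this to be feasible. In particular $K'$ is not smoothly slice. That $K$ and $K'$ genuinely share a 0-surgery is not in doubt, since the relevant filling and drilling are carried out combinatorially on triangulations, as noted in Section~\ref{sec: friend finder}.

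Next, following \cite[\S3]{ManolescuPiccirillo2023}, I build for each pair $(K, K')$ a super-special RBG link $L = R \cup B \cup G$ with $\{K_B, K_G\} = \{K, K'\}$; since the super-special conditions are symmetric in $B$ and $G$, I may relabel so that $K_G = K'$ and $K_B = K$. The search for $L$ is performed inside the common 0-surgery $Z_K = Z_{K'}$, which is hyperbolic by Section~\ref{sec: hyp rules}, using the drilling-and-refilling method of Section~\ref{sec: drill RBG} together with \cite{DunfieldObeidinRudd2024} to turn the resulting triangulations into an explicit diagram for $L$. Once such a diagram is in hand, the super-special hypotheses --- that $R \cup B$ and $R \cup G$ are Hopf links, that $b = g = 0$, and that $r \in \Z$ --- reduce to a finite combinatorial check: one identifies the two 2-component sublinks using SnapPy \cite{SnapPy} and records the framings. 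The resulting links for all 25 knots are archived in \cite{CodeAndData}, in the same spirit as the ribbon certificates of Section~\ref{sec: cert}.

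With $L$ super-special, $K_G = K'$, and $s_\F(K_G) \neq 0$, Theorem~\ref{thm: naka ren} applies verbatim (note that $R$ is unknotted because $R \cup G$ is a Hopf link) and shows that both $K_B = K$ and $K_G = K'$ fail to be smoothly slice; ranging over the 25 rows of Table~\ref{tab: friends} then proves the theorem. Whenever the red framing $r$ can instead be arranged to equal $0$, there is an alternative argument that avoids the $s$-invariant entirely: Theorem~\ref{thm: super-special} gives that $K$ and $K'$ have diffeomorphic 0-traces, and the trace-embedding principle exploited in \cite{Piccirillo2020} --- a knot is smoothly slice if and only if its 0-trace smoothly embeds in $S^4$ --- transports the non-sliceness of $K'$ to $K$.

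I expect the genuine difficulty to lie in the construction of the super-special links. Producing \emph{some} RBG link for a pair of 0-friends is easy in principle, but the super-special condition is considerably more rigid, and the hyperbolic-geometry-guided search of Section~\ref{sec: drill RBG} must actually locate --- or be coaxed toward --- a configuration in which the red component is an unknot meeting each of $B$ and $G$ geometrically once while the three framings collapse to $(0, 0, r)$. Everything downstream (confirming the Hopf sublinks, reading off $r$, and invoking Theorems~\ref{thm: naka ren} and~\ref{thm: super-special}) is routine once a suitable $L$ is available.
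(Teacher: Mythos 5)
Your framework matches the paper's — super-special RBG links, Theorem~\ref{thm: naka ren} applied to the non-slice 0-friend — but there is a genuine gap: you assume that for every one of the 25 knots you can pick a 0-friend $K'$ with $s_\F(K') \neq 0$ for some $\F \in \{\Q, \F_2, \F_3\}$, and this is not true. For $K = 18nh_{00010270}$, the paper's non-sliceness certificate for $K'$ is $\svec^{\Sq^1_o}(K') \neq 0$, which is not one of the $s$-invariants (recall from Section~\ref{sec: smooth} that $\svec^{\Sq^1_o}$ is a strictly separate invariant; its nonvanishing does not imply $s_\F \neq 0$). Since Theorem~\ref{thm: naka ren} has $s_\F(K_G) \neq 0$ as a hypothesis, it simply does not apply to this row, and your proof as written fails there.

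The paper closes this hole with the second tool you mention: for $18nh_{00010270}$ the super-special link found has $r = 0$, so Theorem~\ref{thm: super-special} gives \emph{diffeomorphic} 0-traces, and the Trace Embedding Lemma then transports $K'$'s non-sliceness to $K$ regardless of which smooth invariant detected it. You mention this $r = 0$ route, but you present it as an optional "alternative argument" that one may use "whenever $r$ can be arranged to equal $0$," rather than recognizing it as the \emph{necessary} case split for one of the 25 knots. A correct proof needs to name that exceptional knot, note that the $s$-invariant hypothesis fails for it, and observe that the $r = 0$ RBG link makes the trace-embedding argument available. The other 24 are then handled by Theorem~\ref{thm: naka ren} exactly as you describe. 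As a smaller point, the feasibility claim that the $K'$ diagrams are "small enough" for KnotJob is not something you verify; it is precisely the kind of thing that can fail, and in this one case the relevant computable invariant turned out not to be an $s$-invariant at all.
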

The first two knots in Table~\ref{tab: friends} were already known not
to be smoothly slice by \cite{Piccirillo2020} in the case of the Conway knot
$K11n34$ and by unpublished work of Ciprian Manolescu, Lisa
Piccirillo, and Julius Zhang in the case of $K13n866$ using a similar
method to \cite{Piccirillo2020}.

\begin{table}
  \begin{center}
\small
\begin{tabular}{llrl}
  \toprule
  knot & DT code of RBG link & $r$ &  $K'$ notes\\
  \midrule
  $K11n34$ & vcdemhUSFVDCoKIMeJaRgtNpBql & $0$ & $s_{\F_3} = 2$\\
  $K13n866$ & tcbiisTAbrNEomgQKhFiLdJpc & $-1$ & $s_{\F_3} = 2$\\
  $K15n25044$ & xcdhlUkRJgNxsVpQMLFWhdBCoAITe & $-4$ & $s_{\F_3} = 2$\\
  $16n180537$ & rcbdlGCARMoFBkQhiEPlDNJ & $1$ & $s_{\F_3} = 2$\\
  $16n74539$ & tcbiiEnTOBDlRpqMfKgaCsjHi & $-1$ & $s_{\F_3} = -2$\\
  $17nh_{0001844}$ & scbfkjFinLRAQObhDpeSKmGc & $1$ & $s_{\F_3} = 2$\\
  $17nh_{0002715}$ & xcdoelHtPXVRDuSEaqFwCGmOibKnJ & $0$ & $s_{\F_3} = -2$\\
  $17nh_{0212094}$ & sccdlrcesbjkOGlmifpqHnda & $-1$ & $s_{\F_3} = 2$\\
  $17nh_{0212095}$ & qccdjenaQcmPOGldfjbkHI & $-1$ & $s_{\F_3} = 2$\\
  $18nh_{00010270}$ & zcdkkgNKtZYuaQpCsrVDjXmwbOhlIFE & $0$ & $\svec^{\Sq^1_o(K)} \neq 0$ \\
  $18nh_{00166702}$ & rcehemldoJRhqpEIFnabcgK & $0$ & $s_{\F_3} = -2$\\
  $18nh_{00610378}$ & tccelDPAECTJSQGMNrKfHBlIO & $1$ & $s_{\F_3} = 2$\\
  $18nh_{00610381}$ & ucecmKDGBAlRCOsEhTpqInFuMj & $-1$ & $s_{\F_3} = 2$\\
  $19nh_{000002588}$ & ycblkuLFkRYOmWeSpAvNThIxCbGqDJ & $-1$ & $s_{\F_3} = -2$\\
  $19nh_{000003154}$ & rcgfeClHoJPdBNqFaRMGkEi & $3$ & $s_{\F_3} = -2$\\
  $19nh_{000003570}$ & uciffKlPtSBhoQUNrEdgCfIamJ & $-5$ & $s_{\F_3} = 2$\\
  $19nh_{000032808}$ & xchigIfEOtUkwXmAsPjCVhQGdbNLR & $0$ & $s_{\F_3} = -2$\\
  $19nh_{000076489}$ & wcbjkDftqRSMWovpGLKujcbEAhiN & $-1$ & $s_{\F_3} = -2$\\
  $19nh_{000018991}$ & wcdijMJrFTLWiqgNUDaSvBhPkEoC & $0$ & $s_{\F_3} = -2$\\
  $19nh_{000066839}$ & xcbnhMqXegIdTFOPcBrSWlavUHnJK & $-1$ & $s_{\F_3} = -2$\\
  $19nh_{000066841}$ & ycbohEgYwUtsRxpMoJLVkiCAfbNdqh & $1$ & $s_{\F_3} = -2$\\
  $19nh_{000177115}$ & zcdnhRqPoGwUVasCnTxyJzibDMEHflk & $0$ & $s_{\F_3} = 2$\\
  $19nh_{000177116}$ & zcdscVuopGhWmqLZrFedcsiknXyTJaB & $0$ & $s_{\F_3} = -2$\\
  $19nh_{001336127}$ & ycejjDCNAYSLrXtqWvPBOhkMuifGJE & $-1$ & $s_{\F_3} = -2$\\
  $19nh_{002457201}$ & ycdkjOPHexuMLQyrcGVWASkIBNfTDj & $0$ & $s_{\F_3} = -2$\\
  \bottomrule
\end{tabular}
\end{center}
  \caption{Some 0-friends that admit super-special RBG
    links. In each case, we can conclude that the original knot in
    \PS\ is not smoothly slice.}
  \label{tab: friends}
\end{table}

\begin{proof}[Proof of Theorem~\ref{thm: slice via friends}]
For each of the 25 knots in the statment, the method of
Section~\ref{sec: drill RBG} below was used to find a super-special RBG
link that encoded the relationship between the original $K$ in \PS\
and its 0-friend $K'$; this is recorded in Table~\ref{tab: friends}.
%for the first 25 knots and Table~\ref{tab: friends2} for the other 3.

For $K = 18nh_{00010270}$, as $r = 0$ in that RBG link we learn $K$
and $K'$ have diffeomorphic traces by Theorem~\ref{thm:
  super-special}.  By the Trace Embedding Lemma (see
e.g.~\cite[Lemma~3.5]{ManolescuPiccirillo2023}), it follows that $K$
and $K'$ have the same smooth slice status.  Now $K'$ is not smoothly
slice, as $\svec^{\Sq^1_o(K)}(K') \neq 0$, and so $K$ cannot be
either.
% All the knots in Table~\ref{tab: friends2} also have diffeomorphic
% traces with their 0-friends, proving the theorem in those cases.
The remaining 24 knots in Table~\ref{tab: friends} are covered by
Theorem~\ref{thm: naka ren}.
\end{proof}

\subsection{Finding RBG links}
\label{sec: drill RBG}

Given 0-friends $K$ and $K'$ whose exteriors $E_K$ and $E_{K'}$ are
hyperbolic, we used the following method to search for a
super-special RBG link $L$ that expressed this fact.  As in
Section~\ref{sec: friend finder}, we will use
\cite{DunfieldObeidinRudd2024} to recover a diagram for $L$ from its
exterior as the very last step.  In particular, we searched for a
\3-cusped hyperbolic manifold $Y$ with cusps labelled red, blue,
and green with peripheral framings $(\mu_R, \nu_R)$, $(\mu_B,
\nu_B)$, and $(\mu_G, \nu_G)$ respectively so that:
\begin{enumerate}[label=(\alph*), ref=\alph*]
\item The Dehn filling $Y(\mu_R, \mu_B, \mu_G)$ is homeomorphic to $S^3$.

\item We have $Y(\nu_R, \cdotspaced, \nu_G) \cong E_K$ by a
  homeomorphism that takes $(\mu_B, \nu_B)$ to
  the standard framing $(\mu_K, \lambda_K)$
  for the knot exterior $E_K$.  Likewise, 
  $Y(\nu_R, \nu_B, \cdotspaced) \cong E_{K'}$ where $(\mu_G, \nu_G) 
  \mapsto (\mu_{K'}, \lambda_{K'})$.  (Note this implies  $Y(\nu_R,
  \nu_B, \nu_G)$ is the common $0$-surgery of $K$ and $K'$.)

\item
  \label{item: Hopf}
  Both the Dehn fillings $Y(\cdotspaced, \mu_B, \cdotspaced)$, and
  $Y(\cdotspaced, \cdotspaced, \mu_G)$ have fundamental group $\Z^2$.
  (This gives the Hopf link condition.)
  
\item
  \label{item: frame}
  The curve $\nu_B$ is $0$ in $H_1(Y(\mu_R,
  \cdotspaced, \mu_G); \Z)$ and $\nu_G$ is $0$ in $H_1(Y(\mu_R, \mu_B,
  \cdotspaced); \Z)$. (This gives that $b = g = 0$; note that
  $r \in \Z$ is automatic since $(\mu_R, \nu_R)$ is a framing for
  the red cusp.)
\end{enumerate}
You can check that these four conditions are equivalent to the cores
of the fillings in $Y(\mu_R, \mu_B, \mu_G)$ being the needed
super-special RBG link.  Our method for searching for such $Y$ was:

\begin{enumerate}
\item
  \label{item: dual}
  We looked at simple curves in the dual 1-skeleton of the
  triangulation of $E_K$ that were not null-homotopic.  For each
  such curve, we drilled it out to get a 2-cusped manifold $X$.  The
  cusp of $X$ coming from $E_K$ will be called blue and the new one
  green. The blue cusp inherits a framing $(\mu_B, \nu_B)$ from the
  framing $(\mu_K, \lambda_K)$ for $E_K$.
  
\item
  \label{item: blue-green}
  Check if the filling $X(\nu_B, \cdotspaced)$ is homeomorphic
  to $E_{K'}$.  If so, give the green cusp of $X$ the framing
  $(\mu_G, \nu_G)$ inherited from the framing $(\mu_{K'},
  \lambda_{K'})$ of $E_{K'}$ and proceed to the next step.
  Otherwise, go back to Step~\ref{item: dual}.  
  If we succeed, $X$ will be $Y(\mu_R, \cdotspaced, \cdotspaced)$.
  
\item
  \label{item: red meridan?}
  
  For  $X$ passing Step~\ref{item: blue-green}, we look
  through  simple curves in the dual 1-skeleton to its
  triangulation.  For each, we drilled it out to get a 3-cusped
  manifold $Y$.  On the new ``red'' cusp, we let $\nu_R$ (sic) be
  the curve so that filling along it gives $X$.
  
  Provided that requirement (\ref{item: Hopf}) holds, we consider
  $Y(\cdotspaced, \mu_B, \mu_G)$ and test the finitely-many
  hyperbolically plausible slopes $\mu_R$ to see if any
  $Y(\mu_R, \mu_B, \mu_G)$ is $S^3$.  If so, we check whether
  (\ref{item: frame}) is satisfied.
  
\item If we succeeded in Step~\ref{item: red meridan?}, apply
  \cite{DunfieldObeidinRudd2024} to get a link $L$ in $S^3$ whose
  complement in $Y$ where $(\mu_R, \mu_B, \mu_G)$ are the usual
  meridians for $L$.  This is the needed super-special RBG link.
\end{enumerate}
Note that this method is not symmetric in $K$ and $K'$, and
sometimes it was necessary to switch their roles to find $L$.

\begin{remark}
  As described in Section~\ref{sec: friend finder}, our 0-friend pairs
  each come from a pair of simple closed geodesics in the hyperbolic
  structure of their common 0-filling $Z_K$.  At least assuming these
  geodesics are disjoint, one could search for the red knot among
  other closed geodesics in $Z_K$.  Our experience was that this
  approach is not very effective, and it was often necessary to
  consider the above larger pool of candidates for the red knot.
  Moreover, unlike Section~\ref{sec: hyp rules}, we know of no
  heuristic reason why the method of Section~\ref{sec: drill RBG}
  should typically suffice to find such an $L$, even though it did in
  all 36 of the examples at hand.
\end{remark}

\subsection{Remaining mystery 0-friends}
\label{sec: mystery}

We now turn to an expanded version of Theorem~\ref{thm: mystery} from
the intro.
% ['18nh_00000601', '16n68278', '17nh_0010647', '18nh_00098198']
%
% Of these 4 knots, 18nh_00000601 is fibered of genus 5, and the the
% rest are nonfibered of genus 3.
%
\begin{theorem}
  \label{thm: mystery 2}
  If $18nh_{00000601}$ is not smoothly slice, or if any of
  $\{16n68278$, $17nh_{0010647}$, $18nh_{00098198}\}$ are smoothly
  slice, then there is an exotic smooth 4-sphere.  All four knots are
  topologically slice. The knot $18nh_{00000601}$ is moreover smoothly
  slice in a homotopy 4-ball.
\end{theorem}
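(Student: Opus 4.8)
All three assertions follow from the trace-embedding and $0$-surgery machinery of \cite{Piccirillo2020, ManolescuPiccirillo2023} once the relevant $0$-friends are in hand, so the plan is to collect those inputs and then run the (short) $4$-manifold argument. The inputs I would quote are: the knot $18nh_{00000601}$ --- precisely the one knot of Section~\ref{sec: bands} for which every ribbon-disk search came up empty --- is a $0$-friend of a knot $K_1$ for which we \emph{did} exhibit a ribbon disk; and each of $16n68278$, $17nh_{0010647}$, $18nh_{00098198}$ is a $0$-friend of a knot that one of the obstructions of Section~\ref{sec: smooth} shows is not smoothly slice. In every case the $0$-friend relationship is witnessed concretely by a super-special RBG link produced by the method of Section~\ref{sec: drill RBG}, certifying an honest homeomorphism $S^3_0(K) \cong S^3_0(K')$; after replacing $K$ by its mirror if necessary (which preserves both flavors of sliceness) I may take this to be orientation-compatible.

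The engine is the following, which is essentially \cite[\S 3]{ManolescuPiccirillo2023}. Suppose $K$ and $K'$ are $0$-friends and $K'$ bounds a smooth disk $D'$ in a homotopy $4$-ball $H'$. Set $W' = H' \setminus \nu(D')$, so that $\partial W' \cong S^3_0(K') \cong S^3_0(K)$ and $\pi_1 W'$ is normally generated by a meridian of $K'$, and form $Z = X_0(K) \cup_\phi \overline{W'}$. A van~Kampen computation gives $\pi_1 Z = 1$ (the meridian dies in $\pi_1 X_0(K) = 1$), and Mayer--Vietoris gives $H_*(Z) \cong H_*(S^4)$ (the $0$-framing forces the $H_2$ of the trace to cancel that of the homology $S^1 \times B^3$), so $Z$ is a homotopy $4$-sphere. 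By the Trace Embedding Lemma, \cite[Lemma~3.5]{ManolescuPiccirillo2023}, the smooth embedding $X_0(K) \hookrightarrow Z$ certifies that $K$ is smoothly slice in a homotopy $4$-ball, and slice in the honest $B^4$ precisely when $Z$ is the standard $S^4$.

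From here the three claims drop out. Taking $K = 18nh_{00000601}$ and $K' = K_1$, with $H' = B^4$, gives at once the third assertion: $18nh_{00000601}$ is smoothly slice in the homotopy $4$-ball $Z \setminus \mathrm{int}(\overline{W'})$. For the first assertion, if $18nh_{00000601}$ were \emph{not} smoothly slice then $Z$ cannot be the standard $S^4$ --- otherwise the Trace Embedding Lemma would make $18nh_{00000601}$ slice in $B^4$ --- so $Z$ is an exotic $4$-sphere; and if any of $16n68278$, $17nh_{0010647}$, $18nh_{00098198}$ \emph{were} smoothly slice, then gluing the exterior of its hypothetical slice disk to the $0$-trace of its $0$-friend $K'$ yields a homotopy $4$-sphere that cannot be standard, since that would make $K'$ smoothly slice in $B^4$, contradicting the obstruction of Section~\ref{sec: smooth}; so again there is an exotic $4$-sphere. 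This is the same mechanism used on the Conway knot in \cite{Piccirillo2020} and developed in \cite{ManolescuPiccirillo2023}, and is how Theorem~\ref{thm: slice via friends} was proved. Finally, for the second assertion: $16n68278$, $17nh_{0010647}$, $18nh_{00098198}$ are topologically slice already by Theorem~\ref{thm: real main}, each having trivial Alexander polynomial --- or, via Section~\ref{sec: bands}, a ribbon concordance to one that does --- so that \cite[\S 11.7]{FreedmanQuinn1990} applies; and $18nh_{00000601}$ is topologically slice because a homotopy $4$-ball is homeomorphic to $B^4$ by Freedman's work \cite{FreedmanQuinn1990}, so transporting the smooth (hence locally flat) disk $D' \subset H'$ across such a homeomorphism, together with mirror-invariance of topological sliceness, finishes it.

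The one delicate point inside the proof is keeping ``standard'' and ``homotopy'' straight throughout, along with the orientation bookkeeping for the gluing map $\phi$. The genuinely hard work is upstream and computational: identifying these particular small $0$-friends by drilling short geodesics in the hyperbolic $0$-surgeries (Section~\ref{sec: friends}), producing the certifying super-special RBG links (Section~\ref{sec: drill RBG}), recognizing $K_1$ as ribbon (Section~\ref{sec: bands}), and computing the smooth obstructions for the $0$-friends of the other three knots (Section~\ref{sec: smooth}).
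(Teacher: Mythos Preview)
Your proof is correct and follows essentially the same route as the paper's: both glue a $0$-trace to a slice-disk exterior along the common $0$-surgery to produce a homotopy $4$-sphere (equivalently, the paper's homotopy $4$-ball capped off), invoke the Trace Embedding Lemma, and then use Freedman's theorem that homotopy $4$-balls are topologically standard to extract topological sliceness of $18nh_{00000601}$, while the other three knots are handled directly via $\Delta_K = 1$. One small slip: the phrase ``slice in the honest $B^4$ precisely when $Z$ is the standard $S^4$'' overstates the biconditional (a different slice disk could exist even if this particular $Z$ is exotic), but you only use the correct implication, so the argument stands.
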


\begin{proof}
If the hypothesis holds, then from Table~\ref{tab: friends3} we have a
pair of 0-friends where one is smoothly slice and the other is not. This
gives an exotic smooth 4-sphere as discussed in e.g.~\cite[\S
1]{ManolescuPiccirillo2023}. To sketch the idea, suppose $K$ and $K'$
are $0$-friends where $K$ bounds a smooth 2-disk $D$ in $D^4$ and $K'$
is not smoothly slice.  Let $A = D^4 \setminus \nu(D)$ and $B$ be
$S^3 \times I$ with a 0-framed 4-dimensional 2-handle attached to
$K' \times \{1\}$.  Then $\partial A$ and one component of
$\partial B$ are both $Z_K \cong Z_{K'}$; gluing these together gives
a manifold $W$ with $\partial W = S^3$.  Now $W$ is a homotopy 4-ball
and $K' \subset \partial W$ bounds a disk in $W$, basically the core
of the 2-handle used to build $B$. As $K'$ is not smoothly slice, $W$ is
an exotic smooth 4-ball, as needed.

The three knots $\{16n68278$, $17nh_{0010647}$, $18nh_{00098198}\}$
are topologically slice since they all have $\Delta_K = 1$ \cite[\S
11.7]{FreedmanQuinn1990}.  For $K' = 18nh_{00000601}$, we know it is a
0-friend of a smoothly slice knot $K$.  By the preceding paragraph,
$K'$ is smoothly slice in a homotopy 4-ball $W$.  Since every homotopy
4-ball is \emph{topologically} standard \cite{FreedmanQuinn1990}, it
follows that $K'$ topologically slice in $D^4$, as needed.
\end{proof}

\begin{table}
  \begin{center}
\small
\begin{tabular}{llrl}
  \toprule
  knot & DT code of RBG link & $r$ &  $K'$ notes\\
  \midrule
  $18nh_{00000601}$ & ycjkdnhQyUtMsaVweFIRCXOBgLJDkP & $1$ & ribbon \\
  %$19nh_{000077044}$ & scbihIqSFlONKBRHmePCDGaJ & $2$ & ribbon \\
  %$19nh_{000187109}$ & xcidkOPKHGrsDCVXtAQMLuJwibNfE & $-1$ & ribbon \\
  %$19nh_{003361975}$ & xcbimEvIHXwADCtqMRujsoLkpnGfb & $1$ & ribbon \\ 
  \\
  $16n68278$ & tcbjhTDJBgQmsOLhPcekIRFnA & $1$ & $\svectil_c \neq 0$\\
  $17nh_{0010647}$ & scdifSmJkOBcNQPFrdAHEIgl & $-1$ & $\svectil_c \neq 0$\\
  $17nh_{0010647}$ & xcdhlXqLTCvumPWsoAikRHbnEDfJg & $-1$ & $\svectil_c \neq 0$\\
  $18nh_{00098198}$ & rcbgiHMRfPlnJAIOdBcqEKg & $-1$ & $\svectil_c \neq 0$\\
  \bottomrule
\end{tabular}
\end{center}
  \caption{Five more 0-friends with super-special RBG links, where the
    original knot in \PS\ remains a mystery.  The two $K'$ for
    $17nh_{0010647}$ are distinct.}
  \label{tab: friends3}
\end{table}

\section{A few more non-slice knots}
\label{sec: nonslice scraps}

\begin{figure}
  \centering
  \begin{tikzpicture}[
  line cap=round,
  line join=round,
  line width=1.01,
  font=\small,
  mid arrow/.style={postaction={
        decorate,
        decoration={markings,
          mark=at position #1 with {
            \arrow[xshift=2.25pt, line width=0.85]{
                      Computer Modern Rightarrow[length=1pt 4.5, width'=0pt 1]}
        }}}}]
  \definecolor{linkcolor1}{rgb}{0.15, 0.15, 0.85}
  % K0
  \begin{scope}[shift={(8.5, 4.3)}, rotate=0, scale=0.425]
    % The knot 19nh_000143796 in its standard projection.
\begin{scope}
    \draw (8.50, 0.89) .. controls (9.05, 0.89) and (9.05, 1.62) .. (9.06, 2.26);
    \draw (9.06, 2.79) .. controls (9.07, 4.16) and (9.07, 5.76) .. 
          (7.97, 5.78) .. controls (7.52, 5.78) and (6.87, 5.79) .. (6.61, 5.79);
    \draw (6.61, 5.79) .. controls (6.33, 5.79) and (6.06, 5.79) .. (5.79, 5.79);
    \draw (5.79, 5.79) .. controls (5.38, 5.79) and (4.98, 5.60) .. (4.98, 5.23);
    \draw (4.98, 4.70) .. controls (4.98, 3.90) and (4.20, 3.34) .. (3.35, 3.34);
    \draw (3.35, 3.34) .. controls (2.96, 3.34) and (2.53, 3.27) .. 
          (2.53, 2.93) .. controls (2.53, 2.67) and (2.80, 2.52) .. (3.08, 2.52);
    \draw (3.61, 2.52) .. controls (4.52, 2.52) and (5.43, 2.52) .. (6.34, 2.52);
    \draw (6.85, 2.52) .. controls (7.04, 2.52) and (7.23, 2.52) .. (7.42, 2.52);
    \draw (7.42, 2.52) .. controls (7.61, 2.52) and (7.80, 2.52) .. (7.99, 2.52);
    \draw (8.48, 2.52) .. controls (8.67, 2.52) and (8.87, 2.52) .. (9.06, 2.52);
    \draw (9.06, 2.52) .. controls (9.87, 2.52) and (9.86, 3.61) .. 
          (9.86, 4.56) .. controls (9.86, 5.58) and (9.40, 6.60) .. 
          (8.50, 6.60) .. controls (7.87, 6.61) and (7.14, 6.61) .. (6.61, 6.61);
    \draw (6.61, 6.61) .. controls (6.33, 6.61) and (6.06, 6.61) .. (5.79, 6.61);
    \draw (5.79, 6.61) .. controls (5.34, 6.61) and (4.88, 6.60) .. (4.43, 6.60);
    \draw (3.90, 6.60) .. controls (3.17, 6.60) and (2.44, 6.60) .. (1.72, 6.60);
    \draw (1.72, 6.60) .. controls (0.90, 6.60) and (0.90, 5.42) .. (0.90, 4.42);
    \draw (0.90, 3.89) .. controls (0.90, 3.61) and (1.05, 3.34) .. 
          (1.31, 3.34) .. controls (1.64, 3.34) and (1.72, 3.76) .. (1.72, 4.15);
    \draw (1.72, 4.15) .. controls (1.72, 4.88) and (1.72, 5.61) .. (1.72, 6.33);
    \draw (1.72, 6.86) .. controls (1.72, 7.89) and (2.97, 8.23) .. 
          (4.16, 8.23) .. controls (5.35, 8.23) and (6.61, 7.90) .. (6.61, 6.87);
    \draw (6.61, 6.36) .. controls (6.61, 6.25) and (6.61, 6.15) .. (6.61, 6.04);
    \draw (6.61, 5.53) .. controls (6.61, 4.53) and (6.61, 3.52) .. (6.61, 2.52);
    \draw (6.61, 2.52) .. controls (6.61, 2.13) and (6.68, 1.71) .. 
          (7.01, 1.71) .. controls (7.27, 1.71) and (7.42, 1.98) .. (7.42, 2.26);
    \draw (7.42, 2.79) .. controls (7.42, 3.07) and (7.57, 3.34) .. 
          (7.83, 3.34) .. controls (8.16, 3.34) and (8.24, 2.91) .. (8.24, 2.52);
    \draw (8.24, 2.52) .. controls (8.24, 1.98) and (8.24, 1.44) .. (8.24, 0.89);
    \draw (8.24, 0.89) .. controls (8.24, 0.08) and (6.90, 0.08) .. 
          (5.79, 0.08) .. controls (4.72, 0.08) and (3.35, 0.08) .. (3.35, 0.63);
    \draw (3.35, 1.16) .. controls (3.35, 1.61) and (3.35, 2.07) .. (3.35, 2.52);
    \draw (3.35, 2.52) .. controls (3.35, 2.71) and (3.35, 2.90) .. (3.35, 3.09);
    \draw (3.35, 3.58) .. controls (3.35, 3.69) and (3.35, 3.80) .. (3.35, 3.91);
    \draw (3.35, 4.42) .. controls (3.35, 4.78) and (3.75, 4.97) .. (4.16, 4.97);
    \draw (4.16, 4.97) .. controls (4.43, 4.97) and (4.70, 4.97) .. (4.98, 4.97);
    \draw (4.98, 4.97) .. controls (5.39, 4.97) and (5.79, 5.16) .. (5.79, 5.52);
    \draw (5.79, 6.03) .. controls (5.79, 6.14) and (5.79, 6.25) .. (5.79, 6.36);
    \draw (5.79, 6.87) .. controls (5.79, 7.23) and (5.38, 7.41) .. 
          (4.98, 7.41) .. controls (4.53, 7.41) and (4.16, 7.05) .. (4.16, 6.60);
    \draw (4.16, 6.60) .. controls (4.16, 6.15) and (4.16, 5.69) .. (4.16, 5.23);
    \draw (4.16, 4.70) .. controls (4.16, 4.34) and (3.76, 4.15) .. (3.35, 4.15);
    \draw (3.35, 4.15) .. controls (2.89, 4.15) and (2.44, 4.15) .. (1.98, 4.15);
    \draw (1.47, 4.15) .. controls (1.28, 4.15) and (1.09, 4.15) .. (0.90, 4.15);
    \draw (0.90, 4.15) .. controls (0.23, 4.15) and (0.08, 3.30) .. 
          (0.08, 2.52) .. controls (0.08, 1.71) and (0.42, 0.89) .. 
          (1.14, 0.89) .. controls (1.88, 0.89) and (2.61, 0.89) .. (3.35, 0.89);
    \draw (3.35, 0.89) .. controls (4.89, 0.89) and (6.43, 0.89) .. (7.97, 0.89);
  \end{scope}
    \node[below=2pt] at (5, 0) {$K_0 = 19nh_{000143796}$};
  \end{scope}

  % K1
  \begin{scope}[shift={(8.75, 0.0)}, rotate=0, scale=0.385]
    % The census knot K8_53 = T(5, 3, 2, 10)
  \begin{scope}
    \draw (0.50, 4.94) .. controls (0.09, 5.15) and (0.10, 5.71) .. 
          (0.34, 6.17) .. controls (0.82, 7.09) and (3.00, 7.10) .. 
          (4.63, 7.11) .. controls (6.27, 7.13) and (8.45, 7.14) .. 
          (9.02, 6.21) .. controls (9.31, 5.74) and (9.55, 5.20) .. (9.29, 4.73);
    \draw (9.29, 4.73) .. controls (9.12, 4.42) and (8.95, 4.11) .. (8.78, 3.81);
    \draw (8.78, 3.81) .. controls (8.61, 3.50) and (8.44, 3.19) .. (8.27, 2.88);
    \draw (8.27, 2.88) .. controls (8.03, 2.46) and (7.65, 2.11) .. 
          (7.17, 2.09) .. controls (6.50, 2.05) and (5.92, 2.44) .. (5.38, 2.82);
    \draw (5.38, 2.82) .. controls (4.69, 3.30) and (4.43, 4.15) .. (4.97, 4.53);
    \draw (5.53, 4.92) .. controls (5.96, 5.22) and (6.48, 5.37) .. 
          (7.01, 5.38) .. controls (7.52, 5.38) and (8.00, 5.13) .. (8.25, 4.70);
    \draw (8.25, 4.70) .. controls (8.38, 4.49) and (8.50, 4.28) .. (8.62, 4.07);
    \draw (8.94, 3.54) .. controls (9.06, 3.33) and (9.19, 3.12) .. (9.31, 2.91);
    \draw (9.31, 2.91) .. controls (9.65, 2.34) and (9.87, 1.67) .. 
          (9.58, 1.08) .. controls (9.21, 0.34) and (8.10, 0.26) .. 
          (7.16, 0.19) .. controls (5.76, 0.09) and (4.35, 0.09) .. 
          (2.95, 0.18) .. controls (2.00, 0.24) and (0.79, 0.31) .. 
          (0.44, 0.94) .. controls (0.08, 1.57) and (0.89, 2.08) .. (1.58, 2.52);
    \draw (1.58, 2.52) .. controls (2.24, 2.94) and (2.58, 3.73) .. (2.28, 4.43);
    \draw (2.01, 5.05) .. controls (1.81, 5.50) and (1.10, 5.40) .. (0.80, 4.79);
    \draw (0.80, 4.79) .. controls (0.62, 4.42) and (0.60, 4.01) .. 
          (0.66, 3.60) .. controls (0.72, 3.21) and (0.92, 2.84) .. (1.28, 2.67);
    \draw (1.89, 2.38) .. controls (2.31, 2.17) and (2.76, 2.05) .. 
          (3.22, 2.00) .. controls (3.61, 1.95) and (4.02, 1.93) .. 
          (4.41, 2.04) .. controls (4.71, 2.13) and (4.98, 2.31) .. (5.17, 2.55);
    \draw (5.59, 3.09) .. controls (5.98, 3.59) and (5.77, 4.30) .. (5.25, 4.72);
    \draw (5.25, 4.72) .. controls (4.80, 5.09) and (4.31, 5.44) .. 
          (3.73, 5.46) .. controls (3.13, 5.49) and (2.62, 5.11) .. (2.14, 4.74);
    \draw (2.14, 4.74) .. controls (1.85, 4.51) and (1.44, 4.47) .. (1.11, 4.64);
  \end{scope}

  \draw[fill=white] (7.8, 2.92) rectangle (9.7, 4.7);
  
    \node[below=2pt] at (5, 0) {
      $K_1 = K\left(-\frac{3}{2}, \frac{1}{2}, \frac{1}{15}\right)$};
    \node at (8.75, 3.8) {\footnotesize $-15$};
  \end{scope}

  % Cert
 \begin{scope}[ shift={(0, 7.25)}, rotate=-90, scale=0.7]
     % \draw[line width=0.5, dotted, color=blue] (0, 0) grid (10, 10);
  \begin{scope}
    \draw (6.20, 7.42) .. controls (5.64, 7.42) and (4.97, 7.42) .. (4.93, 7.01);
    \draw (4.89, 6.61) .. controls (4.87, 6.50) and (4.86, 6.40) .. (4.85, 6.29);
    \draw (4.81, 5.90) .. controls (4.76, 5.40) and (5.18, 4.98) .. (5.70, 4.98);
    \draw (5.70, 4.98) .. controls (5.87, 4.98) and (6.03, 4.98) .. (6.20, 4.98);
    \draw (6.20, 4.98) .. controls (6.34, 4.98) and (6.48, 4.98) .. (6.62, 4.98);
    \draw[mid arrow=0.55] (7.00, 4.97) .. controls (7.28, 4.97) and (7.56, 4.97) .. (7.84, 4.97);
    \draw (8.22, 4.97) .. controls (8.30, 4.97) and (8.38, 4.97) .. (8.46, 4.97);
    \draw (8.84, 4.97) .. controls (9.12, 4.97) and (9.25, 4.66) .. 
          (9.25, 4.35) .. controls (9.25, 4.02) and (8.98, 3.74) .. (8.64, 3.74);
    \draw (8.64, 3.74) .. controls (8.44, 3.74) and (8.24, 3.74) .. (8.04, 3.74);
    \draw[mid arrow=0.5] (8.04, 3.74) .. controls (7.42, 3.74) and (6.81, 3.74) .. (6.20, 3.74);
    \draw (6.20, 3.74) .. controls (6.03, 3.74) and (5.87, 3.74) .. (5.71, 3.74);
    \draw (5.71, 3.74) .. controls (4.86, 3.74) and (4.01, 3.74) .. (3.15, 3.74);
    \draw[mid arrow=0.33] (3.15, 3.74) .. controls (2.85, 3.74) and (2.52, 3.79) .. 
          (2.52, 4.05) .. controls (2.52, 4.25) and (2.74, 4.36) .. (2.96, 4.36);
    \draw (3.36, 4.36) .. controls (4.07, 4.36) and (4.79, 4.36) .. (5.51, 4.36);
    \draw (5.85, 4.36) .. controls (5.92, 4.36) and (5.98, 4.36) .. (6.05, 4.36);
    \draw[mid arrow=0.55] (6.40, 4.36) .. controls (6.67, 4.36) and (6.80, 4.67) .. (6.80, 4.98);
    \draw (6.80, 4.98) .. controls (6.80, 5.59) and (6.80, 6.20) .. (6.80, 6.81);
    \draw (6.80, 6.81) .. controls (6.79, 7.01) and (6.79, 7.21) .. (6.79, 7.42);
    \draw (6.79, 7.42) .. controls (6.79, 7.62) and (6.79, 7.82) .. (6.79, 8.02);
    \draw (6.79, 8.02) .. controls (6.79, 8.18) and (6.79, 8.33) .. (6.79, 8.48);
    \draw[mid arrow=0.6](6.79, 8.88) .. controls (6.78, 9.40) and (7.48, 9.41) .. 
          (8.11, 9.33) .. controls (9.44, 9.17) and (9.55, 7.38) .. 
          (9.65, 5.84) .. controls (9.75, 4.34) and (9.86, 2.52) .. (8.84, 2.52);
    \draw (8.46, 2.52) .. controls (8.38, 2.52) and (8.30, 2.52) .. (8.22, 2.52);
    \draw[mid arrow=0.5] (7.84, 2.52) .. controls (7.49, 2.52) and (7.15, 2.52) .. (6.80, 2.52);
    \draw (6.80, 2.52) .. controls (6.60, 2.52) and (6.40, 2.52) .. (6.20, 2.52);
    \draw (6.20, 2.52) .. controls (6.04, 2.52) and (5.87, 2.52) .. (5.71, 2.52);
    \draw (5.71, 2.52) .. controls (4.86, 2.52) and (4.00, 2.52) .. (3.15, 2.52);
    \draw[mid arrow=0.5] (3.15, 2.52) .. controls (2.60, 2.52) and (2.05, 2.52) .. (1.50, 2.52);
    \draw (1.10, 2.52) .. controls (0.69, 2.52) and (0.69, 3.37) .. 
          (0.69, 4.05) .. controls (0.69, 4.76) and (0.69, 5.58) .. (1.30, 5.58);
    \draw (1.30, 5.58) .. controls (1.47, 5.58) and (1.64, 5.58) .. (1.81, 5.58);
    \draw[mid arrow=0.5] (2.20, 5.58) .. controls (2.46, 5.58) and (2.71, 5.58) .. (2.96, 5.58);
    \draw (3.36, 5.58) .. controls (3.81, 5.58) and (4.36, 5.58) .. (4.36, 5.90);
    \draw (4.36, 6.30) .. controls (4.36, 6.59) and (4.61, 6.81) .. (4.91, 6.81);
    \draw (4.91, 6.81) .. controls (5.34, 6.81) and (5.77, 6.81) .. (6.20, 6.81);
    \draw (6.20, 6.81) .. controls (6.34, 6.81) and (6.48, 6.81) .. (6.62, 6.81);
    \draw[mid arrow=0.6] (7.00, 6.81) .. controls (7.23, 6.81) and (7.42, 6.99) .. (7.42, 7.22);
    \draw (7.42, 7.60) .. controls (7.42, 7.68) and (7.42, 7.76) .. (7.42, 7.84);
    \draw (7.42, 8.22) .. controls (7.42, 8.51) and (7.10, 8.66) .. (6.79, 8.68);
    \draw[mid arrow=0.45] (6.79, 8.68) .. controls (5.54, 8.77) and (4.24, 8.82) .. 
          (3.17, 8.18) .. controls (2.64, 7.86) and (2.03, 7.50) .. (2.02, 6.92);
    \draw (2.02, 6.92) .. controls (2.02, 6.66) and (2.02, 6.41) .. (2.01, 6.15);
    \draw (2.01, 6.15) .. controls (2.01, 5.96) and (2.01, 5.77) .. (2.00, 5.58);
    \draw[mid arrow=0.5] (2.00, 5.58) .. controls (2.00, 5.16) and (2.02, 4.62) .. 
          (2.04, 4.15) .. controls (2.06, 3.61) and (2.43, 3.13) .. (2.95, 3.13);
    \draw (3.35, 3.13) .. controls (4.07, 3.13) and (4.79, 3.13) .. (5.51, 3.13);
    \draw (5.86, 3.13) .. controls (5.92, 3.13) and (5.99, 3.13) .. (6.05, 3.13);
    \draw[mid arrow=0.55] (6.40, 3.13) .. controls (6.62, 3.13) and (6.81, 2.95) .. (6.80, 2.72);
    \draw (6.80, 2.34) .. controls (6.80, 2.20) and (6.80, 2.06) .. (6.80, 1.91);
    \draw (6.80, 1.91) .. controls (6.80, 1.71) and (6.80, 1.51) .. (6.80, 1.30);
    \draw (6.80, 1.30) .. controls (6.80, 0.71) and (5.90, 0.70) .. 
          (5.15, 0.70) .. controls (4.48, 0.69) and (3.50, 0.69) .. (3.14, 0.69);
    \draw[mid arrow=0.5] (3.14, 0.69) .. controls (2.60, 0.69) and (2.05, 0.69) .. (1.50, 0.69);
    \draw (1.10, 0.69) .. controls (0.07, 0.69) and (0.07, 2.17) .. 
          (0.07, 3.44) .. controls (0.07, 4.73) and (0.19, 6.18) .. (1.30, 6.16);
    \draw (1.30, 6.16) .. controls (1.47, 6.16) and (1.64, 6.16) .. (1.81, 6.15);
    \draw[mid arrow=0.5] (2.21, 6.15) .. controls (2.46, 6.14) and (2.70, 6.14) .. (2.95, 6.13);
    \draw (3.35, 6.12) .. controls (3.69, 6.12) and (4.02, 6.11) .. (4.36, 6.10);
    \draw (4.36, 6.10) .. controls (4.52, 6.10) and (4.67, 6.10) .. (4.83, 6.10);
    \draw (4.83, 6.10) .. controls (5.32, 6.09) and (5.70, 5.67) .. (5.70, 5.18);
    \draw (5.70, 4.79) .. controls (5.70, 4.65) and (5.70, 4.50) .. (5.70, 4.36);
    \draw (5.70, 4.36) .. controls (5.71, 4.22) and (5.71, 4.07) .. (5.71, 3.93);
    \draw (5.71, 3.56) .. controls (5.71, 3.42) and (5.71, 3.27) .. (5.71, 3.13);
    \draw (5.71, 3.13) .. controls (5.71, 2.99) and (5.71, 2.85) .. (5.71, 2.71);
    \draw (5.71, 2.32) .. controls (5.72, 1.78) and (6.08, 1.30) .. (6.60, 1.30);
    \draw[mid arrow=0.5] (7.00, 1.30) .. controls (7.84, 1.30) and (8.64, 1.76) .. (8.64, 2.52);
    \draw (8.64, 2.52) .. controls (8.64, 2.86) and (8.64, 3.20) .. (8.64, 3.54);
    \draw (8.64, 3.94) .. controls (8.64, 4.28) and (8.64, 4.63) .. (8.64, 4.97);
    \draw (8.64, 4.97) .. controls (8.64, 6.02) and (8.64, 7.42) .. (8.23, 7.42);
    \draw[mid arrow=0.6] (7.85, 7.42) .. controls (7.70, 7.42) and (7.56, 7.42) .. (7.42, 7.42);
    \draw (7.42, 7.42) .. controls (7.27, 7.42) and (7.12, 7.42) .. (6.98, 7.42);
    \draw (6.61, 7.42) .. controls (6.48, 7.42) and (6.34, 7.42) .. (6.20, 7.42);
  \end{scope}
  \begin{scope}[color=linkcolor1]
    \draw (1.30, 0.69) .. controls (1.30, 0.28) and (1.76, 0.08) .. 
          (2.22, 0.08) .. controls (2.66, 0.08) and (3.14, 0.11) .. (3.14, 0.49);
    \draw (3.14, 0.88) .. controls (3.15, 1.36) and (3.15, 1.84) .. (3.15, 2.32);
    \draw (3.15, 2.71) .. controls (3.15, 2.85) and (3.15, 2.99) .. (3.15, 3.13);
    \draw (3.15, 3.13) .. controls (3.15, 3.27) and (3.15, 3.42) .. (3.15, 3.56);
    \draw (3.15, 3.93) .. controls (3.16, 4.07) and (3.16, 4.22) .. (3.16, 4.36);
    \draw (3.16, 4.36) .. controls (3.16, 4.77) and (3.16, 5.17) .. (3.16, 5.58);
    \draw (3.16, 5.58) .. controls (3.16, 5.76) and (3.16, 6.02) .. (3.15, 6.13);
    \draw (3.15, 6.13) .. controls (3.11, 6.44) and (2.99, 6.77) .. 
          (2.69, 6.85) .. controls (2.52, 6.90) and (2.30, 6.96) .. (2.19, 6.94);
    \draw (1.83, 6.89) .. controls (1.54, 6.84) and (1.30, 6.64) .. (1.30, 6.36);
    \draw (1.30, 5.99) .. controls (1.30, 5.91) and (1.30, 5.83) .. (1.30, 5.75);
    \draw (1.30, 5.38) .. controls (1.30, 4.43) and (1.30, 3.47) .. (1.30, 2.52);
    \draw (1.30, 2.52) .. controls (1.30, 1.91) and (1.30, 1.30) .. (1.30, 0.69);
  \end{scope}
  \begin{scope}[color=linkcolor1]
    \draw (6.20, 7.62) .. controls (6.20, 7.84) and (6.37, 8.02) .. (6.59, 8.02);
    \draw (6.98, 8.02) .. controls (7.12, 8.02) and (7.27, 8.02) .. (7.42, 8.02);
    \draw (7.42, 8.02) .. controls (7.76, 8.02) and (8.04, 7.75) .. (8.04, 7.42);
    \draw (8.04, 7.42) .. controls (8.04, 6.60) and (8.04, 5.79) .. (8.04, 4.97);
    \draw (8.04, 4.97) .. controls (8.04, 4.63) and (8.04, 4.28) .. (8.04, 3.94);
    \draw (8.04, 3.54) .. controls (8.04, 3.20) and (8.04, 2.86) .. (8.04, 2.52);
    \draw (8.04, 2.52) .. controls (8.04, 2.08) and (7.51, 1.91) .. (7.00, 1.91);
    \draw (6.60, 1.91) .. controls (6.38, 1.91) and (6.20, 2.10) .. (6.20, 2.32);
    \draw (6.20, 2.71) .. controls (6.20, 2.85) and (6.20, 2.99) .. (6.20, 3.13);
    \draw (6.20, 3.13) .. controls (6.20, 3.27) and (6.20, 3.42) .. (6.20, 3.56);
    \draw (6.20, 3.93) .. controls (6.20, 4.07) and (6.20, 4.22) .. (6.20, 4.36);
    \draw (6.20, 4.36) .. controls (6.20, 4.50) and (6.20, 4.65) .. (6.20, 4.79);
    \draw (6.20, 5.18) .. controls (6.20, 5.65) and (6.20, 6.13) .. (6.20, 6.61);
    \draw (6.20, 6.99) .. controls (6.20, 7.07) and (6.20, 7.15) .. (6.20, 7.23);
  \end{scope}
% \end{tikzpicture}
 \end{scope}
 \node[below] at (3.4, 0.3) {$L$}; 
\end{tikzpicture}
\caption{The links from Theorem~\ref{thm: one knot}.  The standard
  projection of $K_0$ is shown top right. The knot $K_1$ is the
  Montesinos $K\left(-3/2, 1/2, 1/15\right)$, so the labeled box is
  filled in with 15 negative half-twists.  }
\label{fig: link cert}
\end{figure}

In this section, we deal with a few knots where the techniques in
previous sections seem not to apply.

\subsection{Smooth category}

Recently, the 40-year-old question of whether the knot $17ns_{29}$ is
smoothly slice was resolved:
\begin{theorem}[\cite{DaiEtAl2024}]
  \label{thm: cable 8}
  The (2,1)-cable on the
  figure-8 knot, also known as $17ns_{29}$, is not smoothly slice.
\end{theorem}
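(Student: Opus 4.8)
The plan is to pass to the double branched cover and extract a $\Z/2$-equivariant Floer-theoretic obstruction, in the spirit of \cite{DaiEtAl2024}. Write $K = C_{2,1}(4_1)$, so that $\Delta_K(t) = \Delta_{4_1}(t^2) = -t^2 + 3 - t^{-2}$ (a Fox--Milnor norm) and $\sigma(K) = 0$ by Litherland's cabling formula; one checks similarly that the Casson--Gordon / twisted-Alexander obstructions and the standard smooth concordance invariants ($\tau$, the $s$-invariants, $\Upsilon$, $\nu^+$, $\varepsilon$) all vanish, so a genuinely new obstruction is required. The double branched cover $Y = \Sigma_2(S^3, K)$ is an integral homology sphere (since $|\Delta_K(-1)| = |\Delta_{4_1}(1)| = 1$) equipped with the covering involution $\tau$, and the first step is to identify $(Y, \tau)$ explicitly: it is a homology sphere assembled from two copies of the figure-eight exterior (concretely, a surgery on the connected sum $4_1 \# 4_1$), with $\tau$ the involution interchanging the two copies. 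Now if $K$ bounded a smooth disk $D \subset D^4$, then $W = \Sigma_2(D^4, D)$ would be a smooth $\Z/2$-homology $4$-ball with $\partial W = Y$ carrying an involution $\tilde\tau$ that extends $\tau$, has fixed-point set a properly embedded disk, and has quotient $D^4$.

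The central step is to obstruct such an equivariant filling. One attaches to the pair $(Y, \tau)$ an equivariant refinement of its Heegaard Floer local equivalence class --- the $\iota$-complex of $Y$ enhanced with the action induced by $\tau$, in the style of involutive/equivariant Heegaard Floer homology --- and proves that a special equivariant homology ball $W$ as above (fixed set a disk, quotient $D^4$) forces this equivariant class to be trivial; equivalently, the associated $\Z/2$-equivariant correction terms of $(Y, \tau)$ must vanish. Constructing a Floer invariant of $(Y,\tau)$ that is sensitive precisely to \emph{branched} equivariant fillings of this type rather than free ones, and establishing the corresponding vanishing theorem, is the heart of the matter; this is where I expect the real difficulty to lie, and it is essentially why the problem stood open for forty years.

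The final step is the computation showing that the equivariant invariant of $(Y, \tau)$ is in fact nontrivial, producing the contradiction. Since $CFK^\infty(4_1)$ is the direct sum of a single $U$-tower generator and one box (square) --- so that $4_1$ is locally equivalent to the unknot, explaining the vanishing of the classical invariants --- the tensor square $CFK^\infty(4_1 \# 4_1) \simeq CFK^\infty(4_1)^{\otimes 2}$ together with the swapping involution $\tau$ is completely explicit. The plan is then to run an equivariant version of the large-surgery formula on this data to read off the equivariant local equivalence class of $(Y, \tau)$ and verify that it is nonzero. This last step is a finite, if delicate, bookkeeping exercise; the conceptual obstacle is the middle step.
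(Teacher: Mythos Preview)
The paper does not prove this theorem at all; it is simply quoted as a result of \cite{DaiEtAl2024}, with an alternate proof via genus bounds in definite 4-manifolds noted from \cite{AcetoEtAl2023}. Your outline is a reasonable high-level sketch of the \cite{DaiEtAl2024} strategy---pass to the double branched cover $\Sigma_2(S^3,K)$ with its covering involution, build a $\Z/2$-equivariant refinement of the Heegaard Floer local equivalence class, prove a vanishing theorem for equivariant homology-ball fillings of the branched type, and then compute the invariant using the explicit knot Floer complex of $4_1 \# 4_1$ with the swap involution---so it aligns with the cited source rather than with anything in this paper. One small correction: your identification of $Y$ is slightly off, as $\Sigma_2(S^3, C_{2,1}(4_1))$ is $S^3_{1/2}(4_1\#4_1)$ (equivalently a splice of two figure-eight exteriors), not literally ``a surgery on $4_1\#4_1$'' in the sense of integral surgery; this matters for the large-surgery step you invoke at the end.
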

An alternate proof of this was given in \cite{AcetoEtAl2023} using
genus bounds in definite 4-manifolds.  In this section, we use the
latter approach to show:

\begin{theorem}
  \label{thm: one knot}
  The knot $K_0 = 19nh_{000143796}$ is topologically slice but not
  smoothly slice.
\end{theorem}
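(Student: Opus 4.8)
The plan is to dispose of the topological half routinely and concentrate on the smooth one. Since $K_0 \in \PSM$, it is plausibly slice, and one checks directly that $\Delta_{K_0} = 1$ (or, failing that, that the band search of Section~\ref{sec: bands} yields a ribbon concordance from $K_0$ down to a knot with trivial Alexander polynomial); in either case $K_0$ is topologically slice by \cite[\S 11.7]{FreedmanQuinn1990}. For the smooth statement I would adapt the definite-$4$-manifold argument of \cite{AcetoEtAl2023} that reproves Theorem~\ref{thm: cable 8}, with the Montesinos knot $K_1 = K(-3/2,1/2,1/15)$ of Figure~\ref{fig: link cert} as the auxiliary knot. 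Note $K_1$ has small Seifert genus and its double branched cover is a Seifert fibered space with $\abs{H_1} = 56$, so in particular $K_1$ is not even algebraically slice (hence there is no concordance between $K_0$ and $K_1$; the link $L$ encodes a different, $4$-dimensional, relationship).

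First I would use the link $L$ of Figure~\ref{fig: link cert} to turn a hypothetical slice disk into a $4$-manifold. Suppose $K_0$ bounds a smoothly embedded disk $D \subset D^4$. The framed link $L$ is arranged so that attaching the corresponding $2$-handles to $D^4$ and gluing in the disk exterior $D^4 \setminus \nu(D)$ produces a compact smooth $4$-manifold $V$ with $\partial V = S^3_p(K_1)$ for a suitable integer surgery coefficient $p$, where the homology data ($H_1$, $b_2$, the sign of the intersection form) of $V$ is read off from $L$, and $V$ is definite of small $b_2$.

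Next I would cap off and diagonalize. The surgery $S^3_p(K_1)$ also bounds the trace of $p$-surgery on $K_1$ (a single $2$-handle on $D^4$), and, with the orientations chosen so the forms add with the same sign, $Z = V \cup_{\partial V} (\text{this trace, possibly mirrored})$ is a closed smooth $4$-manifold which, after the bookkeeping on $H_1$ and $b_2$, is negative definite. Donaldson's diagonalization theorem then gives $Q_Z \cong -\langle 1 \rangle^{\oplus N}$. Capping a minimal-genus Seifert surface for $K_1$ by the core of the $2$-handle yields a closed embedded surface $\widehat F \subset Z$ of genus $g_3(K_1)$ and self-intersection $p$; because $K_1$ is Montesinos, $g_3(K_1)$ is small and its Seifert form is explicit, so the class $[\widehat F] = \sum_i a_i e_i$ in the diagonalized $Z$ can be pinned down. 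The key input from \cite{AcetoEtAl2023} (Kronheimer--Mrowka/Ozsv\'ath--Szab\'o genus bounds) is that every smoothly embedded surface in a diagonal negative-definite $4$-manifold representing $\sum_i a_i e_i$ has genus at least an explicit $g_1(\{a_i\})$; one then checks $g_3(K_1) < g_1$, a contradiction, so $K_0$ is not smoothly slice. Together with the first paragraph, $K_0$ is topologically but not smoothly slice.

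The hard part is constructing and certifying $L$: the auxiliary curves and framings must be chosen so that (i) a slice disk for $K_0$ is genuinely converted into the $4$-manifold $V$ above with $\partial V$ an integer surgery on this particular $K_1$, and (ii) the closed manifold $Z$ is honestly negative definite, not merely carrying a definite summand — reconciling every orientation, framing, and handle-slide convention is exactly where this style of argument tends to break. Once $L$ is fixed, computing $[\widehat F]$, diagonalizing $Q_Z$, and checking the numerical inequality $g_3(K_1) < g_1(\{a_i\})$ are routine.
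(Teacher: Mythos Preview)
Your broad strategy is the paper's: follow \cite{AcetoEtAl2023}, use the link $L$ of Figure~\ref{fig: link cert} to place a surface built from a hypothetical slice disk for $K_0$ and a Seifert surface for $K_1$ inside a closed definite $4$-manifold, then violate a genus bound. But your implementation of that strategy is off in several places, and as written would not go through.

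The main issue is the construction of the closed $4$-manifold. The auxiliary components $L_b, L_t$ of $L$ are an \emph{unlink}, each with framing $+1$; attaching those two $2$-handles to $D^4$ and capping with another $4$-ball gives $W = \CP^2 \# \CP^2$ outright. There is no intermediate $V$ with $\partial V = S^3_p(K_1)$, no trace of surgery on $K_1$ to glue on, and no appeal to Donaldson: the manifold is already standard and positive (not negative) definite. The point of $L$ is that $+1$-surgery on $L_b \cup L_t$ returns $S^3$ and carries $L_0 = K_0$ to $K_1$; so the slice disk $D_0 \subset B_0$ for $K_0$ and a pushed-in minimal Seifert surface $F_1 \subset B_1$ for $K_1$ share a common boundary and glue to a closed surface $F \subset W$. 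Here $g_3(K_1) = 9$ (computed via knot Floer homology; not ``small''), so $g(F) = 9$. The class $[F] \in H_2(\CP^2 \# \CP^2) \cong \Z^2$ is $(2,6)$, read directly from $\lk(L_0, L_b) = 2$ and $\lk(L_0, L_t) = 6$. The contradiction then comes from Corollary~1.7 of \cite{Bryan1998}, which forces any smooth connected surface representing $(2,6)$ in $\CP^2 \# \CP^2$ to have genus at least $10$. Your remarks about the double branched cover of $K_1$ and its algebraic non-sliceness are not used.
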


\begin{proof}[Proof of Theorem~\ref{thm: one knot}] First, the knot
$K_0$ is topologically slice because $\Delta_{K_0} = 1$ \cite[\S
11.7]{FreedmanQuinn1990}. Let $L$ and $K_1$ be the links shown in
Figure~\ref{fig: link cert}. We denote the components of $L$ as
$L_0 \cup L_b \cup L_t$ where $L_b$ and $L_t$ are the bottom and top
unknotted components, respectively.  SnapPy \cite{SnapPy} easily
checks the following or you can draw pictures: $L_0$ is isotopic to
$K_0$, and doing $+1$ Dehn surgery on both $L_b$ and $L_t$ turns $L_0$ into
$K_1$.  Further, from Figure~\ref{fig: link cert}, we see that
$\lk(L_0, L_b) = 2$ and $\lk(L_0, L_t) = 6$. Finally, the Seifert
genus of $K_1$ is $g_3(K_1) = 9$ by computing knot Floer homology with
\cite{HFKCalculator}.  The proof is now identical to that of Theorem 2
of \cite{AcetoEtAl2023}, with $K_0$ playing the role of the
(2,1)-cable on the figure-8 knot and $K_1$ playing the role of the
torus knot $T_{2, -19}$, but we give a sketch regardless.

Suppose $L_0 = K_0$ bounds a smooth disk $D_0$ in a 4-ball
$B_0$. Create a closed 4\hyp manifold $W$ by attaching $+1$-framed
2-handles to $B_0$ along $L_b \cup L_t$ and capping the result off
with another 4-ball $B_1$.  Since $L_b \cup L_t$ is the unlink, $W$ is
just $\CP^2 \# \CP^2$.  Moreover, $L_0$ viewed from the other ball
$B_1$ can be identified with $K_1$ after an isotopy.  Let $F_1$ be a
genus $9$ Seifert surface for $K_1$ whose interior has been pushed
into $B_1$.  Gluing $F_0$ and $F_1$ together along their common
boundary gives a smooth closed surface $F \subset W$.  Now
$H_2(W) = \Z^2$, where the first generator is the core of the 2-handle
attached to $L_b$ together with a disk bounded by $L_b$ in $S^3$, and
the second generator is the analogue with $L_t$.  From the linking
information about $L$, we see $[F] = (2, 6)$.  However, by Corollary
1.7 of \cite{Bryan1998}, any smooth connected surface representing the
class $(2, 6)$ has genus at least 10, a contradiction.  So $K_0$ is
not smoothly slice.
\end{proof}

\begin{remark}
  We found the knot in Theorem~\ref{thm: one knot} by a large-scale
  search for proofs similar to \cite{AcetoEtAl2023} in the spirit of
  our Section~\ref{sec: drill RBG}.  That is, starting with a knot
  $K$, we looked at closed geodesics in the hyperbolic structure of
  length at most $4$, and recorded those giving unknots whose linking
  number with $K$ was either $2$ or $6$.  This gives candidates for
  $L_b$ and $L_t$, and one can recover the diagram for the knot $K'$
  resulting from $+1$ surgery on $L_b$ and $L_t$ using
  \cite{DunfieldObeidinRudd2024}.  The $\tau$-invariant in knot Floer
  homology, computed via \cite{HFKCalculator}, was used to give a
  lower bound on the 4-ball genus of $K'$.  Of the roughly \num{12000}
  mystery knots where we tried this technique,
  $K_0 = 19nh_{000143796}$ was the only one where we succeeded.
\end{remark}

\subsection{Topological category}

We will also need:

\begin{theorem}
  \label{thm: alt scraps}
  The nine alternating knots $K14a12741$, $16a350194$,
  $17ah_{0000055}$, $18ah_{0000122}$, $18ah_{3327857}$,
  $18ah_{4025786}$, $18ah_{4099296}$, $18ah_{4099297}$, and
  $19ah_{00000457}$ are not topologically slice.
\end{theorem}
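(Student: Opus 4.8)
Each of these nine knots lies in \PS, so it has vanishing Murasugi signature and its Alexander polynomial satisfies the Fox--Milnor condition, and none of them is obstructed by any HKL test of Section~\ref{sec: HKL}. The plan is to push the Casson--Gordon machinery \cite{CassonGordonOrsay} one step beyond the twisted-Alexander-polynomial norm condition of Theorem~\ref{thm: main KL}, exploiting that for an alternating knot $K$ the relevant covering data is completely explicit. Concretely, an alternating diagram for $K$ gives a definite Goeritz matrix $G$, so the double branched cover $B_2 = B_2(K)$ has $H_1(B_2;\Z) \cong \Z^n/G\Z^n$ equipped with the nonsingular linking form induced by $G^{-1}$. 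Because the deck action on $H_1(B_2;\Z)$ is by $v \mapsto -v$, every subgroup is $\Z/2\Z$-invariant, and one can therefore enumerate \emph{exactly} the metabolizers $A \leq H_1(B_2;\Z)$ --- the self-annihilating subgroups with $\abs{A}^2 = \abs{H_1(B_2;\Z)}$ --- rather than the larger ``invariant, correct size'' family used in the implementation of Section~\ref{sec: HKL}.

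For each metabolizer $A$ we then seek a prime-power-order character $\chi \maps H_1(B_2;\Z) \to \Z/q^k\Z$ with $A \subseteq \ker \chi$ that certifies non-sliceness, in two escalating ways. First, one simply reruns the $\Delta_\chi$ norm test of Section~\ref{sec: HKL} over this smaller, genuine set of metabolizers; it is plausible that the spurious metabolizers were the only obstacle for some of the nine knots, in which case this already finishes them. When the polynomial-norm condition alone does not rule out every metabolizer, one instead computes the rational Casson--Gordon signature defect $\sigma(K,\chi) \in \Q$, for instance from a surgery presentation of $B_2$ together with the induced $2$-fold cover of $D^4$ branched over a spanning surface, or via a Seifert-surface signature formula of Litherland type that expresses $\sigma(K,\chi)$ as a sum of Tristram--Levine-type signatures of curves on a Seifert surface of $K$. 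If for every metabolizer $A$ one produces such a $\chi$ with $\Delta_\chi$ not a norm or $\sigma(K,\chi) \neq 0$, then $K$ bounds no locally flat disk in $D^4$: a topological slice disk would make the kernel of $H_1(B_2;\Z) \to H_1(N_2;\Z)$ a metabolizer over which every character vanishing on it extends, forcing the corresponding invariants to vanish. Running this for all nine knots (and, for the $\Delta_K=1$-type low-genus cases, double-checking the bookkeeping against the alternating structure) yields the theorem.

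The main obstacle is the Casson--Gordon signature computation itself: unlike the norm condition on $\Delta_\chi$, which is a factorization question in a number field handled by Proposition~\ref{prop: norm test}, the $\Q$-valued defect $\sigma(K,\chi)$ must be extracted from signatures of twisted intersection forms on a bounding $4$-manifold, and then ``lined up'' with the metabolizer--character indexing coming from the Goeritz presentation --- exactly the delicate bookkeeping flagged in the discussion of the linking form at the end of Section~\ref{sec: HKL}. Two secondary points need care. Casson--Gordon requires $\chi$ of prime-power order, so for a given metabolizer one may have to pass to a larger $q^k$, or occasionally to a different prime-power cover $B_m$, to find a usable character vanishing on $A$; and, as in Section~\ref{sec: HKL with q=2}, one should phrase matters in terms of the clean vanishing statement for the signature invariant, so that no enlargement of the target group $\Z/q^k\Z$ is needed. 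Since only nine knots are involved --- all with small Alexander polynomial, several with $\Delta_K = (t^2-t+1)^2$, precisely the regime Casson--Gordon invariants were built to detect --- this is feasible even though it would be far too expensive to run across all of \PS.
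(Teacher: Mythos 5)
Your proposal misses a key structural observation in the paper and replaces a short argument with a heavy machine whose feasibility the authors explicitly decline to pursue. The paper's proof splits the nine knots into two groups. Five of them ($K14a12741$, $16a350194$, $17ah_{0000055}$, $18ah_{0000122}$, $19ah_{00000457}$) are recognized as 2-bridge knots $K_{m^2,q}$; for these the paper applies Casson--Gordon signatures directly via the closed-form 2-bridge formula (Theorem~3 of \cite{CassonGordonOrsay}), with no need to set up Goeritz matrices, enumerate metabolizers, or compute $\Q$-valued defects from a surgery presentation. The remaining four knots ($18ah_{3327857}$, $18ah_{4025786}$, $18ah_{4099296}$, $18ah_{4099297}$) are handled by a completely different idea you do not consider: the band search of Section~\ref{sec: bands} produces a 1-band ribbon concordance from each to $K_0 \# K_0$ with $K_0 = 7_7$, and since \cite{LivingstonNaik1999} shows $K_0$ has infinite order in $\cC_1^{\mathit{top}}$, the class $[K] = 2[K_0] \neq 0$ immediately rules out a topological slice disk.

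Concretely, the gap is twofold. First, your plan of ``rerunning the norm test over the genuine metabolizers'' by computing $\lk$ from the Goeritz matrix, and then falling back to Casson--Gordon signature defects, is precisely the refinement the authors flag at the end of Section~\ref{sec: HKL} and state they did \emph{not} implement because of the book-keeping needed to line up $\lk$ with the $\Delta_\chi$ computation; you have no evidence this machinery, once built, would actually succeed on all nine knots (for the four non-2-bridge ones in particular, the paper gives no indication CG signatures at $m=2$ suffice). Second, you assume the whole theorem yields to a single uniform Casson--Gordon attack, when in fact the cheap and decisive move for nearly half the list is an order argument in the concordance group obtained from a ribbon concordance to a connected sum $K_0 \# K_0$ --- an argument that sidesteps covers, characters, and signatures entirely.
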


\begin{proof}
The five knots $K14a12741$, $16a350194$, $16a350194$, $17ah_{0000055}$,
$18ah_{0000122}$, and $19ah_{00000457}$ are in fact the 2-bridge knots
$K_{m^2, q}$ for $(m^2, q)$ being $(25, 2)$, $(49, 4)$, $(121, 48)$,
$(169, 48)$, and $(289, 190)$ respectively.  We checked that the
Casson-Gordon signatures obstruct each of these from being
topologically slice, by directly calculating using the
formulas from the original paper \cite{CassonGordonOrsay} and
Theorem 3 therein.

For each of the four knots $18ah_{3327857}$, $18ah_{4025786}$,
$18ah_{4099296}$, and $18ah_{4099297}$, we used the method of
Section~\ref{sec: bands} to find a 1-band ribbon concordance from the
initial knot $K$ to $K_0 \# K_0$ for $K_0 = K7a1 = 7_7$.  By
Corollary 0.2 of \cite{LivingstonNaik1999}, the knot $K_0$ has
infinite order in the topological concordance group
$\cC_1^\mathit{top}$.  Hence, in $\cC_1^\mathit{top}$ we have
$[K] = 2 [K_0] \neq 0$, and so $K$ is not topologically slice.
\end{proof}

\section{Ribbon concordance graph}
\label{sec: ribbon graph}

Our search for ribbon disks in Section~\ref{sec: bands} also produced
many ribbon concordances between knots in \PS.  In this section, we
outline how these are used in the proof of Theorem~\ref{thm: main},
and how they relate to various open questions.  Suppose $K_0$ and
$K_1$ are knots in $S^3$.  Recall from \cite{Gordon1981} that a
\emph{ribbon concordance from $K_1$ to $K_0$} is a smooth annulus $A$
embedded in $S^3 \times I$ with boundary components $K_0 \times \{0\}$
and $K_1 \times \{1\}$ so that the $I$-coordinate is a Morse function
on $A$ with no local maxima.  In this case, we write $K_1 \geq K_0$.
In particular, $K$ is ribbon if and only if $K \geq \mathrm{unknot}$.
Agol recently proved that $\geq$ gives a partial order on isotopy
classes of knots \cite{Agol2022}. 

We will think of this partial order in terms of the \emph{ribbon
  concordance graph} \ConcordGraph: the graph whose vertices are
isotopy classes of knots with a \emph{directed} edge from $K_1$ to
$K_0$ if and only if $K_1 \geq K_0$.  Note that $K$ is ribbon if and
only if there is a directed path starting at $K$ and ending at the
unknot.  The connected components of \ConcordGraph\ as a topological
space are exactly the concordance classes of (unoriented) knots
\cite[\S 6]{Gordon1981}.  In particular, the connected component of
\ConcordGraph\ containing the unknot is the set of slice knots.

Considering the directed structure, define a \emph{sink} for a
component $C$ of \ConcordGraph\ to be a vertex $K$ so that every
vertex $J \in C$ has a directed path from $J$ to $K$.  Because ribbon
concordance is a partial order, a component can have at most one
sink.  The ribbon-slice conjecture is that the unknot is the unique
sink of its component, and Gordon asked in \cite{Gordon1981} whether
every component of \ConcordGraph\ has a sink.

Our data allows us to partially reconstruct the subgraph of
\ConcordGraph\ whose vertices are $\PSM \cup \{\mathrm{unknot}\}$.
Let $\Gamma_0$ be the directed graph whose vertices are
$\PSM \cup \{\mathrm{unknot}\}$ with an edge from $K_1$ to $K_0$
if we found a ribbon concordance $K_1 \geq K_0$.  We define
\PartialConcordGraph\ to be the subgraph of $\Gamma_0$ where we remove
all the isolated vertices, that is, all the knots that are not
involved in any ribbon concordance we found.  We call
\PartialConcordGraph\ the \emph{partial ribbon concordance graph} of \PS.
(Technical aside: elements of \PS\ are not isotopy classes of knots but
rather defined up to a possibly orientation-reversing homeomorphism of
$S^3$.  So \PartialConcordGraph\ is a subgraph not of \ConcordGraph\ but
of a $\Z/2\Z$-quotient of it.)

\subsection{Properties of the partial concordance graph}

The graph \PartialConcordGraph\ has \NumVertsInConcordGraph\ vertices
and \NumEdgesInConcordGraph\ edges.  Of the edges, some
\NumRibbonConcordToUnknot\ end at the unknot.  As a topological space,
it has \NumComponentsConcordGraph\ connected components.  The largest
component $U$ is the one containing the unknot.  The other knots in
$U$ are exactly the \NumRibbonKnots\ ribbon knots we identified in
Theorem~\ref{thm: ribbon}; the unknot is the sink of
$U$ as predicted by the slice-ribbon conjecture.  A very small
piece of $U$ is shown in Figures~\ref{fig: piece of U}, \ref{fig: U
  reduced}, and \ref{fig: U closeup}.

The other components of \PartialConcordGraph\ have mean size
\num{84.0} and median size \num{47.0}; the largest such component has
size \num{1673}, with sink the Conway knot $K11n34$.  Figure~\ref{fig:
  comp size} shows the sizes roughly follow a power-law
distribution. In contrast with Figure~\ref{fig: piece of U}, every one
of these other components has the following simple structure as a
directed graph: there is a sink and every edge ends there.  Note this
is consistent with the generalized slice-ribbon conjecture that every
component of \ConcordGraph\ has a sink.

It is natural to ask how various invariants behave with respect to
ribbon concordance.

\begin{enumerate}
\item
  \label{item: hyp decline}
  When the exterior $S^3 \setminus K$ is hyperbolic, i.e.~has a
  complete finite-volume Riemannian metric of constant curvature $-1$,
  we define $\vol(K)$ to be that volume. This notion naturally extends
  to all knots by considering the Gromov norm; here, unknots and torus
  knots have volume $0$.  Gordon asked in
  \cite{Gordon1981} whether $K_1 \geq K_0$ implies
  $\vol(K_1) \geq \vol(K_0)$. This is true for \PartialConcordGraph\
  in a very strong form.  For $K_1 > K_0$, the smallest value of
  $\vol(K_1) - \vol(K_0)$ was about 3.16, occurring for $K_1 = K6a3$
  and $K_0 = \mathrm{unknot}$.  Moreover, the largest ratio
  $(\vol(K_1) - \vol(K_0))/\vol(K_1)$ was $\approx 0.74$ for
  $K_1 = 19ah_{35125143}$ and $K_0 = K14a14913$; that is, the volume
  always declined by more than 25\% along each edge of
  \PartialConcordGraph.
  
  % Note: There are four non-hyperbolic knots in
  % \PartialConcordGraph\: ['unknot', '18ns_59', '19ns_183', '19ns_186'].
  % These are all ribbon with the only edges they are involved in end
  % at the unknot.  The knots '18ns_59', '19ns_183', '19ns_186' are
  % satellites built from trefoil the various 2-component hyperbolic
  % links of volume 2 v_8 = 7.32772475341775
  
\item Conjecture 9 of \cite{GreeneOwens2022} posits that if $K_1
  \geq K_0$ and $K_1$ is alternating then $K_0$ must be as well.
  Equivalently, there are no edges in \ConcordGraph\ from an
  alternating knot to a nonalternating knot. The graph
  \PartialConcordGraph\ has \NumConcordNonAltToAlt\ edges where one
  end is an alternating knot and the other is not.  In every case,
  these go in the expected direction.

\item Zemke showed that if $K_1 \geq K_0$ then the induced map on knot
  Floer homology $\HFK(K_0) \to \HFK(K_1)$ is injective.  In
  particular, for each $(i,j)$-bigrading one has
  $\dim \HFK_i(K_1, j) \geq \dim \HFK_i(K_0, j) $ as $\F_2$-vector
  spaces \cite{Zemke2019}.  (The analogous result holds for Khovanov
  homology \cite{LevineZemke2019}.)  Given this, a natural question is
  whether one could have $\HFK(K_0) \cong \HFK(K_1)$ when $K_1 > K_0$.
  For the total dimension of $\HFK$, the smallest absolute drop was
  $K_1 = K6a3$ and $K_0 = \mathrm{unknot}$ where it declined from 9 to
  1. The smallest decline as a proportion was the edge from $16n30703$
  to $K6a3$ where it decreased by some 78.0\%.  (In comparing this
  decline with that of hyperbolic volume, recall the connection
  shown in Figure~\ref{fig: vol vs HFK}).

  % \item The paper \cite{BoningerGreene2023} deals with special
  %  alternating knots, which are a special kind of postive knots, and
  %  hence have non-zero signature.  So \PartialConcordGraph\ does not
  %  have anything to say here.

\item
  \label{item: genus drop}
  A consequence of \cite{Zemke2019} is that the Seifert genus respects
  ribbon concordance: if $K_1 \geq K_0$ then $g_3(K_1) \geq g_3(K_0)$.
  For \PartialConcordGraph, moreover, every edge strictly reduced
  $g_3$.  This prompts us to ask: are there prime knots $K_1 > K_0$
  with the same $g_3$?
  
\item By \cite[Proposition~5]{Miyazaki2018}, if $K_1 \geq K_0$ and
  $K_1$ is fibered then so is $K_0$. Given this, it is natural to ask
  whether the dimension of $\HFK$ in the top Alexander grading is
  non-increasing.  This is true for 99.25\% of the edges in
  \PartialConcordGraph, but there are 3,094 cases where it increases.

\end{enumerate}

\subsection{Propagating information} Finally, we use
\PartialConcordGraph\ to complete the proof of the main theorem. The
idea is that, since each component $C$ of \PartialConcordGraph\
consists of smoothly concordant knots, if we know something about one
$K$ in $C$, we can perhaps deduce the same about every knot in $C$.
While the smooth obstructions of Section~\ref{sec: smooth} are all
concordance invariants, and so apply or not to every element in $C$
uniformly, this is not the case for the techniques in
Sections~\ref{sec: HKL}, \ref{sec: friends}, and \ref{sec: nonslice
  scraps}.

\begin{proof}[Proof of Theorem~\ref{thm: real main}]

Of necessity, we only give an outline and defer to \cite{CodeAndData}
for the detailed data.

We first do the topological category.  Recall from Theorem~\ref{thm:
  ribbon} that we identified \NumRibbonKnots\ ribbon knots in \PS.
Moreover, \PS\ contains \num{34930} knots where $\Delta_K = 1$, which
are all topologically slice by \cite[\S 11.7]{FreedmanQuinn1990}; this
contributes \num{18954} topologically slice knots beyond the known
ribbon knots.  Any component $C$ of \PartialConcordGraph\ containing a
topologically slice knot consists entirely of such knots.  For example, the
Conway knot $K11n34$ has $\Delta_K = 1$ and it is the sink of a
component $C$ of size \num{1673}; this tells us \num{1672} knots with
$\Delta_K \neq 1$ are topologically slice.  There are 17 other
components whose sink is a knot with $\Delta_K = 1$.  Collectively,
these 18 components contribute \num{3457} more topologically slice
knots.  This gives the claimed total of at least \NumTopSliceKnots\
topologically slice knots in \PS.

Theorem~\ref{thm: HKL obs summary} gives us \NumObsByAnyHKLTest\ knots
in \PS\ that are not topologically slice, and Theorem~\ref{thm: alt
  scraps} gives 9 more.  We can also propagate this information via
\PartialConcordGraph. For example, the component $C$ of
\PartialConcordGraph\ with sink $K14n5389$ has $\#C = 25$.  The method
of Section~\ref{sec: q^e} with $(m, q) = (2, 3)$ shows 13 of the knots
$K$ cannot be topologically slice and hence the other knots in $C$ are
not as well.  (The 13 knots where the method works have
$H_1(B_2; \Z) = \Z/3\Z \oplus \Z/27\Z$ whereas the other 12 have
$H_1(B_2; \Z)$ one of $(\Z/3\Z)^2$, $(\Z/9\Z)^2$, or
$(\Z/3\Z)^2 \oplus \Z/9\Z$.  Note that Lemma~\ref{lem: small meta}
gives only two possible metabolizers for $\Z/3\Z \oplus \Z/27\Z$
whereas there are 13 possibilities for $\Z/9\Z \oplus \Z/9\Z$, which
makes it harder to rule them all out.)  This kind of
deduction applies to some \num{144} knots, for a total of at least
\NumNotTopSlice\ knots in \PS\ that are not topologically slice.
This completes our discussion in the topological category.

In the smooth setting, Theorem~\ref{thm: smooth summary} obstructs
smooth slicing of \NumObsBySmoothInvsBeyondFullTopObs\ knots beyond
those known not to be topologically slice. As the obstructions in
Theorem~\ref{thm: smooth summary} are concordance invariants, we
cannot propagate them via \PartialConcordGraph.  Theorem~\ref{thm:
  slice via friends} gives 25 additional knots that are not smoothly
slice.  Two of these, $K11n34$ and $K13n866$, are sinks of components
of \PartialConcordGraph\ of sizes \num{1673} and \num{142}
respectively, which adds \num{1813} nonslice knots.  Theorem~\ref{thm:
  cable 8} and Theorem~\ref{thm: one knot} contribute one nonslice
knot apiece, for a total of at least \NumNotSmoothSlice\ knots in \PS\
that are not smoothly slice.
\end{proof}

\begin{table}
  \centering
  \begin{tabular}{lr@{\hskip 3.5em}lr@{\hskip 3.5em}lr}
    \toprule
    sink & $\# C$ & sink & $\# C$ & sink & $\# C$ \\
    \midrule
    $K13n65$ & 199 & $K13n3871$ & 88 & $K14n4621$ & 56 \\
    $K13n4582$ & 138 & $K13n3897$ & 86 & $K14n9023$ & 43 \\
    $K13n3872$ & 127 & $K14n18911$ & 59 & $K14n11063$ & 43 \\
    $K13n3936$ & 127 & $K14n18909$ & 59 & $K14n4425$ & 37 \\
    $K14n21673$ & 99 & $K14n3713$ & 58 & $K14n5486$ & 33 \\
    \bottomrule
  \end{tabular}
  \caption{The 15 largest components $C$ of \PartialConcordGraph\
    whose smooth slice status is unknown; together they contain
    \num{1251} knots, some \num{11.0}\% of the those in \PS\ whose
    smooth slice status is unknown.  With the exceptions of $K14n3713$ and
    $K14n4621$, all of the sink knots listed above have $\Delta_K = 1$
    and hence those components consist entirely of topologically slice
    knots.  The components with sinks $K14n3713$ and $K14n4621$ are
    the only two in \PS\ where we were unable to determine whether the
    knots are topologically slice.}
  \label{tab: unknown comps}
\end{table}

\begin{figure}
  \centering
  \begin{tikzpicture}[font=\footnotesize]
    %Set \graphicspath{{plots/images/}} to include the image files
\begin{tikzoverlay*}[width=0.95\textwidth]{concord_full.pdf}
  \draw (50.000000, 5.0) node[below] {\small Hyperbolic volume
    of $S^3 \setminus K$};
  \draw (93.019481, 8.670635) node[below] {$0$};

  \draw (79.349955, 8.670635) node[below] {$5$};

  \draw (65.680429, 8.670635) node[below] {$10$};

  \draw (52.010903, 8.670635) node[below] {$15$};

  \draw (38.341377, 8.670635) node[below] {$20$};

  \draw (24.671851, 8.670635) node[below] {$25$};

  \draw (11.002325, 8.670635) node[below] {$30$};

  % Internal axis coordinate system
  \begin{scope}[shift={(93.01948052, 14.90599745)},
                xscale=-2.73390519, yscale=0.00511876]
      %\draw[red] (0.000000, 75.406000) rectangle (31.471084, 19163.000000);
  \end{scope}
\end{tikzoverlay*}
  \end{tikzpicture}
  \caption{A small part of the component of \PartialConcordGraph
    containing the unknot.  Each knot is indicated by a dot, where
    alternating knots are red and other knots are blue.  Only edges
    corresponding to known 1-band ribbon cobordisms are drawn; in each
    case, they are directed so that the volume decreases,
    and their color is the average of that of their endpoints.  The
    168 knots used here were carefully chosen to make the picture
    complicated and interesting. The vertical coordinates of the dots
    were algorithmically selected to make the picture more
    comprehensible; they do not have an intrinsic meaning.
    See also Figure~\ref{fig: U reduced} and
    Figure~\ref{fig: U closeup}.  }
  \label{fig: piece of U}
\end{figure}
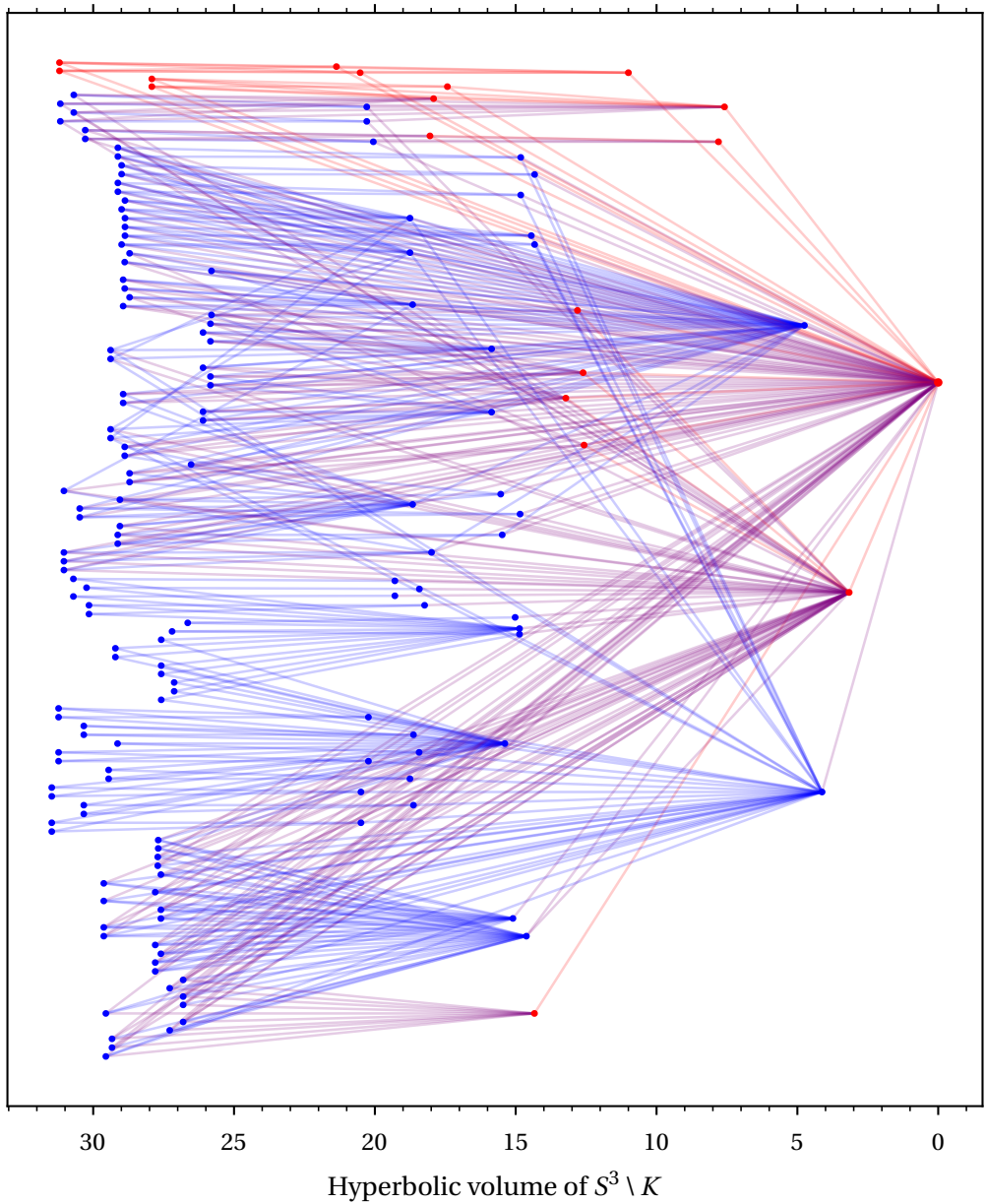

\begin{figure}
  \centering
  \begin{tikzpicture}[font=\footnotesize]
    %Set \graphicspath{{plots/images/}} to include the image files
\begin{tikzoverlay*}[width=0.95\textwidth]{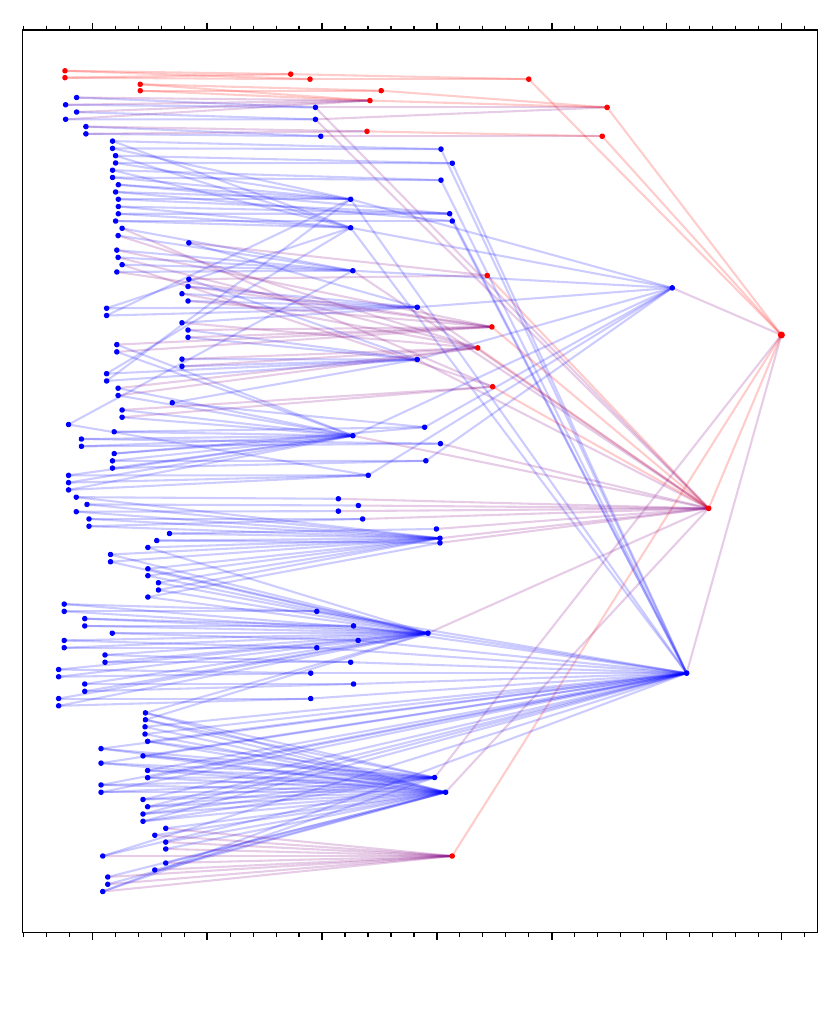}
  \draw (50.000000, 5.0) node[below] {\small Hyperbolic volume of $S^3
  \setminus K$};
  \draw (93.019481, 8.670635) node[below] {$0$};

  \draw (79.349955, 8.670635) node[below] {$5$};

  \draw (65.680429, 8.670635) node[below] {$10$};

  \draw (52.010903, 8.670635) node[below] {$15$};

  \draw (38.341377, 8.670635) node[below] {$20$};

  \draw (24.671851, 8.670635) node[below] {$25$};

  \draw (11.002325, 8.670635) node[below] {$30$};

  % Internal axis coordinate system
  \begin{scope}[shift={(93.01948052, 14.90599745)},
                xscale=-2.73390519, yscale=0.00511876]
      %\draw[red] (0.000000, 75.406000) rectangle (31.471084, 19163.000000);
  \end{scope}
\end{tikzoverlay*}
  \end{tikzpicture}
  \caption{A subgraph of \PartialConcordGraph\ with the same vertices
    as that in Figure~\ref{fig: piece of U}, but with a minimal subset
    of edges generating the underlying partial order.}
  \label{fig: U reduced}
\end{figure}

\begin{figure}
  \centering
  \begin{tikzpicture}
  % First the plot
  \begin{scope}[font=\tiny]
    \begin{tikzoverlay*}[width=10.5cm]{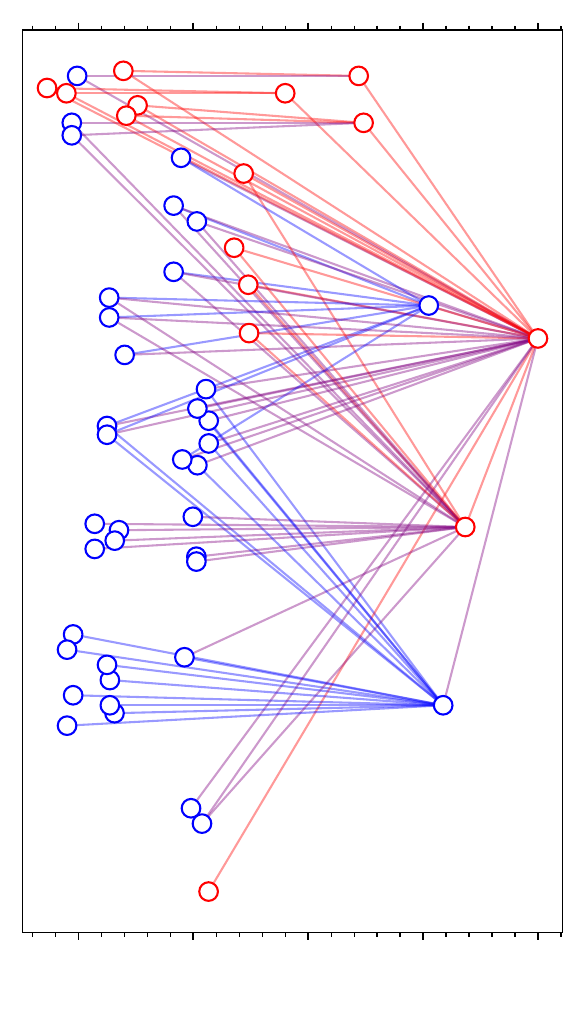}
      \draw (91.958042, 116.504821) node[] {$0$};
      \draw (79.530679, 84.270618) node[] {$1$};
      \draw (75.756314, 53.798334) node[] {$2$};
      \draw (73.294384, 122.131893) node[] {$3$};
      \draw (62.153311, 153.365215) node[] {$4$};
      \draw (61.316061, 161.383024) node[] {$5$};
      \draw (48.752495, 158.415743) node[] {$6$};
      \draw (42.574596, 117.419605) node[] {$7$};
      \draw (42.438072, 125.683405) node[] {$8$};
      \draw (41.655262, 144.693990) node[] {$9$};
      \draw (40.025523, 132.010017) node[] {$10$};
      \draw (35.677697, 102.444829) node[] {$11$};
      \draw (35.677697, 98.555078) node[] {$12$};
      \draw (35.655836, 21.957718) node[] {$13$};
      \draw (35.216106, 107.841283) node[] {$14$};
      \draw (34.529216, 33.573163) node[] {$15$};
      \draw (33.743421, 104.520389) node[] {$16$};
      \draw (33.743421, 94.834446) node[] {$17$};
      \draw (33.650759, 136.507062) node[] {$18$};
      \draw (33.569742, 79.197029) node[] {$19$};
      \draw (33.569742, 78.366805) node[] {$20$};
      \draw (32.963614, 86.023312) node[] {$21$};
      \draw (32.653209, 36.194517) node[] {$22$};
      \draw (31.529896, 62.008324) node[] {$23$};
      \draw (31.158021, 95.818414) node[] {$24$};
      \draw (30.948645, 147.384530) node[] {$25$};
      \draw (29.679950, 139.212976) node[] {$26$};
      \draw (29.679950, 127.897335) node[] {$27$};
      \draw (23.515800, 156.355558) node[] {$28$};
      \draw (21.592029, 154.587489) node[] {$29$};
      \draw (21.302698, 113.691285) node[] {$30$};
      \draw (21.083946, 162.251684) node[] {$31$};
      \draw (20.338066, 83.724823) node[] {$32$};
      \draw (19.609832, 81.941379) node[] {$33$};
      \draw (19.573239, 52.445377) node[] {$34$};
      \draw (18.782633, 53.798334) node[] {$35$};
      \draw (18.782633, 58.110885) node[] {$36$};
      \draw (18.667203, 123.484850) node[] {$37$};
      \draw (18.667203, 120.094770) node[] {$38$};
      \draw (18.295343, 101.537733) node[] {$39$};
      \draw (18.295343, 100.046405) node[] {$40$};
      \draw (18.295343, 60.709177) node[] {$41$};
      \draw (16.190330, 84.824100) node[] {$42$};
      \draw (16.190330, 80.526924) node[] {$43$};
      \draw (13.187747, 161.383024) node[] {$44$};
      \draw (12.501746, 65.905763) node[] {$45$};
      \draw (12.501746, 55.512592) node[] {$46$};
      \draw (12.272624, 153.365215) node[] {$47$};
      \draw (12.272624, 151.228158) node[] {$48$};
      \draw (11.462375, 63.307470) node[] {$49$};
      \draw (11.462375, 50.316007) node[] {$50$};
      \draw (11.348601, 158.415743) node[] {$51$};
      \draw (8.041958, 159.315152) node[] {$52$};
      \draw (50.000000, 7) node[below] {\small Hyperbolic volume of $S^3 \setminus K$};
      \begin{scope}[font=\footnotesize]
        \draw (91.958042, 12.450142) node[below] {$0$};
        \draw (72.319123, 12.450142) node[below] {$5$};
        \draw (52.680204, 12.450142) node[below] {$10$};
        \draw (33.041286, 12.450142) node[below] {$15$};
        \draw (13.402367, 12.450142) node[below] {$20$};
      \end{scope}
      % Internal axis coordinate system
      \begin{scope}[shift={(91.95804196, 15.23290638)},
        xscale=-3.92778376, yscale=0.00768726]
        % \draw[red] (0.000000, 874.800000) rectangle (21.364741, 19125.000000);
      \end{scope}
    \end{tikzoverlay*}
  \end{scope}

  \begin{scope}
    \node[above left] at (14, 0.5) {\scriptsize
      \begin{tabular}{ll}
        0 & unknot \\
        1 & $K6a3$ \\
        2 & $K8n1$ \\
        3 & $K9n5$ \\
        4 & $K8a16$ \\
        5 & $K8a4$ \\
        6 & $K9a12$ \\
        7 & $K11a165$ \\
        8 & $K11a201$ \\
        9 & $K11a58$ \\
        10 & $K11a103$ \\
        11 & $K13n1196$ \\
        12 & $K13n1191$ \\
        13 & $K10a103$ \\
        14 & $K13n1156$ \\
        15 & $K12n440$ \\
        16 & $K13n1194$ \\
        17 & $K13n1198$ \\
        18 & $K12n49$ \\
        19 & $K12n553$ \\
        20 & $K12n556$ \\
        21 & $K14n14150$ \\
        22 & $K13n2199$ \\
        23 & $K13n2927$ \\
        24 & $K14n17953$ \\
        25 & $K14n11817$ \\
        26 & $K14n14151$ \\
        27 & $K14n14149$ \\
        28 & $K13a2044$ \\
        29 & $K13a3071$ \\
        30 & $K14n7615$ \\
        31 & $K13a521$ \\
        32 & $K14n18949$ \\
        33 & $K14n16092$ \\
        34 & $K14n5634$ \\
        35 & $K14n16105$ \\
        36 & $K14n17618$ \\
        37 & $K14n7628$ \\
        38 & $K14n7617$ \\
        39 & $K14n6300$ \\
        40 & $K14n7632$ \\
        41 & $K14n6223$ \\
        42 & $K14n216$ \\
        43 & $K14n229$ \\
        44 & $K14n4939$ \\
        45 & $K14n4482$ \\
        46 & $K14n4479$ \\
        47 & $K14n14502$ \\
        48 & $K14n14508$ \\
        49 & $K14n7685$ \\
        50 & $K14n7602$ \\
        51 & $K14a7319$ \\
        52 & $K14a4364$ \\
      \end{tabular}};
  \end{scope}
\end{tikzpicture}
  \caption{The subgraph of the one shown in Figure~\ref{fig: piece of
      U} corresponding to knots with at most 17 crossings,
    which in this case is equivalent to $\vol(S^3 \setminus K) < 22$.
  }
  \label{fig: U closeup}
\end{figure}

\begin{figure}
  \centering
  \begin{tikzpicture}[font=\scriptsize]
    \tikzset{axis label/.style={font=\footnotesize}}
    \newcommand{\figurewidth}{6cm}
    %Set \graphicspath{{plots/images/}} to include the image files
\begin{tikzoverlay*}[width=\figurewidth]{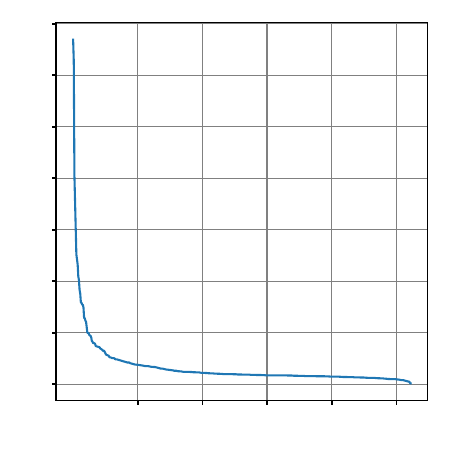}
  \draw (30.617816, 8.453704) node[below] {$100$};

  \draw (44.985632, 8.453704) node[below] {$200$};

  \draw (59.353448, 8.453704) node[below] {$300$};

  \draw (73.721264, 8.453704) node[below] {$400$};

  \draw (88.089080, 8.453704) node[below] {$500$};

  \draw (10.53704, 14.635165) node[left] {$0$};

  \draw (10.53704, 26.073697) node[left] {$250$};

  \draw (10.53704, 37.512228) node[left] {$500$};

  \draw (10.53704, 48.950760) node[left] {$750$};

  \draw (10.53704, 60.389291) node[left] {$1000$};

  \draw (10.53704, 71.827823) node[left] {$1250$};

  \draw (10.53704, 83.266354) node[left] {$1500$};

  \draw (10.53704, 94.704886) node[left] {$1750$};

  % Internal axis coordinate system
  \begin{scope}[shift={(16.25000000, 14.63516531)},
    xscale=0.14367816, yscale=0.04575413]
    % \draw[red] (0.000000, 4.000000) rectangle (522.000000, 1673.000000);
  \end{scope}
  
  \node[rotate=90, anchor=south, axis label] at (-3, 55) {Size};
  \node[below, axis label] at (50, 0) {Component index};

\end{tikzoverlay*}
    \begin{scope}[shift={(6.5, 0)}]
      %Set \graphicspath{{plots/images/}} to include the image files
\begin{tikzoverlay*}[width=\figurewidth]{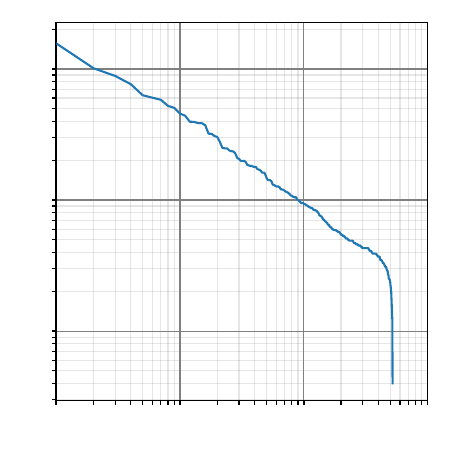}
  \draw (12.500000, 8.453704) node[below] {${10^{0}}$};

  \draw (40.000000, 8.453704) node[below] {${10^{1}}$};

  \draw (67.500000, 8.453704) node[below] {${10^{2}}$};

  \draw (95.000000, 8.453704) node[below] {${10^{3}}$};

  \draw (10.53704, 26.410358) node[left] {${10^{1}}$};

  \draw (10.53704, 55.540818) node[left] {${10^{2}}$};

  \draw (10.53704, 84.671279) node[left] {${10^{3}}$};

  \node[rotate=90, anchor=south, axis label] at (0, 55) {Size};
  \node[below, axis label] at (50, 0) {Component index};
\end{tikzoverlay*}
    \end{scope}
  \end{tikzpicture}
  \caption{This pair of plots shows the sizes of the components of
    \PartialConcordGraph\ that do not contain the unknot.  On the
    horizontal axis, the \num{523} components are ordered from largest
    to smallest. The two plots differ only in the use of log-scales on
    the right so that the power-law behavior is evident. }
  \label{fig: comp size}
\end{figure}

{\RaggedRight
  \small
\bibliographystyle{nmd/math} 
\bibliography{\jobname}

\newcommand{\etalchar}[1]{$^{#1}$}
\begin{thebibliography}{DKMPS}

\bibitem[ACMPS]{AcetoEtAl2023}
P.~Aceto, N.~A. Castro, M.~Miller, J.~Park, and A.~Stipsicz.
\newblock \href{}{{Slice obstructions from genus bounds in definite
  4-manifolds}}, 2023.
\newblock 6 pages.
\newblock \href{http://arxiv.org/abs/arXiv:2303.10587}{{\tt arXiv:2303.10587}}.

\bibitem[Agol]{Agol2022}
I.~Agol.
\newblock \href{https://doi.org/10.1090/cams/15}{{Ribbon concordance of knots
  is a partial ordering}}.
\newblock {\em Commun. Am. Math. Soc.} {\bf 2} (2022), 374--379.
\newblock \href{http://arxiv.org/abs/arXiv:2201.03626}{{\tt arXiv:2201.03626}},
  \mathreviewsnumber{4520779}.

\bibitem[AST]{AgolStormThurston2007}
I.~Agol, P.~A. Storm, and W.~P. Thurston.
\newblock \href{https://doi.org/10.1090/S0894-0347-07-00564-4}{{Lower bounds on
  volumes of hyperbolic {H}aken 3-manifolds}}.
\newblock {\em J. Amer. Math. Soc.} {\bf 20} (2007), 1053--1077.
\newblock With an appendix by Nathan Dunfield.
\newblock \href{http://arxiv.org/abs/arXiv:0506338}{{\tt arXiv:0506338}},
  \mathreviewsnumber{2328715}.

\bibitem[ABDE{\etalchar{+}}]{ApplebaumEtAl}
T.~Applebaum, S.~Blackwell, A.~Davies, T.~Edlich, A.~Juhász, M.~Lackenby,
  N.~Tomašev, and D.~Zheng.
\newblock \href{}{{The unknotting number, hard unknot diagrams, and
  reinforcement learning}}.
\newblock {\em Exp. Math.} (to appear).
\newblock \href{http://arxiv.org/abs/2409.09032}{{\tt arXiv:2409.09032}}.

\bibitem[Asc]{Aschbacher2000}
M.~Aschbacher.
\newblock \href{https://doi.org/10.1017/CBO9781139175319}{{\em Finite group
  theory}}, volume~10 of {\em Cambridge Studies in Advanced Mathematics}.
\newblock Cambridge University Press, Cambridge, second edition, 2000.
\newblock \mathreviewsnumber{1777008}.

\bibitem[BCP]{Magma}
W.~Bosma, J.~Cannon, and C.~Playoust.
\newblock \href{https://doi.org/10.1006/jsco.1996.0125}{{The {M}agma algebra
  system. {I}. {T}he user language}}.
\newblock {\em J. Symbolic Comput.} {\bf 24} (1997), 235--265.
\newblock Computational algebra and number theory (London, 1993).
\newblock \url{http://doi.org/10.1006/jsco.1996.0125}
  \mathreviewsnumber{1484478}.

\bibitem[Bry]{Bryan1998}
J.~Bryan.
\newblock \href{https://doi.org/10.4310/MRL.1998.v5.n2.a3}{{Seiberg-{W}itten
  theory and {${\bf Z}/2^p$} actions on spin {$4$}-manifolds}}.
\newblock {\em Math. Res. Lett.} {\bf 5} (1998), 165--183.
\newblock \mathreviewsnumber{1617929}.

\bibitem[BB]{BudneyBurton2020}
R.~Budney and B.~A. Burton.
\newblock \href{https://doi.org/10.1080/10586458.2020.1740836}{{Embeddings of
  3-manifolds in {$S^4$} from the point of view of the 11-tetrahedron census}}.
\newblock {\em Exp. Math.} {\bf 31} (2022), 988--1013.
\newblock \href{http://arxiv.org/abs/arXiv:0810.2346}{{\tt arXiv:0810.2346}},
  \mathreviewsnumber{4477418}.

\bibitem[Bur]{Burton2020}
B.~A. Burton.
\newblock \href{https://doi.org/10.4230/LIPIcs.SoCG.2020.25}{{The next 350
  million knots}}.
\newblock In {\em 36th {I}nternational {S}ymposium on {C}omputational
  {G}eometry}, volume 164 of {\em LIPIcs. Leibniz Int. Proc. Inform.}, pages
  Art. No. 25, 17. Schloss Dagstuhl. Leibniz-Zent. Inform., Wadern, 2020.
\newblock \mathreviewsnumber{4117738}.

\bibitem[BBP{\etalchar{+}}]{Regina}
B.~A. Burton, R.~Budney, W.~Pettersson, et~al.
\newblock \href{}{{Regina: Software for low-dimensional topology}}, 1999--2023.
\newblock \url{http://regina-normal.github.io}

\bibitem[CG1]{CassonGordon1983}
A.~J. Casson and C.~M. Gordon.
\newblock \href{https://doi.org/10.1007/BF01388533}{{A loop theorem for duality
  spaces and fibred ribbon knots}}.
\newblock {\em Invent. Math.} {\bf 74} (1983), 119--137.
\newblock \mathreviewsnumber{722728}.

\bibitem[CG2]{CassonGordonOrsay}
A.~J. Casson and C.~M. Gordon.
\newblock \href{}{{Cobordism of classical knots}}.
\newblock In {\em \`{A} la recherche de la topologie perdue}, volume~62 of {\em
  Progr. Math.}, pages 181--199. Birkh\"{a}user Boston, Boston, MA, 1986.
\newblock With an appendix by P. M. Gilmer.
\newblock \mathreviewsnumber{900252}.

\bibitem[CL]{CassonLong1985}
A.~J. Casson and D.~D. Long.
\newblock \href{https://doi.org/10.1007/BF01389054}{{Algorithmic compression of
  surface automorphisms}}.
\newblock {\em Invent. Math.} {\bf 81} (1985), 295--303.
\newblock \mathreviewsnumber{799268}.

\bibitem[Coh]{Cohen1993}
H.~Cohen.
\newblock \href{https://doi.org/10.1007/978-3-662-02945-9}{{\em A course in
  computational algebraic number theory}}, volume 138 of {\em Graduate Texts in
  Mathematics}.
\newblock Springer-Verlag, Berlin, 1993.
\newblock \mathreviewsnumber{1228206}.

\bibitem[CDGW]{SnapPy}
M.~Culler, N.~M. Dunfield, M.~Goerner, and J.~R. Weeks.
\newblock \href{}{{Snap{P}y, a computer program for studying the geometry and
  topology of $3$-manifolds, Version 3.2}}, 2025.
\newblock \url{https://snappy.computop.org}

\bibitem[DKMPS]{DaiEtAl2024}
I.~Dai, S.~Kang, A.~Mallick, J.~Park, and M.~Stoffregen.
\newblock \href{}{{The $(2,1)$-cable of the figure-eight knot is not smoothly
  slice}}.
\newblock {\em Invent. Math.} (to appear).
\newblock \href{http://arxiv.org/abs/2207.14187}{{\tt arXiv:2207.14187}}.

\bibitem[DJLTs]{DaviesJuhaszLackenbyTomasev2024}
A.~Davies, A.~Juh\'asz, M.~Lackenby, and N.~Toma\v~sev.
\newblock \href{https://doi.org/10.2140/gt.2024.28.2313}{{The signature and
  cusp geometry of hyperbolic knots}}.
\newblock {\em Geom. Topol.} {\bf 28} (2024), 2313--2343.
\newblock \href{http://arxiv.org/abs/2111.15323}{{\tt arXiv:2111.15323}},
  \mathreviewsnumber{4793642}.

\bibitem[DF]{DummitFoote}
D.~S. Dummit and R.~M. Foote.
\newblock \href{}{{\em Abstract algebra}}.
\newblock John Wiley \& Sons, Inc., Hoboken, NJ, third edition, 2004.
\newblock \mathreviewsnumber{2286236}.

\bibitem[Dun]{Dunfield2020b}
N.~M. Dunfield.
\newblock \href{https://doi.org/10.1090/conm/760/15289}{{A census of
  exceptional {D}ehn fillings}}.
\newblock In {\em Characters in low-dimensional topology}, volume 760 of {\em
  Contemp. Math.}, pages 143--155. Amer. Math. Soc., 2020.
\newblock \href{http://arxiv.org/abs/1812.11940}{{\tt 1812.11940}},
  \mathreviewsnumber{4193924}.

\bibitem[DG]{CodeAndData}
N.~M. Dunfield and S.~Gong.
\newblock \href{https://doi.org/10.7910/DVN/YBDTBT}{{{Ribbon concordances and
  slice obstructions: code and data}}}.
\newblock Harvard Dataverse, 2025.

\bibitem[DLS]{DunfieldLipshitzSchuetz}
N.~M. Dunfield, R.~Lipshitz, and D.~Sch\"utz.
\newblock \href{}{{Local equivalence and refinements of Rasmussen's
  s-invariant}}.
\newblock {\em J. Topol.} (to appear).
\newblock 47 pages.
\newblock \href{http://arxiv.org/abs/arXiv:2312.09114}{{\tt arXiv:2312.09114}}.

\bibitem[DOR]{DunfieldObeidinRudd2024}
N.~M. Dunfield, M.~Obeidin, and C.~G. Rudd.
\newblock \href{https://doi.org/10.1007/s00454-023-00533-w}{{Computing a link
  diagram from its exterior}}.
\newblock {\em Discrete Comput. Geom.} {\bf 71} (2024), 121--159.
\newblock \href{http://arxiv.org/abs/2112.03251}{{\tt 2112.03251}},
  \mathreviewsnumber{4685711}.

\bibitem[FM]{FellerMcCoy2016}
P.~Feller and D.~McCoy.
\newblock \href{https://doi.org/10.1090/proc/13147}{{On 2-bridge knots with
  differing smooth and topological slice genera}}.
\newblock {\em Proc. Amer. Math. Soc.} {\bf 144} (2016), 5435--5442.
\newblock \href{http://arxiv.org/abs/1508.01431}{{\tt arXiv:1508.01431}},
  \mathreviewsnumber{3556284}.

\bibitem[Fox]{Fox1961}
R.~H. Fox.
\newblock \href{}{{Some problems in knot theory}}.
\newblock In {\em Topology of 3-manifolds and related topics ({P}roc. {T}he
  {U}niv. of {G}eorgia {I}nstitute, 1961)}, pages 168--176. Prentice-Hall,
  Inc., Englewood Cliffs, NJ, 1961.
\newblock \mathreviewsnumber{140100}.

\bibitem[FQ]{FreedmanQuinn1990}
M.~H. Freedman and F.~Quinn.
\newblock \href{}{{\em Topology of 4-manifolds}}, volume~39 of {\em Princeton
  Mathematical Series}.
\newblock Princeton University Press, Princeton, NJ, 1990.
\newblock \mathreviewsnumber{1201584}.

\bibitem[FV]{FriedlVidussi2011}
S.~Friedl and S.~Vidussi.
\newblock \href{https://doi.org/10.1007/978-3-642-15637-3\_3}{{A survey of
  twisted {A}lexander polynomials}}.
\newblock In {\em The mathematics of knots}, volume~1 of {\em Contrib. Math.
  Comput. Sci.}, pages 45--94. Springer, Heidelberg, 2011.
\newblock \url{https://doi.org/10.1007/978-3-642-15637-3_3}
  \href{http://arxiv.org/abs/arXiv:0905.0591}{{\tt arXiv:0905.0591}},
  \mathreviewsnumber{2777847}.

\bibitem[GAP]{GAP}
The GAP~Group.
\newblock \href{}{{\em {GAP -- Groups, Algorithms, and Programming, Version
  4.13.1}}}, 2024.
\newblock \url{https://www.gap-system.org}

\bibitem[Gor]{Gordon1981}
C.~M. Gordon.
\newblock \href{https://doi.org/10.1007/BF01458281}{{Ribbon concordance of
  knots in the {$3$}-sphere}}.
\newblock {\em Math. Ann.} {\bf 257} (1981), 157--170.
\newblock \mathreviewsnumber{634459}.

\bibitem[GO]{GreeneOwens2022}
J.~E. Greene and B.~Owens.
\newblock \href{}{{Alternating links, rational balls, and cube tilings}}.
\newblock {\em J. Eur. Math. Soc. (JEMS)}.
\newblock \href{http://arxiv.org/abs/arXiv:2212.06248}{{\tt arXiv:2212.06248}}.

\bibitem[GHMR]{GukovEtAl2023}
S.~Gukov, J.~Halverson, C.~Manolescu, and F.~Ruehle.
\newblock \href{}{{Searching for ribbons with machine learning}}, 2023.
\newblock preprint, 23 pages.
\newblock \href{http://arxiv.org/abs/arXiv:2304.09304}{{\tt arXiv:2304.09304}}.

\bibitem[HKL]{HeraldKirkLivingston2010}
C.~Herald, P.~Kirk, and C.~Livingston.
\newblock \href{https://doi.org/10.1007/s00209-009-0548-1}{{Metabelian
  representations, twisted {A}lexander polynomials, knot slicing, and
  mutation}}.
\newblock {\em Math. Z.} {\bf 265} (2010), 925--949.
\newblock \mathreviewsnumber{2652542}.

\bibitem[HW]{HodgsonWeeks1994}
C.~D. Hodgson and J.~R. Weeks.
\newblock \href{}{{Symmetries, isometries and length spectra of closed
  hyperbolic three-manifolds}}.
\newblock {\em Experiment. Math.} {\bf 3} (1994), 261--274.
\newblock \url{http://projecteuclid.org/euclid.em/1048515809}
  \mathreviewsnumber{1341719}.

\bibitem[HR]{HoltRoyle2020}
D.~Holt and G.~Royle.
\newblock \href{https://doi.org/10.1016/j.jsc.2019.06.006}{{A census of small
  transitive groups and vertex-transitive graphs}}.
\newblock {\em J. Symbolic Comput.} {\bf 101} (2020), 51--60.
\newblock \href{http://arxiv.org/abs/arXiv:1811.09015}{{\tt arXiv:1811.09015}},
  \mathreviewsnumber{4109707}.

\bibitem[JMZ]{JuhaszMillerZemke2020}
A.~Juh\'asz, M.~Miller, and I.~Zemke.
\newblock \href{https://doi.org/10.1112/topo.12170}{{Knot cobordisms, bridge
  index, and torsion in {F}loer homology}}.
\newblock {\em J. Topol.} {\bf 13} (2020), 1701--1724.
\newblock \href{http://arxiv.org/abs/1904.02735}{{\tt arXiv:1904.02735}},
  \mathreviewsnumber{4186142}.

\bibitem[Kaw]{Kawauchi1996}
A.~Kawauchi.
\newblock \href{}{{\em A survey of knot theory}}.
\newblock Birkh\"{a}user Verlag, Basel, 1996.
\newblock Translated and revised from the 1990 Japanese original by the author.
\newblock \mathreviewsnumber{1417494}.

\bibitem[KW]{KegelWeiss2024}
M.~Kegel and N.~Weiss.
\newblock \href{}{{Complexity of equal 0-surgeries}}.
\newblock Preprint 2024, 13 pages.
\newblock \href{http://arxiv.org/abs/2401.06015}{{\tt arXiv:2401.06015}}.

\bibitem[Kho]{Khovanov2003}
M.~Khovanov.
\newblock \href{http://projecteuclid.org/euclid.em/1087329238}{{Patterns in
  knot cohomology. {I}}}.
\newblock {\em Experiment. Math.} {\bf 12} (2003), 365--374.
\newblock \href{http://arxiv.org/abs/0201306}{{\tt arXiv:0201306}},
  \mathreviewsnumber{2034399}.

\bibitem[KL]{KirkLivingston1999a}
P.~Kirk and C.~Livingston.
\newblock \href{https://doi.org/10.1016/S0040-9383(98)00039-1}{{Twisted
  {A}lexander invariants, {R}eidemeister torsion, and {C}asson-{G}ordon
  invariants}}.
\newblock {\em Topology} {\bf 38} (1999), 635--661.
\newblock \mathreviewsnumber{1670420}.

\bibitem[Kup]{Kuperberg2019}
G.~Kuperberg.
\newblock \href{https://doi.org/10.2140/pjm.2019.301.189}{{Algorithmic
  homeomorphism of 3-manifolds as a corollary of geometrization}}.
\newblock {\em Pacific J. Math.} {\bf 301} (2019), 189--241.
\newblock \mathreviewsnumber{4007377}.

\bibitem[LZ]{LevineZemke2019}
A.~S. Levine and I.~Zemke.
\newblock \href{https://doi.org/10.1112/blms.12303}{{Khovanov homology and
  ribbon concordances}}.
\newblock {\em Bull. Lond. Math. Soc.} {\bf 51} (2019), 1099--1103.
\newblock \href{http://arxiv.org/abs/1903.01546}{{\tt arXiv:1903.01546}},
  \mathreviewsnumber{4041014}.

\bibitem[LM]{LewarkMcCoy2019}
L.~Lewark and D.~McCoy.
\newblock \href{https://doi.org/10.1080/10586458.2017.1353453}{{On calculating
  the slice genera of 11- and 12-crossing knots}}.
\newblock {\em Exp. Math.} {\bf 28} (2019), 81--94.
\newblock \href{http://arxiv.org/abs/arXiv:1508.01098}{{\tt arXiv:1508.01098}},
  \mathreviewsnumber{3938580}.

\bibitem[Lic]{Lickorish1997}
W.~B.~R. Lickorish.
\newblock \href{https://doi.org/10.1007/978-1-4612-0691-0}{{\em An introduction
  to knot theory}}, volume 175 of {\em Graduate Texts in Mathematics}.
\newblock Springer-Verlag, New York, 1997.
\newblock \mathreviewsnumber{1472978}.

\bibitem[LS]{LipshitzSarkar2014a}
R.~Lipshitz and S.~Sarkar.
\newblock \href{https://doi.org/10.1215/00127094-2644466}{{A refinement of
  {R}asmussen's {$S$}-invariant}}.
\newblock {\em Duke Math. J.} {\bf 163} (2014), 923--952.
\newblock \href{http://arxiv.org/abs/arXiv:1206.3532}{{\tt arXiv:1206.3532}},
  \mathreviewsnumber{3189434}.

\bibitem[Lis]{Lisca2007}
P.~Lisca.
\newblock \href{https://doi.org/10.2140/gt.2007.11.429}{{Lens spaces, rational
  balls and the ribbon conjecture}}.
\newblock {\em Geom. Topol.} {\bf 11} (2007), 429--472.
\newblock \href{http://arxiv.org/abs/0701610}{{\tt arXiv:0701610}},
  \mathreviewsnumber{2302495}.

\bibitem[Liv]{Livingston2010}
C.~Livingston.
\newblock \href{https://doi.org/10.1142/S0218216510008571}{{The algebraic
  concordance order of a knot}}.
\newblock {\em J. Knot Theory Ramifications} {\bf 19} (2010), 1693--1711.
\newblock \mathreviewsnumber{2755496}.

\bibitem[LN]{LivingstonNaik1999}
C.~Livingston and S.~Naik.
\newblock \href{}{{Obstructing four-torsion in the classical knot concordance
  group}}.
\newblock {\em J. Differential Geom.} {\bf 51} (1999), 1--12.
\newblock \url{http://projecteuclid.org/euclid.jdg/12144250231}
  \href{http://arxiv.org/abs/9802071}{{\tt arXiv:9802071}},
  \mathreviewsnumber{1703602}.

\bibitem[MP]{ManolescuPiccirillo2023}
C.~Manolescu and L.~Piccirillo.
\newblock \href{https://doi.org/10.1112/jlms.12800}{{From zero surgeries to
  candidates for exotic definite 4-manifolds}}.
\newblock {\em J. Lond. Math. Soc. (2)} {\bf 108} (2023), 2001--2036.
\newblock \href{http://arxiv.org/abs/arXiv:2102.04391}{{\tt arXiv:2102.04391}},
  \mathreviewsnumber{4668522}.

\bibitem[Miy]{Miyazaki2018}
K.~Miyazaki.
\newblock \href{https://doi.org/10.1142/S0218216518710025}{{A note on genera of
  band sums that are fibered}}.
\newblock {\em J. Knot Theory Ramifications} {\bf 27} (2018), 1871002, 3.
\newblock \mathreviewsnumber{3876350}.

\bibitem[Nak]{Nakamura2022}
K.~Nakamura.
\newblock \href{https://doi.org/10.1112/topo.12319}{{Trace embeddings from zero
  surgery homeomorphisms}}.
\newblock {\em J. Topol.} {\bf 16} (2023), 1641--1664.
\newblock \href{http://arxiv.org/abs/arXiv:2203.14270}{{\tt arXiv:2203.14270}},
  \mathreviewsnumber{4828048}.

\bibitem[NZ]{NeumannZagier1985}
W.~D. Neumann and D.~Zagier.
\newblock \href{https://doi.org/10.1016/0040-9383(85)90004-7}{{Volumes of
  hyperbolic three-manifolds}}.
\newblock {\em Topology} {\bf 24} (1985), 307--332.
\newblock \mathreviewsnumber{815482}.

\bibitem[Nos]{Nosaka2022}
T.~Nosaka.
\newblock \href{}{{Cellular chain complexes of universal covers of some
  3-manifolds}}.
\newblock {\em J. Math. Sci. Univ. Tokyo} {\bf 29} (2022), 89--113.
\newblock \href{http://arxiv.org/abs/arXiv:2107.08851}{{\tt arXiv:2107.08851}},
  \mathreviewsnumber{4414248}.

\bibitem[OS]{OwensSwenton2023}
B.~Owens and F.~Swenton.
\newblock \href{https://doi.org/10.1080/10586458.2022.2158968}{{An Algorithm to
  Find Ribbon Disks for Alternating Knots}}.
\newblock {\em Experimental Mathematics} {\bf 0} (2023), 1--19.
\newblock \href{http://arxiv.org/abs/arXiv:2102.11778}{{\tt arXiv:2102.11778}}.

\bibitem[Pic]{Piccirillo2020}
L.~Piccirillo.
\newblock \href{https://doi.org/10.4007/annals.2020.191.2.5}{{The {C}onway knot
  is not slice}}.
\newblock {\em Ann. of Math. (2)} {\bf 191} (2020), 581--591.
\newblock \href{http://arxiv.org/abs/arXiv:1808.02923}{{\tt arXiv:1808.02923}},
  \mathreviewsnumber{4076631}.

\bibitem[Ras]{Rasmussen2010}
J.~Rasmussen.
\newblock \href{https://doi.org/10.1007/s00222-010-0275-6}{{Khovanov homology
  and the slice genus}}.
\newblock {\em Invent. Math.} {\bf 182} (2010), 419--447.
\newblock \href{http://arxiv.org/abs/arXiv:0402131}{{\tt arXiv:0402131}},
  \mathreviewsnumber{2729272}.

\bibitem[Ren]{Ren2024}
Q.~Ren.
\newblock \href{https://doi.org/10.2140/gt.2024.28.3935}{{Lee filtration
  structure of torus links}}.
\newblock {\em Geom. Topol.} {\bf 28} (2024), 3935--3960.
\newblock \href{http://arxiv.org/abs/arXiv:2305.16089}{{\tt arXiv:2305.16089}},
  \mathreviewsnumber{4843752}.

\bibitem[Rub]{Ruberman2015}
D.~Ruberman.
\newblock \href{}{{Fibered example of topologically slice knots}}.
\newblock MathOverflow Answer.
\newblock \url{https://mathoverflow.net/q/215495}

\bibitem[Sar]{Sarkar2020}
S.~Sarkar.
\newblock \href{https://doi.org/10.2140/agt.2020.20.1041}{{Ribbon distance and
  {K}hovanov homology}}.
\newblock {\em Algebr. Geom. Topol.} {\bf 20} (2020), 1041--1058.
\newblock \mathreviewsnumber{4092319}.

\bibitem[SSS]{SarkarScadutoStoffregen2020}
S.~Sarkar, C.~Scaduto, and M.~Stoffregen.
\newblock \href{https://doi.org/10.1016/j.aim.2020.107112}{{An odd {K}hovanov
  homotopy type}}.
\newblock {\em Adv. Math.} {\bf 367} (2020), 107112, 51.
\newblock \mathreviewsnumber{4078823}.

\bibitem[Saw]{Sawin2024}
W.~Sawin.
\newblock \href{}{{Bilinear forms on abelian $p$-groups: Images of weak
  metabolizers in the Frattini quotient}}.
\newblock MathOverflow Answer.
\newblock \url{https://mathoverflow.net/q/473984}

\bibitem[Sch1]{Schuetz2022c}
D.~Sch\"utz.
\newblock \href{https://doi.org/10.2140/agt.2022.22.1287}{{A scanning algorithm
  for odd {K}hovanov homology}}.
\newblock {\em Algebr. Geom. Topol.} {\bf 22} (2022), 1287--1324.
\newblock \href{http://arxiv.org/abs/2008.07410}{{\tt arXiv:2008.07410}},
  \mathreviewsnumber{4474786}.

\bibitem[Sch2]{KnotJob}
D.~Sch\"utz.
\newblock \href{}{{Knot{J}ob (computer software), gray version}}, 2023.
\newblock \url{https://www.maths.dur.ac.uk/users/dirk.schuetz/knotjob.html}

\bibitem[Sch3]{Schuetz2024}
D.~Sch\"utz.
\newblock \href{}{{A note on the X-torsion order of a knot}}, 2024.
\newblock Preprint 2024, 6 pages.
\newblock \href{http://arxiv.org/abs/2412.05156}{{\tt arXiv:2412.05156}}.

\bibitem[Sch4]{Schuetz2022a}
D.~Sch\"utz.
\newblock \href{https://doi.org/10.1307/mmj/20226211}{{On an integral version
  of the {R}asmussen invariant}}.
\newblock {\em Michigan Math. J.} {\bf 75} (2025), 65--88.
\newblock \href{http://arxiv.org/abs/arXiv:2202.00445}{{\tt arXiv:2202.00445}},
  \mathreviewsnumber{4873797}.

\bibitem[Sch5]{Schuetz2025}
D.~Sch\"utz.
\newblock \href{}{{On Variations of s-invariants from sl(3)-link homology}}.
\newblock Preprint 2025, 29 pages.
\newblock \href{http://arxiv.org/abs/2504.07806}{{\tt arXiv:2504.07806}}.

\bibitem[Sch6]{Schuetz2022b}
D.~Sch\"utz.
\newblock \href{}{{Two second Steenrod squares for odd Khovanov homology}}.
\newblock Preprint 2022, 38 pages.
\newblock \href{http://arxiv.org/abs/2209.00389}{{\tt arXiv:2209.00389}}.

\bibitem[Sto]{Stoimenow2002}
A.~Stoimenow.
\newblock \href{https://doi.org/10.1142/S0129167X02001290}{{Some inequalities
  between knot invariants}}.
\newblock {\em Internat. J. Math.} {\bf 13} (2002), 373--393.
\newblock \mathreviewsnumber{1911864}.

\bibitem[Sza]{HFKCalculator}
Z.~Szab\'o.
\newblock \href{}{{Knot Floer homology calculator}}, 2017.
\newblock \url{https://web.math.princeton.edu/~szabo/HFKcalc.html}

\bibitem[Tei]{Teichner2011}
P.~Teichner.
\newblock \href{}{{Slice knots: {K}not theory in the 4th dimension}}, 2011.
\newblock Lecture notes by Justin Roberts, Julia Collins and Mark Powell.
\newblock \url{https://people.mpim-bonn.mpg.de/teichner/Math/surveys.html}

\bibitem[Sage]{SageMath}
{The Sage Developers}.
\newblock \href{}{{\em {S}ageMath, the {S}age {M}athematics {S}oftware {S}ystem
  ({V}ersion 10.0)}}, 2023.
\newblock \url{https://www.sagemath.org}

\bibitem[Thi]{Morwen23}
M.~B. Thistlethwaite.
\newblock \href{https://doi.org/10.2140/agt.2025.25.329}{{The enumeration and
  classification of prime 20-crossing knots}}.
\newblock {\em Algebr. Geom. Topol.} {\bf 25} (2025), 329--344.
\newblock \mathreviewsnumber{4877260}.

\bibitem[Zem]{Zemke2019}
I.~Zemke.
\newblock \href{https://doi.org/10.4007/annals.2019.190.3.5}{{Knot {F}loer
  homology obstructs ribbon concordance}}.
\newblock {\em Ann. of Math. (2)} {\bf 190} (2019), 931--947.
\newblock \href{http://arxiv.org/abs/1902.04050}{{\tt arXiv:1902.04050}},
  \mathreviewsnumber{4024565}.

\end{thebibliography}
}
\end{document}